\theoremstyle{plain}
\newtheorem{thm}{Theorem}[section]
\newtheorem{lemm}[thm]{Lemma}
\theoremstyle{definition}
\newtheorem{dfn}[thm]{Definition}
\theoremstyle{remark}
\newtheorem{rmk}[thm]{Remark}
\theoremstyle{Main Theorem}
\newtheorem*{mthm}{Main Theorem}
\theoremstyle{Conjecture}
\newtheorem{conj}[thm]{Conjecture}
\theoremstyle{Corollary}
\newtheorem{cor}[thm]{Corollary}
\begin{document}
\title{On\ the\ Feigin-Tipunin\ conjecture}
\author{SHOMA SUGIMOTO}
\address{Research Institute For Mathematical Sciences, Kyoto University, Kyoto 606-8502 JAPAN}
\email{shoma@kurims.kyoto-u.ac.jp}1
\keywords{Representation theory, Vertex operator algebra}
\subjclass[2010]{81T40}
\date{}
\maketitle
\begin{abstract}
We prove the Feigin-Tipunin conjecture \cite{FT} on the geometric construction of the logarithmic $W$-algebras $W(p)_Q$ associated with a simply-laced simple Lie algebra $\mathfrak{g}$ and $p\in\mathbb{Z}_{\geq2}$. 
\end{abstract}

\section{Introduction}
%\subsection{Motivation}
The triplet $W$-algebra \cite{FGST1}-\cite{FGST3}, \cite{AM1}-\cite{AM3}, \cite{NT}, \cite{TW} is one of the most well-known examples of $C_2$-cofinite and irrational vertex operator algebra \cite{FB,FHL,Kac}, and related to many interesting objects such as the tails of colored Jones polynomials and false theta functions \cite{BM1,CCFGH,MN}, quantum groups at roots of unity \cite{CGR,FGR, NT}, and the quantum geometric Langlands program \cite{CG,Cr1}. The triplet $W$-algebra is defined as the kernel of the narrow screening operator on the lattice vertex operator algebra associated with the rescaled root lattice of type $A_1$. The definition of the triplet $W$-algebra is naturally generalized to the case of an arbitrary simply-laced simple Lie algebra $\mathfrak{g}$ and $p\in\mathbb{Z}_{\geq 2}$, and we call these vertex operator algebras the {\it logarithmic $W$-algebras} and denote by $W(p)_Q$ (\cite{AM4,CG,Cr1,Cr2,FL,FT,M}). It has been expected and widely believed that the logarithmic $W$-algebras inherit interesting properties of the triplet $W$-algebra mentioned above (\cite{AM4}, \cite{BKM1}-\cite{CrM}, \cite{FT,GLO}, \cite{M}). However, very little is known about the properties and the representation theory of $W(p)_Q$ except for the case of type $A_1$, that is the case of the triplet $W$-algebra,
and for the case where  
$\mathfrak{g}=\mathfrak{sl}_3$ and $p=2$ (\cite{AMW}).

%Recently, in \cite{AMW}, when $p=2$ for $\mathfrak{g}=\mathfrak{sl}_3$, they show the simplicity and the $\mathcal{W}_{-1}(\mathfrak{sl}_3)$-module structure of $W(p)_Q$.

%, the author proved the simplicity and $\mathcal{W}_k(\mathfrak{g})$-module structure of $H^0(\xi_\lambda)$ in \cite{S1}.}

%\subsection{Content and main results}
The main purpose of this paper is to prove the geometric construction of $W(p)_Q$ 
conjectured in \cite{FT} in full generality.
%and $W(p)_Q$-modules \eqref{newnotation2}\commentT{式番号だけで説明しない} that was conjectured in \cite{FT} in full generality.\commentT{Moduleに関しても
%full generality で証明しているのか？}
 
% This construction reveals that $W(p)_Q$ and \eqref{newnotation2} are a certain module extensions of the 
% \textcolor{red}{principal}
% affine $W$-algebra $\mathcal{W}^k(\mathfrak{g})$ \textcolor{red}{([FF])}
% at level $k=p-h$ which
% \commentT{これだとどこにwhichがかかるか明らかでない} admit natural actions of $G$. 
% 
%Let us state our main results more precisely.
Let $\mathfrak{g}=\mathfrak{n}_-\oplus\mathfrak{h}\oplus\mathfrak{n}_+$ be a simply-laced simple Lie algebra of rank $l$, $\mathfrak{b}=\mathfrak{n}_-\oplus\mathfrak{h}$ the Borel subalgebra of $\mathfrak{g}$, $G$ and $B$ the corresponding algebraic groups, respectively. Let $Q$ and $P_+$ be the root lattice and the set of dominant integral weights of $\mathfrak{g}$, respectively. 
For a fixed integer $p\in\mathbb{Z}_{\geq2}$, let $V_{\sqrt{p}Q}$ be the lattice vertex operator algebra associated with $\sqrt{p}Q$ (see \eqref{V_A}), 
and for $\lambda\in\Lambda$ (see \eqref{(15)} for the definition), let $V_{\sqrt{p}Q+\lambda}$ be the irreducible $V_{\sqrt{p}Q}$-module. %(see \cite{D}, \eqref{(15.5)}). 
Then for each $\lambda\in\Lambda$, the action of {\it screening operators} $\{f_i\}_{i=1}^l$ 
%(see \eqref{(16)}) 
extends to an integrable action of $\mathfrak{b}$ on $V_{\sqrt{p}Q+\lambda}$.
Thus,
we obtain the homogeneous vector bundle $$\xi_{\lambda}=G\times_B V_{\sqrt{p}Q+\lambda}$$ 
over the flag variety $G/B$.
The space of global sections $H^0(\xi_0)$ inherits a vertex operator algebra structure from $V_{\sqrt{p}Q}$, and $H^0(\xi_{\lambda})$ is an $H^0(\xi_0)$-module for every $\lambda\in\Lambda$. 
Also, there is an obvious action of $G$ on $H^0(\xi_\lambda)$ that gives $H^0(\xi_0)$-module automorphisms.

On the other hand, we have the intertwining operators $\{F_{i,\lambda}\}_{i=1}^l$ %(see \eqref{narrowscreening2}) 
from $V_{\sqrt{p}Q+\lambda}$ to $V_{\sqrt{p}Q+\sigma_i\ast\lambda}$ (see \eqref{(16.2)}), which are called the {\it narrow screening operators}. 
The  logarithmic $W$-algebra $W(p)_Q$ is by definition  \cite{Cr1,CrM}
the intersection of the kernel of $F_{i,0}$ in $ V_{\sqrt{p}Q}$.
Similarly,
for each $\lambda\in\Lambda$,
the $W(p)_Q$-module $W(p,\lambda)_Q$  is defined as 
the  intersection of the kernel of narrow screening operators
in  $V_{\sqrt{p}Q+\lambda}$
(see Definition \ref{logWalgs} for the precise definition).

We also define the vertex subalgebra $\mathcal{W}_0$ of $W(p)_Q$ and the $\mathcal{W}_0$-submodule $\mathcal{W}_{\sqrt{p}\alpha+\lambda}$ of $W(p,\lambda)_Q$ for each $\alpha\in Q$ as in Definition \ref{dfnmathcalW}. 
The vertex algebra
$\mathcal{W}_0$ contains the principal 
$W$-algebra $\mathcal{W}^{p-h^{\vee}}(\mathfrak{g})$ (\cite{FF})
 at level $p-h^\vee$ as its vertex subalgebra, where $h^{\vee}$ is the dual Coxeter number of $\mathfrak{g}$.
%(see \eqref{newnotation.}, \cite{Cr1,CrM}) is called the logarithmic $W$-algebra. , we also define the $W(p)_Q$-submodule $W(p,\lambda)_Q$ (see \eqref{newnotation.}) of $V_{\sqrt{p}Q+\lambda}$. 

The fiber of $\xi_{\lambda}$ at the identity $\operatorname{id}B\in G/B$
is
naturally identified with $V_{\sqrt{p}Q+\lambda}$.
In Section \ref{lastsection} below, we show that the map
\begin{align}
\Phi_\lambda\colon H^0(\xi_{\lambda})\rightarrow V_{\sqrt{p}Q+\lambda},
\quad s\mapsto s(\operatorname{id}B)
\label{eq:fiber}
\end{align}
is an embedding and its
 image is contained in $W(p,\lambda)_Q\subset V_{\sqrt{p}Q+\lambda}$.

\begin{mthm} $ $

\begin{enumerate}
\item\label{mthm1}
 The map \eqref{eq:fiber}
gives a vertex operator algebra isomorphism
\begin{align}
H^0(\xi_0)\simeq W(p)_Q.\nonumber
\end{align}
In particular, the group $G$ acts on $W(p)_Q$ as an automorphism group.
%, and thus, the $G$-orbifold $W(p)_Q^G$ is a vertex operator subalgebra of $W(p)_Q$.
\item\label{mthm2}
 More generally, for $\lambda\in \Lambda$,
the embedding
$H^0(\xi_\lambda)\hookrightarrow W(p,\lambda)_Q$
is an isomorphism
if and only if  $\lambda$ satisfies the condition \eqref{novelcond3}.
\item\label{mthm3}
 For $\lambda\in \Lambda$,
 we have a $G\times\mathcal{W}_0$-module isomorphism
\begin{align*}%\label{alignisom}
H^0(\xi_{\lambda})\simeq \bigoplus_{\alpha\in P_+\cap Q}\mathcal{R}_{\alpha+\hat\lambda}\otimes\mathcal{W}_{-\sqrt{p}\alpha+\lambda},
%,~s\mapsto s(\operatorname{id}B),\nonumber
\end{align*}
where $R_{\mu}$ is the irreducible $G$-module with the highest weight $\mu$.
%where 
%Moreover, the right hand side above is a $W(p)_Q$-submodule of $W(p,\lambda)_Q$, and they coincide if and only if $\lambda$ satisfies the condition \eqref{novelcond}.
\end{enumerate}
\end{mthm}
%Moreover,
In particular, $\mathcal{W}_0$ is the $G$-orbifold of $W(p)_Q$. 
\begin{rmk}
In the subsequent paper \cite{S1} we will prove 
the simplicity and $\mathcal{W}_{p-h^\vee}(\mathfrak{g})$-module structure of $H^0(\xi_\lambda)$
for any simply-laced simple $\mathfrak{g}$ and $\lambda\in\Lambda$ such that $(\sqrt{p}\bar\lambda+\rho,\theta)\leq p$.
\end{rmk}
We also confirm the following two assertions stated in \cite{FT} without details of
proofs.

\begin{thm}\label{thm1}
For $p\geq h^\vee-1$, 
we have
\begin{align*}
\mathcal{W}_0\simeq\mathcal{W}^{p-h^{\vee}}(\mathfrak{g})
\end{align*}
as vertex algebras.
%we have a vertex operator algebra isomorphism
%$W(p)_Q^G\simeq\mathcal{W}^k(\mathfrak{g})$, where $\mathcal{W}^k(\mathfrak{g})$ is the {\it affine $W$-algebra} \cite{FF} of level $k=p-h$.
%Moreover, for $p\geq h$, $\mathcal{W}^k(\mathfrak{g})$ is isomorphic to the unique simple quotient $\mathcal{W}_k(\mathfrak{g})$ ({\cite[Theorem 10]{CrM}}).
\end{thm}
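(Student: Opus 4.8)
The plan is to compare $\mathcal{W}_0$ with $\mathcal{W}^{p-h^\vee}(\mathfrak{g})$ as graded vertex subalgebras of the rank-$l$ Heisenberg vertex algebra $\pi_0\subset V_{\sqrt{p}Q}$. By construction $\mathcal{W}_0$ is the intersection of the kernels of the narrow screening operators restricted to $\pi_0$, and by the Feigin--Frenkel free-field realization \cite{FF} the principal $W$-algebra $\mathcal{W}^{p-h^\vee}(\mathfrak{g})$ is always contained in this intersection; this is the inclusion $\mathcal{W}^{p-h^\vee}(\mathfrak{g})\subseteq\mathcal{W}_0$ already recorded above. Since both sides are non-negatively graded by conformal weight with finite-dimensional homogeneous components and the inclusion is a homomorphism of vertex algebras, it suffices to prove the reverse inequality of graded characters; equality of characters then upgrades the inclusion to an isomorphism.

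First I would fix the character of the target: for non-critical level the principal $W$-algebra is freely generated by $l$ fields of conformal weights $m_1+1,\dots,m_l+1$, where $m_1\le\cdots\le m_l=h^\vee-1$ are the exponents of $\mathfrak{g}$, so that
\begin{align*}
\operatorname{ch}\mathcal{W}^{p-h^\vee}(\mathfrak{g})=\prod_{i=1}^{l}\prod_{n>m_i}(1-q^{n})^{-1}.
\end{align*}
The remaining task is to bound $\operatorname{ch}\mathcal{W}_0$ from above by the same expression. For this I would realize $\pi_0$ inside a Felder-type (BGG) complex built from the narrow screening operators: the screenings satisfy the Serre relations of $\mathfrak{n}_+$, so iterating them along reduced words of the Weyl group produces a complex of Fock modules whose degree-zero cohomology is exactly $\mathcal{W}_0$. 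Computing the Euler characteristic of this complex reproduces the free character above, so the desired upper bound is equivalent to the vanishing of the higher cohomology, i.e.\ to the maximality of the rank of the screening maps in each conformal weight.

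The decisive step --- and the place where the hypothesis $p\ge h^\vee-1$ enters --- is precisely this exactness. The rank of a screening map drops exactly when one of the intermediate Fock modules $\pi_\beta$ acquires an unexpected singular vector, and by the Kac-type reducibility criterion the first such degeneration occurs at a conformal weight governed by $p$ against the height $(\rho,\theta)=h^\vee-1$ of the highest root $\theta$. When $p\ge h^\vee-1$ no such degeneration can occur in the weight range that supports the generators and their normally ordered products, so the complex is exact away from degree $0$ and the upper bound holds with equality. Assembling the two inequalities gives $\operatorname{ch}\mathcal{W}_0=\operatorname{ch}\mathcal{W}^{p-h^\vee}(\mathfrak{g})$, and hence the inclusion is an isomorphism of vertex algebras.

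I expect the main obstacle to be this last exactness statement at the non-generic integer level: the Feigin--Frenkel identification is automatic for generic $p$, but at integer $p$ one must genuinely rule out extra screening-invariants, which requires a careful bookkeeping of singular vectors in the Fock modules $\pi_\beta$ (equivalently, control of the affine Weyl dot-orbit of $0$) and an argument that the threshold for their appearance is exactly $p=h^\vee-1$. A secondary point is to confirm that matching graded characters suffices for a vertex-algebra isomorphism; this follows because the inclusion is already a vertex-algebra map, so no independent verification of the operator products is needed once the dimensions agree.
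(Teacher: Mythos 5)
Your overall skeleton agrees with the paper's: establish the inclusion $\mathcal{W}^{p-h^{\vee}}(\mathfrak{g})\subseteq\mathcal{W}_0$ (the paper cites \cite[Remark 4.1]{FKRW} for this), then show equality of graded characters, and conclude that the inclusion is an isomorphism. The divergence, and the gap, is in how the character of $\mathcal{W}_0$ is computed. You propose to resolve $\mathcal{W}_0$ by a Felder/BGG-type complex of Fock modules built from the screening operators and to prove its exactness away from degree zero when $p\geq h^{\vee}-1$. This exactness is asserted, not proved, and it is essentially as hard as the main content of the paper: at integer $p$ the Fock modules are highly non-generic, and even the well-definedness of the complex (independence of the composed screenings of the choice of reduced word) is left as an open problem in the paper (Conjecture \ref{conjnarrow}). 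Your heuristic that the threshold is governed by $(\rho,\theta)=h^{\vee}-1$ correctly identifies where the hypothesis enters, but "no degeneration can occur in the weight range that supports the generators" is not an argument: one must rule out extra screening-invariants in \emph{all} conformal weights, not just those of the generators, since a single unexpected invariant in any weight would break the character identity. There is also a misstatement at the outset: $\mathcal{W}_0$ is defined (Definition \ref{dfnmathcalW}) as $\bigcap_i\ker f_i|_{\mathcal{F}_0}$ for the \emph{long} screenings $f_i=|\sqrt{p}\alpha_i\rangle_{(0)}$, not the narrow ones $F_{i,0}$; the two are related only through Feigin--Frenkel duality, which is not free at non-generic level.

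The paper sidesteps the Felder complex entirely. It obtains the character of $\mathcal{W}_0$ from two already-proved geometric inputs: Theorem \ref{thm2} (vanishing of $H^{n}(\xi_0)$ for $n>0$ under $(\rho,\theta)\leq p$, proved via the fibrations $G/B\to G/P_i$ and Leray spectral sequences, plus the Atiyah--Bott fixed point formula giving the alternating-sum character \eqref{(61)}), and Main Theorem \eqref{mthm3} (the $G\times\mathcal{W}_0$-module decomposition $H^0(\xi_0)\simeq\bigoplus_{\alpha}\mathcal{R}_{\alpha}\otimes\mathcal{W}_{-\sqrt{p}\alpha}$). Extracting the multiplicity space of the trivial $G$-module identifies $\operatorname{ch}\mathcal{W}_0$ with $\operatorname{tr}_{T^p_{0,0}}(q^{L_0-c/24})$, i.e.\ with the character of $\mathcal{W}^{p-h^{\vee}}(\mathfrak{g})$. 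In effect, the cohomology vanishing on the flag variety plays the role your exactness claim would play, and it is precisely the part of your proposal that is missing. To repair your argument you would either need to supply the exactness proof (which no one has at this level of generality) or, as the paper does, replace it with the geometric vanishing theorem.
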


\begin{thm}\label{thm2}
If $\lambda\in\Lambda$ satisfies $(\sqrt{p}\bar\lambda+\rho,\theta)\leq p$,
then we have $H^n(\xi_{\lambda})=0$ for $n>0$ and
\begin{align}
\operatorname{tr}_{H^0(\xi_{\lambda})}(q^{L_0-\frac{c}{24}}z_1^{h_{1,\lambda}}\cdots z_l^{h_{l,\lambda}})=\sum_{\alpha\in P_+\cap Q}\chi_{\alpha+\hat{\lambda}}^{\mathfrak{g}}(z)\operatorname{tr}_{T^p_{\sqrt{p}\bar\lambda,\alpha+\hat\lambda}}(q^{L_0-\frac{c}{24}}),\nonumber
\end{align}
where $\chi_{\alpha+\hat{\lambda}}^{\mathfrak{g}}(z)$ is the Weyl character formula of $\mathcal{R}_{\alpha+\hat\lambda}$, and $T^p_{\sqrt{p}\bar\lambda,\alpha+\hat\lambda}$ is the $\mathcal{W}_{p-h^\vee}(\mathfrak{g})$-module defined in \cite[(2.10)]{ArF}.
\end{thm}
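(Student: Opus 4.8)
The plan is to compute the full sheaf cohomology $H^\bullet(\xi_\lambda)$ of the homogeneous bundle over $G/B$ by Borel--Weil--Bott, which simultaneously yields the vanishing in positive degrees and the graded character in degree $0$. First I would exploit the two commuting gradings on the fiber: the conformal grading by $L_0$ (tracked by $q$) and the integrable action of the torus $H\subset B$ (tracked by $z_1,\dots,z_l$). Since the $G$-action preserves $L_0$ and the $H$-weights lie in a lattice, the fiber decomposes as a direct sum $V_{\sqrt{p}Q+\lambda}=\bigoplus_{d} V_{\sqrt{p}Q+\lambda}[d]$ of finite-dimensional $B$-modules, and because sheaf cohomology on the Noetherian variety $G/B$ commutes with direct sums one has $H^n(\xi_\lambda)=\bigoplus_d H^n(G\times_B V_{\sqrt{p}Q+\lambda}[d])$. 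Thus it suffices to analyze each finite-dimensional homogeneous bundle separately.

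For the vanishing I would filter each finite-dimensional piece by one-dimensional $B$-submodules (possible since $B$ is solvable), obtaining line bundles $\mathcal{L}_\mu=G\times_B\mathbb{C}_\mu$ indexed by the $H$-weights $\mu$. By Borel--Weil--Bott each $\mathcal{L}_\mu$ contributes to $H^n$ only in the single degree $n=\ell(w)$, with $w\in W$ the unique element rendering $w(\mu+\rho)$ dominant, and contributes nothing when $\mu+\rho$ is singular. The hypothesis $(\sqrt{p}\bar\lambda+\rho,\theta)\le p$ is exactly the condition that keeps every $H$-weight $\mu$ of the fiber inside the region where $\mu+\rho$ is either dominant or singular, so that the only nonzero contributions occur at $n=0$; note that for $\lambda=0$ this reads $(\rho,\theta)=h^\vee-1\le p$, matching the hypothesis of Theorem \ref{thm1}. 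It then remains to pass from the associated graded back to $M$ through the long exact sequences of the filtration, and to verify that the dominant quotients behave like dual Weyl modules so that the extensions create no cohomology in positive degree; this gives $H^n(\xi_\lambda)=0$ for $n>0$.

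With vanishing established, the graded character of $H^0(\xi_\lambda)$ coincides with the equivariant Euler characteristic, which the Weyl character formula computes as $\chi(G\times_B M)=\big(\sum_{w\in W}(-1)^{\ell(w)}w(e^{\rho}\operatorname{ch}M)\big)/\big(e^{\rho}\prod_{\alpha>0}(1-e^{-\alpha})\big)$ for $M=V_{\sqrt{p}Q+\lambda}$ taken with its full $(L_0,H)$-graded character. Substituting the explicit lattice-vertex-algebra character of the fiber and dividing by the Weyl denominator collapses the lattice part of the sum onto the dominant weights $\alpha+\hat\lambda$ with $\alpha\in P_+\cap Q$: the $z$-dependence reassembles into the Weyl characters $\chi^{\mathfrak{g}}_{\alpha+\hat\lambda}(z)$ of $\mathcal{R}_{\alpha+\hat\lambda}$, and the remaining $q$-series are identified with the principal $W$-algebra module characters $\operatorname{tr}_{T^p_{\sqrt{p}\bar\lambda,\alpha+\hat\lambda}}(q^{L_0-\frac{c}{24}})$ of \cite[(2.10)]{ArF}. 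Equivalently, one may read the decomposition straight off part \eqref{mthm3} of the Main Theorem and then match the character of $\mathcal{W}_{-\sqrt{p}\alpha+\lambda}$ with that of $T^p_{\sqrt{p}\bar\lambda,\alpha+\hat\lambda}$ using Theorem \ref{thm1}. The main obstacle is the vanishing step: since the fiber is only a $B$-module and not a $G$-module, one must prevent the non-dominant weights appearing in the filtration from combining through the connecting homomorphisms into positive-degree cohomology, and it is precisely the alcove-type bound $(\sqrt{p}\bar\lambda+\rho,\theta)\le p$ that secures this control.
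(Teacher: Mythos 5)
Your character-formula step is essentially the paper's: once vanishing is known, the graded character of $H^0(\xi_\lambda)$ equals the equivariant Euler characteristic, computed via the Atiyah--Bott fixed point formula over the Schubert covering, and the Weyl-denominator manipulation collapses the lattice sum onto $\alpha\in P_+\cap Q$ exactly as in \eqref{(61)}. The problem is the vanishing step, where your argument has a genuine gap. The $H$-weights of the fiber are $\alpha+\hat\lambda$ for \emph{all} $\alpha\in Q$ (see \eqref{(59)}: $h_{i,\lambda}$ acts on $\mathcal{F}_{-\sqrt{p}\alpha+\lambda}$ by $(\alpha_i,\alpha+\hat\lambda)$), so in every conformal-weight piece of sufficiently large degree the weights sweep out arbitrarily antidominant elements of $Q+\hat\lambda$. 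Your claim that the hypothesis $(\sqrt{p}\bar\lambda+\rho,\theta)\leq p$ "keeps every $H$-weight $\mu$ of the fiber inside the region where $\mu+\rho$ is dominant or singular" is therefore false: that hypothesis constrains only $\bar\lambda$ (it places $\sqrt{p}\bar\lambda$ in the closed fundamental alcove), not the weights of the fiber, and a line-bundle filtration of a graded piece produces, by Borel--Weil--Bott, masses of contributions in every cohomological degree up to $l(w_0)$. You acknowledge at the end that these must cancel through the connecting maps, but you supply no mechanism for the cancellation, and that mechanism is the entire content of the proof.

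The paper's mechanism is not a weight filtration at all. It uses the exact sequence $0\to W(p,\lambda)^i_Q(\mu)\to V_{\sqrt{p}Q+\lambda}(\mu)\to W(p,\sigma_i\ast\lambda)^i_Q(\mu+\epsilon_\lambda(\sigma_i))\to 0$ coming from the narrow screening operator $F_{i,\lambda}$ (Theorem \ref{am1}\eqref{am1:exseq}, which rests on the Adamovi\'c--Milas and Nagatomo--Tsuchiya structure theory), together with the fact that the kernel $W(p,\lambda)^i_Q$ is a $P_i$-module (Lemma \ref{lemm:serre}), so that $H^n(P_i\times_B W(p,\lambda)^i_Q(\mu))\simeq H^n(P_i\times_B\mathbb{C}_\mu)\otimes W(p,\lambda)^i_Q$. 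Feeding this into the Leray spectral sequence for $G/B\to G/P_i$ yields the degree-shifting isomorphism $H^n(\xi_{\sigma\ast\lambda}(\epsilon_\lambda(\sigma)))\simeq H^{n+1}(\xi_{\sigma_i\sigma\ast\lambda}(\epsilon_\lambda(\sigma_i\sigma)))$ whenever $(\epsilon_\lambda(\sigma),\alpha_i)=0$ (Theorem \ref{vanish}); the role of the hypothesis $(\sqrt{p}\bar\lambda+\rho,\theta)\leq p$ is precisely to guarantee, via Lemma \ref{condequiv}, that this orthogonality holds at every step of a reduced expression of $w_0$. Iterating $l(w_0)=\dim G/B$ times pushes $H^n(\xi_\lambda)$ into $H^{n+l(w_0)}$ of another bundle, which vanishes for $n\geq 1$ for dimension reasons. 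Without some substitute for the screening-operator exact sequences and the parabolic module structure of their kernels, your outline cannot be completed.
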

Note that there is an explicit  formula
of
$\operatorname{tr}_{T^p_{\sqrt{p}\bar\lambda,\alpha+\hat\lambda}}(q^{L_0-\frac{c}{24}})$,
see  \eqref{(61)}.

\begin{rmk}
In \cite{FT}, it is %claimed 
stated
that Theorem \ref{thm1}, \ref{thm2} hold for any $\lambda\in\Lambda$ and $p\in\mathbb{Z}_{\geq2}$. However, because of a certain technical difficulty (see Remark \ref{tech} in Section \ref{vanishingcohoms}), we prove the %claim 
assertions only under the assumption that
%for only the cases where 
$(\sqrt{p}\bar\lambda+\rho,\theta)\leq p$, that is, $\sqrt{p}\bar\lambda$ is in the closure of the fundamental alcove \cite{J}.
\end{rmk}

\begin{rmk}
By Lemma \ref{condequiv}, if $\lambda\in\Lambda$ satisfies $(\sqrt{p}\bar\lambda+\rho,\theta)\leq p$, then $\lambda$ satisfies \eqref{novelcond3}. However, the converse is not true in general.
\end{rmk}

The paper is organized as follows. In Section \ref{sectionpre1} and \ref{sectionpre2}, we shall refer to notation and results given in \cite{AM1,FT,NT}. Section \ref{sectionproofs} is devoted to the proofs of the main results. In Section \ref{parabolic}, for any $\lambda\in\Lambda$ and $p\in\mathbb{Z}_{\geq2}$, we show that $W(p,\lambda)_{Q}^i$ (see Definition \ref{logWalgs}) is a $P_i$-submodule of $V_{\sqrt{p}Q+\lambda}$ in the sense of Definition \ref{dfnGsubmodule}. Applying this, in Section \ref{vanishingcohoms} and \ref{chformulas}, we prove Theorem \ref{thm2} along the line of the proof in \cite{FT}.

A key ingredient of our proof of Main Theorem is the construction of the largest $G$-submodule of $V_{\sqrt{p}Q+\lambda}$ in the sense of Definition \ref{dfnGsubmodule} in two different ways. 
If $V$ is a $G$-submodule of $V_{\sqrt{p}Q+\lambda}$, then we have
\begin{align}
V=\Phi_\lambda(H^0(G\times_BV))\subseteq\Phi_\lambda(H^0(\xi_\lambda)).
\end{align}
Therefore, all $G$-submodules of $V_{\sqrt{p}Q+\lambda}$ are $G$-submodules of $\Phi_\lambda(H^0(\xi_\lambda))$.
On the other hand, in Section \ref{vanishingcohoms}, we give a $P_i$-module isomorphism
\begin{equation}
H^0(P_i/B,P_i\times_BV_{\sqrt{p}Q+\lambda})\simeq W(p,\lambda)_Q^i,~s\mapsto s(\operatorname{id}B).\nonumber
\end{equation}
For the same reason as above, all $P_i$-submodules of $V_{\sqrt{p}Q+\lambda}$ are $P_i$-submodules of $W(p,\lambda)_Q^i$. 
In particular, $W(p,\lambda)_Q$ contains $\Phi_\lambda(H^0(\xi_\lambda))$. By Theorem \ref{parab} in Section \ref{almostproof}, $W(p,\lambda)_Q$ is closed under the $G$-action if and only if $\lambda$ satisfies the condition \eqref{novelcond3}. 
Finally, after we prove Main Theorem \eqref{mthm3}, we prove Theorem \ref{thm1} by combining Main Theorem \eqref{mthm3} with the character formulas in Section \ref{chformulas}.\\

{\it Acknowledgements} \  \ This paper is the master thesis of the author, and he wishes to express his gratitude to his supervisor Tomoyuki Arakawa for suggesting the problems and lots of advice to improve this paper. He also gives an address to thanks his previous supervisor Hiraku Nakajima for two years' detailed guidance. He is deeply grateful to Naoki Genra and Ryo Fujita for their many pieces of advice and encouragement. He thanks Thomas Creutzig, Boris Feigin, and Shigenori Nakatsuka for useful comments and discussions. In particular, \cite{FT} is the original idea of this paper, and discussions with Thomas Creutzig and Boris Feigin were very suggestive and interesting for future works of the paper related to their recent works. 
Finally, he appreciates the referee for the thoughtful and constructive feedback.
This work was supported by JSPS KAKENHI Grant Number 19J21384.

\section{Preliminaries I}\label{sectionpre1}
\subsection{Notation}\label{subsect:notation}
We fix an integer $p$ such that $p\geq 2$ throughout the present paper.
Let $\mathfrak{g}$ be a simply-laced simple Lie algebra of rank $l$, and $\mathfrak{g}=\mathfrak{n}_-\oplus\mathfrak{h}\oplus\mathfrak{n}_+$ the triangular decomposition, $\mathfrak{h}$ the Cartan subalgebra, $\mathfrak{b}$ the Borel subalgebra $\mathfrak{n}_-\oplus\mathfrak{h}$, $G$, $H$, and $B$ the semisimple, simply-connected, complex algebraic groups corresponding to $\mathfrak{g}$, $\mathfrak{h}$, $\mathfrak{b}$, respectively. 
We adopt the standard numbering for the simple roots $\alpha_1,\ldots,\alpha_l$ of $\mathfrak{g}$ as in \cite{B} and $\omega_1,\dots,\omega_l$ denote the corresponding fundamental weights.
 Let $Q$ be the root lattice, $P$ the weight lattice, $P_+$ the set of dominant integral weights of $\mathfrak{g}$, respectively.
  Denote by $(\cdot , \cdot)$ the invariant form of $\mathfrak{g}$ normalized as $(\alpha_i,\alpha_i)=2$ for any $1\leq i\leq l$, $W$ the Weyl group of $\mathfrak{g}$ generated by the simple reflections $\{\sigma_i\}_{i=1}^l$, $(c_{ij})$ the Cartan matrix of $\mathfrak{g}$ and $(c^{ij})$ the inverse matrix to $(c_{ij})$, $\rho$ the half sum of positive roots, $\theta$ the highest root, $h$ the Coxeter number of $\mathfrak{g}$. 
 We choose the set $\hat\Lambda$ of representative of generators of the abelian group $P/Q$ in $P_+$ as \cite{Kac2}, that is, $\hat\lambda\in\hat\Lambda$ satisfies $(\hat\lambda,\theta)=1$). Let $h_1,\cdots,h_l$ be the basis of $\mathfrak{h}$ corresponding to the simple roots by $(\cdot,\cdot)$. 
For $\sigma\in W$, $l(\sigma)$ denotes the length of $\sigma$.
For $\alpha\in P_+$, we write $\mathcal{R}_\alpha$ for the irreducible $\mathfrak{g}$-module with the highest weight $\alpha$.

Denote by $\Pi$ the set of nodes of the Dynkin diagram of $\mathfrak{g}$. We identify $\Pi$ with the set of integers $\{1,\dots,l\}$. For a subset $J$ of $\Pi$, let $P_J$ denote the parabolic subgroup of $G$ corresponding to $J$.
 When $J$ is a one-point subset $\{j\}$, we use the letter $P_j$ instead of $P_{\{j\}}$.
 For a $B$-module $V$, $P_J\times V$ has a $B$-module structure by
 \begin{align}
 b(g,v):=(gb^{-1}, bv)
 \end{align}
 for $g\in P_J$, $b\in B$, and $v\in V$.
 Then we have the homogeneous vector bundle $P_J\times_BV$ over $P_J/B$ defined by
 \begin{align}
 (P_J\times V)/B\rightarrow P_J/B,~[gB,v]\mapsto gB.
 \end{align}
Here, the $P_J$-action on $P_J/B$ is given by right multiplication, and that on $(P_J\times V)/B$ is given by $g[g_0,v]:=[gg_0,v]$ for $g,g_0\in P_J$ and $v\in V$.

\hypertarget{$V(\mu)$}{For $\mu\in\mathfrak{h}^*$, let $\mathbb{C}_{\mu}$ be the one-dimensional $B$-module such that for each $1\leq i\leq l$, $h_{i}$ acts on $\mathbb{C}_{\mu}$ by $(\alpha_i , \mu)\operatorname{id}$ and $\mathfrak{n}_{-}$ acts on $\mathbb{C}_{\mu}$ trivially. For a $B$-module $V$ and $\mu\in\mathfrak{h}^\ast$, we write $V(\mu)$ for the $B$-module $V\otimes\mathbb{C}_{\mu}$.}

\hypertarget{$O(\mu)$}{We use the same notation for a vector bundle and its sheaf of sections.
For a sheaf $\mathcal{F}$ over a topological space $X$ and an open subset $U$ of $X$, $\mathcal{F}(U)$ denotes the space of sections of $\mathcal{F}$ on $U$.
For an algebraic variety $X$, let $\mathcal{O}_X$ be the structure sheaf of $X$.
For $\mu\in\mathfrak{h}^\ast$, we write $\mathcal{O}(\mu)$ for the line bundle (or invertible sheaf) $G\times_B\mathbb{C}_\mu$ over the flag variety $G/B$. In particular, $\mathcal{O}(0)$ is $\mathcal{O}_{G/B}$. For an $\mathcal{O}_{G/B}$-module $\mathcal{F}$, we use the letter $\mathcal{F}(\mu)$ for $\mathcal{F}\otimes_{\mathcal{O}_{G/B}}\mathcal{O}(\mu)$. In particular, for a $B$-module $V$ and the homogeneous vector bundle $G\times_BV$ over $G/B$, we have $(G\times_BV)(\mu)=G\times_BV(\mu)$.}

For a vertex operator algebra $V$ and $v\in V$, the field of $v\in V$ is denoted by
\begin{equation}\label{00}
v(z)=\sum_{n\in\mathbb{Z}}v_{(n)}z^{-n-1}.
\end{equation}
We call $v_{(0)}$ the zero mode of $v(z)$.
For a conformal vector $\omega\in V$, we write $L_n$ for the Virasoro operator $\omega_{(n+1)}$.
For a vertex operator algebra $V$ and a Lie algebra $\mathfrak{a}$, denote by $\mathcal{U}(V)$ and $\mathcal{U}(\mathfrak{a})$ the universal enveloping algebras of $V$ (see \cite{FB,Kac}) and $\mathfrak{a}$, respectively. For a vertex operator algebra $V$, a $V$-module $M$ and $m\in M$, we write $\mathcal{U}(V)m$ for the $V$-submodule of $M$ generated by $m$. \hypertarget{confdimension}{For $\Delta\in\mathbb{C}$, denote by $M_{\Delta}$ the space of homogeneous vectors with the conformal weight $\Delta$.}
Finally, when a $V$-module $M$ has a $B$-module structure, for $\mu\in P$, we identify $M$ with $M(\mu)=M\otimes\mathbb{C}_{\mu}$ as $V$-modules by the linear isomorphism
$M\xrightarrow{\sim}M(\mu),~v\mapsto v\otimes1$.
%\commentT{これで良いが別立てにするほどのことではない。}

\subsection{$P_J$-submodule of a $B$-module}\label{subsect:P_Jsubmod}
Let $J$ be a subset of $\Pi$ and $P_J$ be the corresponding parabolic subgroup of $G$.
Let $\pi\colon E\rightarrow X$ be a $P_J$-equivariant vector bundle.
Then the space of global sections (in other words, the zeroth sheaf cohomology) $H^0(\pi)$ of $\pi\colon E\rightarrow X$ has the $P_J$-module structure as follows:
For $g\in P_J$ and $s\in H^0(\pi)$, let
\begin{align}\label{gactiononglobalsection}
gs=g\circ s\circ g^{-1},
\end{align}
where the action $g^{-1}$ in \eqref{gactiononglobalsection} is that on $X$, and the action $g$ in \eqref{gactiononglobalsection} is that on $E$. Since $\pi\colon E\rightarrow X$ is $P_J$-equivariant, \eqref{gactiononglobalsection} is well-defined.
Note that for a $B$-module $V$, the homogeneous vector bundle $P_J\times_BV$ is $P_J$-equivariant.

Let $V$ be a $B$-module. 
we define the $B$-action on $\mathcal{O}_{P_J}(P_J)\otimes_{\mathbb{C}}V$ by 
\begin{align}\label{(31.15)}
(bs)(g)=b(s(gb))
\end{align}
for $s\in\mathcal{O}_{P_J}(P_J)\otimes_{\mathbb{C}}V$, $g\in P_J$, $b\in B$.
Denote by $(\mathcal{O}_{P_J}(P_J)\otimes_{\mathbb{C}}V)^B$ the orbifold of the $B$-action \eqref{(31.15)} on $\mathcal{O}_{P_J}(P_J)\otimes_{\mathbb{C}}V$.
Then $(\mathcal{O}_{P_J}(P_J)\otimes_{\mathbb{C}}V)^B$ has the $P_J$-module structure as follows: 
For $g,g_0\in P_J$ and $f\in(\mathcal{O}_{P_J}(P_J)\otimes_{\mathbb{C}}V)^B$, let
\begin{align}\label{(31.2)}
(gf)(g_0)=f(g^{-1}g_0).
\end{align}

\begin{lemm}\label{lemm:P_Jmodisom}
Let $V$ be a $B$-module and $J\subseteq \Pi$.
Then we have the $P_J$-module isomorphism
\begin{align}\label{(31.1)}
(\mathcal{O}_{P_J}(P_J)\otimes_{\mathbb{C}}V)^B\simeq H^0(P_J\times_BV),~f\mapsto s_f,
\end{align} 
where $s_f$ is defined by $s_f(gB)=[g,f(g)]$.
\end{lemm}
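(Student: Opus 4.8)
The plan is to write down the evident inverse of $f\mapsto s_f$ and check that the two maps are mutually inverse $P_J$-module homomorphisms. First I unwind the two sides. By \eqref{(31.15)}, an element of $(\mathcal{O}_{P_J}(P_J)\otimes_{\mathbb{C}}V)^B$ is precisely a regular map $f\colon P_J\to V$ satisfying the equivariance $f(gb)=b^{-1}f(g)$ for all $g\in P_J$, $b\in B$; and a point of the total space of $P_J\times_BV$ is a class $[g,v]$ subject to $[gb,b^{-1}v]=[g,v]$, coming from the $B$-action $b(g,v)=(gb^{-1},bv)$. With these identifications the first thing to verify is that $s_f(gB):=[g,f(g)]$ is independent of the representative $g$ of the coset $gB$: replacing $g$ by $gb$ gives $[gb,f(gb)]=[gb,b^{-1}f(g)]=[g,f(g)]$, so $s_f$ is well defined, and $\pi\circ s_f=\operatorname{id}$, so $s_f$ is a section.

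Next, construct the inverse. Given $s\in H^0(P_J\times_BV)$ and $g\in P_J$, the value $s(gB)$ lies in the fiber $\{[g,v]\mid v\in V\}$, and since $[g,v]=[g,w]$ forces $v=w$, there is a unique $v=:f_s(g)$ with $s(gB)=[g,f_s(g)]$. The same computation as above, applied to the two representatives $g$ and $gb$ of the coset $gB$, yields $f_s(gb)=b^{-1}f_s(g)$, so $f_s$ lands in $(\mathcal{O}_{P_J}(P_J)\otimes_{\mathbb{C}}V)^B$ once its regularity is known. By construction $f\mapsto s_f$ and $s\mapsto f_s$ are mutually inverse, so it remains to establish $P_J$-equivariance and the regularity statements.

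For $P_J$-equivariance I would compare the action \eqref{(31.2)} on the left with \eqref{gactiononglobalsection} on the right by a direct calculation. Using that $g$ acts on the total space by $g[h,v]=[gh,v]$ and on the base by right-translation, one gets $(g\,s_f)(g_0B)=g\cdot s_f(g^{-1}g_0B)=g\cdot[g^{-1}g_0,f(g^{-1}g_0)]=[g_0,f(g^{-1}g_0)]$, while $s_{gf}(g_0B)=[g_0,(gf)(g_0)]=[g_0,f(g^{-1}g_0)]$ by \eqref{(31.2)}; hence $g\,s_f=s_{gf}$ and the bijection is $P_J$-linear.

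The only genuinely nontrivial point, and the step I expect to be the main obstacle, is the regularity statement: that $s_f$ is a regular section for regular $f$, and conversely that $f_s$ is regular for a regular section $s$. This is where the structure of the bundle enters, through the Zariski-local triviality of the principal $B$-bundle $\pi\colon P_J\to P_J/B$: over the big cell, and its translates which cover $P_J/B$ by the Bruhat decomposition of the Levi of $P_J$, there are regular local sections of $\pi$, and composing with them converts the equivariant description into the local trivializations of $P_J\times_BV$, matching regular $f$ with regular $s$ chart by chart. If $V$ is infinite-dimensional, as for the modules $V_{\sqrt{p}Q+\lambda}$, I would first reduce to the finite-dimensional case: the $\mathfrak{b}$-action preserves the conformal grading, so $V=\bigoplus_{\Delta}V_{\Delta}$ as a $B$-module with each $V_{\Delta}$ finite-dimensional, and both sides of \eqref{(31.1)} commute with this direct sum, reducing the regularity claim to finite-rank bundles over the projective variety $P_J/B$, where it is classical.
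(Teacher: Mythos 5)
Your proposal is correct and follows essentially the same route as the paper: construct the inverse $s\mapsto f_s$ from the fiber identification, check the two maps are mutually inverse, and verify $P_J$-equivariance by comparing \eqref{gactiononglobalsection} with \eqref{(31.2)}. The only difference is that you spell out the regularity of $s_f$ and $f_s$ via local triviality of $P_J\to P_J/B$ and the reduction to finite-dimensional graded pieces, points the paper's proof passes over as immediate from the definitions.
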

\begin{proof}
Since $f\in(\mathcal{O}_{P_J}(P_J)\otimes_{\mathbb{C}}V)^B$, $s_f$ is well-defined.
By definition, \eqref{(31.1)} is injective.
For $s\in H^0(P_J\times_BV)$ and $g\in P_J$, let $s(gB)=[g',v]\in(P_J\times V)/B$. Then there exists a unique element $b\in B$ such that $g'=gb$. Let us define $f_s$ in $\mathcal{O}_{P_J}(P_J)\otimes_{\mathbb{C}}V$ by $f_s(g)=bv$. Then $f_s$ is independent of the choice of $[g',v]$, and hence $f_s\in(\mathcal{O}_{P_J}(P_J)\otimes_{\mathbb{C}}V)^B$. 
By definition, we have $s_{f_s}=s$, and hence \eqref{(31.1)} is surjective.
By \eqref{gactiononglobalsection} and \eqref{(31.2)}, the bijection \eqref{(31.1)} gives a $P_J$-module isomorphism.
\end{proof}

Keeping the isomorphism \eqref{(31.1)} in mind, we shall sometimes abuse notation and denote 
the inverse image $f_s\in(\mathcal{O}_{P_J}(P_J)\otimes_{\mathbb{C}}V)^B$ of $s\in H^0(P_J\times_BV)$ by $s$.

\begin{lemm}\label{lemm:H^0VandV}
Let $J$ be a subset of $\Pi$ and $M$ be a $P_J$-module.
Then we have the $P_J$-module isomorphism 
\begin{align}\label{tuika20}
&H^0(P_J\times_BM)\xrightarrow{\sim}M,
~s\mapsto s(\operatorname{id}).
\end{align}
\end{lemm}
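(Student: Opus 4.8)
The plan is to combine the isomorphism \eqref{(31.1)} of Lemma \ref{lemm:P_Jmodisom} with the extra structure coming from the hypothesis that $M$ is a $P_J$-module, not merely a $B$-module. Via \eqref{(31.1)} the section $s_f$ satisfies $s_f(\operatorname{id}B)=[\operatorname{id},f(\operatorname{id})]$, so under the identification of the fiber of $P_J\times_BM$ at $\operatorname{id}B$ with $M$, the evaluation map \eqref{tuika20} becomes
\begin{align}
(\mathcal{O}_{P_J}(P_J)\otimes_{\mathbb{C}}M)^B\rightarrow M,\quad f\mapsto f(\operatorname{id}).\nonumber
\end{align}
Thus it suffices to prove that this latter map is a $P_J$-module isomorphism, and the whole point is that the $P_J$-action on $M$ should trivialize the bundle $P_J\times_BM$.

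First I would exploit the $P_J$-action on $M$ to untwist a $B$-invariant section. Given $f\in(\mathcal{O}_{P_J}(P_J)\otimes_{\mathbb{C}}M)^B$, the invariance \eqref{(31.15)} reads $f(gb)=b^{-1}f(g)$ for all $g\in P_J$ and $b\in B$, so setting $\phi(g):=g\cdot f(g)$ (using the $P_J$-module structure of $M$) gives $\phi(gb)=gb\cdot b^{-1}f(g)=g\cdot f(g)=\phi(g)$. Hence $\phi$ is right $B$-invariant and descends to a regular $M$-valued function on $P_J/B$. The key input is then the completeness of $P_J/B$: since $P_J/B$ is a connected projective variety, every such function is constant, so $\phi\equiv f(\operatorname{id})$ and therefore $f(g)=g^{-1}\cdot f(\operatorname{id})$ for all $g$. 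This shows at once that $f$ is determined by $f(\operatorname{id})$, giving injectivity, while conversely for any $w\in M$ the assignment $g\mapsto g^{-1}w$ defines an element of $(\mathcal{O}_{P_J}(P_J)\otimes_{\mathbb{C}}M)^B$ that maps to $w$, giving surjectivity. Finally, $P_J$-equivariance follows from \eqref{(31.2)}: if $f(g)=g^{-1}w$, then $(gf)(\operatorname{id})=f(g^{-1})=g\cdot w$, so $f\mapsto f(\operatorname{id})$ intertwines \eqref{(31.2)} with the given action on $M$.

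The step I expect to require the most care is the descent-and-constancy argument when $M$ is infinite dimensional, as it is in the vertex-algebra applications. There one must ensure both that $g\mapsto g^{-1}w$ is genuinely a regular ($M$-valued) map and that constancy of regular functions still forces $\phi$ to be independent of $g$. Both points are handled by the local finiteness of the $P_J$-action: writing $M$ as the union of its finite-dimensional $P_J$-submodules reduces every claim to the finite-dimensional case, where $\phi$ is an ordinary morphism from the projective variety $P_J/B$ into an affine space and is hence constant. Everything else is a direct verification, so once this reduction is in place the proof closes.
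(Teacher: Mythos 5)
Your proof is correct and is essentially the paper's own argument in the function model: your map $\phi(g)=g\cdot f(g)$ is exactly the trivialization $\iota\colon[g,v]\mapsto(gB,gv)$ used in the paper, and your appeal to constancy of regular functions on the connected projective variety $P_J/B$ plays the role of the paper's identification $H^0(\mathcal{O}_{P_J/B})\simeq\mathbb{C}$ via Borel--Weil and the Levi decomposition. Your explicit treatment of the infinite-dimensional case through local finiteness is a point the paper leaves implicit, but the route is the same.
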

\begin{proof}
Let $P_J/B\times M$ be the $P_J$-equivariant trivial vector bundle over $P_J/B$ such that the $P_J$-action on $P_J/B$ is right multiplication, and that on $P_J/B\times M$ is given by $g(g_0B,v):=(gg_0B,gv)$.
Then the following map gives the $P_J$-equivariant vector bundle isomorphism:
\begin{align}\label{isomtrivbdle}
 \iota\colon P_J\times_BM\rightarrow P_J/B\times M,\ [g_0,v]\mapsto (g_0B,g_0v).
\end{align}
In particular, we have the $P_J$-module isomorphism
\begin{align}\label{2/20,1}
H^0(P_J\times_BM)\xrightarrow{\sim}H^0(P_J/B\times M),~s\mapsto\iota\circ s.
\end{align}
On the other hand, 
by combining the Levi decomposition of $P_J$ with the Borel-Weil theorem \cite{Kum,Kum1},
 we have 
 $H^0(\mathcal{O}_{P_J/B})\simeq\mathbb{C}$ as $P_J$-modules.
 Thus, we have the $P_J$-module isomorphism
\begin{align}\label{2/20,2}
H^0(P_J/B\times M)\simeq H^0(\mathcal{O}_{P_J/B})\otimes M\simeq M,~\iota\circ s\mapsto p_2 ((\iota\circ s)(\operatorname{id}B)),
\end{align}
where $p_2$ is the projection $P_J/B\times M\rightarrow M$.
Because $p_2(\iota\circ s(\operatorname{id}B))=s(\operatorname{id})$, combining \eqref{2/20,1} with \eqref{2/20,2}, we obtain \eqref{tuika20}.
\end{proof}

  \begin{cor}\label{cor:P_Jsubmod}
 Let $J$ be a subset of $\Pi$, $M_1$ and $M_2$ be $P_J$-modules.
 If $\phi\colon M_1\simeq M_2$ is a $B$-module isomorphism, then $\phi$ is a $P_J$-module isomorphism.
 \end{cor}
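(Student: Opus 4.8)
The plan is to bootstrap through the global-sections functor, exploiting the fact that the $P_J$-module structure on $H^0(P_J\times_B M)$ arises from left translation on the base (which never sees the fibrewise module structure), while the identification of Lemma \ref{lemm:H^0VandV} with $M$ recovers the original $P_J$-action only upon evaluation at the identity. The whole point is that $\phi$, although assumed merely $B$-equivariant, already suffices to produce a $P_J$-equivariant map downstairs, which then pins down its $P_J$-equivariance.

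First I would observe that a $B$-module homomorphism $\phi\colon M_1\to M_2$ induces a morphism of homogeneous vector bundles
\[
\phi_*\colon P_J\times_B M_1\to P_J\times_B M_2,\quad [g,v]\mapsto[g,\phi(v)],
\]
which is well-defined precisely because $\phi$ is $B$-equivariant, and which is $P_J$-equivariant because the $P_J$-action is by left multiplication on the first factor and therefore commutes with $\phi$ on the fibre. Passing to global sections, $\phi_*$ yields a $P_J$-module homomorphism $H^0(\phi_*)\colon H^0(P_J\times_B M_1)\to H^0(P_J\times_B M_2)$. In the function model of Lemma \ref{lemm:P_Jmodisom}, where a section is identified with $f\in(\mathcal{O}_{P_J}(P_J)\otimes_{\mathbb{C}}M_i)^B$, this map is simply post-composition $f\mapsto\phi\circ f$; its compatibility with the $P_J$-action \eqref{(31.2)}, which only pre-composes a function with a translation, is then immediate.

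Next I would combine this with the $P_J$-module isomorphisms $\Psi_i\colon H^0(P_J\times_B M_i)\xrightarrow{\sim}M_i$, $s\mapsto s(\operatorname{id})$, of Lemma \ref{lemm:H^0VandV}, and verify that the resulting square commutes. In the function model $s\mapsto s(\operatorname{id})$ is the evaluation $f\mapsto f(\operatorname{id})$, so for any section $s$ corresponding to $f$ one computes
\[
\Psi_2\bigl(H^0(\phi_*)\,s\bigr)=(\phi\circ f)(\operatorname{id})=\phi\bigl(f(\operatorname{id})\bigr)=\phi\bigl(\Psi_1(s)\bigr),
\]
that is, $\Psi_2\circ H^0(\phi_*)=\phi\circ\Psi_1$. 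Consequently $\phi=\Psi_2\circ H^0(\phi_*)\circ\Psi_1^{-1}$ is a composite of $P_J$-module homomorphisms, hence itself a $P_J$-module homomorphism; being bijective by hypothesis, it is a $P_J$-module isomorphism.

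I expect no serious obstacle here, since the argument is entirely formal once the bundle map $\phi_*$ is in hand. The only point demanding care is the bookkeeping that the identification $s\mapsto s(\operatorname{id})$ of Lemma \ref{lemm:H^0VandV} coincides with the evaluation $f_s\mapsto f_s(\operatorname{id})$ in the function model of Lemma \ref{lemm:P_Jmodisom}; this compatibility was, however, already recorded inside the proof of Lemma \ref{lemm:H^0VandV} via the relation $p_2\bigl((\iota\circ s)(\operatorname{id}B)\bigr)=s(\operatorname{id})$, so no new verification is needed.
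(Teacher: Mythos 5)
Your proposal is correct and follows essentially the same route as the paper: both transport $\phi$ through the global-sections functor via the isomorphisms $M_i\simeq H^0(P_J\times_B M_i)$ of Lemma \ref{lemm:H^0VandV}, with the middle map being post-composition with $\phi$ (the induced map of the bundle morphism $[g,v]\mapsto[g,\phi(v)]$), whose $P_J$-equivariance is automatic since the $P_J$-action only translates the base. The paper merely records the resulting chain $v\mapsto s_v\mapsto \phi\circ s_v=s_{\phi(v)}\mapsto\phi(v)$ without spelling out the commuting square, which you verify explicitly.
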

 \begin{proof}
By Lemma \ref{lemm:H^0VandV}, we obtain the $P_J$-module isomorphisms
\begin{align}
&M_1\xrightarrow{\sim}H^0(P_J\times_BM_1)\xrightarrow{\sim}H^0(P_J\times_BM_2)\xrightarrow{\sim}M_2,\\
&v\mapsto s_v\mapsto \phi\circ s_v=s_{\phi(v)}\mapsto \phi(v),\nonumber
\end{align}
where $s_v\in H^0(P_J\times_BM_1)$ and $s_{\phi(v)}\in H^0(P_J\times_BM_2)$ are defined by $s_v(\operatorname{id})=v$ and $s_{\phi(v)}(\operatorname{id})=\phi(v)$, respectively.
\end{proof}

\hypertarget{$G$-submodule}{
 \begin{dfn}\label{dfnGsubmodule}
Let $J$ be a subset of $\Pi$. For a $B$-module $V$ and a $B$-submodule $M$ of $V$, we call $M$ a {\it $P_J$-submodule} of $V$ if for some $P_J$-module $M'$, we have $M'\simeq M$ as $B$-modules.
Note that by Corollary \ref{cor:P_Jsubmod}, a $P_J$-submodule structure on $M$ is unique if it exists.
 \end{dfn}}

 In Definition \ref{dfnGsubmodule}, if $M$ is a $P_J$-sumbodule of $V$, then we have the $B$-module isomorphism
 \begin{align}\label{2/20,3}
 H^0(P_J\times_BM)\xrightarrow{\sim}M, ~s\mapsto s(\operatorname{id}).
 \end{align}
 In fact, for a $P_J$-module $M'$ and a $B$-module isomorphism $\phi\colon M'\simeq M$, 
 we have the $B$-module isomorphisms
 \begin{align}
 &H^0(P_J\times_BM)\xrightarrow{\sim} H^0(P_J\times_BM')\xrightarrow{\sim}M'\xrightarrow{\sim} M,\\
 &s\mapsto \phi^{-1}\circ s\mapsto \phi^{-1}(s(\operatorname{id}))\mapsto s(\operatorname{id}),\nonumber
 \end{align}
 where the second isomorphism follows from Lemma \ref{lemm:H^0VandV}.

\begin{lemm}\label{lemm:maximalP_Jsubmod}
Let $J$ be a finite subset of $\Pi$. For a $B$-module $V$ and a $P_J$-submodule $M$ of $V$, 
 if the $B$-module homomorphism
\begin{align}
\Phi\colon H^0(P_J\times_BV)\rightarrow V,~s\mapsto s(\operatorname{id})
\end{align}
is an embedding and its image is $M\subseteq V$,
then $M$ is the largest $P_J$-submodule of $V$.
\end{lemm}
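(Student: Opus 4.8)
The plan is to show that any $P_J$-submodule $N$ of $V$ is contained in $M$, so that $M$, which is itself a $P_J$-submodule by hypothesis, is the largest. Let $N\subseteq V$ be an arbitrary $P_J$-submodule in the sense of Definition \ref{dfnGsubmodule}. The key observation is the functoriality of the construction $H^0(P_J\times_B(-))$ together with the embedding property of $\Phi$. Concretely, the inclusion $N\hookrightarrow V$ of $B$-modules induces a morphism of homogeneous vector bundles $P_J\times_B N\to P_J\times_B V$, and hence a $P_J$-module (in particular $B$-module) homomorphism on global sections
\begin{align}
H^0(P_J\times_B N)\to H^0(P_J\times_B V).\nonumber
\end{align}
Composing with $\Phi$ gives the evaluation-at-identity map $H^0(P_J\times_B N)\to V$, whose image lands in $N$ by construction.

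First I would invoke the displayed isomorphism \eqref{2/20,3}: since $N$ is a $P_J$-submodule, the evaluation map $H^0(P_J\times_B N)\xrightarrow{\sim} N$, $s\mapsto s(\operatorname{id})$, is a $B$-module isomorphism, and in particular is surjective onto $N$. Thus every element $v\in N$ is of the form $v=s(\operatorname{id})$ for some $s\in H^0(P_J\times_B N)$. Now I would push $s$ forward along the bundle inclusion $P_J\times_B N\hookrightarrow P_J\times_B V$ to obtain a section $\tilde s\in H^0(P_J\times_B V)$ with $\tilde s(\operatorname{id})=s(\operatorname{id})=v$. Since $\Phi(\tilde s)=\tilde s(\operatorname{id})=v$ and the image of $\Phi$ is exactly $M$ by hypothesis, we conclude $v\in M$. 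As $v\in N$ was arbitrary, $N\subseteq M$, which is precisely the maximality assertion.

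The one point requiring a little care, and which I expect to be the main (though minor) obstacle, is verifying that the bundle inclusion $P_J\times_B N\hookrightarrow P_J\times_B V$ really induces an \emph{injection} on global sections, or at least that it is compatible with evaluation at the identity coset so that $\tilde s(\operatorname{id})=s(\operatorname{id})$. This is transparent once one recalls from the proof of Lemma \ref{lemm:P_Jmodisom} that a section of $P_J\times_B V$ is encoded by a $B$-equivariant function $f\colon P_J\to V$ with $f(g)\in V$, and that the inclusion $N\subseteq V$ simply views such a function valued in $N$ as one valued in $V$; evaluation at $\operatorname{id}$ then reads off $f(\operatorname{id})$ in either picture. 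Since $N\hookrightarrow V$ is injective, the induced map on equivariant functions, hence on global sections, is injective as well, but in fact injectivity is not even needed for the argument—only that the pushed-forward section has the same value at $\operatorname{id}$.

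I would close by noting that the hypothesis that $M$ is a $P_J$-submodule is what guarantees $M$ is a legitimate candidate for the largest such submodule (rather than merely an upper bound), while the embedding hypothesis on $\Phi$ is what pins down $M$ as exactly the image of \emph{all} sections and thereby forces every $P_J$-submodule into it. Together these give both that $M$ is a $P_J$-submodule and that it contains every other, establishing the claim.
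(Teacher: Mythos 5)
Your argument is correct and is essentially the paper's own proof: both reduce to the isomorphism \eqref{2/20,3} for the $P_J$-submodule $N$ and the functoriality of $H^0(P_J\times_B(-))$ applied to the inclusion $N\hookrightarrow V$, yielding $N=\Phi(H^0(P_J\times_BN))\subseteq\Phi(H^0(P_J\times_BV))=M$. The extra care you take about evaluation at the identity being compatible with the bundle inclusion is exactly the content the paper leaves implicit.
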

\begin{proof}
For any $B$-submodule $M_0$ of $V$, $H^0(P_J\times_BM_0)$ is a $P_J$-submodule of $H^0(P_J\times_BV)$.
By \eqref{2/20,3}, if $M_0$ is a $P_J$-submodule of $V$, then we have
\begin{align}
M_0=\Phi(H^0(P_J\times_BM_0))\subseteq \Phi(H^0(P_J\times_BV))=M.
\end{align}
Thus, $M_0$ is a $P_J$-submodule of $M$.
\end{proof}

\subsection{Lattice vertex operator algebra $V_{\sqrt{p}Q}$}\label{subsect:lattice}
We consider the lattice vertex operator algebra $V_{\sqrt{p}Q}$ associated with the rescaled root lattice $\sqrt{p}Q$ (\cite{FLM},\cite[Section 5.4]{Kac}). We reproduce the definition for completeness. Let
\begin{equation}
\hat{\mathfrak{h}}=\mathfrak{h}\otimes\mathbb{C}[t^{\pm}]\oplus\mathbb{C}K
\end{equation}
be the Heisenberg algebra associated with $\mathfrak{h}$ and the invariant form $(\cdot , \cdot)$ in Section \ref{subsect:notation}, and let
$\hat{\mathfrak{h}}^{<0}$ and $\hat{\mathfrak{h}}^{\geq0}$ be the commutative Lie subalgebras of $\hat{\mathfrak{h}}$ defined by
\begin{align}
\hat{\mathfrak{h}}^{<0}=\sum_{j<0}\mathfrak{h}\otimes t^{j},~\hat{\mathfrak{h}}^{\geq0}=\sum_{j\geq0}\mathfrak{h}\otimes t^{j},
\end{align}
respectively.
For a subset $A$ of $\frac{1}{\sqrt{p}}P$, denote by $V_A$ the vector space defined by
\begin{align}\label{V_A}
V_A=\bigoplus_{\nu\in A}\mathcal{U}(\hat{\mathfrak{h}})\otimes_{\mathcal{U}(\hat{\mathfrak{h}}^{\geq0})}\mathbb{C}|\nu\rangle=\bigoplus_{\nu\in A}\mathcal{F}_{\nu},
\end{align}
where $|\nu\rangle$ is the vector corresponding to the lattice point $\nu\in\frac{1}{\sqrt{p}}P$, 
$\mathcal{F}_0$ is the Heisenberg vertex operator algebra defined by $\mathcal{F}_0=\mathcal{U}(\hat{\mathfrak{h}}^{<0})\otimes\mathbb{C}|0\rangle$, and 
$\mathcal{F}_{\nu}$ is the Fock representation $\mathcal{U}(\hat{\mathfrak{h}})\otimes_{\mathcal{U}(\hat{\mathfrak{h}}^{\geq0})}\mathbb{C}|\nu\rangle$ of $\mathcal{F}_0$ with the highest weight $\nu$.

 Let us set up some notations to describe the vertex algebra structure on $V_{\sqrt{p}Q}$.
 For $a\in\mathfrak{h}$ and $\mu\in\frac{1}{\sqrt{p}}P$, the fields $a(z)$ and $V_{|\mu\rangle}(z)$ 
 on $V_{\frac{1}{\sqrt{p}}P}$ are given by
 \begin{align}
 &a(z)=\sum_{n\in\mathbb{Z}}a_{(n)}z^{-n-1}=\sum_{n\in\mathbb{Z}}(a\otimes t^n)z^{-n-1},\\
&V_{|\mu\rangle}(z)=e^{\mu}z^{\mu_{(0)}}\operatorname{exp}{(-\sum_{n<0}\frac{z^{-n}}{n}\mu_{(n)})}\exp{(-\sum_{n>0}\frac{z^{-n}}{n}\mu_{(n)})},
 \end{align}
 respectively.
 Here the operator $e^{\mu}\in\operatorname{End}_{\mathbb{C}}(V_{\frac{1}{\sqrt{p}}P})$ is defined by
 \begin{align}
  e^{\mu}|\nu\rangle=|\mu+\nu\rangle,~
  [a_{(n)}, e^{\mu}]=\delta_{n,0}(a,\mu)e^{\mu}
 \end{align}
 for $a\in\mathfrak{h}$ and $\nu\in\frac{1}{\sqrt{p}}P$.
 On the other hand, by \cite[Corollary 5.5]{Kac}, there exists a $2$-cocycle $\epsilon\colon\sqrt{p}Q\times\sqrt{p}Q\rightarrow\mathbb{C}^{\times}$ that satisfies \cite[(5.4.14)]{Kac}.
 Moreover, by \cite[Section 5.4.5]{FB}, for $\mu,\nu\in\frac{1}{\sqrt{p}}P$ such that $(\mu,\nu)\in\mathbb{Z}$, $\epsilon(\mu,\nu)\in\mathbb{C}^\times$ is also well-defined.
 Let us fix such a $2$-cocycle $\epsilon$ throughout the present paper.
 For $\mu\in\frac{1}{\sqrt{p}}P$, set
$(\frac{1}{\sqrt{p}}P)_{\mu}=\{\nu\in\frac{1}{\sqrt{p}}P~|~(\mu,\nu)\in\mathbb{Z}\}$.
Denote by $|\mu\rangle(z)$ the formal power series defined by
\begin{align}
|\mu\rangle(z)=V_{|\mu\rangle}(z)\epsilon(\mu,\cdot)\in\operatorname{Hom}_{\mathbb{C}}(V_{(\frac{1}{\sqrt{p}}P)_{\mu}},V_{(\frac{1}{\sqrt{p}}P)_{\mu}+\mu})\otimes_{\mathbb{C}}\mathbb{C}[[z^{\pm}]],
\end{align}
where the operator $\epsilon(\mu,\cdot)$ is defined by $\epsilon(\mu,\cdot)|_{\mathcal{F}_\nu}=\epsilon(\mu,\nu)\operatorname{id}_{\mathcal{F}_\nu}$ for each $\nu\in(\frac{1}{\sqrt{p}}P)_\mu$.
 Then the vertex algebra structure on $V_{\sqrt{p}Q}$ is defined by
 \begin{align}\label{V_pQfields}
 {V_{\sqrt{p}Q}}(a,z)=a(z),~{V_{\sqrt{p}Q}}(|\sqrt{p}\alpha\rangle,z)=|\sqrt{p}\alpha\rangle(z).
 \end{align}
 By \cite[Theorem 5.5]{Kac}, the vertex algebra structure on $V_{\sqrt{p}Q}$ is unique and independent of the choice of $\epsilon$.
By \cite[(5.5.13), (5.5.14), (5.5.18)]{Kac}, for $a,b\in\mathfrak{h}$ and $\mu,\nu\in \sqrt{p}Q$, we have the following OPE formulas:
 \begin{align}
a(z)b(w)\sim&\frac{(a,b)}{(z-w)^2},\label{(11.1)}\\
a(z)|\mu\rangle(w)\sim&\frac{(\mu,a)|\mu\rangle(w)}{z-w},\label{(11.2)}\\
 |\mu\rangle(z)|\nu\rangle(w)\sim&\epsilon(\mu,\nu)(z-w)^{(\mu,\nu)}\sum_{n\in\mathbb{Z}_{\geq0}}\sum_{\substack{k_1+2k_2+\cdots+nk_n=n\\ k_i\in\mathbb{Z}_{\geq0}}}\label{(11.3)}\\
 &\frac{(z-w)^n:\mu(w)^{k_1}(\partial\mu(w))^{k_2}\cdots(\partial^{n-1}\mu(w))^{k_n}|\mu+\nu\rangle(w):}{(1!)^{k_1}k_1!(2!)^{k_2}k_2!\cdots(n!)^{k_n}k_n!}\nonumber.
 \end{align}

 We choose the shifted conformal vector $\omega$ of $V_{\sqrt{p}Q}$ as
 \begin{align}\label{(11.5)}
 \omega=\frac{1}{2}\sum_{1\leq i,j \leq l}c^{ij}(\alpha_{i})_{(-1)}\alpha_{j}+Q_{0}(\rho)_{(-2)}|0\rangle\in\mathcal{F}_0\subseteq V_{\sqrt{p}Q},
 \end{align}
 where $Q_{0}=\sqrt{p}-\frac{1}{\sqrt{p}}$.
 The central charge $c$ of $\omega$ is given by
 \begin{equation}\label{(13)}
 c=l+12(\rho, \rho)(2-p-\frac{1}{p})=l+h\dim{\mathfrak{g}}(2-p-\frac{1}{p}).
 \end{equation}

 Let $\bar{\Lambda}_p$ be a subset of $\frac{1}{\sqrt{p}}P$ defined by
 \begin{align}
 \bar{\Lambda}_p=\{\sum_{i=1}^l\frac{s_i}{\sqrt{p}}\omega_i~|~0\leq s_i\leq p-1\}.
 \end{align}
 For $\lambda\in\frac{1}{\sqrt{p}}P$, denote by $\bar\lambda$ the representative of $[\lambda]\in\frac{1}{\sqrt{p}}P/\sqrt{p}P$ in $\bar\Lambda_p$.
 Also, for $\lambda\in\frac{1}{\sqrt{p}}P$, we write $\hat\lambda$ for the representative of $[\frac{1}{\sqrt{p}}(\lambda-\bar\lambda)]\in P/Q$ in $\hat\Lambda$.
In particular, for any $\lambda\in\frac{1}{\sqrt{p}}P$, we have $\lambda=-\sqrt{p}(\alpha+\hat\lambda)+\bar\lambda$ for some $\alpha\in Q$.
Then the parameter set $\Lambda$ of irreducible $V_{\sqrt{p}Q}$-modules is given by
\begin{align}\label{(15)}
\Lambda=\{\lambda=-\sqrt{p}\hat{\lambda}+\bar{\lambda}~|~\hat\lambda\in\hat\Lambda,~ \bar\lambda\in\bar\Lambda_p\}
\end{align}
(see \cite{D}). For $\lambda\in\Lambda$, let $V_{\sqrt{p}Q+\lambda}$ be the irreducible $V_{\sqrt{p}Q}$-module defined by
\begin{align}\label{(15.5)}
V_{\sqrt{p}Q+\lambda}=\bigoplus_{\alpha\in Q}\mathcal{F}_{-\sqrt{p}\alpha+\lambda}=\bigoplus_{\alpha\in Q}\mathcal{F}_{-\sqrt{p}(\alpha+\hat\lambda)+\bar\lambda}.
 \end{align}
 Note that the $V_{\sqrt{p}Q}$-module structure of $V_{\sqrt{p}Q+\lambda}$ is defined by the fields
 \eqref{V_pQfields}.
 
 For $\mu\in\frac{1}{\sqrt{p}}P$, the conformal weight $\Delta_{\mu}$ of $|\mu\rangle$ is given by
 \begin{equation}\label{(14)}
 \Delta_{\mu}=\frac{1}{2}|\mu -Q_{0}\rho|^2 +\frac{c-l}{24}=\frac{1}{2}|\mu|^2 -Q_{0}(\mu, \rho).
 \end{equation}
 \begin{rmk}\label{confwtrmk}
 Note that for any $\alpha\in Q$ and $\lambda\in\Lambda$, the conformal weight of a nonzero homogeneous vector in $\mathcal{F}_{-\sqrt{p}\alpha+\lambda}$ is greater than or equal to $\Delta_{-\sqrt{p}\alpha+\lambda}$.
 \end{rmk}
\begin{rmk}
To regard $|-\sqrt{p}\alpha+\lambda\rangle$ as the highest weight vector of $\mathcal{R}_{\alpha+\hat{\lambda}}$ in later sections, we changed the definition of $\Lambda$ from \cite{FT}, \cite{BM}, \cite{CrM}. For the same reason, we also changed the definition of the operators $h_{1,\lambda},\dots,h_{l,\lambda}$ that give rise to the action of $\mathfrak{h}$ on $V_{\sqrt{p}Q+\lambda}$ in Section \ref{sectionpre2}. In consequence of these changes, our Borel subalgebra $\mathfrak{b}$ is not $\mathfrak{n}_+\oplus\mathfrak{h}$ but $\mathfrak{n}_-\oplus\mathfrak{h}$, and our character formulas of the modules $H^0(\xi_{\lambda})$ over the logarithmic $W$-algebra $W(p)_Q$ in later sections are slightly different from \cite{BM}, \cite{CrM}.
\end{rmk}

\section{Preliminaries I\hspace{-.1 em}I}\label{sectionpre2}
\subsection{Screening operators and the $B$-action}\label{screenings}
For $1\leq i\leq l$, $\alpha\in Q$ and $\lambda\in\Lambda$, we consider the {\it screening operator} $f_i$ defined by
\begin{equation}\label{(16)}
f_i =|\sqrt{p}\alpha_{i}\rangle_{(0)}\in\operatorname{Hom}_{\mathbb{C}}(\mathcal{F}_{-\sqrt{p}{\alpha}+\lambda},\mathcal{F}_{-\sqrt{p}(\alpha-\alpha_i)+\lambda}).
\end{equation}
For $1\leq i \leq l$ and $\mu\in\mathfrak{h}^\ast$, we consider the operator $h_{i,\lambda}(\mu)$ defined by 
\begin{equation}\label{(30)}
h_{i,\lambda}(\mu)=-\frac{1}{\sqrt{p}}(\alpha_i )_{(0)}+\frac{1}{\sqrt{p}}(\alpha_i , \bar{\lambda}+\sqrt{p}\mu)\operatorname{id}\in\operatorname{End}_{\mathbb{C}}(V_{\sqrt{p}Q+\lambda}).
\end{equation}
When $\mu=0$, we use the letter $h_{i,\lambda}$ instead of $h_{i,\lambda}(\mu)$.
For $\mu\in\mathfrak{h}^\ast$, $1\leq i\leq l$, and $n\in\mathbb{Z}$, it is straightforward to verify that
\begin{align}\label{(19.0)}
[f_i , L_n]=[h_{i,\lambda}(\mu),L_n]=0.
\end{align}
In particular, $f_i$ and $h_{i,\lambda}(\mu)$ preserve the conformal grading.
Moreover, since $f_i$ and $(\alpha_i)_{(0)}$ are zero modes, for any $a\in V_{\sqrt{p}Q}$, $v\in V_{\sqrt{p}Q+\lambda}$, and $n\in\mathbb{Z}$, we have
\begin{align}
f_i(a_{(n)}v)&=(f_ia)_{(n)}v+a_{(n)}f_iv,\label{infinitesimalisom}\\
h_{i,\lambda}(\mu)(a_{(n)}v)&=(h_{i,0}a)_{(n)}v+a_{(n)}h_{i,\lambda}(\mu)v.\label{infinitesimalisom2}
\end{align}
Let us recall notation in \hyperlink{$V(\mu)$}{Section \ref{subsect:notation}}.
\begin{thm}[{\cite[Theorem 4.1]{FT}}]\label{ft41}$ $
The operators $\{f_i,h_{i,\lambda}\}_{i=1}^l$ give rise to an integrable action of $\mathfrak{b}$ on $V_{\sqrt{p}Q+\lambda}$.   
More generally, the operators $\{f_i,h_{i,\lambda}(\mu)\}_{i=1}^l$ give rise to the integrable action of $\mathfrak{b}$ on $V_{\sqrt{p}Q+\lambda}(\mu)$. 
%
%(1)\ $\mathcal{L}_{\lambda}$ admits the action of $\mathfrak{b}$ given by $e_i$, $h_i$.\\
%(2)\ The action of $\mathfrak{b}$ in (1) is integrable.
\end{thm}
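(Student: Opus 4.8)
The plan is to prove that the operators $\{f_i, h_{i,\lambda}\}_{i=1}^l$ satisfy the defining relations of $\mathfrak{b}=\mathfrak{n}_-\oplus\mathfrak{h}$, and then to verify that this Lie algebra action is integrable, meaning it exponentiates to an algebraic action of the group $B$. Recall that $\mathfrak{b}$ is generated by the Cartan elements $h_i$ (via the basis dual to the simple roots) together with the lower-triangular generators $f_i$, subject to the Chevalley--Serre relations restricted to the Borel: $[h_i, h_j]=0$, $[h_i, f_j] = -c_{ij} f_j$, and the Serre relation $(\operatorname{ad} f_i)^{1-c_{ij}} f_j = 0$ for $i\neq j$. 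The first task is therefore to identify $h_{i,\lambda}$ with the action of $h_i\in\mathfrak{h}$ and check these three families of relations at the level of operators on $V_{\sqrt{p}Q+\lambda}$.

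First I would compute the bracket $[h_{i,\lambda}, f_j]$. Since $h_{i,\lambda}=-\frac{1}{\sqrt{p}}(\alpha_i)_{(0)}+\text{const}$ and $f_j=|\sqrt{p}\alpha_j\rangle_{(0)}$, the relevant input is the OPE \eqref{(11.2)}, which gives $[(\alpha_i)_{(0)}, |\sqrt{p}\alpha_j\rangle_{(0)}] = (\sqrt{p}\alpha_j,\alpha_i)|\sqrt{p}\alpha_j\rangle_{(0)}$; the scalar shift commutes with everything. This yields $[h_{i,\lambda},f_j] = -\frac{1}{\sqrt{p}}(\sqrt{p}\alpha_j,\alpha_i)f_j = -(\alpha_i,\alpha_j)f_j = -c_{ij}f_j$, exactly the required weight relation (recalling the normalization $(\alpha_i,\alpha_j)=c_{ij}$ in the simply-laced case). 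The relation $[h_{i,\lambda},h_{j,\lambda}]=0$ is immediate since both are built from commuting Heisenberg modes $(\alpha_i)_{(0)}$. The Serre relations among the $f_i$ are the genuinely substantive Lie-theoretic computation: using \eqref{(11.3)} one checks that $(\operatorname{ad} f_i)^{1-c_{ij}}f_j$ vanishes, which amounts to analyzing the vacuum-like singular terms in the iterated OPE of the vertex operators $|\sqrt{p}\alpha_i\rangle(z)$; for $i\neq j$ with $c_{ij}=0$ the brackets $[f_i,f_j]$ vanish, and for adjacent nodes $c_{ij}=-1$ one needs $[f_i,[f_i,f_j]]=0$, which follows from the structure of the lattice pairing $(\alpha_i,\alpha_j)$ and the explicit form of the normal-ordered products. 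With the parameter $\mu$ reinstated, the only change is in the scalar term of $h_{i,\lambda}(\mu)$, so the bracket relations are identical and the $\mu$-version follows verbatim.

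The hard part will be integrability, i.e. upgrading the abstract $\mathfrak{b}$-action to a genuine algebraic $B$-action. The issue is that $f_i$ acts locally nilpotently but $V_{\sqrt{p}Q+\lambda}$ is infinite-dimensional, so exponentiating $f_i$ requires that each vector be annihilated by a sufficiently high power of $f_i$. Here I would exploit the conformal grading: by \eqref{(19.0)} the operators $f_i$ and $h_{i,\lambda}$ preserve each finite-dimensional graded piece $(V_{\sqrt{p}Q+\lambda})_{\Delta}$, so it suffices to establish integrability on each such piece, where it reduces to a finite-dimensional statement. Within a fixed conformal weight the weight spaces for the $\mathfrak{h}$-action (graded by the $(\alpha_i)_{(0)}$-eigenvalues, i.e. by the lattice component $-\sqrt{p}\alpha+\lambda$) are finite-dimensional and bounded, which forces local nilpotency of each $f_i$ and lets one integrate the $\mathfrak{sl}_2$-triple completions on each graded subspace. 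I expect this is the step requiring care, and it is presumably the content of the cited \cite[Theorem 4.1]{FT}: one must confirm that the screening operators generate a \emph{locally finite}, hence integrable, $\mathfrak{b}$-module structure compatible with the grading, rather than merely a formal Lie algebra action. The formulas \eqref{infinitesimalisom} and \eqref{infinitesimalisom2}, which express the derivation property of $f_i$ and $h_{i,\lambda}(\mu)$ with respect to the vertex operators, are what guarantee that the resulting $B$-action is by vertex-algebra module automorphisms, tying the integrated action back to the module structure.
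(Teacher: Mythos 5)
Your overall strategy coincides with the paper's: verify the Chevalley relations $[h_{i,\lambda},h_{j,\lambda}]=0$, $[h_{i,\lambda},f_j]=-c_{ij}f_j$ and the Serre relations, then deduce integrability from local nilpotency of the $f_i$ on each conformally graded piece. The commutator $[h_{i,\lambda},f_j]=-c_{ij}f_j$ via \eqref{(11.2)} and the local-nilpotency argument (a fixed conformal weight $\Delta$ meets only finitely many Fock summands $\mathcal{F}_{-\sqrt{p}\alpha+\lambda}$ because $\Delta_{-\sqrt{p}\alpha+\lambda}$ grows quadratically in $\alpha$, while $f_i$ preserves $\Delta$ and shifts the Fock label) are both correct and are exactly what the paper does.

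The one genuine gap is the Serre relation for adjacent nodes. You reduce it to $[f_i,[f_i,f_j]]=0$ when $c_{ij}=-1$ and then assert that this ``follows from the structure of the lattice pairing and the explicit form of the normal-ordered products,'' i.e.\ you propose to compute the iterated OPE of $|\sqrt{p}\alpha_i\rangle(z)$ with the normally ordered vector $|\sqrt{p}\alpha_i\rangle_{(0)}|\sqrt{p}\alpha_j\rangle$ directly. That computation is not carried out, and it is the only non-formal relation in the whole statement; note that $[f_i,f_j]\neq 0$ in general here, so there really is something to check. The paper avoids the computation entirely with a conformal-weight argument that you in fact already have all the ingredients for: since $f_i$ is a zero mode it acts as a derivation on modes, so the operator identity $(\operatorname{ad}f_i)^{1-c_{ij}}f_j=0$ follows from the vanishing of the \emph{state} $\operatorname{ad}(f_i)^{1-c_{ij}}|\sqrt{p}\alpha_j\rangle$; this state lies in $\mathcal{F}_{\sqrt{p}((1-c_{ij})\alpha_i+\alpha_j)}$ and, by \eqref{(19.0)}, has conformal weight $1$, whereas by \eqref{(14)} the minimal conformal weight of that Fock space is $2-c_{ij}>1$, so the state is zero by Remark \ref{confwtrmk}. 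This is the same ``conformal weight versus minimal Fock weight'' comparison you invoke for integrability, applied once more; without it (or the explicit OPE computation you defer) the proof of the Lie algebra relations is incomplete.
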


\begin{proof}
We include the proof for completeness.
By \eqref{(11.2)} and \eqref{(11.3)}, we have
\begin{equation}\label{(22)}
[h_{i,\lambda}(\mu), f_j]=-c_{ij}f_j
\end{equation}
for $1\leq i,j\leq l$
and 
\begin{equation}\label{(23)}
\operatorname{ad}(f_i)^{1-c_{ij}}|\sqrt{p}\alpha_j\rangle\in\mathcal{F}_{\sqrt{p}((1-c_{ij})\alpha_i+\alpha_j)}
\end{equation}
for $1\leq i\not=j\leq l$.
By \eqref{(14)} and \eqref{(19.0)}, the conformal weight of $\operatorname{ad}(f_i)^{1-c_{ij}}|\sqrt{p}\alpha_j\rangle$ is $1$. However, since 
$\Delta_{\sqrt{p}((1-c_{ij})\alpha_i+\alpha_j)}=2-c_{ij}>1$,
by Remark \ref{confwtrmk}, we have $\operatorname{ad}(f_i)^{1-c_{ij}}|\sqrt{p}\alpha_j\rangle=0$. 
Hence the operators $\{f_i,h_{i,\lambda}\}_{i=1}^l$ give rise to an action of $\mathcal{U}(\mathfrak{b})$ on $V_{\sqrt{p}Q+\lambda}$.
Also, for $\mu\in\mathfrak{h}^\ast$, the operators $\{f_i,h_{i,\lambda}(\mu)\}_{i=1}^l$ give rise to the action of $\mathcal{U}(\mathfrak{b})$ on $V_{\sqrt{p}Q+\lambda}(\mu)$ by definition.

We show that the action of $\mathcal{U}(\mathfrak{b})$ on $V_{\sqrt{p}Q+\lambda}(\mu)$ is integrable.
The operator $h_{i,\lambda}(\mu)$ is integrable because 
\begin{align}
h_{i,\lambda}(\mu)|_{\mathcal{F}_{-\sqrt{p}\alpha+\lambda}}=(\alpha_i,\alpha+\hat{\lambda}+\mu)\operatorname{id}|_{\mathcal{F}_{-\sqrt{p}\alpha+\lambda}}.
\end{align}
 For $\beta\in Q$, let $A$ be a homogeneous vector in $\mathcal{F}_{\sqrt{p}\beta+\lambda}$ with the conformal weight $n$.
For $N\in\mathbb{N}$, by \eqref{(19.0)}, $f_i^NA$ is a homogeneous vector in $\mathcal{F}_{\sqrt{p}(\beta+N\alpha_i)+\lambda}$ with the conformal weight $n$. By \eqref{(14)}, $\Delta_{\sqrt{p}(\beta+N\alpha_i)+\lambda}$ is the quadratic function in $N$ with the positive quadratic coefficient $p$. It follows that for a sufficiently large $N$, we have $n<\Delta_{\sqrt{p}(\beta+N\alpha_i)+\lambda}$, and by Remark \ref{confwtrmk}, $f_i^NA=0$. Thus, $f_i$ is also integrable.
\end{proof}
By \eqref{(19.0)}, \eqref{infinitesimalisom}, \eqref{infinitesimalisom2}, we obtain the following corollary.
\begin{cor}\label{cor:ft41}
For $\lambda\in\Lambda$ and $\mu\in P$, the $B$-action on $V_{\sqrt{p}Q+\lambda}(\mu)$ defined by Theorem \ref{ft41} gives the $V_{\sqrt{p}Q}$-module automorphisms.
%\commentT{$V_{\sqrt{p}Q+\lambda}(\mu)$ の$V_{\sqrt{p}Q}$加群の構造はどこで定義されているのか}\responseT{Section \ref{subsect:notation} の最後の一文で、定義になっているでしょうか？ (
%Section \ref{reviewam1} のコメントも同様) あと、$(\mu)$ という記号は不適切で、$\otimes\mathbb{C}_{\mu}$ に書き換えた方がよろしいでしょうか？}
\end{cor}

\subsection{Homogeneous vector bundles $\xi_\lambda(\mu)$}\label{subsectvectbdle}
Let us recall Section \ref{subsect:notation}, \ref{subsect:P_Jsubmod}.
For $\lambda\in\Lambda$, we consider the homogeneous vector bundle $\xi_\lambda(\mu)$ over the flag variety $G/B$ (for flag varieties, cf., e.g., \cite{Bo,Kum1}) defined by
\begin{align}\label{(31)}
\xi_{\lambda}(\mu)=G\times_BV_{\sqrt{p}Q+\lambda}(\mu).
\end{align}
Here the action of $B$ on $V_{\sqrt{p}Q+\lambda}$ is given by Theorem \ref{ft41}.
When $\mu=0$, we use the letter $\xi_\lambda$ instead of $\xi_\lambda(0)$.
\begin{rmk}\label{holom}
We have $V_{\sqrt{p}Q+\lambda}=\bigoplus_{\Delta\in\mathbb{Q}}(V_{\sqrt{p}Q+\lambda})_{\Delta}$ and $\operatorname{dim}(V_{\sqrt{p}Q+\lambda})_{\Delta}<\infty$ for each $\Delta\in\mathbb{Q}$.
By \eqref{(19.0)}, the $B$-action on $V_{\sqrt{p}Q+\lambda}(\mu)$ preserves the conformal grading.
Thus, we have
\begin{align}\label{confdecomp}
\xi_{\lambda}(\mu)\simeq\bigoplus_{\Delta\in\mathbb{Q}}\xi_{\lambda}(\mu)_{\Delta}:=\bigoplus_{\Delta\in\mathbb{Q}}G\times_B(V_{\sqrt{p}Q+\lambda})_{\Delta}
\end{align}
as vector bundles. If necessary, we regard $\xi_{\lambda}(\mu)$ as the direct sum of holomorphic vector bundles $\xi_{\lambda}(\mu)_{\Delta}$ by this decomposition. 
\end{rmk}

Let $N_+$ be the unipotent subgroup of $G$ with the Lie algebra $\mathfrak{n}_+$. For $X\in G$, there exists an element $\sigma\in W$ such that $\sigma X\in N_+B$. For $\sigma\in W$, let
\begin{equation}
\tilde{U}_{\sigma}=\{X\in G\ |\ \sigma X\in N_+B\}
\end{equation}
and $U_{\sigma}=\tilde{U}_{\sigma}/B\subseteq G/B$.
Then we have the Schubert open covering 
\begin{equation}\label{(33.001)}
G/B=\bigcup_{\sigma\in W}U_{\sigma}
\end{equation}
 of $G/B$ such that for each $\sigma\in W$, there exists a unique fixed point of the $H$-action in $U_{\sigma}$. Unless otherwise stated, we use this open covering \eqref{(33.001)} throughout the present paper.
For $\sigma,\tau\in W$, let
\begin{align}\label{dfn:transfunc}
\varphi_{\sigma,\tau}^{\lambda,\mu}\colon U_{\sigma}\cap U_{\tau}\rightarrow B
\end{align}
be the transition function of $\xi_\lambda(\mu)$.
%The $B$-action in the right hand side is that on $V_{\sqrt{p}Q+lambda}(\mu)$ defined above.
Then by the embedding 
\begin{align}\label{H0embedding}
H^0(\xi_\lambda(\mu))\hookrightarrow\bigoplus_{\alpha\in P_+\cap Q}H^0({\xi_\lambda}(\mu)|_{U_\sigma}),~s\mapsto(s|_{U_\sigma})_{\sigma\in W},
\end{align}
we have the $G$-module isomorphism
\begin{equation}\label{strofH0}
H^0(\xi_{\lambda}(\mu))\simeq\left\{(s_{\sigma})_{\sigma\in W}\in\bigoplus_{\sigma\in W}H^0(\xi_{\lambda}(\mu)|_{U_{\sigma}})\ |\  \substack{\text{For any $\sigma, \tau\in W$,}\\
{s_{\tau}}|_{U_{\sigma}\cap U_{\tau}}=\varphi_{\sigma,\tau}^{\lambda,\mu}\circ{s_{\sigma}}|_{U_{\sigma}\cap U_{\tau}}}\right\},
\end{equation}
where ${s_{\sigma}}|_{U_{\sigma}\cap U_{\tau}}$ and ${s_{\tau}}|_{U_{\sigma}\cap U_{\tau}}$ mean the restriction of $s_{\sigma}$ and $s_\tau$ to $U_{\sigma}\cap U_{\tau}$, respectively.

For $\sigma,\tau\in W$, $\mathcal{O}_{G/B}(U_\sigma)$ and $\mathcal{O}_{G/B}(U_\sigma\cap U_\tau)$ are regarded as commutative vertex algebras with trivial conformal vectors, respectively.
We give the vertex operator algebra structure on $H^0(\xi_0|_{U_{\sigma}})$ by the tensor products of $\mathcal{O}_{G/B}(U_\sigma)$ and $V_{\sqrt{p}Q}$.
For each $\sigma\in W$, denote by $1_\sigma$ the vacuum vector of $\mathcal{O}_{G/B}(U_\sigma)$, that is, the constant function $1$ on $U_\sigma$.
Similarly, we also introduce the vertex operator algebra structure on $H^0(\xi_{0}|_{U_{\sigma}\cap U_{\tau}})\simeq\mathcal{O}_{G/B}(U_\sigma\cap U_\tau)\otimes V_{\sqrt{p}Q}$ for $\sigma,\tau\in W$. 
The transition function $\varphi_{\sigma,\tau}^{0,0}$ defines a linear automorphism of $H^0(\xi_{0}|_{U_{\sigma}\cap U_{\tau}})$ by
\begin{align}\label{beforevoastr}
(\varphi_{\sigma,\tau}^{0,0}(f\otimes A))(x)=f(x)\varphi_{\sigma,\tau}^{0,0}(x)A,
\end{align}
where $x\in U_{\sigma}\cap U_{\tau}$, $f\in\mathcal{O}_{G/B}(U_\sigma\cap U_\tau)$, and $A\in V_{\sqrt{p}Q}$. 
By Corollary \ref{cor:ft41} and \eqref{beforevoastr}, 
for $f,g\in\mathcal{O}_{G/B}(U_\sigma\cap U_\tau)$, $A,B\in V_{\sqrt{p}Q}$, and $n\in\mathbb{Z}$, we have
\begin{align}
&{\varphi_{\sigma,\tau}^{0,0}}(f\otimes|0\rangle)=f\otimes|0\rangle,~{ \varphi_{\sigma,\tau}^{0,0}}(f\otimes\omega)=f\otimes\omega,\\
& {\varphi_{\sigma,\tau}^{0,0}}((f\otimes A)_{(n)}(g\otimes B))=({\varphi_{\sigma,\tau}^{0,0}}(f\otimes A))_{(n)}({\varphi_{\sigma,\tau}^{0,0}}(g\otimes B)).\label{32.99}
 \end{align}
 Thus, for $\sigma,\tau\in W$, the transition function $\varphi_{\sigma,\tau}^{0,0}$ defines a vertex operator algebra automorphism of $H^0(\xi_0|_{U_\sigma\cap U_\tau})$.

\begin{lemm}\label{voastr}
The sheaf cohomology $H^0(\xi_0)$ has a vertex operator algebra structure. Furthermore, the group $G$ acts on $H^0(\xi_0)$ as  automorphisms of vertex operator algebra.
\end{lemm}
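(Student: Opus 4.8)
The plan is to transport the vertex operator algebra structure from the local charts $H^0(\xi_0|_{U_\sigma})$ to $H^0(\xi_0)$ using the description \eqref{strofH0} of $H^0(\xi_0)$ as the space of compatible tuples $(s_\sigma)_{\sigma\in W}$. Recall that each $H^0(\xi_0|_{U_\sigma})\simeq\mathcal{O}_{G/B}(U_\sigma)\otimes V_{\sqrt{p}Q}$ is a vertex operator algebra and that the restriction maps to $H^0(\xi_0|_{U_\sigma\cap U_\tau})$ are homomorphisms of vertex algebras. I would define the $n$-th products on $H^0(\xi_0)$ componentwise, setting $(s_{(n)}t)_\sigma=(s_\sigma)_{(n)}(t_\sigma)$ for all $\sigma\in W$. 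The one thing to check is that this tuple again satisfies the gluing relation $s_\tau|_{U_\sigma\cap U_\tau}=\varphi_{\sigma,\tau}^{0,0}\circ s_\sigma|_{U_\sigma\cap U_\tau}$ defining the image of \eqref{strofH0}; but this is immediate from \eqref{32.99}, which says that each transition function $\varphi_{\sigma,\tau}^{0,0}$ is a vertex operator algebra automorphism and hence commutes with every $n$-th product. Thus the compatible tuples form a vertex subalgebra of the finite direct sum $\bigoplus_{\sigma\in W}H^0(\xi_0|_{U_\sigma})$.

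It remains to identify the vacuum and conformal vectors and to verify the grading axioms. Both $|0\rangle$ and the conformal vector $\omega$ of \eqref{(11.5)} are fixed by the $B$-action of Theorem \ref{ft41}: for $\lambda=0$ one has $f_i|0\rangle=0$ and $h_{i,0}|0\rangle=0$, and since $\omega=L_{-2}|0\rangle$ while $f_i,h_{i,0}$ commute with $L_{-2}$ by \eqref{(19.0)}, they annihilate $\omega$ as well. Hence $gB\mapsto[g,|0\rangle]$ and $gB\mapsto[g,\omega]$ are global sections of $\xi_0$, with compatible local representatives $1_\sigma\otimes|0\rangle$ and $1_\sigma\otimes\omega$, which I would take as the vacuum and conformal vectors. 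The vertex algebra axioms then follow componentwise from those on each chart. To obtain a genuine vertex operator algebra I would invoke the conformal decomposition \eqref{confdecomp}: since $G/B$ is projective and each $(V_{\sqrt{p}Q})_\Delta$ is finite-dimensional, every graded piece $H^0((\xi_0)_\Delta)$ is finite-dimensional, and the grading is bounded below because the conformal weights $\Delta_{-\sqrt{p}\alpha}$ in \eqref{(14)} are bounded below on $Q$.

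For the last assertion I would show that every $g\in G$ acts by a vertex operator algebra automorphism. Because the screening and Cartan operators act as derivations of the vertex algebra $V_{\sqrt{p}Q}$ (\eqref{infinitesimalisom}, \eqref{infinitesimalisom2}), the $B$-action on $V_{\sqrt{p}Q}$ is by vertex operator algebra automorphisms; consequently every fiber of $\xi_0$ carries a canonical vertex operator algebra structure, independent of the trivialization, with respect to which the componentwise products above are computed pointwise. The $G$-action $gs=g\circ s\circ g^{-1}$ of \eqref{gactiononglobalsection} is induced fiberwise by the maps $[h,v]\mapsto[gh,v]$, which are isomorphisms of fiber vertex operator algebras; therefore $g(s_{(n)}t)=(gs)_{(n)}(gt)$, and $g$ fixes the constant vacuum and conformal sections. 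Hence $G$ acts as a group of automorphisms of the vertex operator algebra $H^0(\xi_0)$.

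The main obstacle is the closedness in the first paragraph, namely that the componentwise vertex product sends compatible tuples to compatible tuples. This, however, has effectively been secured in advance by the computation \eqref{32.99} showing that the transition functions are vertex operator algebra automorphisms; once this input is available, the remainder of the argument is the routine packaging of local data into a global structure together with the finiteness check on the conformal grading.
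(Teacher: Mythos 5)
Your proposal is correct and follows essentially the same route as the paper: componentwise $n$-th products, closure under gluing via the fact \eqref{32.99} that the transition functions $\varphi^{0,0}_{\sigma,\tau}$ are vertex operator algebra automorphisms, and the vacuum and conformal vectors coming from the $\mathfrak{b}$-invariance of $|0\rangle$ and $\omega$. For the $G$-action the paper computes in the model $(\mathcal{O}_G(G)\otimes V_{\sqrt{p}Q})^B$ via \eqref{(31.2)} rather than fiberwise as you do, but the two verifications are the same observation in different packaging, and your explicit checks (that $\omega=L_{-2}|0\rangle$ is annihilated by $f_i,h_{i,0}$ via \eqref{(19.0)}, and the finiteness of the graded pieces) only make explicit what the paper leaves implicit.
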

%automorphism follows from
%\begin{align}
%(g(s_{(n)}t))(g_0B)=(s_{(n)}t)(g^{-1}g_0B)=s(g^{-1}g_0B)_{(n)}t(g^{-1}g_0B)=(gs)(g_0B)_{(n)}gt(g_0B)=((gs)_{(n)}gt)(g_0B)
%\end{align}

\begin{proof}
For $(s_{\sigma})_{\sigma\in W},(t_{\sigma})_{\sigma\in W}\in H^0(\xi_0)$ and $n\in\mathbb{Z}$, set 
\begin{equation}\label{(33.1)}
((s_{\sigma})_{\sigma\in W})_{(n)}(t_{\sigma})_{\sigma\in W}=((s_{\sigma})_{(n)}t_{\sigma})_{\sigma\in W}\in\bigoplus_{\sigma\in W}H^0(\xi_0|_{U_{\sigma}}).
\end{equation}
We show that \eqref{(33.1)} defines the vertex operator algebra structure on $H^0(\xi_0)$.
First, because the action of $\mathfrak{b}$ on $V_{\sqrt{p}Q}$ annihilates $|0\rangle$ and $\omega$, the vacuum vector $(1_w\otimes|0\rangle)_{w\in W}$ and the conformal vector $(1_w\otimes\omega)_{w\in W}$ of $\bigoplus_{w\in W}\mathcal{O}(U_w)\otimes V_{\sqrt{p}Q}$ are contained in $H^0(\xi_0)$.
Second, we show that $H^0(\xi_0)$ is closed under the product \eqref{(33.1)}.
Since the transition function $\varphi_{\sigma,\tau}^{0,0}$ is an automorphism of $H^0(\xi_0|_{U_\sigma\cap U_\tau})$, for $(s_{w})_{w\in W},(t_{w})_{w\in W}\in H^0(\xi_0)$ and $n\in\mathbb{Z}$, we have 
\begin{align}
((s_{\tau})_{(n)}t_{\tau})|_{U_{\sigma}\cap U_{\tau}}&=(\varphi_{\sigma,\tau}^{0,0}\circ{s_{\sigma}}|_{U_{\sigma}\cap U_{\tau}})_{(n)}(\varphi_{\sigma,\tau}^{0,0}\circ t_{\sigma}|_{U_{\sigma}\cap U_{\tau}})\\
&=\varphi_{\sigma,\tau}^{0,0}\circ((s_{\sigma}|_{U_{\sigma}\cap U_{\tau}})_{(n)}t_{\sigma}|_{U_{\sigma}\cap U_{\tau}})\nonumber\\
&=\varphi_{\sigma,\tau}^{0,0}\circ((s_{\sigma})_{(n)}t_{\sigma})|_{U_{\sigma}\cap U_{\tau}}.\nonumber
\end{align}
Hence we have $((s_{w})_{w\in W})_{(n)}(t_{w})_{w\in W}\in H^0(\xi_0)$ by \eqref{strofH0}. 

We prove the last claim.
 By the $G$-module isomorphism \eqref{(31.1)},
 $(\mathcal{O}_G(G)\otimes V_{\sqrt{p}Q})^B$ inherits the vertex operator algebra structure from $H^0(\xi_0)$ by $(s_{(n)}t)(g)=s(g)_{(n)}t(g)$ for $s,t\in(\mathcal{O}_G(G)\otimes V_{\sqrt{p}Q})^B$, $g\in G$, $n\in\mathbb{Z}$. 
 By definition, \eqref{(31.1)} is the vertex operator algebra isomorphism.
 Moreover, by \eqref{(31.2)}, we have
\begin{align}
(g(s_{(n)}t))(g_0)=(s_{(n)}t)(g^{-1}g_0)=(s(g^{-1}g_0))_{(n)}t(g^{-1}g_0)=((gs)_{(n)}gt)(g_0)
\end{align}
for $g,g_0\in G$, $s,t\in(\mathcal{O}_G(G)\otimes V_{\sqrt{p}Q})^B$ and $n\in\mathbb{Z}$. Thus, the last claim is proved.
\end{proof}
\hypertarget{lastcomment}{In the same way as Lemma \ref{voastr}, for each $\lambda\in\Lambda$ and $\mu\in\mathfrak{h}^\ast$, $H^0(\xi_{\lambda}(\mu))$ is the $H^0(\xi_0)$-module and $G$ acts on $H^0(\xi_{\lambda}(\mu))$ as  $H^0(\xi_0)$-module automorphisms.}

\subsection{Narrow screening operators}
For $\sigma\in W$ and $\lambda\in\Lambda$, set
\begin{align}\label{(16.2)}
\sigma\ast\lambda=-\sqrt{p}\hat{\lambda}+\frac{1}{\sqrt{p}}(\sigma(\sqrt{p}\bar\lambda+\rho)-\rho)\in\frac{1}{\sqrt{p}}P.
\end{align}
Then \eqref{(16.2)} defines a $W$-action on $\Lambda$.
For $\lambda\in\Lambda$ and $\sigma\in W$, set
\begin{align}\label{(666)}
\epsilon_{\lambda}(\sigma)=\frac{1}{\sqrt{p}}(\sigma\ast\bar\lambda-\overline{\sigma\ast\lambda})\in P.
\end{align}
Note that $\epsilon_{\lambda}(\operatorname{id})=0$ for any $\lambda\in\Lambda$.
Note also that when $(\alpha_i,\alpha_j)=-1$, we have
\begin{align}\label{tuika15}
(\epsilon_\lambda(\sigma_i),\alpha_j)=1\Leftrightarrow (\sqrt{p}\bar\lambda,\alpha_i+\alpha_j)+1\geq p\Leftrightarrow (\epsilon_\lambda(\sigma_j),\alpha_i)=1.
\end{align}
It is straightforward to verify that
\begin{align}
&\epsilon_\lambda(\sigma_i)+\epsilon_{\sigma_i\ast\lambda}(\sigma_i)=-\alpha_i-\delta_{(\sqrt{p}\bar\lambda,\alpha_i),p-1}\alpha_i,\label{401-1}\\
&\epsilon_{\lambda}(\sigma)=\sigma_i(\epsilon_{\lambda}(\sigma_i\sigma)+\epsilon_{\sigma_i\sigma\ast\lambda}(\sigma_i)+\rho)-\rho-\delta_{(\sqrt{p}\overline{\sigma\ast\lambda},\alpha_i),p-1}\alpha_i.\label{(401)}
\end{align}
\begin{rmk}\label{beloweps}
When $(\sqrt{p}\bar\lambda,\alpha_i)\leq p-2$, we have 
\begin{align}\label{neew}
(\epsilon_\lambda(\sigma_i),\alpha_i)=(\epsilon_{\sigma_i\ast\lambda}(\sigma_i),\alpha_i)=-1.
\end{align}
On the other hand, when $(\sqrt{p}\bar\lambda,\alpha_i)=p-1$, we have
$\sigma_i\ast\lambda=\lambda-\sqrt{p}\alpha_i$ and 
\begin{align}\label{401+1}
\epsilon_\lambda(\sigma_i)=\epsilon_{\sigma_i\ast\lambda}(\sigma_i)=-\alpha_i.
\end{align}
\end{rmk}
\begin{rmk}\label{rmkbeloweps}
For any $\lambda\in\Lambda$, $\sigma\in W$, and $1\leq i\leq l$ such that $l(\sigma_i\sigma)=l(\sigma)+1$, we have $(\epsilon_{\lambda}(\sigma),\alpha_i)\geq 0$ because $\sqrt{p}\bar\lambda\in P_+$.
On the other hand, for any $1\leq i\leq l$ such that $l(\sigma_i\sigma)=l(\sigma)-1$, by \eqref{401-1} and \eqref{(401)}, we have $(\epsilon_{\lambda}(\sigma),\alpha_i)\leq -1$.
\end{rmk}
The following Lemma is proved in Appendix. 
\begin{lemm}\label{condequiv}
For the longest element $w_0$ in $W$, let us take a minimal expression of $w_0$ as $w_0=\sigma_{i_{l(w_0)}}\dots\sigma_{i_1}$.
Then for $\lambda\in\Lambda$, the following conditions are equivalent:
\begin{enumerate}
\item\label{condition1}
 For any $1\leq n\leq l(w_0)-1$, we have
$(\epsilon_{\lambda}(\sigma_{i_n}\dots\sigma_{i_1}),\alpha_{i_{n+1}})=0$.
\item\label{condition2}
 We have $(\sqrt{p}\bar\lambda+\rho,\theta)\leq p$.
\end{enumerate}
\end{lemm}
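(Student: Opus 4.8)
The plan is to collapse both conditions into a single family of inequalities $(\sqrt{p}\bar\lambda+\rho,\beta)\le p$ indexed by the positive roots $\beta$, and then to the single highest root $\theta$. The crux is a closed formula for $\epsilon_\lambda(\sigma)$. Writing out \eqref{(666)} and \eqref{(16.2)} for $\sigma\ast\bar\lambda$, and using $\widehat{\bar\lambda}=0$ and $\overline{\bar\lambda}=\bar\lambda$, we have $\sqrt{p}\,(\sigma\ast\bar\lambda)=\sigma(\sqrt{p}\bar\lambda+\rho)-\rho$, while $\overline{\sigma\ast\lambda}=\overline{\sigma\ast\bar\lambda}$ because these two differ by an element of $\sqrt{p}P$. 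Since the representative $\overline{\ \cdot\ }$ in $\bar\Lambda_p$ reduces each $\omega_j$-coefficient modulo $p$ into $\{0,\dots,p-1\}$, comparing coefficients and using $(\omega_i,\alpha_j)=\delta_{ij}$ and $(\rho,\alpha_j)=1$ yields
\[
(\epsilon_\lambda(\sigma),\alpha_j)=\left\lfloor\frac{(\sqrt{p}\bar\lambda+\rho,\sigma^{-1}\alpha_j)-1}{p}\right\rfloor,
\]
which one can cross-check against Remark \ref{beloweps} and the recursions \eqref{401-1}, \eqref{(401)}.

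Next I would specialize to $\sigma=\sigma^{(n)}:=\sigma_{i_n}\cdots\sigma_{i_1}$, so that $(\sigma^{(n)})^{-1}\alpha_{i_{n+1}}=\sigma_{i_1}\cdots\sigma_{i_n}(\alpha_{i_{n+1}})=:\gamma_n$. Since $w_0$ is an involution, $\sigma_{i_1}\cdots\sigma_{i_{l(w_0)}}$ is again a reduced expression for $w_0$, so the standard enumeration of the positive roots through a reduced word shows that $\{\gamma_n\}_{n=0}^{l(w_0)-1}$ is exactly the set of positive roots, with $\gamma_0=\alpha_{i_1}$. In particular each $\gamma_n$ is positive, whence $(\sqrt{p}\bar\lambda+\rho,\gamma_n)\ge(\rho,\gamma_n)\ge1$, and the displayed formula gives
\[
(\epsilon_\lambda(\sigma^{(n)}),\alpha_{i_{n+1}})=0\iff(\sqrt{p}\bar\lambda+\rho,\gamma_n)\le p.
\]
Therefore condition \eqref{condition1} amounts to $(\sqrt{p}\bar\lambda+\rho,\gamma_n)\le p$ for $n=1,\dots,l(w_0)-1$, i.e.\ for every positive root other than $\gamma_0=\alpha_{i_1}$; the excluded root is automatic since $(\sqrt{p}\bar\lambda+\rho,\alpha_{i_1})=(\sqrt{p}\bar\lambda,\alpha_{i_1})+1\le p$. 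Thus \eqref{condition1} is equivalent to $(\sqrt{p}\bar\lambda+\rho,\beta)\le p$ for all positive roots $\beta$.

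To finish, I would reduce this family to the single root $\theta$. Because $\sqrt{p}\bar\lambda+\rho$ is dominant and $\theta-\beta$ is a nonnegative integral combination of simple roots for every positive root $\beta$, we have $(\sqrt{p}\bar\lambda+\rho,\beta)\le(\sqrt{p}\bar\lambda+\rho,\theta)$. Hence the whole family of inequalities holds if and only if the single inequality $(\sqrt{p}\bar\lambda+\rho,\theta)\le p$ does, which is condition \eqref{condition2}. This closes the chain of equivalences.

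The one genuinely delicate point is the first step, pinning down $\epsilon_\lambda(\sigma)$ exactly, which rests on the coefficientwise mod-$p$ description of the representative in $\bar\Lambda_p$ together with $\overline{\sigma\ast\lambda}=\overline{\sigma\ast\bar\lambda}$. One could instead avoid the closed formula and argue by induction along the reduced word, using the recursions \eqref{401-1} and \eqref{(401)} and the sign dichotomy of Remark \ref{rmkbeloweps} (length up gives a nonnegative pairing, length down a negative one); but the floor-function identity makes the passage to condition \eqref{condition2} immediate, so that is the route I would take.
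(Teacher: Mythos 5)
Your proof is correct, but it follows a genuinely different and more uniform route than the paper's. The paper first reformulates condition \eqref{condition1} as a telescoping identity (Lemma \ref{condequiv2}), proves independence of the reduced expression (Lemma \ref{indepofminexp}), and then verifies the equivalence type by type against the explicit reduced words of Table \ref{table0} and the tabulated sequences $(\epsilon_{\sigma\ast\lambda})$ of Table \ref{table}. You instead derive the closed formula $(\epsilon_\lambda(\sigma),\alpha_j)=\lfloor((\sqrt{p}\bar\lambda+\rho,\sigma^{-1}\alpha_j)-1)/p\rfloor$ directly from \eqref{(666)} and \eqref{(16.2)} --- which checks out, since $\overline{\sigma\ast\lambda}=\overline{\sigma\ast\bar\lambda}$ and the representative in $\bar\Lambda_p$ is obtained by reducing each $\omega_j$-coefficient modulo $p$, and the formula is consistent with Remark \ref{beloweps}, Remark \ref{rmkbeloweps} and \eqref{401-1}, \eqref{(401)} --- and then invoke the standard enumeration of positive roots along the reversed reduced word $\sigma_{i_1}\cdots\sigma_{i_{l(w_0)}}$ of $w_0=w_0^{-1}$ to identify condition \eqref{condition1} with the family of inequalities $(\sqrt{p}\bar\lambda+\rho,\beta)\le p$ over all positive roots $\beta$, which collapses to the single inequality for $\theta$ by dominance. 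What your approach buys is a type-free, conceptually transparent proof that also makes the independence of the reduced expression automatic (the inversion set of $w_0$ does not depend on the word chosen); what the paper's tables buy is explicit data, e.g.\ the individual increments $\epsilon_{\sigma_{i_j}\cdots\sigma_{i_1}\ast\lambda}(\sigma_{i_{j+1}})$ used for Corollary \ref{cortable}, which your floor formula recovers only in aggregate.
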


To define the {\it narrow screening operators} \eqref{narrowscreening1} and \eqref{narrowscreening2}, let us recall notations in Section \ref{subsect:lattice}.
 By \cite[Section 5]{BK}, the vector space $V_{\frac{1}{\sqrt{p}}P}$ has a generalized vertex algebra \cite{BK,DL} structure by 
\begin{align}
V_{\frac{1}{\sqrt{p}}P}(a,z)=a(z),~V_{\frac{1}{\sqrt{p}}P}(|\mu\rangle,z)=V_{|\mu\rangle}(z),
\end{align}%\commentT{$'$のまま}
for $a\in\mathfrak{h}$ and $\mu\in\frac{1}{\sqrt{p}}P$.
Note that $V_{\sqrt{p}Q}$ is not a generalized vertex subalgebra of $V_{\frac{1}{\sqrt{p}}P}$, because $V_{|\sqrt{p}\alpha\rangle}(z)\not=|\sqrt{p}\alpha\rangle(z)$ in general.
To distinguish vertex operators of $V_{\frac{1}{\sqrt{p}}P}$ from that of $V_{\sqrt{p}Q}$, for $a\in V_{\frac{1}{\sqrt{p}}P}$, let us use the notation
\begin{align}
{V_{\frac{1}{\sqrt{p}}P}}(a,z)=\sum_{n\in\mathbb{Q}}a^{\text{gen}}_{(n)}z^{-n-1}.
\end{align}
Then by \cite[(27)]{BK}, for any $\mu,\nu\in\frac{1}{\sqrt{p}}P$, $a\in\mathcal{F}_{\mu}$, $b\in\mathcal{F}_{\nu}$, $v\in V_{\frac{1}{\sqrt{p}}P}$, we have 
\begin{align}\label{genborcherds}
(a^{\text{gen}}_{(m)}b^{\text{gen}}_{(n)}-(-1)^{(\mu,\nu)}b^{\text{gen}}_{(n)}a^{\text{gen}}_{(m)})v=\sum_{j\geq 0}\binom{m}{j}(a^{\text{gen}}_{(j)}b)^{\text{gen}}_{(m+n-j)}v.
\end{align} 
For $1\leq i\leq l$, let $F_{i,0}$ denote the operator in $\operatorname{Hom}_{\mathbb{C}}(V_{\sqrt{p}P},V_{\sqrt{p}P-\frac{1}{\sqrt{p}}\alpha_i})$ defined by
\begin{align}\label{narrowscreening1}
F_{i,0}=|-\frac{1}{\sqrt{p}}\alpha_i\rangle_{(0)}={|-\frac{1}{\sqrt{p}}\alpha_i\rangle}^{\text{gen}}_{(0)}\epsilon(-\frac{1}{\sqrt{p}}\alpha_i,\cdot).
\end{align}
Then it is straightforward to check using \eqref{genborcherds} and \cite[(5.4.14)]{Kac} that for any $x\in \ker {F_{i,0}}|_{V_{\sqrt{p}Q}}$ and $n\in\mathbb{Z}$, we have
\begin{align}\label{(18.5)}
[F_{i,0},x_{(n)}]=0.
\end{align}

\hypertarget{perp}{Denote by $\mathcal{F}_0^i$ the rank $1$ Heisenberg vertex algebra generated by $\alpha_i$, and $\mathcal{F}_0^{i,\perp}$ the rank $l-1$ Heisenberg vertex algebra generated by $\{\omega_j\}_{1\leq j\not=i\leq l}$.
Then for $\alpha\in Q$ and $\lambda\in\Lambda$, we have }
\begin{align}\label{alignperp}
\mathcal{F}_{-\sqrt{p}\alpha+\lambda}\simeq\mathcal{U}(\mathcal{F}_0^{i,\perp})\otimes\mathcal{U}(\mathcal{F}_0^i)|-\sqrt{p}\alpha+\lambda\rangle.
\end{align}
By \cite{NT,TK}, if $s_i:=(\sqrt{p}\bar\lambda,\alpha_i)\leq p-2$, then we have the narrow screening operator $F_{i,\lambda}\in\operatorname{Hom}_{\mathbb{C}}(\mathcal{U}(\mathcal{F}_0^i)|-\sqrt{p}\alpha+\lambda\rangle,\mathcal{U}(\mathcal{F}_0^i)|-\sqrt{p}\alpha+\sigma_i\ast\lambda\rangle)$ defined by
\begin{align}\label{narrowscreening2}
F_{i,\lambda}=\int_{[\Gamma_{s_i+1}]}|-\frac{1}{\sqrt{p}}\alpha_i\rangle(z_1)\dots |-\frac{1}{\sqrt{p}}\alpha_i\rangle(z_{s_i+1})\mathrm{d}z_1\dots\mathrm{d}z_{s_i+1},
\end{align}
where the cycle $[\Gamma_{s_i+1}]$ is given in \cite[Proposition 2.1]{NT}. By \cite[Theorem 2.7, 2.8]{NT}, we have $F_{i,\lambda}\not=0$.  
Because the field $|-\frac{1}{\sqrt{p}}\alpha_i\rangle(z)$ commutes with the action of $\mathcal{U}(\mathcal{F}_0^{i,\perp})$, $F_{i,\lambda}$ also defines the operator in $\operatorname{Hom}_{\mathbb{C}}(\mathcal{F}_{-\sqrt{p}\alpha+\lambda},\mathcal{F}_{-\sqrt{p}\alpha+\sigma_i\ast\lambda})$, and thus, in
$
\operatorname{Hom}_{\mathbb{C}}(V_{\sqrt{p}P+\bar\lambda},V_{\sqrt{p}P+\sigma_i\ast\bar\lambda}).
$
For convenience, for $\lambda\in\Lambda$ such that $(\sqrt{p}\bar\lambda,\alpha_i)=p-1$, we set $F_{i,\lambda}=0$.
%since $F_i(z)$ and a field from $V_{\sqrt{p}Q}$ satisfy the same relations in \eqref{(11.1)}, \eqref{(11.2)}, \eqref{(11.3)}, 
For $1\leq i,j\leq l$, $\mu\in\mathfrak{h}^\ast$, $a\in\ker F_{i,0}|_{V_{\sqrt{p}Q}}$ and $n\in\mathbb{Z}$, it is straightforward to verify that 
\begin{align}
&[F_{j,\lambda},a_{(n)}]=[F_{j,\lambda}, L_n]=[F_{j,\lambda}, f_i]=0,\label{(19.1)}\\
&F_{j,\lambda}h_{i,\lambda}(\mu)=h_{i,\sigma_i\ast\lambda}(\mu+\epsilon_\lambda(\sigma_j))F_{j,\lambda}.\label{(19.11)}
\end{align}
In particular, $F_{i,\lambda}$ preserves the conformal grading.

By \eqref{(18.5)} and \eqref{(19.1)}, for any subset $J$ of $\Pi$, $\bigcap_{i\in J}\ker F_{i,0}|_{V_{\sqrt{p}Q}}$ is a vertex operator subalgebra of $V_{\sqrt{p}Q}$ with the conformal vector $\omega$.
Also, by \eqref{(19.1)}, for every $\lambda\in\Lambda$, $\bigcap_{i\in J}\ker F_{i,\lambda}|_{V_{\sqrt{p}Q+\lambda}}$ is a module over $\bigcap_{i\in J}\ker F_{i,0}|_{V_{\sqrt{p}Q}}$.

\begin{dfn}\label{logWalg}
The {\it logarithmic $W$-algebra associated with $\sqrt{p}Q$}
is the vertex operator subalgebra $W(p)_Q$ of $V_{\sqrt{p}Q}$
defined by
\begin{align*}
W(p)_Q=\bigcap_{i=1}^l\ker F_{i,0}|_{V_{\sqrt{p}Q}}.
\end{align*}
Also, for every $\lambda\in\Lambda$, we define the $W(p)_Q$-submodule $W(p,\lambda)_Q$ of $V_{\sqrt{p}Q+\lambda}$ by
\begin{align*}
W(p,\lambda)_Q=\bigcap_{i=1}^l\ker F_{i,\lambda}|_{V_{\sqrt{p}Q+\lambda}}.
\end{align*}
%The vertex operator subalgebra $W(p)_Q$ of $V_{\sqrt{p}Q}$ is
%We call $W(p)_Q$ the {\it logarithmic $W$-algebra associated with $\sqrt{p}Q$}.
\end{dfn}
In particular, in the case where $\mathfrak{g}=\mathfrak{sl}_2$, $W(p)_Q$ is the {\it triplet $W$-algebra} (\cite{AM1}-\cite{AM3}, \cite{FGST1}-\cite{FGST3}, \cite{NT}, \cite{TW}).

\begin{dfn}\label{logWalgs}
\begin{enumerate}
\item
For any $1\leq i\leq l$ and $\lambda\in\Lambda$,
$W(p)_Q^i$ denotes the vertex operator subalgebra of $V_{\sqrt{p}Q}$ 
defined by
\begin{align*}
W(p)_Q^i=\ker F_{i,0}|_{V_{\sqrt{p}Q}}, 
\end{align*}
and $W(p,\lambda)_Q^i$ denotes the $W(p)_Q^i$-submodule of $V_{\sqrt{p}Q+\lambda}$
defined by
\begin{align*}
W(p,\lambda)_Q^i=\ker F_{i,\lambda}|_{V_{\sqrt{p}Q+\lambda}}.
\end{align*}
\item
More generally, for any subset $J\subseteq\Pi$ and $\lambda\in\Lambda$,  $W(p)_Q^J$ denotes the vertex operator subalgebra of $V_{\sqrt{p}Q}$ 
defined by
\begin{align*}
W(p)_Q^J=\bigcap_{i\in J}\ker F_{i,0}|_{V_{\sqrt{p}Q}},
\end{align*}
and $W(p,\lambda)_Q^J$  denotes the $W(p)_Q^J$-submodule of $V_{\sqrt{p}Q+\lambda}$ defined by
\begin{align*}
W(p,\lambda)_Q^J=\bigcap_{i\in J}\ker F_{i,\lambda}|_{V_{\sqrt{p}Q+\lambda}}. 
\end{align*}
\end{enumerate}
\end{dfn}

Note that  we have $W(p)_Q^\Pi=W(p)_Q$ and $W(p,\lambda)_Q^\Pi=W(p,\lambda)_Q$ for $J=\Pi$.
Note also that $W(p,\lambda)_Q^J$ is a $W(p)_Q^I$-module for $J\subseteq I\subseteq \Pi$.
%In particular, when $J=\Pi$, we have $W(p)_Q^\Pi=W(p)_Q$ and $W(p,\lambda)_Q^\Pi=W(p,\lambda)_Q$.
%When  
By \eqref{(19.1)} and \eqref{(19.11)}, for any $J\subseteq\Pi$ and $\lambda\in\Lambda$, $W(p,\lambda)_Q^J$ is a $B$-submodule of $V_{\sqrt{p}Q+\lambda}(\mu)$ for the $B$-action in Theorem \ref{ft41}.
More generally, for any $\mu\in\mathfrak{h}^\ast$, 
$W(p,\lambda)_Q^J(\mu)$ (see \hyperlink{$V(\mu)$}{Section \ref{subsect:notation}}) is a $B$-submodule of $V_{\sqrt{p}Q+\lambda}(\mu)$ for the $B$-action in Theorem \ref{ft41}.

Similarly,
%On the other hand, 
by \eqref{(19.0)} and \eqref{infinitesimalisom}, for any subset $J\subseteq \Pi$, $\bigcap_{i\in J}\ker f_i|_{\mathcal{F}_0}$ is a vertex operator subalgebra of $\mathcal{F}_0$ with the conformal vector $\omega$.

\begin{dfn}\label{dfnmathcalW}
Let $\mathcal{W}_0$ be the vertex operator subalgebra of $\mathcal{F}_0$ is defined by
\begin{align*}
\mathcal{W}_0=\bigcap_{i=1}^l\ker f_i|_{\mathcal{F}_0}.
\end{align*}
Also, for any $\lambda\in\Lambda$ and $\alpha\in Q$, 
let $\mathcal{W}_{-\sqrt{p}\alpha+\lambda}$  be
 the $\mathcal{W}_0$-submodule of $\mathcal{F}_{-\sqrt{p}\alpha+\lambda}$ defined by
\begin{align*}
\mathcal{W}_{-\sqrt{p}\alpha+\lambda}=\bigcap_{i=1}^l(\mathcal{U}(\ker f_i|_{\mathcal{F}_0})|-\sqrt{p}\alpha+\lambda\rangle).
\end{align*}
\end{dfn}

\begin{dfn}\label{dfnmathcalWs}
\begin{enumerate}
\item
For any $1\leq i\leq l$, $\alpha\in Q$ and $\lambda\in\Lambda$, $\mathcal{W}_0^i$ denotes the vertex operator subalgebra of $\mathcal{F}_0$ defined by
\begin{align*}
\mathcal{W}_0^i=\ker f_i|_{\mathcal{F}_0},
\end{align*}
 and $\mathcal{W}_{-\sqrt{p}\alpha+\lambda}^i$ denotes the $\mathcal{W}_0^i$-submodule of $\mathcal{F}_{-\sqrt{p}\alpha+\lambda}$ defined by
\begin{align*}
\mathcal{W}_{-\sqrt{p}\alpha+\lambda}^i=\mathcal{U}(\mathcal{W}_0^i)|-\sqrt{p}\alpha+\lambda\rangle.\end{align*}
\item
More generally, for any subset $J\subseteq\Pi$, $\alpha\in Q$ and $\lambda\in\Lambda$, $\mathcal{W}_0^J$ denotes the vertex operator subalgebra of $\mathcal{F}_0$ defined by
\begin{align*}
\mathcal{W}_0^J=\bigcap_{i\in J}\mathcal{W}_0^i,
\end{align*}
and $\mathcal{W}_{-\sqrt{p}\alpha+\lambda}^J$ denotes the $\mathcal{W}_0^J$-submodule of $\mathcal{F}_{-\sqrt{p}\alpha+\lambda}$ defined by
\begin{align*}
\mathcal{W}_{-\sqrt{p}\alpha+\lambda}^J=\bigcap_{i\in J}\mathcal{W}_{-\sqrt{p}\alpha+\lambda}^i.\end{align*}
\end{enumerate}
\end{dfn}
Note that we have $\mathcal{W}_0=\mathcal{W}_0^\Pi$ and $\mathcal{W}_{-\sqrt{p}\alpha+\lambda}=\mathcal{W}_{-\sqrt{p}\alpha+\lambda}^\Pi$ for $J=\Pi$.
Note also that $\mathcal{W}_{-\sqrt{p}\alpha+\lambda}^J$ is a $\mathcal{W}_0^I$-module for $J\subseteq I\subseteq\Pi$.
The following lemma follows from Theorem \ref{am1} \eqref{am1:modstr} in Section \ref{reviewam1}.
\begin{lemm}\label{lemm in new section}
For $J\subseteq I\subseteq \Pi$, $\mathcal{W}_0^I$ is a vertex operator subalgebra of $W(p)_Q^J$.
Moreover, for any $\lambda\in\Lambda$ and $\alpha\in Q$ such that $(\alpha+\hat\lambda,\alpha_i)\geq 0$ for any $i\in J$, $\mathcal{W}_{-\sqrt{p}\alpha+\lambda}^J$ is a $\mathcal{W}^I_0$-submodule of $W(p,\lambda)_Q^J$.
\end{lemm}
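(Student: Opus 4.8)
The plan is to reduce the statement to the rank-one situation attached to a single node and then recover the general case by intersecting. Since $J\subseteq I$, for every $j\in J$ we have $\mathcal{W}_0^I\subseteq\mathcal{W}_0^j$ and $\mathcal{W}_{-\sqrt{p}\alpha+\lambda}^J\subseteq\mathcal{W}_{-\sqrt{p}\alpha+\lambda}^j$, while $W(p)_Q^J=\bigcap_{j\in J}\ker F_{j,0}$ and $W(p,\lambda)_Q^J=\bigcap_{j\in J}\ker F_{j,\lambda}$ by Definition \ref{logWalgs}. Hence it suffices to prove, for each fixed $j\in J$, the two inclusions
\begin{align*}
\mathcal{W}_0^j\subseteq\ker F_{j,0}|_{V_{\sqrt{p}Q}}=W(p)_Q^j,\qquad
\mathcal{W}_{-\sqrt{p}\alpha+\lambda}^j\subseteq\ker F_{j,\lambda}|_{V_{\sqrt{p}Q+\lambda}}=W(p,\lambda)_Q^j,
\end{align*}
the second one under the hypothesis $(\alpha+\hat\lambda,\alpha_j)\geq0$.

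First I would isolate the $\alpha_j$-direction. By \eqref{alignperp} we have $\mathcal{F}_{-\sqrt{p}\alpha+\lambda}\simeq\mathcal{U}(\mathcal{F}_0^{j,\perp})\otimes\mathcal{U}(\mathcal{F}_0^j)|{-\sqrt{p}\alpha+\lambda}\rangle$, and the operators $f_j$, $F_{j,0}$, $F_{j,\lambda}$ all commute with the action of $\mathcal{U}(\mathcal{F}_0^{j,\perp})$: for $f_j=|\sqrt{p}\alpha_j\rangle_{(0)}$ and $F_{j,0}=|{-\tfrac{1}{\sqrt{p}}\alpha_j}\rangle_{(0)}$ this is immediate from \eqref{(11.2)}, since $(\sqrt{p}\alpha_j,\omega_k)=(-\tfrac{1}{\sqrt{p}}\alpha_j,\omega_k)=0$ for $k\neq j$, and for $F_{j,\lambda}$ it is the commutation of the current $|{-\tfrac{1}{\sqrt{p}}\alpha_j}\rangle(z)$ with $\mathcal{U}(\mathcal{F}_0^{j,\perp})$ noted when $F_{i,\lambda}$ was defined. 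Consequently $\mathcal{W}_0^j$, $\mathcal{W}_{-\sqrt{p}\alpha+\lambda}^j$, $\ker F_{j,0}$ and $\ker F_{j,\lambda}$ all factor as $\mathcal{U}(\mathcal{F}_0^{j,\perp})\otimes(-)$, and the two inclusions become statements inside the single rank-one Heisenberg $\mathcal{F}_0^j$ generated by $\alpha_j$ with $(\alpha_j,\alpha_j)=2$; this is exactly the $\mathfrak{sl}_2$ $(1,p)$-model governed by \cite{AM1}, i.e.\ Theorem \ref{am1}\eqref{am1:modstr}.

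Granting the first inclusion from Theorem \ref{am1}\eqref{am1:modstr}, the second reduces to a single vanishing. Indeed $\mathcal{W}_{-\sqrt{p}\alpha+\lambda}^j=\mathcal{U}(\mathcal{W}_0^j)|{-\sqrt{p}\alpha+\lambda}\rangle$, and every $a\in\mathcal{W}_0^j\subseteq\ker F_{j,0}|_{V_{\sqrt{p}Q}}$ satisfies $[F_{j,\lambda},a_{(n)}]=0$ by \eqref{(19.1)}; hence $F_{j,\lambda}$ commutes past all the generating modes and $F_{j,\lambda}\bigl(\mathcal{U}(\mathcal{W}_0^j)|{-\sqrt{p}\alpha+\lambda}\rangle\bigr)=\mathcal{U}(\mathcal{W}_0^j)\,F_{j,\lambda}|{-\sqrt{p}\alpha+\lambda}\rangle$. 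Thus the module inclusion follows once $F_{j,\lambda}|{-\sqrt{p}\alpha+\lambda}\rangle=0$, which holds in the dominant range $(\alpha+\hat\lambda,\alpha_j)\geq0$ by the rank-one analysis underlying am1 (cf.\ \cite{NT}). Intersecting over $j\in J$ then yields $\mathcal{W}_0^I\subseteq W(p)_Q^J$ and $\mathcal{W}_{-\sqrt{p}\alpha+\lambda}^J\subseteq W(p,\lambda)_Q^J$. For the module structure, since $\mathcal{W}_0^I\subseteq\mathcal{W}_0^j$ and $\mathcal{W}_{-\sqrt{p}\alpha+\lambda}^j$ is a $\mathcal{W}_0^j$-module by definition, $\mathcal{W}_0^I$ preserves each $\mathcal{W}_{-\sqrt{p}\alpha+\lambda}^j$, hence their intersection $\mathcal{W}_{-\sqrt{p}\alpha+\lambda}^J$; and $\mathcal{W}_0^I$ is a vertex operator subalgebra with conformal vector $\omega$ as recorded just before Definition \ref{dfnmathcalW}.

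The main obstacle is the first rank-one inclusion $\ker f_j|_{\mathcal{F}_0}\subseteq\ker F_{j,0}|_{\mathcal{F}_0}$ — the assertion that the long-screening kernel (the principal/singlet part) sits inside the short-screening kernel (the triplet-type algebra) — together with the vanishing $F_{j,\lambda}|{-\sqrt{p}\alpha+\lambda}\rangle=0$ for $(\alpha+\hat\lambda,\alpha_j)\geq0$. These two facts are genuinely representation-theoretic and are precisely the content of Theorem \ref{am1}\eqref{am1:modstr}; by contrast, the passage from a single node to arbitrary $J\subseteq I$, the factorization through $\mathcal{F}_0^{j,\perp}$, and the commutation argument via \eqref{(19.1)} are formal bookkeeping.
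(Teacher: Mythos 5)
Your proof is correct and follows essentially the same route as the paper, which simply observes that the lemma follows from Theorem \ref{am1} \eqref{am1:modstr}: the rank-one decomposition of $W(p,\lambda)_Q^j$ exhibits $\mathcal{W}^j_{-\sqrt{p}\alpha+\lambda}$ (the $m=0$ summand) inside $\ker F_{j,\lambda}$, and intersecting over $j\in J$ gives the statement. Your additional steps (the factorization through $\mathcal{F}_0^{j,\perp}$, the commutation via \eqref{(19.1)}, and the vanishing $F_{j,\lambda}|-\sqrt{p}\alpha+\lambda\rangle=0$) correctly fill in the bookkeeping the paper leaves implicit.
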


\begin{rmk}
We expect
%The author expect 
that  
$\mathcal{U}(\mathcal{W}_0^J)|\sqrt{p}\alpha+\lambda\rangle=\mathcal{W}^J_{\sqrt{p}\alpha+\lambda}$ and is irreducible as a $\mathcal{W}^J_0$-module
for any $J\subseteq\Pi$ and $\lambda\in\Lambda$. 
Note that the inclusion 
$\mathcal{U}(\mathcal{W}_0^J)|\sqrt{p}\alpha+\lambda\rangle\subseteq\mathcal{W}^J_{\sqrt{p}\alpha+\lambda}$
is easy to see.
\end{rmk}

\subsection{A review of some results in \cite{AM1,NT}}\label{reviewam1}
Let us recall notation in \eqref{alignperp} and \hyperlink{$V(\mu)$}{Section \ref{subsect:notation}}.
For any $\lambda\in\Lambda$ and $1\leq j\leq l$, we have the following decomposition:
\begin{align}\label{aligndecomp}
V_{\sqrt{p}Q+\lambda}\simeq\mathcal{U}(\mathcal{F}_0^{j,\perp})\otimes(\bigoplus_{[\alpha]\in Q/\mathbb{Z}\alpha_j}\bigoplus_{n\in\mathbb{Z}}\mathcal{U}(\mathcal{F}_0^j)|-\sqrt{p}(\alpha+n\alpha_j)+\lambda\rangle),
\end{align}
where for $\alpha,\beta\in Q$, $[\alpha]=[\beta]$ means that $\beta\in\alpha+\mathbb{Z}\alpha_j$.

\begin{thm}\label{am1}
Let $1\leq j\leq l$ and $\lambda\in\Lambda$. 
\begin{enumerate}
\item\label{am1:exseq}
If $(\sqrt{p}\bar\lambda,\alpha_j)\leq p-2$, then for any $\mu\in P$, we have the exact sequence
\begin{align}
0\rightarrow W(p,\lambda)^j_Q(\mu)\hookrightarrow V_{\sqrt{p}Q+\lambda}(\mu)\xrightarrow{F_{j,\lambda}} W(p,\sigma_j\ast\lambda)^j_Q(\mu+\epsilon_{\lambda}(\sigma_j))\rightarrow 0\nonumber
\end{align}
of $B$-modules and $\mathcal{W}^j_0$-modules.

\item\label{am1:cosing}
Let $\alpha\in Q$ such that $(\alpha+\hat\lambda,\alpha_j)\geq-1$. If $(\sqrt{p}\bar\lambda,\alpha_j)\leq p-2$, then there exists a vector $w_{-\sqrt{p}\alpha+\lambda}\in\mathcal{F}_{-\sqrt{p}\alpha+\lambda}$ such that
\begin{align}
&F_{j,\lambda}w_{-\sqrt{p}\alpha+\lambda}=|-\sqrt{p}(\alpha+\hat\lambda-\epsilon_\lambda(\alpha_j))+\overline{\sigma_j\ast\lambda}\rangle,\nonumber\\
&f_j^{(\alpha+\hat\lambda,\alpha_j)+1}w_{-\sqrt{p}\alpha+\lambda}=|-\sqrt{p}(\sigma_j(\alpha+\hat\lambda)-\alpha_j)+\bar\lambda\rangle.\nonumber
\end{align}
\item\label{am1:modstr}
As $\mathcal{W}^j_0$-modules, we have
\begin{align*}
W(p,\lambda)_Q^j\simeq\bigoplus_{\substack{\alpha\in Q,\\(\alpha+\hat\lambda,\alpha_j)\geq 0}}\bigoplus_{m=0}^{(\alpha+\hat\lambda,\alpha_j)}f_j^m\mathcal{W}^j_{-\sqrt{p}\alpha+\lambda}.
\end{align*}
Moreover, when $(\sqrt{p}\bar\lambda,\alpha_j)\leq p-2$, as $\mathcal{W}^j_0$-module, $V_{\sqrt{p}Q+\lambda}$ is generated by a family of linearly independent vectors
\begin{align}
&\{f_j^m|-\sqrt{p}\alpha+\lambda\rangle~|~(\alpha+\hat\lambda,\alpha_j)\geq 0,~0\leq m\leq (\alpha+\hat\lambda,\alpha_j)\}\nonumber\\
\sqcup&\{f_j^mw_{-\sqrt{p}\alpha+\lambda}~|~(\alpha+\hat\lambda,\alpha_j)\geq 0,~0\leq m\leq (\alpha+\hat\lambda,\alpha_j)+1\}.\nonumber
\end{align}
%\commentT{前のようにこの主張が最初の方が良い。}
%\responseT{Section \ref{subsectvectbdle} の直前のコメントをご参照ください。}
\end{enumerate}
\end{thm}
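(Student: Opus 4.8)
The plan is to reduce all three assertions to the rank-one ($A_1$) situation, where they are the structural results of Adamovi\'c--Milas \cite{AM1} and Nakatsuka--Tsuchiya \cite{NT} for the triplet and singlet $W$-algebras, and then to tensor and sum back. Concretely, I would use the orthogonal splitting $\mathfrak{h}^\ast=\mathbb{C}\alpha_j\oplus\operatorname{span}\{\omega_k\}_{k\neq j}$, which gives $\mathcal{F}_0\simeq\mathcal{U}(\mathcal{F}_0^j)\otimes\mathcal{U}(\mathcal{F}_0^{j,\perp})$ and, more generally, the decompositions \eqref{alignperp} and \eqref{aligndecomp}. The first point is that both $f_j=|\sqrt{p}\alpha_j\rangle_{(0)}$ and the narrow screening $F_{j,\lambda}$ of \eqref{narrowscreening2} are built solely from fields in the $\alpha_j$-direction, so by \eqref{(11.2)} (since $(\pm\tfrac{1}{\sqrt{p}}\alpha_j,\omega_k)=(\sqrt{p}\alpha_j,\omega_k)=0$ for $k\neq j$) they commute with the transverse Heisenberg algebra $\mathcal{U}(\mathcal{F}_0^{j,\perp})$. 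Hence along each coset string $\bigoplus_{n\in\mathbb{Z}}\mathcal{U}(\mathcal{F}_0^j)|-\sqrt{p}(\alpha+n\alpha_j)+\lambda\rangle$ the pair $(f_j,F_{j,\lambda})$ is exactly the pair of long and narrow screenings of the $A_1$ lattice vertex algebra $V_{\sqrt{p}\mathbb{Z}\alpha_j}$, and $\mathcal{W}_0^j$ is the corresponding $A_1$ singlet algebra tensored with $\mathcal{U}(\mathcal{F}_0^{j,\perp})$.

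For assertion \eqref{am1:exseq}, exactness at the left and middle terms is immediate from the definition $W(p,\lambda)_Q^j=\ker F_{j,\lambda}$. The content is (i) that $\operatorname{Im}F_{j,\lambda}$ is annihilated by $F_{j,\sigma_j\ast\lambda}$, i.e. two consecutive narrow screenings compose to zero, and (ii) surjectivity of $F_{j,\lambda}$ onto $\ker F_{j,\sigma_j\ast\lambda}=W(p,\sigma_j\ast\lambda)_Q^j$. Both are contour-deformation statements for the Tsuchiya--Kanie cycles $[\Gamma_{s_i+1}]$ and are precisely the exactness of the $A_1$ Felder-type complex established in \cite{NT,TK} (the hypothesis $(\sqrt{p}\bar\lambda,\alpha_j)\leq p-2$ being exactly what guarantees $F_{j,\lambda}\neq 0$ via \cite[Theorem 2.7, 2.8]{NT}), applied stringwise and tensored with $\mathcal{U}(\mathcal{F}_0^{j,\perp})$. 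That the sequence is one of $B$-modules is then formal: by \eqref{(19.1)} the map $F_{j,\lambda}$ commutes with each $f_i$, and by \eqref{(19.11)} it intertwines $h_{i,\lambda}(\mu)$ with $h_{i,\sigma_j\ast\lambda}(\mu+\epsilon_\lambda(\sigma_j))$, which is exactly why the target must carry the shifted parameter $\mu+\epsilon_\lambda(\sigma_j)$; integrability (Theorem \ref{ft41}) upgrades this $\mathfrak{b}$-homomorphism to a $B$-homomorphism, and $\mathcal{W}_0^j$-linearity follows from commutativity with $\mathcal{U}(\mathcal{F}_0^{j,\perp})$ together with the $A_1$ fact that $F_{j,\lambda}$ is a singlet-module map.

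Assertion \eqref{am1:cosing} is then obtained by applying the surjectivity in \eqref{am1:exseq}: the lattice vector on the right-hand side of the first equation lies in $\ker F_{j,\sigma_j\ast\lambda}$, so it admits a preimage $w_{-\sqrt{p}\alpha+\lambda}$, which can be chosen inside the single Fock space $\mathcal{F}_{-\sqrt{p}\alpha+\lambda}$ by the conformal-weight bookkeeping of \eqref{(14)} and Remark \ref{confwtrmk}; its image under $f_j^{(\alpha+\hat\lambda,\alpha_j)+1}$ is read off from the explicit $A_1$ action of the long screening on this logarithmic vector, as in \cite[Theorem 2.7, 2.8]{NT}. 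For \eqref{am1:modstr} I would, stringwise, use the $A_1$ description of $\ker F_{j,\lambda}$ as the direct sum of the images $f_j^m\mathcal{W}^j_{-\sqrt{p}\alpha+\lambda}$ for $0\leq m\leq(\alpha+\hat\lambda,\alpha_j)$ (the range in which $f_j^m$ neither annihilates the top vector nor exits the kernel), and then deduce the generation of $V_{\sqrt{p}Q+\lambda}$ by combining the vectors $f_j^m|-\sqrt{p}\alpha+\lambda\rangle$ with the $f_j^m w_{-\sqrt{p}\alpha+\lambda}$; their linear independence is a weight count, again reduced to the rank-one Fock module and tensored over the transverse factor $\mathcal{U}(\mathcal{F}_0^{j,\perp})$.

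I expect the main obstacle to be the genuinely rank-one input: the exactness and nonvanishing of the narrow-screening complex, and the precise behaviour of the logarithmic vector $w_{-\sqrt{p}\alpha+\lambda}$ under $f_j$, both of which rest on the delicate contour-integral analysis of \cite{NT} and the structure theory of \cite{AM1}. Once these are invoked, the remaining difficulty is purely organizational, namely matching the dot-action $\sigma_j\ast\lambda$ and the weight shift $\epsilon_\lambda(\sigma_j)$ across the reduction through \eqref{(19.11)}, and verifying throughout that the constraint $(\sqrt{p}\bar\lambda,\alpha_j)\leq p-2$ is exactly the condition making $F_{j,\lambda}$ nonzero and the complex exact.
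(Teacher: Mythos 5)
Your proposal is correct and follows essentially the same route as the paper: the paper's proof likewise decomposes $V_{\sqrt{p}Q+\lambda}$ via \eqref{aligndecomp} into $\mathbb{Z}\alpha_j$-strings tensored with $\mathcal{U}(\mathcal{F}_0^{j,\perp})$, applies the rank-one results \cite[Theorem 1.1, 1.2]{AM1} stringwise for assertions \eqref{am1:cosing} and \eqref{am1:modstr}, and uses \cite[Theorem 2.7]{NT} together with \eqref{(19.1)} and \eqref{(19.11)} for the exact sequence in \eqref{am1:exseq}. The only cosmetic difference is that you derive the cosingular vector of \eqref{am1:cosing} from surjectivity in \eqref{am1:exseq}, whereas the paper reads it off directly from the cited $A_1$ structure theorems.
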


 \begin{proof}
 Let $\mathfrak{g}_j\simeq\mathfrak{sl}_2$ be the Lie subalgebra of $\mathfrak{g}$ generated by $\{e_j,h_j,f_j\}$ (where we use the letters $e_j,f_j$ for the Chevalley generators of $\mathfrak{g}$). 
 Then applying \cite[Theorem 1.1, 1.2]{AM1} to $\mathfrak{g}_j$ and $\bigoplus_{n\in\mathbb{Z}}\mathcal{U}(\mathcal{F}_0^j)|-\sqrt{p}(\alpha+n\alpha_j)+\lambda\rangle$
 for each $[\alpha]\in Q/\mathbb{Z}\alpha_j$ in \eqref{aligndecomp}, we obtain Theorem \ref{am1} \eqref{am1:cosing}, \eqref{am1:modstr}.
On the other hand, applying \cite[Theorem 2.7]{NT}, \eqref{(19.1)}, \eqref{(19.11)} to the same objects, we obtain Theorem \ref{am1} \eqref{am1:exseq}.
 \end{proof}

By Theorem \ref{am1} \eqref{am1:cosing}, for $\lambda\in\Lambda$ such that $(\sqrt{p}\bar\lambda,\alpha_j)\leq p-2$ and $\beta\in Q+\hat\lambda$ such that $(\beta,\alpha_j)\geq 0$, we have 
\begin{align}\label{alignfrom42}
f_j^{(\beta,\alpha_j)}|-\sqrt{p}\beta+\bar\lambda\rangle=F_{j,\sigma_j\ast\lambda}|-\sqrt{p}(\sigma_j(\beta)+\epsilon_{\sigma_j\ast\lambda}(\sigma_j))+\overline{\sigma_j\ast\lambda}\rangle.
\end{align}

\begin{cor}\label{beforecoram1}
Let $1\leq j\leq l$, $\lambda\in\Lambda$, and $\alpha\in Q$ such that $(\alpha+\hat\lambda,\alpha_j)\geq 0$. Then for any $x\in\mathcal{W}_{-\sqrt{p}\alpha+\lambda}^j$, we have $f_j^{(\alpha+\hat\lambda,\alpha_j)+1}x=0$. 
\end{cor}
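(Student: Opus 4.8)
The plan is to reduce the claim to the single cyclic generator $|-\sqrt{p}\alpha+\lambda\rangle$ and then to settle that case by a conformal-weight estimate, exactly in the spirit of the integrability argument in the proof of Theorem \ref{ft41}.

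First I would exploit that $\mathcal{W}_{-\sqrt{p}\alpha+\lambda}^j=\mathcal{U}(\mathcal{W}_0^j)|-\sqrt{p}\alpha+\lambda\rangle$, so every $x$ is a linear combination of vectors $a^1_{(n_1)}\cdots a^k_{(n_k)}|-\sqrt{p}\alpha+\lambda\rangle$ with $a^1,\dots,a^k\in\mathcal{W}_0^j=\ker f_j|_{\mathcal{F}_0}$. Because $f_j=|\sqrt{p}\alpha_j\rangle_{(0)}$ is a zero mode, formula \eqref{infinitesimalisom} gives $f_j(a_{(n)}v)=(f_ja)_{(n)}v+a_{(n)}f_jv=a_{(n)}f_jv$ whenever $a\in\ker f_j$; that is, $f_j$ commutes with the operator $a_{(n)}$ for every $a\in\mathcal{W}_0^j$. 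Hence any power of $f_j$ commutes with the whole $\mathcal{U}(\mathcal{W}_0^j)$-action, and it suffices to prove that $f_j^{(\alpha+\hat\lambda,\alpha_j)+1}$ annihilates $|-\sqrt{p}\alpha+\lambda\rangle$.

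Set $m=(\alpha+\hat\lambda,\alpha_j)\ge0$, $s_j=(\sqrt{p}\bar\lambda,\alpha_j)$, and $\nu=-\sqrt{p}\alpha+\lambda=-\sqrt{p}(\alpha+\hat\lambda)+\bar\lambda$. By \eqref{(16)} each application of $f_j$ shifts the Fock summand by $\sqrt{p}\alpha_j$, so $f_j^{m+1}|-\sqrt{p}\alpha+\lambda\rangle$ lies in $\mathcal{F}_{\nu+(m+1)\sqrt{p}\alpha_j}$; and since $[f_j,L_n]=0$ by \eqref{(19.0)}, this vector is homogeneous of conformal weight $\Delta_\nu$. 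A direct computation with \eqref{(14)}, using $(\alpha_j,\alpha_j)=2$, $(\rho,\alpha_j)=1$, $(\nu,\alpha_j)=-\sqrt{p}m+s_j/\sqrt{p}$ and $Q_0=\sqrt{p}-1/\sqrt{p}$, yields $\Delta_{\nu+(m+1)\sqrt{p}\alpha_j}-\Delta_\nu=(m+1)(s_j+1)$. Since $\sqrt{p}\bar\lambda\in P_+$ forces $s_j\ge0$, this difference is strictly positive. By Remark \ref{confwtrmk} every nonzero homogeneous vector of $\mathcal{F}_{\nu+(m+1)\sqrt{p}\alpha_j}$ has conformal weight at least $\Delta_{\nu+(m+1)\sqrt{p}\alpha_j}>\Delta_\nu$, so the weight-$\Delta_\nu$ vector $f_j^{m+1}|-\sqrt{p}\alpha+\lambda\rangle$ must vanish.

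The only genuine computation is the conformal-weight difference $\Delta_{\nu+(m+1)\sqrt{p}\alpha_j}-\Delta_\nu=(m+1)(s_j+1)$, and I expect the bookkeeping there (tracking the $\sqrt{p}$ and $Q_0$ contributions so that they collapse into this clean positive expression) to be the only place needing care; positivity is then immediate from $s_j\ge0$ and $m\ge0$. Everything else is formal, resting only on $\mathcal{W}_0^j\subseteq\ker f_j$ and the derivation property of the zero mode $f_j$. I note that this is morally the $\mathfrak{sl}_2$-integrability statement for $\mathfrak{g}_j$ applied to the generator $|-\sqrt{p}\alpha+\lambda\rangle$, which has $h_{j,\lambda}$-weight $m$; but phrasing it through conformal-weight growth keeps the argument self-contained and avoids having to exhibit a raising operator explicitly.
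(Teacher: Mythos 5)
Your proposal is correct, and its first half coincides with the paper's: the reduction to the cyclic vector $|-\sqrt{p}\alpha+\lambda\rangle$ via $\mathcal{W}^j_{-\sqrt{p}\alpha+\lambda}=\mathcal{U}(\mathcal{W}^j_0)|-\sqrt{p}\alpha+\lambda\rangle$ and the derivation property \eqref{infinitesimalisom} of the zero mode $f_j$ is exactly the paper's opening step. Where you diverge is in how the generator is killed. The paper splits into the cases $(\sqrt{p}\bar\lambda,\alpha_j)\leq p-2$ and $(\sqrt{p}\bar\lambda,\alpha_j)=p-1$, and in the first case rewrites $f_j^{(\alpha+\hat\lambda,\alpha_j)}|-\sqrt{p}\alpha+\lambda\rangle$ via \eqref{alignfrom42} as the image under the narrow screening operator $F_{j,\sigma_j\ast\lambda}$ of an extremal Fock vector, commutes $f_j$ past $F_{j,\sigma_j\ast\lambda}$ using \eqref{(19.1)}, and only then invokes the conformal-weight bound of Remark \ref{confwtrmk}; in the second case it uses Theorem \ref{am1} \eqref{am1:modstr} to identify $f_j^{(\alpha+\hat\lambda,\alpha_j)}|-\sqrt{p}\alpha+\lambda\rangle$ with a multiple of an extremal vector before applying the same bound. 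You instead apply Remark \ref{confwtrmk} directly to $f_j^{m+1}|-\sqrt{p}\alpha+\lambda\rangle\in\mathcal{F}_{-\sqrt{p}(\alpha-(m+1)\alpha_j)+\lambda}$, and your bookkeeping is right: with $k=m+1$, $\nu=-\sqrt{p}(\alpha+\hat\lambda)+\bar\lambda$ and $s_j=(\sqrt{p}\bar\lambda,\alpha_j)$, formula \eqref{(14)} gives $\Delta_{\nu+k\sqrt{p}\alpha_j}-\Delta_{\nu}=kp(k-m-1)+k(s_j+1)$, which equals $(m+1)(s_j+1)>0$ at $k=m+1$ since $0\leq s_j\leq p-1$; together with $[f_j,L_0]=0$ from \eqref{(19.0)} this forces the vector to vanish. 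Your route is shorter and genuinely more elementary — it needs neither the narrow screening operators nor the case split, only the quadratic-growth argument already used in the proof of Theorem \ref{ft41} pushed to the exact threshold $N=m+1$ — while the paper's detour has the side benefit of exhibiting the identity \eqref{alignfrom42} relating $f_j^{(\beta,\alpha_j)}$ to $F_{j,\sigma_j\ast\lambda}$, which is reused later (e.g.\ in the proof of Theorem \ref{parab}).
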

\begin{proof}
By definition, the action of $\mathcal{W}^j_0$ commutes with $f_j$. Thus, it is enough to show that $f_j^{(\alpha+\hat\lambda,\alpha_j)+1}|-\sqrt{p}\alpha+\lambda\rangle=0$.
If $(\sqrt{p}\bar\lambda,\alpha_j)\leq p-2$,
then we have
\begin{align}
f_j^{(\alpha+\hat\lambda,\alpha_j)+1}|-\sqrt{p}\alpha+\lambda\rangle
=&f_j(f_j^{(\alpha+\hat\lambda,\alpha_j)}|-\sqrt{p}(\alpha+\hat\lambda)+\bar\lambda\rangle)\nonumber\\
=&f_jF_{j,\sigma_j\ast\lambda}|-\sqrt{p}(\sigma_j(\alpha+\hat\lambda)+\epsilon_{\sigma\ast\lambda}(\sigma_j))+\overline{\sigma_j\ast\lambda}\rangle\nonumber\\
=&F_{j,\sigma_j\ast\lambda}f_j|-\sqrt{p}(\sigma_j(\alpha+\hat\lambda)+\epsilon_{\sigma\ast\lambda}(\sigma_j))+\overline{\sigma_j\ast\lambda}\rangle=0.\nonumber
\end{align}
Here the second equality follows from \eqref{alignfrom42}, the third equality follows from \eqref{(19.1)}, and the last equality follows from Remark \ref{confwtrmk} and the fact that
\begin{align}
\Delta_{-\sqrt{p}(\sigma_j(\alpha+\hat\lambda)+\epsilon_{\sigma_j\ast\lambda}(\sigma_j))+\overline{\sigma_j\ast\lambda}}<\Delta_{-\sqrt{p}(\sigma_j(\alpha+\hat\lambda)+\epsilon_{\sigma_j\ast\lambda}(\sigma_j)-\alpha_j)+\overline{\sigma_j\ast\lambda}}.\nonumber
\end{align}
If $(\sqrt{p}\bar\lambda,\alpha_j)=p-1$, then by definition, we have $V_{\sqrt{p}Q+\lambda}=W(p,\lambda)_Q^j$.
By Theorem \ref{am1} \eqref{am1:modstr} and the fact that
$\Delta_{-\sqrt{p}\alpha+\lambda}=\Delta_{-\sqrt{p}(\sigma_j(\alpha+\hat\lambda))+\bar\lambda}$,
we have 
\begin{align}\label{alignp-1case}
f_j^{(\alpha+\hat\lambda,\alpha_j)}|-\sqrt{p}\alpha+\lambda\rangle=a|-\sqrt{p}(\sigma_j(\alpha+\hat\lambda))+\bar\lambda\rangle
\end{align}
for some $a\in\mathbb{C}^\times$.
Then we have 
\begin{align}
f_j^{(\alpha+\hat\lambda,\alpha_j)+1}|-\sqrt{p}\alpha+\lambda\rangle=af_j|-\sqrt{p}(\sigma_j(\alpha+\hat\lambda))+\bar\lambda\rangle=0,
\end{align}
where the first equality follows from \eqref{alignp-1case}, and the second equality follows from Remark \ref{confwtrmk} and the fact that
$\Delta_{-\sqrt{p}(\sigma_j(\alpha+\hat\lambda))+\bar\lambda}<\Delta_{-\sqrt{p}(\sigma_j(\alpha+\hat\lambda)-\alpha_j)+\bar\lambda}$.
\end{proof}

\begin{cor}\label{coram1}
Let $1\leq j\leq l$, $\lambda\in\Lambda$, and $\alpha\in Q$ such that $(\alpha+\hat\lambda,\alpha_j)\geq 0$. Then:
\begin{enumerate}
\item\label{coram1.1}
For $v\in\mathcal{F}_{-\sqrt{p}\alpha+\lambda}$, we have $v=0$ if and only if $f_j^{(\alpha+\hat\lambda,\alpha_j)}v=0$.
\item\label{coram1.2}
For $v\in W(p,\lambda)_Q^j\cap\mathcal{F}_{-\sqrt{p}\alpha+\lambda}$, we have
$v\in\bigoplus_{m=0}^nf_j^m\mathcal{W}^j_{-\sqrt{p}(\alpha+m\alpha_j)+\lambda}$
if and only if $f_j^{(\alpha+\hat\lambda,\alpha_j)+m+1}v=0$.
\end{enumerate}
\end{cor}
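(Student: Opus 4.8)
The plan is to read off both statements from the $\mathcal{W}^j_0$-module description of $V_{\sqrt pQ+\lambda}$ in Theorem \ref{am1}\eqref{am1:modstr}, which organizes each Fock component into $f_j$-strings of controlled length. Write $d=(\alpha+\hat\lambda,\alpha_j)\ge0$. Since $f_j$ commutes with the $\mathcal{W}^j_0$-action (by \eqref{infinitesimalisom}, as $\mathcal{W}^j_0=\ker f_j|_{\mathcal F_0}$), restricting the decomposition to $\mathcal F_{-\sqrt p\alpha+\lambda}$ yields
\[
\mathcal F_{-\sqrt p\alpha+\lambda}=\Big(\bigoplus_{m\ge0}f_j^m\,\mathcal W^j_{-\sqrt p(\alpha+m\alpha_j)+\lambda}\Big)\ \oplus\ (\text{a complement built from the }w\text{-strings}),
\]
the first summand being exactly $W(p,\lambda)^j_Q\cap\mathcal F_{-\sqrt p\alpha+\lambda}$. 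The $m$-th string here is generated in $h_{j,\lambda}$-weight $d+2m$ and, by Corollary \ref{beforecoram1}, is killed only after applying $f_j^{\,d+2m+1}$ (its $w$-analogue running one step longer); all sums are finite in each conformal degree because $\Delta_{-\sqrt p(\alpha+m\alpha_j)+\lambda}\to\infty$ and $f_j$ preserves the conformal grading by \eqref{(19.0)}.

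For part \eqref{coram1.1}, this gives injectivity of $f_j^{\,d}$ on $\mathcal F_{-\sqrt p\alpha+\lambda}$ directly. A string meeting $\mathcal F_{-\sqrt p\alpha+\lambda}$ at its level $m$ terminates only at level $d+2m\ge m+d$, so $f_j^{\,d}$ carries each string component to a strictly lower but still nonzero level of the same string, with no overshoot. As distinct strings map into distinct summands of the analogous decomposition of $\mathcal F_{-\sqrt p(\alpha-d\alpha_j)+\lambda}$, the equation $f_j^{\,d}v=0$ forces every component of $v$, hence $v$ itself, to vanish; the converse is trivial.

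For part \eqref{coram1.2} I would write $v=\sum_{m\ge0}f_j^mx_m$ with $x_m\in\mathcal W^j_{-\sqrt p(\alpha+m\alpha_j)+\lambda}$, reading the exponent in the statement as $d+n+1$. For the ``only if'' direction, each term with $m\le n$ satisfies $f_j^{\,d+n+1}(f_j^mx_m)=f_j^{\,d+n+1+m}x_m=0$, since $d+n+1+m\ge d+2m+1$ and Corollary \ref{beforecoram1} applies; hence $f_j^{\,d+n+1}v=0$. For the ``if'' direction the same computation removes the terms with $m\le n$, leaving $\sum_{m>n}f_j^{\,d+n+1+m}x_m=0$. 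For $m>n$ one has $d+n+1+m\le d+2m=(\alpha+m\alpha_j+\hat\lambda,\alpha_j)$, so part \eqref{coram1.1} applied to $\mathcal F_{-\sqrt p(\alpha+m\alpha_j)+\lambda}$ makes $f_j^{\,d+2m}$, and a fortiori the lower power $f_j^{\,d+n+1+m}$, injective there; since the images for distinct $m$ again lie in distinct summands of the target Fock space, every $x_m$ with $m>n$ vanishes, i.e.\ $v\in\bigoplus_{m=0}^{n}f_j^m\mathcal W^j_{-\sqrt p(\alpha+m\alpha_j)+\lambda}$.

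The step carrying all the weight is the non-degeneracy asserted in the first paragraph: that along each string $f_j$ is injective from one level to the next until the string terminates at the length predicted by Corollary \ref{beforecoram1}, equivalently that $f_j^m$ is injective on $\mathcal W^j_{-\sqrt p\beta+\lambda}$ for $0\le m\le(\beta+\hat\lambda,\alpha_j)$ (and on the $w$-modules one step further). This is precisely the content supplied by Theorem \ref{am1}\eqref{am1:cosing},\eqref{am1:modstr} through the rank-one analysis of \cite{AM1}, and it is the only nonformal ingredient; everything else is bookkeeping with the weights $d+2m$. Finally one should keep the regimes $(\sqrt p\bar\lambda,\alpha_j)\le p-2$ and $(\sqrt p\bar\lambda,\alpha_j)=p-1$ apart, since in the latter $V_{\sqrt pQ+\lambda}=W(p,\lambda)^j_Q$, the $w$-strings disappear, and the argument only simplifies.
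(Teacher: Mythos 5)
Your argument is correct and follows essentially the same route as the paper, whose proof is just a two-line citation: part \eqref{coram1.1} from the string decomposition of Theorem \ref{am1} \eqref{am1:modstr}, and part \eqref{coram1.2} from Corollary \ref{beforecoram1} (necessity) together with Theorem \ref{am1} \eqref{am1:modstr} (sufficiency). Your write-up merely unfolds the bookkeeping with the weights $d+2m$ that the paper leaves implicit, and correctly identifies the rank-one analysis of \cite{AM1} as the only non-formal input.
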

\begin{proof}
Corollary \ref{coram1} \eqref{coram1.1} follows from Theorem \ref{am1} \eqref{am1:modstr}.
The necessity of the condition $f_j^{(\alpha+\hat\lambda,\alpha_j)+m+1}v=0$ in Corollary \ref{coram1} \eqref{coram1.2} follows from Corollary \ref{beforecoram1}, and 
the converse follows from Theorem \ref{am1} \eqref{am1:modstr}.
\end{proof}

We prove lemmas needed in Section \ref{lastsection}.
For $\beta\in P_+$, denote by $y_\beta$ and $y'_\beta$ the highest weight vector and the lowest weight vector of $\mathcal{R}_\beta$, respectively.

\begin{lemm}\label{newlemm3.4}
For any $\alpha\in P_+\cap Q$ and $\hat\lambda\in\hat\Lambda$, there exists an element ${u}_{\alpha+\hat\lambda}\in\mathcal{U}(\mathfrak{n}_-)$ such that for any nonzero vector $x$ in $\mathcal{F}_{-\sqrt{p}\alpha+\lambda}$, we have ${u}_{\alpha+\hat\lambda}x\not=0$ and ${u}_{\alpha+\hat\lambda}y_{\alpha+\hat\lambda}\in\mathbb{C}^\times y'_{\alpha+\hat\lambda}$. 
\end{lemm}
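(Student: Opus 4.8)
The plan is to write down a single explicit monomial in the screening operators, built from a reduced expression of the longest element $w_0$, and to prove the two required properties separately: the assertion about $\mathcal{R}_{\alpha+\hat\lambda}$ is pure finite-dimensional representation theory of $\mathfrak{g}$, whereas the injectivity on the Fock space will follow by iterating Corollary \ref{coram1} \eqref{coram1.1}. The whole point is that one monomial, whose exponents are dictated by the weight combinatorics, handles both simultaneously. Concretely, I would fix a reduced expression $w_0=\sigma_{i_{l(w_0)}}\cdots\sigma_{i_1}$, put $\beta=\alpha+\hat\lambda\in P_+$, and define recursively $\beta^{(0)}=\beta$, $\beta^{(k)}=\sigma_{i_k}(\beta^{(k-1)})$, together with the exponents $m_k=(\beta^{(k-1)},\alpha_{i_k})$; then I would set
\[
u_{\alpha+\hat\lambda}=f_{i_{l(w_0)}}^{m_{l(w_0)}}\cdots f_{i_1}^{m_1}\in\mathcal{U}(\mathfrak{n}_-).
\]
Since the reverse word $\sigma_{i_1}\cdots\sigma_{i_{l(w_0)}}$ is again a reduced expression for $w_0$, each root $\sigma_{i_1}\cdots\sigma_{i_{k-1}}(\alpha_{i_k})$ is positive, whence $m_k=(\beta,\sigma_{i_1}\cdots\sigma_{i_{k-1}}(\alpha_{i_k}))\geq0$ by dominance of $\beta$; in particular $\beta^{(k-1)}-m_k\alpha_{i_k}=\beta^{(k)}$ and the terminal subword $\sigma_{i_k}\cdots\sigma_{i_1}$ has length $k$.

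For the assertion on $\mathcal{R}_{\alpha+\hat\lambda}$, the weights $\beta^{(k)}=\sigma_{i_k}\cdots\sigma_{i_1}(\beta)$ are extremal, so each weight space $\mathcal{R}_{\alpha+\hat\lambda}[\beta^{(k)}]$ is one-dimensional. Choosing nonzero $v_k$ in it with $v_0=y_{\alpha+\hat\lambda}$ and $v_{l(w_0)}=y'_{\alpha+\hat\lambda}$, the extremality of $v_{k-1}$ together with $(\beta^{(k-1)},\alpha_{i_k})=m_k\geq0$ yields $e_{i_k}v_{k-1}=0$, so that $v_{k-1}$ is an $\mathfrak{sl}_2(\alpha_{i_k})$-highest weight vector and $f_{i_k}^{m_k}v_{k-1}\in\mathbb{C}^\times v_k$ by the representation theory of $\mathfrak{sl}_2$. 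Composing these nonzero scalars gives $u_{\alpha+\hat\lambda}y_{\alpha+\hat\lambda}\in\mathbb{C}^\times y'_{\alpha+\hat\lambda}$.

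For the injectivity on $\mathcal{F}_{-\sqrt{p}\alpha+\lambda}$, I would argue by induction along the same word. Setting $\alpha^{(k)}=\alpha^{(k-1)}-m_k\alpha_{i_k}$, so that $\alpha^{(k)}\in Q$ and $\alpha^{(k)}+\hat\lambda=\beta^{(k)}$, a nonzero $x\in\mathcal{F}_{-\sqrt{p}\alpha+\lambda}$ produces vectors $f_{i_k}^{m_k}\cdots f_{i_1}^{m_1}x\in\mathcal{F}_{-\sqrt{p}\alpha^{(k)}+\lambda}$. At stage $k$ we have $(\alpha^{(k-1)}+\hat\lambda,\alpha_{i_k})=m_k\geq0$, and $m_k$ is precisely the exponent occurring in Corollary \ref{coram1} \eqref{coram1.1}; hence nonvanishing at stage $k-1$ forces nonvanishing at stage $k$, and by induction $u_{\alpha+\hat\lambda}x\neq0$.

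The one place that genuinely requires care — and what I regard as the heart of the argument — is the exact matching of exponents: the monomial must use the power $m_k=(\beta^{(k-1)},\alpha_{i_k})$ at each step so that this single number is at once the exponent making $\beta^{(k-1)}$ an extremal (hence $\mathfrak{sl}_2(\alpha_{i_k})$-highest) weight of $\mathcal{R}_{\alpha+\hat\lambda}$ and the exponent to which Corollary \ref{coram1} \eqref{coram1.1} applies on the Fock space. This is what forces the careful bookkeeping of the weights $\beta^{(k)}$ along the reduced word and the verification $m_k\geq0$ via the positivity of $\sigma_{i_1}\cdots\sigma_{i_{k-1}}(\alpha_{i_k})$, which is the sole point where the reduced-word combinatorics enters. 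Everything else is a routine composition of the two inductions.
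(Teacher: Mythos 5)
Your proof is correct and follows essentially the same route as the paper: iterate Corollary \ref{coram1} \eqref{coram1.1} along a chain of simple reflections taking $\alpha+\hat\lambda$ down to $w_0(\alpha+\hat\lambda)$, using at each step the exponent $(\beta^{(k-1)},\alpha_{i_k})$ that simultaneously governs the extremal-weight $\mathfrak{sl}_2$-string in $\mathcal{R}_{\alpha+\hat\lambda}$ and the nonvanishing criterion on the Fock space. The paper compresses this into ``by repeating this process finitely many times''; your reduced-word bookkeeping (positivity of $\sigma_{i_1}\cdots\sigma_{i_{k-1}}(\alpha_{i_k})$, hence $m_k\geq 0$) is exactly the detail needed to justify that phrase.
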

\begin{proof}
By Corollary \ref{coram1} \eqref{coram1.1}, we have $f_i^{(\alpha+\hat\lambda,\alpha_i)}x\not=0$ for any $1\leq i\leq l$. 
Similarly, for $1\leq j\leq l$ such that $(\sigma_i(\alpha+\hat\lambda),\alpha_j)\geq 0$, we have $f_j^{(\sigma_i(\alpha+\hat\lambda),\alpha_j)}f_i^{(\alpha+\hat\lambda,\alpha_i)}x\not=0$. By repeating this process finitely many times, 
for some ${u}_{\alpha+\hat\lambda}\in\mathcal{U}(\mathfrak{n}_-)$ such that ${u}_{\alpha+\hat\lambda}y_{\alpha+\hat\lambda}\in\mathbb{C}^\times y'_{\alpha+\hat\lambda}$, we have ${u}_{\alpha+\hat\lambda}x\not=0$.
\end{proof}

\begin{lemm}\label{newlemm3.9}
Let $x$ be a nonzero vector in $\mathcal{F}_{-\sqrt{p}\alpha+\lambda}$.
If there exists a $B$-module homomorphism $\Phi\colon\mathcal{R}_{\alpha+\hat\lambda}\rightarrow\mathcal{U}(\mathfrak{b})x$ that sends $y_{\alpha+\hat\lambda}$ to $x$, then $\Phi$ is an isomorphism.
In particular,
$\mathcal{U}(\mathfrak{b})x$ is a $G$-submodule of $V_{\sqrt{p}Q+\lambda}$
in the sense of Definition \ref{dfnGsubmodule}.
\end{lemm}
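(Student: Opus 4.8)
The plan is to dispatch surjectivity almost for free, reduce the whole lemma to the injectivity of $\Phi$, and then extract injectivity from the non-degeneracy statement of Lemma \ref{newlemm3.4}. First I would note that surjectivity is automatic: since $\mathcal{R}_{\alpha+\hat\lambda}$ is the irreducible highest weight $\mathfrak{g}$-module, we have $\mathcal{R}_{\alpha+\hat\lambda}=\mathcal{U}(\mathfrak{n}_-)y_{\alpha+\hat\lambda}=\mathcal{U}(\mathfrak{b})y_{\alpha+\hat\lambda}$, the factor $\mathcal{U}(\mathfrak{h})$ acting on $y_{\alpha+\hat\lambda}$ by scalars. As $\Phi$ is a $\mathfrak{b}$-homomorphism with $\Phi(y_{\alpha+\hat\lambda})=x$, its image is $\mathcal{U}(\mathfrak{b})\Phi(y_{\alpha+\hat\lambda})=\mathcal{U}(\mathfrak{b})x$, i.e. all of the target. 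Thus everything reduces to showing $\ker\Phi=0$.

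For injectivity I would exploit that $\ker\Phi$ is an $\mathfrak{h}$- and $\mathfrak{n}_-$-stable subspace of $\mathcal{R}_{\alpha+\hat\lambda}$, hence a finite direct sum of weight spaces, and argue that such a submodule is either zero or contains the lowest weight vector $y'_{\alpha+\hat\lambda}$. Concretely, if $\ker\Phi\neq0$ I would pick a weight $\nu$ that is minimal among the weights of $\ker\Phi$; then for any $0\neq w\in(\ker\Phi)_\nu$ each $f_iw$ lies in the zero space $(\ker\Phi)_{\nu-\alpha_i}$, so $w$ is annihilated by all of $\mathfrak{n}_-$. Since $\mathcal{R}_{\alpha+\hat\lambda}$ is irreducible, its $\mathfrak{n}_-$-invariants form exactly the one-dimensional lowest weight space $\mathbb{C}y'_{\alpha+\hat\lambda}$, so $y'_{\alpha+\hat\lambda}\in\ker\Phi$.

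To rule this out I would apply Lemma \ref{newlemm3.4}, which provides $u_{\alpha+\hat\lambda}\in\mathcal{U}(\mathfrak{n}_-)$ with $u_{\alpha+\hat\lambda}y_{\alpha+\hat\lambda}\in\mathbb{C}^\times y'_{\alpha+\hat\lambda}$ and, crucially, $u_{\alpha+\hat\lambda}x\neq0$. Applying $\Phi$ and using $\mathfrak{b}$-equivariance gives $\Phi(y'_{\alpha+\hat\lambda})\in\mathbb{C}^\times\,u_{\alpha+\hat\lambda}\Phi(y_{\alpha+\hat\lambda})=\mathbb{C}^\times\,u_{\alpha+\hat\lambda}x\neq0$, so $y'_{\alpha+\hat\lambda}\notin\ker\Phi$. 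Together with the previous paragraph this forces $\ker\Phi=0$, and $\Phi$ is an isomorphism. The coupling between the abstract module $\mathcal{R}_{\alpha+\hat\lambda}$ and its realization inside the Fock space — namely that the distinguished element $u_{\alpha+\hat\lambda}$ acts injectively on $x$ — is the one genuinely non-formal ingredient, and this is precisely the point where Lemma \ref{newlemm3.4} is indispensable; everything else is representation-theoretic bookkeeping.

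Finally, for the \emph{in particular} assertion I would transport structure along $\Phi$. Since $\Phi$ is now a bijective $B$-module homomorphism, it is an isomorphism of $B$-modules $\mathcal{R}_{\alpha+\hat\lambda}\xrightarrow{\sim}\mathcal{U}(\mathfrak{b})x$. Because $G$ is semisimple and simply-connected and $\alpha+\hat\lambda\in P_+$, the $\mathfrak{g}$-module $\mathcal{R}_{\alpha+\hat\lambda}$ integrates to a $G$-module whose restriction to $\mathfrak{b}$ is the given action. Taking $J=\Pi$, so that $P_\Pi=G$, and $M'=\mathcal{R}_{\alpha+\hat\lambda}$ in Definition \ref{dfnGsubmodule}, the $B$-isomorphism $\mathcal{U}(\mathfrak{b})x\simeq\mathcal{R}_{\alpha+\hat\lambda}$ says exactly that $\mathcal{U}(\mathfrak{b})x$ is a $G$-submodule of $V_{\sqrt{p}Q+\lambda}$, as claimed.
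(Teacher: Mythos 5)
Your proof is correct and follows essentially the same route as the paper: surjectivity is formal from $\mathcal{R}_{\alpha+\hat\lambda}=\mathcal{U}(\mathfrak{b})y_{\alpha+\hat\lambda}$, and injectivity is reduced to the fact that a nonzero $\mathfrak{n}_-$-stable subspace of $\mathcal{R}_{\alpha+\hat\lambda}$ must contain the lowest weight vector $y'_{\alpha+\hat\lambda}$, which is then excluded because Lemma \ref{newlemm3.4} gives $\Phi(y'_{\alpha+\hat\lambda})\in\mathbb{C}^\times u_{\alpha+\hat\lambda}x\neq0$. The paper phrases this element-by-element (for $u\in\mathcal{U}(\mathfrak{b})$ with $ux=0$, producing $u'\in\mathcal{U}(\mathfrak{n}_-)$ driving $uy_{\alpha+\hat\lambda}$ to the lowest weight line), but the key ingredient and its use are identical to yours.
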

\begin{proof}
Since $\Phi(y_{\alpha+\hat\lambda})=x$, if $uy_{\alpha+\hat\lambda}=0$ for some $u\in\mathcal{U}(\mathfrak{b})$, then $ux=0$. By contraposition, $\Phi$ is sunjective.
We show that $\Phi$ is injective.
Let $ux=0$ for some $u\in\mathcal{U}(\mathfrak{b})$. 
If $uy_{\alpha+\hat\lambda}\not=0$, then because $\mathcal{R}_{\alpha+\hat\lambda}$ is 
an irreducible $\mathfrak{g}$-module, for some $u'\in\mathcal{U}(\mathfrak{n}_-)$, we have $u'uy_{\alpha+\hat\lambda}\in\mathbb{C}^\times y'_{\alpha+\hat\lambda}$. In other words, we have $u'uy_{\alpha+\hat\lambda}=a{u}_{\alpha+\hat\lambda}y_{\alpha+\hat\lambda}$ for some $a\in\mathbb{C}^\times$ and ${u}_{\alpha+\hat\lambda}$ in Lemma \ref{newlemm3.4}.
Then we have 
$0=u'ux=a{u}_{\alpha+\hat\lambda}x$, and by Lemma \ref{newlemm3.4}, we have $x=0$.
It contradicts the assumption, and thus, we have $uy_{\alpha+\hat\lambda}=0$.
\end{proof}

\section{Proofs of main results}\label{sectionproofs}
\subsection{The action of $P_j$ on $W(p,\lambda)_Q^j$}\label{parabolic}
Let us recall Definition \ref{logWalgs} and \ref{dfnmathcalWs}.
We fix an integer $1\leq j\leq l$ and $\lambda\in\Lambda$ throughout this subsection.
In this subsection, we show that $W(p,\lambda)_Q^j$ is a $P_j$-submodule of  $V_{\sqrt{p}Q+\lambda}$ in the sense of Definition \ref{dfnGsubmodule}.
We define the linear operator $e_j\in\operatorname{End}_{\mathbb{C}}(W(p,\lambda)_Q^j)$ by
\begin{equation}\label{(57.02)}
e_jf_j^NA_{\beta}=c_N^{\beta}f_j^{N-1}A_{\beta},
\end{equation}
where $\beta\in Q+\hat\lambda$ such that $(\beta,\alpha_j)\geq 0$, $c_N^{\beta}=N((\beta,\alpha_j)-N+1)$, and $A_{\beta}\in\mathcal{W}^j_{-\sqrt{p}\beta+\bar\lambda}$. By Theorem \ref{am1} \eqref{am1:modstr}, $e_j$ is well-defined.

\begin{lemm}\label{lemm:pre}
Let $\beta\in Q+\hat\lambda$ such that $(\beta,\alpha_j)\geq 0$, $A_{\beta}\in\mathcal{W}^j_{-\sqrt{p}\beta+\bar\lambda}$, and $1\leq i\leq l$ such that $i\not=j$. Then we have 
\begin{equation}\label{(57.1)}
f_iA_{\beta}\in\mathcal{W}^j_{-\sqrt{p}(\beta-\alpha_i)+\bar\lambda}.
\end{equation}
On the other hand, in the case where $(\alpha_i,\alpha_j)=-1$, we have
\begin{equation}\label{(57.2)}
[f_i,f_j]A_{\beta}=B^0_{\beta-\alpha_i-\alpha_j}+f_jB^1_{\beta-\alpha_i}
\end{equation}
for some $B^0_{\beta-\alpha_i-\alpha_j}\in\mathcal{W}^j_{-\sqrt{p}(\beta-\alpha_i-\alpha_j)+\bar\lambda}$ and $B^1_{\beta-\alpha_i}\in\mathcal{W}^j_{-\sqrt{p}(\beta-\alpha_i)+\bar\lambda}$.
\end{lemm}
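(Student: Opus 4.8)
The plan is to deduce both \eqref{(57.1)} and \eqref{(57.2)} from the module-structure results of Theorem \ref{am1}, in the sharp form provided by Corollary \ref{coram1} \eqref{coram1.2}, after reducing each assertion to the vanishing of an explicit power of $f_j$. The observation that makes every such $f_j$-power computation trivial is that the bracket $[f_i,f_j]$ commutes with $f_j$. Write $k=(\beta,\alpha_j)\geq 0$; by Corollary \ref{beforecoram1} we have $f_j^{k+1}A_\beta=0$. When $(\alpha_i,\alpha_j)=-1$, the Serre relation $\operatorname{ad}(f_j)^2f_i=0$ holds as an operator identity—it is \eqref{(23)} with the roles of $i$ and $j$ interchanged, promoted to operators through the derivation property \eqref{infinitesimalisom}—so $[f_j,[f_i,f_j]]=-\operatorname{ad}(f_j)^2f_i=0$, and symmetrically $[f_i,[f_i,f_j]]=\operatorname{ad}(f_i)^2f_j=0$. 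Thus $[f_i,f_j]$ commutes with both $f_i$ and $f_j$.

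First I would prove \eqref{(57.1)}. Since $(\beta,\alpha_j)\geq 0$, Lemma \ref{lemm in new section} gives $A_\beta\in\mathcal{W}^j_{-\sqrt{p}\beta+\bar\lambda}\subseteq W(p,\lambda)^j_Q$, and because $[F_{j,\lambda},f_i]=0$ by \eqref{(19.1)} the vector $f_iA_\beta$ again lies in $W(p,\lambda)^j_Q$. It sits in $\mathcal{F}_{-\sqrt{p}(\beta-\alpha_i)+\bar\lambda}$, and $(\beta-\alpha_i,\alpha_j)=k-(\alpha_i,\alpha_j)\geq 0$, so by Corollary \ref{coram1} \eqref{coram1.2} (with $n=0$) the membership \eqref{(57.1)} is equivalent to $f_j^{(\beta-\alpha_i,\alpha_j)+1}f_iA_\beta=0$. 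If $(\alpha_i,\alpha_j)=0$ then $f_i$ and $f_j$ commute (Theorem \ref{ft41}) and this reads $f_if_j^{k+1}A_\beta=0$. If $(\alpha_i,\alpha_j)=-1$ it reads $f_j^{k+2}f_iA_\beta=0$; expanding $f_j^{k+2}f_i=f_if_j^{k+2}+(k+2)f_j^{k+1}[f_j,f_i]$ and then pulling $[f_j,f_i]=-[f_i,f_j]$ out to the left past the remaining powers of $f_j$, this equals $-(k+2)[f_i,f_j]f_j^{k+1}A_\beta$. In both cases the expression vanishes because $f_j^{k+1}A_\beta=0$.

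For \eqref{(57.2)} I would argue in the same spirit. As $[F_{j,\lambda},f_i]=[F_{j,\lambda},f_j]=0$, the vector $[f_i,f_j]A_\beta$ lies in $W(p,\lambda)^j_Q\cap\mathcal{F}_{-\sqrt{p}(\beta-\alpha_i-\alpha_j)+\bar\lambda}$. When $k\geq 1$ we have $(\beta-\alpha_i-\alpha_j,\alpha_j)=k-1\geq 0$, and Corollary \ref{coram1} \eqref{coram1.2} (now with $n=1$) shows that \eqref{(57.2)} holds if and only if $f_j^{(\beta-\alpha_i-\alpha_j,\alpha_j)+2}[f_i,f_j]A_\beta=f_j^{k+1}[f_i,f_j]A_\beta=0$; this is immediate from the commutativity of $[f_i,f_j]$ with $f_j$ and from $f_j^{k+1}A_\beta=0$. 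The degenerate case $k=0$ has to be handled separately, since then $(\beta-\alpha_i-\alpha_j,\alpha_j)=-1$ and the corollary does not apply: here $f_jA_\beta=0$, so $[f_i,f_j]A_\beta=-f_jf_iA_\beta$, which by \eqref{(57.1)} already lies in $f_j\mathcal{W}^j_{-\sqrt{p}(\beta-\alpha_i)+\bar\lambda}$, and \eqref{(57.2)} holds with $B^0_{\beta-\alpha_i-\alpha_j}=0$.

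The step I would watch most carefully is the passage, via Corollary \ref{coram1} \eqref{coram1.2}, from the two membership statements to the purely computational conditions $f_j^{k+2}f_iA_\beta=0$ and $f_j^{k+1}[f_i,f_j]A_\beta=0$: this requires first confirming that $f_iA_\beta$ and $[f_i,f_j]A_\beta$ indeed lie in $W(p,\lambda)^j_Q$, and that the relevant pairings with $\alpha_j$ are nonnegative. The single place where nonnegativity fails is the case $k=0$ of \eqref{(57.2)}, which is exactly why it must be treated by the direct argument above. Once this reduction is set up, the Serre relations carry the rest, since they force $[f_i,f_j]$ to commute with $f_j$, whereupon every power of $f_j$ slides harmlessly onto $A_\beta$ and annihilates it.
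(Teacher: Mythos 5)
Your proof is correct and follows essentially the same route as the paper's: both reduce each membership statement to the vanishing of an explicit power of $f_j$ via the structure theorem (Theorem \ref{am1} \eqref{am1:modstr}, packaged as Corollary \ref{coram1} \eqref{coram1.2}), and both kill that power using $[f_j,[f_i,f_j]]=0$ together with $f_j^{(\beta,\alpha_j)+1}A_\beta=0$. The only differences are cosmetic: you invoke the corollary directly where the paper re-expands the decomposition term by term, and you treat the degenerate case $(\beta,\alpha_j)=0$ of \eqref{(57.2)} explicitly, which the paper absorbs into the range of summation in Theorem \ref{am1} \eqref{am1:modstr}.
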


\begin{proof}
Since $f_iA_{\beta}\in W(p,\lambda)_Q^j\cap\mathcal{F}_{-\sqrt{p}(\beta-\alpha_j)+\bar\lambda}$, by Theorem \ref{am1} \eqref{am1:modstr}, we have
\begin{align}\label{(57.21)}
f_iA_{\beta}=\sum_{m\geq 0}f_j^mA^m_{\beta-\alpha_i+m\alpha_j}
\end{align}
for some $A^m_{\beta-\alpha_i+m\alpha_j}\in\mathcal{W}^j_{-\sqrt{p}(\beta-\alpha_i+m\alpha_j)+\bar\lambda}$. 

First we consider the case of $(\alpha_i,\alpha_j)=0$. Since $[f_i,f_j]=0$, we have
\begin{equation}\label{tuika11}
f_j^nf_iA_{\beta}=f_if_j^nA_{\beta}=0
\end{equation}
for $n\geq(\beta,\alpha_j)+1$, where the second equality follows from Corollary \ref{coram1} \eqref{coram1.2}.
By multiplying both sides of \eqref{(57.21)} by $f_j^{(\beta,\alpha_j)+1}$, by \eqref{tuika11}, we have
\begin{equation}
0=f_j^{(\beta,\alpha_j)+1}f_iA_{\beta}=\sum_{m\geq0}f_j^{m+(\beta,\alpha_j)+1}A^m_{\beta-\alpha_i+m\alpha_j}.
\end{equation}
By Corollary \ref{coram1} \eqref{coram1.2}, we have $f_j^{(\beta,\alpha_j)+1}A^0_{\beta-\alpha_i}=0$.
However, since $m+(\beta,\alpha_j)+1\leq(\beta-\alpha_i+m\alpha_j,\alpha_j)$ for $m\geq1$, by Theorem \ref{am1} \eqref{am1:modstr}, 
$A^m_{\beta-\alpha_i+m\alpha_j}$ must be $0$
for $m\geq 1$. Hence $f_iA_{\beta}\in\mathcal{W}^j_{-\sqrt{p}(\beta-\alpha_i)+\bar\lambda}$, by \eqref{(57.21)}.

Next we consider the case of $(\alpha_i,\alpha_j)=-1$. Since $[f_j,[f_i,f_j]]=0$, we have
\begin{equation}\label{tuika12}
f_j^nf_iA_{\beta}=(n[f_j,f_i]+f_if_j)f_j^{n-1}A_{\beta}=0
\end{equation}
for $n\geq(\beta,\alpha_j)+2$, where the second equality follows from Corollary \ref{coram1} \eqref{coram1.2}.
By multiplying both sides of \eqref{(57.21)} by $f_j^{(\beta,\alpha_j)+2}$, by \eqref{tuika12}, we have
\begin{equation}
0=f_j^{(\beta,\alpha_j)+2}f_iA_{\beta}=\sum_{m\geq0}f_j^{m+(\beta,\alpha_j)+2}A^m_{\beta-\alpha_i+m\alpha_j}.
\end{equation}
In the same manner as above, we have $f_j^{(\beta,\alpha_j)+2}A^0_{\beta-\alpha_i}=0$ and $A^m_{\beta-\alpha_i+m\alpha_j}=0$
for $m\geq 1$. Hence $f_iA_{\beta}\in\mathcal{W}^j_{-\sqrt{p}(\beta-\alpha_i)+\bar\lambda}$, by \eqref{(57.21)}.

 Finally, let us prove \eqref{(57.2)}. By Theorem \ref{am1} \eqref{am1:modstr}, we have
 \begin{equation}\label{(57.3)}
[f_i,f_j]A_{\beta}=\sum_{m\geq 0}f_j^mB^m_{\beta-\alpha_i-(1-m)\alpha_j},
 \end{equation}
 for some $B^m_{\beta-\alpha_i-(1-m)\alpha_j}\in\mathcal{W}^j_{-\sqrt{p}(\beta-\alpha_i-(1-m)\alpha_j)+\bar\lambda}$. Since $[f_j,[f_i,f_j]]=0$, we have
 \begin{align}\label{tuika13}
 f_j^n[f_i,f_j]A_{\beta}=[f_i,f_j]f_j^nA_\beta=0
 \end{align}
 for $n\geq(\beta,\alpha_j)+1$, where the second equality follows from Corollary \ref{coram1} \eqref{coram1.2}.
 By multiplying both sides of \eqref{(57.3)} by $f_j^{(\beta,\alpha_j)+1}$, by \eqref{tuika13}, we have
 \begin{align}
 0=f_j^{(\beta,\alpha_j)+1}[f_i,f_j]A_{\beta}=\sum_{m\geq 0}f_j^{m+(\beta,\alpha_j)+1}B^m_{\beta-\alpha_i-(1-m)\alpha_j}.
\end{align}
In the same manner as above, we have $B^m_{\beta-\alpha_i-(1-m)\alpha_j}=0$ for $m\geq 2$.
Hence $[f_i,f_j]A_{\beta}=B^0_{\beta-\alpha_i-\alpha_j}+f_jB^1_{\beta-\alpha_i}$, by \eqref{(57.3)}.
\end{proof}

\begin{lemm}\label{lemm:serre}
For $1\leq i \leq l$, we have $[e_j, f_i]=\delta_{ij}h_{i,\lambda}$ on $W(p,\lambda)_Q^j$. In particular, $W(p,\lambda)_Q^j$ is a $P_j$-submodule of the $B$-module $V_{\sqrt{p}Q+\lambda}$.
\end{lemm}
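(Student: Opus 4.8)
The plan is to verify the identity on the $\mathcal{W}^j_0$-module generators of $W(p,\lambda)_Q^j$ supplied by Theorem \ref{am1} \eqref{am1:modstr}, i.e.\ on the vectors $f_j^NA_\beta$ with $\beta\in Q+\hat\lambda$, $(\beta,\alpha_j)\geq0$, $A_\beta\in\mathcal{W}^j_{-\sqrt{p}\beta+\bar\lambda}$ and $0\leq N\leq(\beta,\alpha_j)$; since $e_j$ and each $f_i$ are linear endomorphisms of $W(p,\lambda)_Q^j$, this suffices. For $i=j$ I would simply unwind \eqref{(57.02)}: from $e_jf_j(f_j^NA_\beta)=c^\beta_{N+1}f_j^NA_\beta$ and $f_je_j(f_j^NA_\beta)=c^\beta_Nf_j^NA_\beta$ one gets $[e_j,f_j]f_j^NA_\beta=(c^\beta_{N+1}-c^\beta_N)f_j^NA_\beta=((\beta,\alpha_j)-2N)f_j^NA_\beta$, and $(\beta,\alpha_j)-2N$ is exactly the scalar by which $h_{j,\lambda}$ acts on $f_j^NA_\beta$, as computed in the proof of Theorem \ref{ft41}. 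For $i\neq j$ with $(\alpha_i,\alpha_j)=0$ the operators $f_i,f_j$ commute and $f_iA_\beta\in\mathcal{W}^j_{-\sqrt{p}(\beta-\alpha_i)+\bar\lambda}$ with $(\beta-\alpha_i,\alpha_j)=(\beta,\alpha_j)$ by \eqref{(57.1)}, so $c^{\beta-\alpha_i}_N=c^\beta_N$ and moving $f_i$ through $f_j^N$ makes $e_jf_if_j^NA_\beta$ and $f_ie_jf_j^NA_\beta$ equal; hence $[e_j,f_i]=0$.

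The only substantial case is $i\neq j$ with $(\alpha_i,\alpha_j)=-1$. Using the Serre relation $[f_j,[f_i,f_j]]=0$, valid as operators by Theorem \ref{ft41}, I would write $f_if_j^NA_\beta=f_j^Nf_iA_\beta+Nf_j^{N-1}[f_i,f_j]A_\beta$ and substitute \eqref{(57.1)} and \eqref{(57.2)} to bring this into the normal form of Theorem \ref{am1} \eqref{am1:modstr}, namely $f_j^N(f_iA_\beta+NB^1_{\beta-\alpha_i})+Nf_j^{N-1}B^0_{\beta-\alpha_i-\alpha_j}$. Applying $e_j$ termwise, subtracting $f_ie_jf_j^NA_\beta=c^\beta_Nf_if_j^{N-1}A_\beta$ (expanded the same way), and comparing coefficients in the direct-sum decomposition, the component along $\mathcal{W}^j_{-\sqrt{p}(\beta-\alpha_i-\alpha_j)+\bar\lambda}$ cancels identically, and what survives is $Nf_j^{N-1}\bigl(f_iA_\beta+((\beta,\alpha_j)+1)B^1_{\beta-\alpha_i}\bigr)$.

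The crux is therefore to prove that $f_iA_\beta+((\beta,\alpha_j)+1)B^1_{\beta-\alpha_i}=0$ in $\mathcal{W}^j_{-\sqrt{p}(\beta-\alpha_i)+\bar\lambda}$. I plan to derive this from $f_if_j^{(\beta,\alpha_j)+1}A_\beta=0$, which holds because $f_j^{(\beta,\alpha_j)+1}A_\beta=0$ by Corollary \ref{beforecoram1}. Expanding the left-hand side exactly as above and noting that $f_j^{(\beta,\alpha_j)}B^0_{\beta-\alpha_i-\alpha_j}$ vanishes (again by Corollary \ref{beforecoram1}, since $(\beta-\alpha_i-\alpha_j,\alpha_j)=(\beta,\alpha_j)-1$), one is left with $f_j^{(\beta,\alpha_j)+1}\bigl(f_iA_\beta+((\beta,\alpha_j)+1)B^1_{\beta-\alpha_i}\bigr)=0$. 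Because $(\beta-\alpha_i,\alpha_j)=(\beta,\alpha_j)+1$, Corollary \ref{coram1} \eqref{coram1.1} tells us that $f_j^{(\beta,\alpha_j)+1}$ is injective on $\mathcal{F}_{-\sqrt{p}(\beta-\alpha_i)+\bar\lambda}$, forcing the parenthesized vector to vanish. This is the step I expect to be the main obstacle: it is exactly where the naive $\mathfrak{sl}_2$-bookkeeping leaves a nonzero remainder that only the injectivity of a power of $f_j$ on a single Fock space eliminates. Combining the three cases yields $[e_j,f_i]=\delta_{ij}h_{i,\lambda}$.

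For the last assertion I would observe that, together with the easily checked relation $[h_{i,\lambda},e_j]=(\alpha_i,\alpha_j)e_j$ (a weight computation from \eqref{(57.02)} and the eigenvalues of $h_{i,\lambda}$) and the relations of Theorem \ref{ft41}, the identity $[e_j,f_i]=\delta_{ij}h_{i,\lambda}$ shows that $\{f_i,h_{i,\lambda}\}_{i=1}^l\cup\{e_j\}$ realize the Lie algebra $\mathfrak{p}_j=\mathfrak{b}\oplus\mathbb{C}e_j$ of $P_j$ acting on $W(p,\lambda)_Q^j$. This action is integrable: $f_j$ is locally nilpotent by Theorem \ref{ft41}, $e_j$ is locally nilpotent because it strictly lowers $N$, and $h_{j,\lambda}$ acts semisimply with integral eigenvalues, so the triple $\{e_j,h_{j,\lambda},f_j\}$ integrates to an $SL_2$-action. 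Hence the $\mathfrak{p}_j$-action integrates to a $P_j$-module structure on the underlying space of $W(p,\lambda)_Q^j$, which is by construction $B$-isomorphic to $W(p,\lambda)_Q^j$; by Definition \ref{dfnGsubmodule} this exhibits $W(p,\lambda)_Q^j$ as a $P_j$-submodule of $V_{\sqrt{p}Q+\lambda}$.
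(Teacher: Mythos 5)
Your proposal is correct and follows essentially the same route as the paper: verify the commutator on the generators $f_j^NA_\beta$ supplied by Theorem \ref{am1} \eqref{am1:modstr}, split into the cases $i=j$, $(\alpha_i,\alpha_j)=0$, $(\alpha_i,\alpha_j)=-1$, and use Lemma \ref{lemm:pre} together with Corollaries \ref{beforecoram1} and \ref{coram1} to kill the residual term in the last case. The only (cosmetic) difference is in that final step: the paper reduces to $[e_j,f_i]f_jA_\beta=0$ and checks it by computing $f_j[e_j,f_i]f_jA_\beta$ and multiplying by $f_j^{(\beta,\alpha_j)}$, whereas you derive the equivalent identity $f_iA_\beta+((\beta,\alpha_j)+1)B^1_{\beta-\alpha_i}=0$ directly from $f_if_j^{(\beta,\alpha_j)+1}A_\beta=0$ and the injectivity of $f_j^{(\beta-\alpha_i,\alpha_j)}$ on $\mathcal{F}_{-\sqrt{p}(\beta-\alpha_i)+\bar\lambda}$ — same ingredients, slightly cleaner bookkeeping.
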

\begin{proof}
By Theorem \ref{am1} \eqref{am1:modstr}, it is enough to show that 
\begin{align}\label{assertionalign1}
[e_j, f_i]f_j^NA_\beta=\delta_{ij}h_{i,\lambda}f_j^NA_\beta
\end{align}
for any $\beta\in Q+\hat\lambda$ such that $(\beta,\alpha_j)\geq 0$, $A_\beta\in\mathcal{W}^j_{-\sqrt{p}\beta+\bar\lambda}$, and $0\leq N\leq(\beta,\alpha_j)$.

We divide the proof into several cases.
In the case of $i=j$, \eqref{assertionalign1} follows from the definition.
Second, we consider the case of $(\alpha_i,\alpha_j)=0$.
By Lemma \ref{lemm:pre}, we have
\begin{align}
f_iA_\beta=A^0_{\beta-\alpha_i}
\end{align}
for some $A^0_{\beta-\alpha_i}\in\mathcal{W}^j_{-\sqrt{p}(\beta-\alpha_i)+\bar\lambda}$.
Then we have
\begin{align}
(f_ie_j-e_jf_i)f_j^NA_{\beta}&=c_N^{\beta}f_if_j^{N-1}A_{\beta}-e_jf_j^Nf_iA_{\beta}\\
&=c_N^{\beta}f_j^{N-1}f_iA_{\beta}-e_jf_j^Nf_iA_{\beta}\nonumber\\
&=c_N^{\beta}f_j^{N-1}A^0_{\beta-\alpha_i}-e_jf_j^NA^0_{\beta-\alpha_i}\nonumber\\
&=(c_N^{\beta}-c_N^{\beta-\alpha_i})f_j^{N-1}A^0_{\beta-\alpha_i}=0,\nonumber
\end{align}
where the first and fourth equality follows from \eqref{(57.02)}, and the second equality follows from the fact that $[f_i,f_j]=0$.
Finally, we consider the case of $(\alpha_i,\alpha_j)=-1$. 
By Lemma \ref{lemm:pre}, we have
\begin{align}
[f_i,f_j]A_\beta=C^0_{\beta-\alpha_i-\alpha_j}+f_jC^1_{\beta-\alpha_i},~f_iA_\beta=D^0_{\beta-\alpha_i}
\end{align}
for some $C^0_{\beta-\alpha_i-\alpha_j}\in\mathcal{W}^j_{-\sqrt{p}(\beta-\alpha_i-\alpha_j)+\bar\lambda}$ and $C^1_{\beta-\alpha_i}, D^0_{\beta-\alpha_i}\in\mathcal{W}^j_{-\sqrt{p}(\beta-\alpha_i)+\bar\lambda}$.
Then we have
\begin{align}\label{beforealign}
&e_jf_if_j^NA_{\beta}\\
=&e_j(Nf_j^{N-1}[f_i,f_j]+f_j^Nf_i)A_{\beta}\nonumber\\
=&Ne_jf_j^{N-1}C^0_{\beta-\alpha_i-\alpha_j}+Ne_jf_j^NC^1_{\beta-\alpha_i}+e_jf_j^ND^0_{\beta-\alpha_i}\nonumber\\
=&Nc_{N-1}^{\beta-\alpha_i-\alpha_j}f_j^{N-2}C^0_{\beta-\alpha_i-\alpha_j}+Nc_{N}^{\beta-\alpha_i}f_j^{N-1}C^1_{\beta-\alpha_i}+c_{N}^{\beta-\alpha_i}f_j^{N-1}D^0_{\beta-\alpha_i},\nonumber
\end{align}
where the first equality follows from the Serre relation, and the third equality follows from \eqref{(57.02)}.
In the same manner as \eqref{beforealign}, we have
\begin{align}\label{afteralign}
&f_ie_jf_j^NA_{\beta}\\
=&(N-1)c_{N}^{\beta}f_j^{N-2}C^0_{\beta-\alpha_i-\alpha_j}+(N-1)c_{N}^{\beta}f_j^{N-1}C^1_{\beta-\alpha_i}+c_{N}^{\beta}f_j^{N-1}D^0_{\beta-\alpha_i}.\nonumber
\end{align}
By \eqref{beforealign} and \eqref{afteralign}, we obtain that
\begin{align}
[e_j, f_i]f_j^NA_{\beta}&=N(\beta-\alpha_i,\alpha_j)f_j^{N-1}C^1_{\beta-\alpha_i}+Nf_j^{N-1}D^0_{\beta-\alpha_i}\\
&=Nf_j^{N-1}[e_j, f_i]f_jA_{\beta},\nonumber
\end{align}
Hence it is enough to show that $[e_j, f_i]f_jA_{\beta}=0$.
Note that it is sufficient to show that $f_j^{(\beta-\alpha_i,\alpha_j)}[e_j, f_i]f_jA_{\beta}=0$, by Corollary \ref{coram1} \eqref{coram1.1}.
By Lemma \ref{lemm:pre}, we have
\begin{align}\label{m1align}
f_if_jA_{\beta}=A^0_{\beta-\alpha_i-\alpha_j}+f_jA^1_{\beta-\alpha_i},~f_iA_{\beta}=B^0_{\beta-\alpha_i}
\end{align}
for some $A^0_{\beta-\alpha_i-\alpha_j}\in\mathcal{W}^j_{-\sqrt{p}(\beta-\alpha_i-\alpha_j)+\bar\lambda}$ and  $A^1_{\beta-\alpha_i}, B^0_{\beta-\alpha_i}\in\mathcal{W}^j_{-\sqrt{p}(\beta-\alpha_i)+\bar\lambda}$. Moreover, by multiplying $f_if_jA_\beta$ in \eqref{m1align} by $e_j$, we have
\begin{equation}\label{m1align2}
[e_j, f_i]f_jA_{\beta}=c_1^{\beta-\alpha_i}A^1_{\beta-\alpha_i}-c_1^{\beta}B^0_{\beta-\alpha_i}.
\end{equation}
By combining \eqref{m1align} with \eqref{m1align2}, we have
\begin{align}\label{strangeway}
f_j[e_j, f_i]f_jA_{\beta}=c_1^{\beta-\alpha_i}f_if_jA_{\beta}-c_1^{\beta}f_jf_iA_{\beta}-c^{\beta-\alpha_i}_1A^0_{\beta-\alpha_i-\alpha_j}.
\end{align}
Then we have
\begin{align}
f_j^{(\beta-\alpha_i,\alpha_j)}[e_j, f_i]f_jA_{\beta}=&f_j^{(\beta,\alpha_j)}(f_j[e_j, f_i]f_jA_{\beta})\nonumber\\
=&f_j^{(\beta-\alpha_i-\alpha_j,\alpha_j)+1}(c_1^{\beta-\alpha_i}f_if_jA_{\beta}-c_1^{\beta}f_jf_iA_{\beta}-c^{\beta-\alpha_i}_1A^0_{\beta-\alpha_i-\alpha_j})\nonumber\\
=&((\beta,\alpha_j)c_1^{\beta-\alpha_i}-((\beta,\alpha_j)+1)c_1^{\beta})[f_j,f_i]f_j^{(\beta,\alpha_j)}A_{\beta}=0,\nonumber
\end{align}
where the second equality follows from \eqref{strangeway}, and the third equality follows from Corollary \ref{coram1} \eqref{coram1.2}. Thus the claim is proved.
\end{proof}

\subsection{Generalization of Lemma \ref{lemm:serre}}\label{almostproof}
Let us recall Definition \ref{logWalgs}, \ref{dfnmathcalWs}.
For $(i,j,\lambda)\in \Pi\times\Pi\times \Lambda$, we consider the following condition:
\begin{align}\label{novelcond}
\text{$\epsilon_\lambda(\sigma_j)=-\alpha_j$ or $(\epsilon_\lambda(\sigma_j),\alpha_i)=-\delta_{ij}$
(where $\epsilon_\lambda(\sigma_j)$ is given in \eqref{(666)}).}
\end{align}
For a subset $J\subseteq\Pi$, we also consider the following condition:
\begin{align}\label{novelcond2}
\text{$(i,j,\lambda)$ satisfies \eqref{novelcond} for any $(i,j)\in J\times J$}.
\end{align}
When $J=\Pi$, the condition \eqref{novelcond2} is stated as the following:
\begin{align}\label{novelcond3}
\text{$(i,j,\lambda)$ satisfies \eqref{novelcond} for any $(i,j)\in\{1,\dots,l\}\times\{1,\dots,l\}$.}
\end{align}
\begin{lemm}\label{special}
Let $1\leq i,j\leq l$ and $\lambda\in\Lambda$. If $(i,j,\lambda)$ satisfies \eqref{novelcond}, then $W(p,\lambda)_Q^{\{i,j\}}$ is closed under the operator $e_i$ in \eqref{(57.02)}.
\end{lemm}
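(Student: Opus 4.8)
The plan is to reduce the statement to showing $F_{j,\lambda}e_iv=0$ for every $v\in W(p,\lambda)_Q^{\{i,j\}}=W(p,\lambda)_Q^i\cap W(p,\lambda)_Q^j$, and then to prove that $F_{j,\lambda}$ intertwines $e_i$ with the analogous operator for $\sigma_j\ast\lambda$. Since $e_i$ is defined on $W(p,\lambda)_Q^i$ and preserves it by \eqref{(57.02)}, and since $W(p,\lambda)_Q^j=\ker F_{j,\lambda}$, closure under $e_i$ amounts to $e_i\bigl(W(p,\lambda)_Q^{\{i,j\}}\bigr)\subseteq\ker F_{j,\lambda}$. First I would dispose of the trivial instances of \eqref{novelcond}. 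If $i=j$ there is nothing to prove. If $\epsilon_\lambda(\sigma_j)=-\alpha_j$, then $(\sqrt p\bar\lambda,\alpha_j)=p-1$ by Remark \ref{beloweps}, so $F_{j,\lambda}=0$, $W(p,\lambda)_Q^j=V_{\sqrt pQ+\lambda}$, and $W(p,\lambda)_Q^{\{i,j\}}=W(p,\lambda)_Q^i$ is closed under $e_i$. Thus I may assume $i\neq j$, $(\sqrt p\bar\lambda,\alpha_j)\le p-2$ and $(\epsilon_\lambda(\sigma_j),\alpha_i)=0$.

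Next I would expand $v$ using the $\mathcal{W}_0^i$-module decomposition of $W(p,\lambda)_Q^i$ from Theorem \ref{am1} \eqref{am1:modstr}, writing $v=\sum_{\beta,N}f_i^NA_\beta$ with $A_\beta\in\mathcal{W}_{-\sqrt p\beta+\bar\lambda}^i$, $(\beta,\alpha_i)\ge0$ and $0\le N\le(\beta,\alpha_i)$. By \eqref{(57.02)} and $[F_{j,\lambda},f_i]=0$ from \eqref{(19.1)}, we get $F_{j,\lambda}e_iv=\sum_{\beta,N}c_N^\beta f_i^{N-1}(F_{j,\lambda}A_\beta)$ while $F_{j,\lambda}v=\sum_{\beta,N}f_i^N(F_{j,\lambda}A_\beta)=0$. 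The key point is that $F_{j,\lambda}$ sends each highest-weight piece $A_\beta$ to a highest-weight piece of the corresponding decomposition of $W(p,\sigma_j\ast\lambda)_Q^i$. Indeed, \eqref{(19.11)} together with $(\epsilon_\lambda(\sigma_j),\alpha_i)=0$ shows that $F_{j,\lambda}$ intertwines $h_{i,\lambda}$ with $h_{i,\sigma_j\ast\lambda}$, so $F_{j,\lambda}A_\beta$ lies in $\mathcal{F}_{-\sqrt p\beta'+\overline{\sigma_j\ast\lambda}}$ with $(\beta',\alpha_i)=(\beta,\alpha_i)$; combining this with $[F_{j,\lambda},f_i]=0$ and $f_i^{(\beta,\alpha_i)+1}A_\beta=0$ (Corollary \ref{beforecoram1}) yields $f_i^{(\beta',\alpha_i)+1}F_{j,\lambda}A_\beta=0$. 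Granting that $F_{j,\lambda}A_\beta\in W(p,\sigma_j\ast\lambda)_Q^i$, Corollary \ref{coram1} \eqref{coram1.2} (with $n=0$) then places $F_{j,\lambda}A_\beta$ in $\mathcal{W}_{-\sqrt p\beta'+\overline{\sigma_j\ast\lambda}}^i$. Since $\beta\mapsto\beta'$ is a fixed translation, the terms $f_i^N(F_{j,\lambda}A_\beta)$ lie in pairwise distinct summands of the direct-sum decomposition of $W(p,\sigma_j\ast\lambda)_Q^i$ in Theorem \ref{am1} \eqref{am1:modstr}; hence $F_{j,\lambda}v=0$ forces each $f_i^N(F_{j,\lambda}A_\beta)=0$, and then $F_{j,\lambda}A_\beta=0$ because $f_i^N$ is injective on $\mathcal{W}_{-\sqrt p\beta'+\overline{\sigma_j\ast\lambda}}^i$ for $0\le N\le(\beta',\alpha_i)$. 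Therefore $F_{j,\lambda}e_iv=0$, as required.

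The main obstacle is the step granted above, namely that $F_{j,\lambda}$ carries $W(p,\lambda)_Q^i=\ker F_{i,\lambda}$ into $W(p,\sigma_j\ast\lambda)_Q^i=\ker F_{i,\sigma_j\ast\lambda}$, equivalently that $F_{i,\sigma_j\ast\lambda}\circ F_{j,\lambda}$ vanishes on $\ker F_{i,\lambda}$. When $(\alpha_i,\alpha_j)=0$ this is immediate: the fields $|-\tfrac1{\sqrt p}\alpha_i\rangle(z)$ and $|-\tfrac1{\sqrt p}\alpha_j\rangle(w)$ are mutually local with no monodromy, so $F_{i,\sigma_j\ast\lambda}F_{j,\lambda}=F_{j,\sigma_i\ast\lambda}F_{i,\lambda}$, and the right-hand side kills $\ker F_{i,\lambda}$. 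When $(\alpha_i,\alpha_j)=-1$ the two fields have branch cut $(z-w)^{-1/p}$ and the narrow screenings no longer commute; here the hypothesis $(\epsilon_\lambda(\sigma_j),\alpha_i)=0$, which by \eqref{tuika15} means $(\sqrt p\bar\lambda,\alpha_i+\alpha_j)\le p-2$, is exactly what forces the containment. I expect to handle this adjacent case by adapting the $f_i$-string computations of Lemma \ref{lemm:pre} and Lemma \ref{lemm:serre}, decomposing $A_\beta$ and $F_{j,\lambda}A_\beta$ into strings and using the Serre relation together with the exact sequence of Theorem \ref{am1} \eqref{am1:exseq} to control $F_{i,\sigma_j\ast\lambda}F_{j,\lambda}A_\beta$; this interplay between the two families of screenings at adjacent nodes is the delicate heart of the argument.
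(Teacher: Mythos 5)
Your reduction, your disposal of the trivial cases, and your use of the string decomposition from Theorem \ref{am1} \eqref{am1:modstr} all match the paper's strategy, but the argument has a genuine gap that you yourself flag: everything after ``granting that $F_{j,\lambda}A_\beta\in W(p,\sigma_j\ast\lambda)_Q^i$'' depends on that membership (both the application of Corollary \ref{coram1} \eqref{coram1.2} and the final separation of terms via the direct-sum decomposition of $W(p,\sigma_j\ast\lambda)_Q^i$), and in the adjacent case $(\alpha_i,\alpha_j)=-1$ you only ``expect to handle'' it. That unproven containment $F_{i,\sigma_j\ast\lambda}\circ F_{j,\lambda}=0$ on $\ker F_{i,\lambda}$ is not a routine verification --- it is essentially the whole content of the lemma --- so as written the proposal does not close.

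The paper's proof shows that this detour is unnecessary. Taking $a=f_i^nA_\beta$ with $0\le n<(\beta,\alpha_i)$ and $f_ia\in W(p,\lambda)_Q^{\{i,j\}}$, one gets $F_{j,\lambda}f_i^{(\beta,\alpha_i)}A_\beta=0$ because $\ker F_{j,\lambda}$ is $f_i$-stable, hence $f_i^{(\beta,\alpha_i)}F_{j,\lambda}A_\beta=0$ by \eqref{(19.1)}. Now one applies Corollary \ref{coram1} \eqref{coram1.1}, which is an injectivity statement for $f_i^{(\gamma,\alpha_i)}$ on the \emph{entire} Fock component $\mathcal{F}_{-\sqrt{p}\gamma+\overline{\sigma_j\ast\lambda}}$, with no requirement that the vector lie in any kernel of a narrow screening. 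The hypothesis $(\epsilon_\lambda(\sigma_j),\alpha_i)=0$ enters exactly here: it guarantees that the Fock component containing $F_{j,\lambda}A_\beta$ still pairs with $\alpha_i$ to give $(\beta,\alpha_i)$, so the exponent matches and $F_{j,\lambda}A_\beta=0$ outright. This kills both nodes of your difficulty at once --- no case split on $(\alpha_i,\alpha_j)$, no locality analysis of the two screening fields, and no need to know where $F_{j,\lambda}A_\beta$ lands beyond its Fock component. If you replace your ``granted'' step by this direct application of Corollary \ref{coram1} \eqref{coram1.1}, your argument closes and coincides with the paper's.
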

\begin{proof}
If $\epsilon_\lambda(\sigma_j)=-\alpha_j$ or $i=j$, then we have $W(p,\lambda)_Q^{\{i,j\}}=W(p,\lambda)_Q^i$ and the assertion is clear. Let us assume that $i\not=j$ and $(\epsilon_\lambda(\sigma_j),\alpha_i)=0$.
It is enough to show that if $f_ia$ is a nonzero vector in $W(p,\lambda)_Q^{\{i,j\}}$, then we have $a\in W(p,\lambda)_Q^{\{i,j\}}$. By Theorem \ref{am1} \eqref{am1:modstr}, it is enough to consider the case where
\begin{align}
a=f_i^nA_\beta\in W(p,\lambda)_Q^i
\end{align}
 for some $A_{\beta}\in\mathcal{W}^i_{-\sqrt{p}\beta+\bar\lambda}$, $(\beta,\alpha_i)\geq0$, and $0\leq n<(\beta,\alpha_i)$.
By the assumption that $f_ia=f_i^{n+1}A_\beta\in W(p,\lambda)_Q^j$, we have 
\begin{align}
f_i^{(\beta,\alpha_i)}F_{j,\lambda}A_{\beta}=F_{j,\lambda}f_i^{(\beta,\alpha_i)}A_{\beta}=0,
\end{align}
where the first equality follows from \eqref{(19.1)}.
By Corollary \ref{coram1} \eqref{coram1.1} and the assumption that $(\epsilon_{\lambda}(\sigma_j),\alpha_i)=0$, we have $F_{j,\lambda}A_{\beta}=0$, that is, $A_\beta\in W(p,\lambda)^j_Q$. In particular, we have $a\in W(p,\lambda)^j_Q$.
\end{proof}

\begin{thm}\label{parab}
If and only if $(J,\lambda)$ satisfies \eqref{novelcond2}, then $W(p,\lambda)_Q^J$ is a $P_J$-submodule of $V_{\sqrt{p}Q+\lambda}$. 
\end{thm}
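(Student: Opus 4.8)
The plan is to reduce the statement to the Lie-algebra level. By Definition \ref{dfnGsubmodule} and the uniqueness of a $P_J$-structure (Corollary \ref{cor:P_Jsubmod}), showing that the $B$-module $W(p,\lambda)_Q^J$ is a $P_J$-submodule amounts to extending the $\mathfrak b$-action of Theorem \ref{ft41} to an \emph{integrable} action of the Lie algebra $\mathfrak p_J$ of $P_J$, i.e.\ to adjoining the raising operators $e_i$ of \eqref{(57.02)} for $i\in J$. Since $\mathfrak p_J$ is generated by $\mathfrak b$ together with $\{e_i\}_{i\in J}$, and since $f_i$ (Theorem \ref{ft41}) and $e_i$ (by its defining formula \eqref{(57.02)}, which shows $e_i^{N+1}f_i^NA_\beta=0$) act locally nilpotently, such an action exponentiates to $P_J$ by the standard integration of $\mathfrak h$-diagonalizable, $\mathfrak{sl}_2$-locally-finite representations. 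Hence the genuine content is twofold: (a) that $W(p,\lambda)_Q^J$ is stable under each $e_i$, $i\in J$, and (b) that $\{e_i,f_i,h_{i,\lambda}\}_{i\in J}$ together with $\mathfrak n_-,\mathfrak h$ satisfy the defining relations of $\mathfrak p_J$.

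For the ``if'' direction, assume \eqref{novelcond2}. First I would prove $e_i$-stability: for $x\in W(p,\lambda)_Q^J$ and any $j\in J$, the triple $(i,j,\lambda)$ satisfies \eqref{novelcond}, so Lemma \ref{special} gives $e_ix\in W(p,\lambda)_Q^{\{i,j\}}\subseteq\ker F_{j,\lambda}$ (the operator $e_i$ on $W(p,\lambda)_Q^{\{i,j\}}$ being the restriction of that on $W(p,\lambda)_Q^i$); letting $j$ range over $J$ yields $e_ix\in W(p,\lambda)_Q^J$. For the relations, $[h_{k,\lambda},e_i]=c_{ki}e_i$ is immediate from the $\mathfrak h$-weight grading, $[e_i,f_k]=\delta_{ik}h_{i,\lambda}$ on $W(p,\lambda)_Q^J\subseteq W(p,\lambda)_Q^i$ is Lemma \ref{lemm:serre}, and the Serre relations $\operatorname{ad}(e_i)^{1-c_{ij}}e_j=0$ for $i,j\in J$ are checked pairwise by the same Fock-space bookkeeping as Lemma \ref{lemm:serre}, using Theorem \ref{am1}, Corollary \ref{coram1}, and Lemma \ref{lemm:pre}. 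With the relations in hand the integrable $\mathfrak p_J$-action exists, and exponentiating gives the $P_J$-structure.

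For the ``only if'' direction I would argue by contraposition. If $W(p,\lambda)_Q^J$ is a $P_J$-submodule, restricting its $P_J$-structure to each $P_i$ ($i\in J$) exhibits it as a $P_i$-submodule; since any $B$-module map between $P_i$-modules is a $P_i$-map (by the argument of Lemma \ref{lemm:H^0VandV} and Corollary \ref{cor:P_Jsubmod}), the inclusion $W(p,\lambda)_Q^J\hookrightarrow W(p,\lambda)_Q^i$ intertwines the $e_i$-actions, so $W(p,\lambda)_Q^J$ is necessarily $e_i$-stable for every $i\in J$. Now suppose \eqref{novelcond2} fails. By Remark \ref{beloweps} the case $i=j$ always satisfies \eqref{novelcond}, and for non-adjacent $i,j$ one has $(\epsilon_\lambda(\sigma_j),\alpha_i)=0$; hence the failure forces an adjacent pair $i\ne j\in J$ with $(\sqrt p\bar\lambda,\alpha_j)\le p-2$ and $(\epsilon_\lambda(\sigma_j),\alpha_i)=1$ (by \eqref{tuika15}). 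Reversing the computation in Lemma \ref{special}: because $F_{j,\lambda}$ shifts the Fock parameter by $\epsilon_\lambda(\sigma_j)$, the relevant Corollary \ref{coram1} exponent for $F_{j,\lambda}A_\beta$ drops from $(\beta,\alpha_i)$ to $(\beta,\alpha_i)-1$. Choosing $\beta\in Q+\hat\lambda$ and $A_\beta\in\mathcal W^i_{-\sqrt p\beta+\bar\lambda}$ lying in $\bigcap_{k\in J\setminus\{j\}}\ker F_{k,\lambda}$ with $F_{j,\lambda}A_\beta\ne0$, the vector $a=f_i^{(\beta,\alpha_i)}A_\beta$ lies in $W(p,\lambda)_Q^J$ while $e_ia\in\mathbb C^\times f_i^{(\beta,\alpha_i)-1}A_\beta$ has $F_{j,\lambda}(e_ia)\ne0$; thus $e_ia\notin W(p,\lambda)_Q^J$, contradicting $e_i$-stability.

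The hard part will be step (b) of the ``if'' direction — the Serre relations for the $e_i$, equivalently the full rank-two instance of the theorem — since closure (via Lemma \ref{special}) only guarantees that the operators are defined on $W(p,\lambda)_Q^J$, not a priori that they obey the $\mathfrak p_J$-relations; these must be extracted from the explicit $\mathcal W_0^i$-module structure of Theorem \ref{am1} by a computation in the spirit of Lemma \ref{lemm:serre}. In the converse the delicate point is producing the witness $A_\beta$ with $F_{j,\lambda}A_\beta\ne0$ inside the remaining kernels, which I would obtain from the surjectivity in Theorem \ref{am1} \eqref{am1:exseq}.
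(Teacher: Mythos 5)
Your ``if'' direction follows the paper's route exactly: Lemma \ref{special} for $e_i$-stability, Lemma \ref{lemm:serre} for $[e_i,f_k]=\delta_{ik}h_{i,\lambda}$, and a pairwise verification of the Serre relations $\operatorname{ad}(e_i)^{1-c_{ij}}e_j=0$ via the decomposition of Theorem \ref{am1} \eqref{am1:modstr}; you defer that last computation, but it is precisely the one the paper carries out (writing $e_jA_\beta=A_{\beta+\alpha_j}+f_iA_{\beta+\alpha_j+\alpha_i}$ and cancelling the coefficients $c^\bullet_N$), so no objection there.

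The ``only if'' direction, however, has a genuine gap in the witness construction. You take $A_\beta\in\mathcal W^i_{-\sqrt p\beta+\bar\lambda}\cap\bigcap_{k\in J\setminus\{j\}}\ker F_{k,\lambda}$ with $F_{j,\lambda}A_\beta\neq0$ and claim $a=f_i^{(\beta,\alpha_i)}A_\beta\in W(p,\lambda)_Q^J$. The problematic membership is $F_{j,\lambda}a=f_i^{(\beta,\alpha_i)}F_{j,\lambda}A_\beta=0$. The vector $F_{j,\lambda}A_\beta$ has $\alpha_i$-weight $(\beta,\alpha_i)-1$, but it need not lie in $\mathcal W^i$, nor even in $W(p,\sigma_j\ast\lambda)_Q^i$ (the narrow screenings for adjacent nodes do not commute, so $\ker F_{i,\lambda}$ is not preserved by $F_{j,\lambda}$); by the second half of Theorem \ref{am1} \eqref{am1:modstr} a general vector of weight $n$ can survive $f_i^{n+1}$ (the cosingular vectors $w_{-\sqrt p\alpha+\lambda}$ do), so $f_i^{(\beta,\alpha_i)}F_{j,\lambda}A_\beta$ may well be nonzero, in which case $a\notin W(p,\lambda)_Q^J$ and your contradiction evaporates. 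Moreover the existence of such an $A_\beta$ at all (non-highest-weight inside $\mathcal W^i$, killed by every $F_{k,\lambda}$ with $k\neq j$ but not by $F_{j,\lambda}$) is asserted rather than produced; the surjectivity in Theorem \ref{am1} \eqref{am1:exseq} gives preimages in $V_{\sqrt pQ+\lambda}$, not in $\mathcal W^i\cap\bigcap_{k\neq j}\ker F_{k,\lambda}$. The paper avoids both problems by a detour through the shifted sector: since $(i,j,\sigma_i\ast\lambda)$ and $(j,i,\sigma_i\ast\lambda)$ do satisfy \eqref{novelcond}, the rank-two parabolic acts on $W(p,\sigma_i\ast\lambda)_Q^{\{i,j\}}$, one takes extremal vectors $x,\ y=f_i^{(\cdot)}x,\ z=f_j^{(\cdot)}y$ of an irreducible constituent there, transports $y$ back to the $\lambda$-sector via \eqref{alignfrom42} as $y=F_{i,\lambda}|v\rangle$ for an explicit Fock vector $|v\rangle$, and uses $z'=f_j^{(\cdot)+1}|v\rangle$ as the witness; its membership in $W(p,\lambda)_Q^J$ is then checked directly by the computation \eqref{(1030)}, which reduces $F_{i,\lambda}z'$ to $f_jz=0$. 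Some such transport through $\sigma_i\ast\lambda$ (or an equivalent explicit control of $F_{j,\lambda}A_\beta$) is needed to close your argument.
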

\begin{proof}
First, let us assume that $(J,\lambda)$ satisfies \eqref{novelcond2}. By using Lemma \ref{special} repeatedly, it follows that the subspace $W(p,\lambda)_Q^J\subseteq V_{\sqrt{p}Q+\lambda}$ is closed under the operators $\{e_i\}_{i\in J}$ and the $B$-action. By Lemma \ref{lemm:serre}, to show that $W(p,\lambda)_Q^J$ is a $P_J$-submodule of $V_{\sqrt{p}Q+\lambda}$, it is enough to verify that the operators $\{e_i\}_{i\in J}$ satisfy the Serre relations on $W(p,\lambda)_Q^J$. By Theorem \ref{am1} \eqref{am1:modstr}, it is enough to show that
$\operatorname{ad}(e_i)^{1-c_{ij}}e_jf_i^NA_{\beta}=0$
for any $(i,j)\in J\times J$ such that $i\not=j$, $f_i^NA_{\beta}\in W(p,\lambda)_Q^{\{i,j\}}$, $A_\beta\in\mathcal{W}^i_{-\sqrt{p}\beta+\bar\lambda}$. 
First, let us assume that $(\alpha_i,\alpha_j)=0$. Then by Lemma \ref{lemm:serre}, we have
\begin{align}\label{t1}
f_i^{(\beta+\alpha_j,\alpha_i)+1}e_jA_\beta=e_jf_i^{(\beta,\alpha_i)+1}A_\beta=0.
\end{align}
By \eqref{t1} and Corollary \ref{coram1} \eqref{coram1.2}, we have $e_jA_\beta\in\mathcal{W}^i_{-\sqrt{p}(\beta+\alpha_j)+\bar\lambda}$.
Thus we have
\begin{align}
[e_i,e_j]f_i^NA_{\beta}=e_if_i^Ne_jA_{\beta}-c^{\beta}_Nf_i^{N-1}e_jA_{\beta}=(c^{\beta+\alpha_j}_N-c^{\beta}_N)f_i^{N-1}e_jA_\beta=0.\nonumber
\end{align}
Next, let us assume that $(\alpha_i,\alpha_j)=-1$.
Then by Lemma \ref{lemm:serre}, we have
\begin{align}\label{t2}
f_i^{(\beta+\alpha_j,\alpha_i)+2}e_jA_\beta=e_jf_i^{(\beta,\alpha_i)+1}A_\beta=0.
\end{align}
By \eqref{t2} and Corollary \ref{coram1} \eqref{coram1.2}, we have $e_jA_\beta=A_{\beta+\alpha_j}+f_iA_{\beta+\alpha_j+\alpha_i}$ for some $A_{\beta+\alpha_j}\in\mathcal{W}^i_{-\sqrt{p}(\beta+\alpha_j)+\bar\lambda}$ and $A_{\beta+\alpha_j+\alpha_i}\in\mathcal{W}^i_{-\sqrt{p}(\beta+\alpha_j+\alpha_i)+\bar\lambda}$.
Thus we have
\begin{align}
[e_i[e_i,e_j]]f_i^NA_{\beta}&=(e_i^2e_j-2e_ie_je_i+e_je_i^2)f_i^NA_{\beta}\nonumber\\
&=(e_i^2f_i^N-2c^{\beta}_Ne_if_i^{N-1}+c^{\beta}_Nc^{\beta}_{N-1})e_jA_{\beta}\nonumber\\
&=(e_i^2f_i^N-2c^{\beta}_Ne_if_i^{N-1}+c^{\beta}_Nc^{\beta}_{N-1})(A_{\beta+\alpha_j}+f_iA_{\beta+\alpha_j+\alpha_i})\nonumber\\
&=(c^{\beta+\alpha_j}_Nc^{\beta+\alpha_j}_{N-1}-2c^{\beta}_Nc^{\beta+\alpha_j}_{N-1}+c^{\beta}_Nc^{\beta}_{N-1})f_i^{N-2}A_{\beta+\alpha_j}\nonumber\\
&+(c^{\beta+\alpha_j+\alpha_i}_{N+1}c^{\beta+\alpha_j+\alpha_i}_{N}-2c^{\beta}_Nc^{\beta+\alpha_j+\alpha_i}_N+c^{\beta}_Nc^{\beta}_{N-1})f_i^{N-1}A_{\beta+\alpha_j+\alpha_i}=0.\nonumber
\end{align}
Therefore, if $(J,\lambda)$ satisfies \eqref{novelcond2}, then $W(p,\lambda)_Q^J$ is a $P_J$-submodule of $V_{\sqrt{p}Q+\lambda}$. 

We prove the converse of the assertion above. Let us assume that $(J,\lambda)$ does not satisfy \eqref{novelcond2}. Then there exists a pair $(j,i)\in J\times J$ such that $i\not=j$, $\epsilon_{\lambda}(\sigma_i)\not=-\alpha_i$ and $(\epsilon_{\lambda}(\sigma_i),\alpha_j)=1$. Denote by $\mathfrak{g}_{(i,j)}$ and $\mathfrak{b}_{(i,j)}$ the simple Lie algebra $\mathfrak{sl}_3$ and its Borel subalgebra generated by $\{e_i,e_j,h_i,h_j,f_i,f_j\}$ and $\{h_i,h_j,f_i,f_j\}$, respectively (where we use the letters $e_i,e_j,h_i,h_j,f_i,f_j$ for the Chevalley basis of $\mathfrak{g}$). 
Also, let $G_{(i,j)}$ and $B_{(i,j)}$ be the corresponding algebraic groups, respectively.
Since $(\epsilon_\lambda(\sigma_i),\alpha_j)=1$, by \eqref{tuika15}, we also have $(\epsilon_\lambda(\sigma_j),\alpha_i)=1$.
By \eqref{401-1}, we have
\begin{align}
(\epsilon_{\sigma_i\ast\lambda}(\sigma_i),\alpha_j)=(\epsilon_{\sigma_j\ast\lambda}(\sigma_j),\alpha_i)=0,
\end{align}
and thus, $(j,i,\sigma_i\ast\lambda)$ and $(i,j,\sigma_i\ast\lambda)$ satisfy \eqref{novelcond}.
By Lemma \ref{lemm:serre}, Lemma \ref{special}, and the discussion on the Serre relations above, $W(p,\sigma_i\ast\lambda)_Q^{\{i,j\}}$ is a $G_{(i,j)}$-submodule of the $B_{(i,j)}$-module $V_{\sqrt{p}Q+\sigma_i\ast\lambda}$. 
Set 
\begin{align}
x=|-\sqrt{p}(\alpha+\widehat{\sigma_i\ast\lambda})+\overline{\sigma_i\ast\lambda}\rangle\in W(p,\sigma_i\ast\lambda)_Q^J,
\end{align}
where $\alpha\in Q$ such that $(\alpha+\widehat{\sigma_i\ast\lambda},\alpha_s)\geq0$ for $s\in J$. 
Then $x$ is the highest weight vector of the irreducible $\mathfrak{g}_{(i,j)}$-module with highest weight $\alpha+\widehat{\sigma_i\ast\lambda}$. By the proof of Lemma \ref{newlemm3.4}, 
we have the nonzero vectors $y$ and $z$ in $W(p,\sigma_i\ast\lambda)_Q^J$ defined by
\begin{align}
y=f_i^{(\alpha+\widehat{\sigma_i\ast\lambda},\alpha_i)}x,~z=f_j^{(\sigma_i(\alpha+\widehat{\sigma_i\ast\lambda}),\alpha_j)}y.
\end{align}
Since $e_jy=0$, 
we have $y\in\mathcal{W}^j_{-\sqrt{p}(\sigma_i(\alpha+\widehat{\sigma_i\ast\lambda}))+\overline{\sigma_i\ast\lambda}}$, and hence 
\begin{align}\label{minialign}
f_iy=f_jz=0
\end{align}
by Corollary \ref{coram1} \eqref{coram1.2}.
By \eqref{alignfrom42}, we have 
\begin{align}\label{(1029)}
y=F_{i,\lambda}|-\sqrt{p}(\sigma_i(\alpha+\widehat{\sigma_i\ast\lambda})+\epsilon_{\lambda}(\sigma_i))+\bar{\lambda}\rangle.
\end{align}
Moreover, by Theorem \ref{am1} \eqref{am1:modstr}, the vector $z'$ defined by
\begin{align}
z'=&f_j^{(\sigma_i(\alpha+\widehat{\sigma_i\ast\lambda})+\epsilon_{\lambda}(\sigma_i),\alpha_j)}|-\sqrt{p}(\sigma_i(\alpha+\widehat{\sigma_i\ast\lambda})+\epsilon_{\lambda}(\sigma_i))+\bar{\lambda}\rangle\\
=&f_j^{(\sigma_i(\alpha+\widehat{\sigma_i\ast\lambda}),\alpha_j)+1}|-\sqrt{p}(\sigma_i(\alpha+\widehat{\sigma_i\ast\lambda})+\epsilon_{\lambda}(\sigma_i))+\bar{\lambda}\rangle\nonumber
\end{align}
is nonzero. On the other hand, we have
\begin{align}\label{(1030)}
F_{i,\lambda}z'&=F_{i,\lambda}f_j^{(\sigma_i(\alpha+\widehat{\sigma_i\ast\lambda}),\alpha_j)+1}|-\sqrt{p}(\sigma_i(\alpha+\widehat{\sigma_i\ast\lambda})+\epsilon_{\lambda}(\sigma_i))+\bar{\lambda}\rangle\\
&=f_j^{(\sigma_i(\alpha+\widehat{\sigma_i\ast\lambda}),\alpha_j)+1}F_{i,\lambda}|-\sqrt{p}(\sigma_i(\alpha+\widehat{\sigma_i\ast\lambda})+\epsilon_{\lambda}(\sigma_i))+\bar{\lambda}\rangle\nonumber\\
&=f_j^{(\sigma_i(\alpha+\widehat{\sigma_i\ast\lambda}),\alpha_j)+1}y=f_jz=0,\nonumber
\end{align}
where the second equality follows from \eqref{(19.1)},
the third equality follows from \eqref{(1029)},
and the last equality follows from \eqref{minialign}.
Because $(\sigma_i(\alpha+\widehat{\sigma_i\ast\lambda})+\epsilon_{\lambda}(\sigma_i),\alpha_s)\geq 0$ for $s\in J\setminus\{i\}$, we have
\begin{align}\label{1030}
|-\sqrt{p}(\sigma_i(\alpha+\widehat{\sigma_i\ast\lambda})+\epsilon_{\lambda}(\sigma_i))+\bar{\lambda}\rangle\in W(p,\lambda)_Q^{J\setminus\{i\}}.
\end{align}
Thus, by \eqref{(1030)} and \eqref{1030}, $z'\in W(p,\lambda)_Q^J$. 
On the other hand, if $W(p,\lambda)_Q^J$ is closed under the operator $e_j$, then we have 
\begin{align}
|-\sqrt{p}(\sigma_i(\alpha+\widehat{\sigma_i\ast\lambda})+\epsilon_{\lambda}(\sigma_i))+\bar{\lambda}\rangle\in W(p,\lambda)_Q^J.
\end{align}
Then by \eqref{(1029)}, we have $y=0$. It contradicts the fact that $y\not=0$.
 Thus, $W(p,\lambda)_Q^J$ is not closed under the operator $e_j$, and hence not a $P_J$-submodule of $V_{\sqrt{p}Q+\lambda}$.
\end{proof}

\subsection{Vanishing cohomologies in Theorem \ref{thm2}}\label{vanishingcohoms}
Let us recall notation in \hyperlink{$V(\mu)$}{Section \ref{subsect:notation}}.
In this subsection, we prove the vanishing of the sheaf cohomologies in Theorem \ref{thm2} along the line of the proof in \cite[Section 4.2]{FT}. For $J\subseteq\Pi$, $n\in\mathbb{Z}$, and $B$-module $V$, we write $H^n(P_J\times_BV)$ for the sheaf cohomology of the sheaf of sections of the homogeneous vector bundle $P_J\times_BV$ over $P_J/B$.

We first review well-known properties of homogeneous vector bundles (see \cite{Dem,F,Kum,L}).
Let us consider the case where $J=\{i\}$ for some $1\leq i\leq l$.
By combining \eqref{isomtrivbdle} with Lemma \ref{lemm:serre}, for $n\geq 0$, $\lambda\in\Lambda$, and $\mu\in\mathfrak{h}^\ast$, we have
\begin{align}\label{(202)}
H^n(P_i\times_BW(p,\lambda)_Q^i(\mu))\simeq H^n(P_i\times_B\mathbb{C}_\mu)\otimes W(p,\lambda)_Q^i.
\end{align}
Note that by the proof of \cite[III, Theorem 5.1]{H}, for $\mu\in P$ and $1\leq i\leq l$, we have
\begin{align}\label{dims}
\operatorname{dim}_{\mathbb{C}}H^n(P_i\times_B\mathbb{C}_{\mu})=
\begin{cases}
(\mu,\alpha_i)+1&n=0, (\mu,\alpha_i)\geq 0,\\
-(\mu,\alpha_i)-1&n=1, (\mu,\alpha_i)<0,\\
0&\text{otherwise}.
\end{cases}
\end{align}
Let $\pi\colon G/B\twoheadrightarrow G/P_i$ be the $P_i/B$-bundle. Note that $P_i/B\simeq\mathbb{P}^1$. 
For a continuous map $f:X\rightarrow Y$ between topological spaces $X$ and $Y$, we write $f_\ast$ and $R^if_\ast$ for the direct image functor and higher direct image functor between categories of sheaves of abelian groups, respectively.
The following lemma is also well-known (cf. e.g., \cite{F,L}).
\begin{lemm}\label{fung}
For $1\leq i\leq l$, let $\mu$ be an element in $P$ such that $(\mu+\rho,\alpha_i)\geq 0$. Then we have $\pi_\ast\mathcal{O}_{G/B}(\mu)\simeq R^1\pi_\ast\mathcal{O}_{G/B}(\sigma_i(\mu+\rho)-\rho)$ as $\mathcal{O}_{G/P_i}$-modules. In particular, we have $H^0(P_i\times_B\mathbb{C}_\mu)\simeq H^1(P_i\times_B\mathbb{C}_{\sigma_i(\mu+\rho)-\rho})$ as $P_i$-modules.
\end{lemm}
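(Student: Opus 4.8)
The plan is to reduce the sheaf statement to a computation on a single fibre $P_i/B\simeq\mathbb{P}^1$ and then to spread it out by equivariance. Since $\pi\colon G/B\twoheadrightarrow G/P_i$ is $G$-equivariant and $\mathcal{O}_{G/B}(\mu)$ is a $G$-equivariant line bundle, both $\pi_\ast\mathcal{O}_{G/B}(\mu)$ and $R^1\pi_\ast\mathcal{O}_{G/B}(\sigma_i(\mu+\rho)-\rho)$ are $G$-equivariant coherent $\mathcal{O}_{G/P_i}$-modules, hence (once seen to be locally free) homogeneous vector bundles on $G/P_i$. Such a bundle is determined up to isomorphism by its fibre at the base point $\operatorname{id}P_i$ regarded as a $P_i$-module, via the associated-bundle equivalence; so it suffices to identify these two fibres as $P_i$-modules, which is precisely the ``in particular'' clause.

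First I would record the fibrewise degrees. The restriction of $\mathcal{O}_{G/B}(\mu)$ to $P_i/B$ is $P_i\times_B\mathbb{C}_\mu\simeq\mathcal{O}_{\mathbb{P}^1}(d)$ with $d=(\mu,\alpha_i)$, consistently with the dimension count \eqref{dims}, and the hypothesis $(\mu+\rho,\alpha_i)\geq 0$ gives $d\geq-1$. Using $\sigma_i(\mu+\rho)=\mu+\rho-(\mu+\rho,\alpha_i)\alpha_i$ together with $(\rho,\alpha_i)=1$, the restriction of $\mathcal{O}_{G/B}(\sigma_i(\mu+\rho)-\rho)$ to the fibre has degree $-d-2$. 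These degrees are constant along the base, so cohomology and base change (Grauert) applies: when $d\geq 0$ one has $R^1\pi_\ast\mathcal{O}_{G/B}(\mu)=0$ and $\pi_\ast\mathcal{O}_{G/B}(\mu)$ locally free with fibre $H^0(P_i\times_B\mathbb{C}_\mu)$, while $\pi_\ast\mathcal{O}_{G/B}(\sigma_i(\mu+\rho)-\rho)=0$ and $R^1\pi_\ast\mathcal{O}_{G/B}(\sigma_i(\mu+\rho)-\rho)$ locally free with fibre $H^1(P_i\times_B\mathbb{C}_{\sigma_i(\mu+\rho)-\rho})$; when $d=-1$ all four sheaves vanish and the assertion is trivial.

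The heart of the matter is therefore the $P_i$-module isomorphism $H^0(P_i\times_B\mathbb{C}_\mu)\simeq H^1(P_i\times_B\mathbb{C}_{\sigma_i(\mu+\rho)-\rho})$, which I would prove by Borel--Weil--Bott on $\mathbb{P}^1$. Restricting to the Levi subgroup $L_i\subseteq P_i$, whose semisimple part has Lie algebra $\mathfrak{g}_i\simeq\mathfrak{sl}_2$, the bundle $P_i\times_B\mathbb{C}_\mu$ is the $L_i$-equivariant $\mathcal{O}_{\mathbb{P}^1}(d)$; Bott's theorem identifies $H^1(\mathbb{P}^1,\mathcal{O}(-d-2))$ with $H^0(\mathbb{P}^1,\mathcal{O}(d))$, both being the irreducible $L_i$-module of highest weight $\mu$, and the unipotent radical of $P_i$ acts trivially, so this is an isomorphism of $P_i$-modules. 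By the equivalence of the first paragraph it lifts to $\pi_\ast\mathcal{O}_{G/B}(\mu)\simeq R^1\pi_\ast\mathcal{O}_{G/B}(\sigma_i(\mu+\rho)-\rho)$, and taking fibres at $\operatorname{id}P_i$ returns the ``in particular'' statement.

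I expect the main obstacle to be bookkeeping the $P_i$-equivariance through duality. The conceptual input is relative Serre duality for the $\mathbb{P}^1$-bundle $\pi$, whose relative dualizing sheaf is $\mathcal{O}_{G/B}(-\alpha_i)$ (the unique $G$-equivariant line bundle of fibre degree $-2$); but applied naively it yields the $\mathcal{O}_{G/P_i}$-dual of $\pi_\ast\mathcal{O}_{G/B}(-\sigma_i\mu)$ rather than $\pi_\ast\mathcal{O}_{G/B}(\mu)$ on the nose. Converting this into an honest $P_i$-equivariant isomorphism forces one to use the self-duality of $\mathfrak{sl}_2$-modules (matching highest weights after dualizing, via $\mu=-\sigma_i(-\mu)$) and to check that the unipotent radical contributes nothing. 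Working fibrewise and invoking the homogeneous-bundle equivalence, as above, is the cleanest way to sidestep these sign and twist subtleties.
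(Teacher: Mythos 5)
Your argument is correct, but it runs in the opposite direction from the paper's. The paper (following \cite{F}) proves the sheaf statement first by a direct computation on the projective bundle $G/B\simeq\mathbb{P}(\varepsilon)$ over $G/P_i$: it writes $\mathcal{O}(\mu)=\pi^\ast\mathcal{M}\otimes\mathcal{O}(m)$ via the Picard group of a $\mathbb{P}^1$-bundle, identifies the relative canonical sheaf with $\mathcal{O}(-\alpha_i)$, applies $R^1\pi_\ast$ using the Hartshorne exercises on $\mathbb{P}(\varepsilon)$, and closes the loop with the perfect pairing $S^m(\varepsilon)\times S^m(\varepsilon)\to(\wedge^2\varepsilon)^{\otimes m}$; the ``in particular'' clause is then read off from the resulting (natural, hence equivariant) isomorphism. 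You instead prove the $P_i$-module statement first --- $SL_2$ Borel--Weil--Bott on the fibre $P_i/B\simeq\mathbb{P}^1$, together with the observation that the unipotent radical of $P_i$ acts trivially on $P_i\times_B\mathbb{C}_\mu$ and hence on its cohomology --- and then promote it to the sheaf statement via Grauert/cohomology-and-base-change and the equivalence between $G$-equivariant vector bundles on $G/P_i$ and $P_i$-modules. This is essentially Demazure's classical argument (the paper cites \cite{Dem} but does not use it here), and it is arguably better adapted to how the lemma is actually used later (Corollary \ref{corfung} and Theorem \ref{vanish} only need the $P_i$-module isomorphism); the cost is that you must invoke base change and the equivariant-bundle dictionary, which the paper's computation avoids. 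Your degree bookkeeping ($d$ versus $-d-2$, with the trivial case $d=-1$) and the weight match $\mu,\dots,\sigma_i\mu$ on both cohomology groups check out, and your closing remarks on relative Serre duality are not load-bearing, so they introduce no gap.
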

\begin{proof}
We reproduce the proof in \cite{F} for completeness.
When $(\mu+\rho,\alpha_i)=0$, by \eqref{dims}, we have $H^0(P_i\times_B\mathbb{C}_\mu)\simeq H^1(P_i\times_B\mathbb{C}_{\sigma_i(\mu+\rho)-\rho})\simeq 0$, and hence, $\pi_\ast\mathcal{O}_{G/B}(\mu)\simeq R^1\pi_\ast\mathcal{O}_{G/B}(\sigma_i(\mu+\rho)-\rho)\simeq 0$. 

We consider the cases where $(\mu,\alpha_i)\geq 0$.
Since $\pi\colon G/B\twoheadrightarrow G/P_i$ is a $\mathbb{P}^1$-bundle, by \cite[II, Ex. 7.10.(c)]{H}, we have $G/B\simeq\mathbb{P}(\varepsilon)$. Here $\varepsilon$ is a locally free sheaf on $G/P_i$ of rank $2$, and $\mathbb{P}(\varepsilon)$ is the projective space bundle associated with $\varepsilon$ (see \cite[II.7]{H}). 
For a ringed space $X$, denote by $\operatorname{Pic}X$ the Picard group of $X$ (see \cite[II, 6]{H}), and for a projective scheme $X$, we write $\mathcal{O}_{X}(1)$ for the twisting sheaf of $X$ and $\mathcal{O}_{X}(m):=\mathcal{O}_{X}(1)^{\otimes m}$ (see \cite[II, 5]{H}). We omit the letter $X$ when it is clear. Then by \cite[II, Ex. 7.9.(a)]{H}, we have the group isomorphism
\begin{align}\label{pic}
\operatorname{Pic}G/P_i\times\mathbb{Z}\rightarrow\operatorname{Pic}G/B,~(\mathcal{M},m)\mapsto\pi_\ast\mathcal{M}(m):=\pi^\ast\mathcal{M}\otimes_{\mathcal{O}_{G/B}}\mathcal{O}_{G/B}(m).
\end{align}
By \eqref{pic}, we have $\mathcal{O}(\mu)=\pi^\ast\mathcal{M}\otimes\mathcal{O}(m)$ for some $\mathcal{M}\in\operatorname{Pic}G/P_i$ and $m=(\mu,\alpha_i)$. By \cite[Lemme (i)]{Dem1}, the relative canonical sheaf $\omega_{(G/B)/(G/P_i)}=\Omega_{(G/B)/(G/P_i)}$ (see \cite[III, Ex. 8.4(b)]{H}) is $\mathcal{O}(-\alpha_i)$.
Then we have
\begin{align}\label{extra1}
\mathcal{O}(\sigma_i(\mu+\rho)-\rho)&\simeq\mathcal{O}(\mu)\otimes\omega_{(G/B)/(G/P_i)}^{\otimes m+1}\\
&\simeq\pi^\ast\mathcal{M}\otimes\mathcal{O}(m)\otimes(\pi^\ast\wedge^2\varepsilon\otimes\mathcal{O}(-2))^{\otimes m+1}\nonumber\\
&\simeq\pi^\ast(\mathcal{M}\otimes(\wedge^2\varepsilon)^{\otimes m+1})\otimes\mathcal{O}(-m-2),\nonumber
\end{align}
where the second $\mathcal{O}_{G/B}$-module isomorphism follows from \cite[III, Ex. 8.4(b)]{H}. Moreover, by \eqref{extra1}, we have
\begin{align}\label{extra2}
R^1\pi_\ast\mathcal{O}(\sigma_i(\mu+\rho)-\rho)&\simeq R^1\pi_\ast(\pi^\ast(\mathcal{M}\otimes(\wedge^2\varepsilon)^{\otimes m+1})\otimes\mathcal{O}(-m-2))\\
&\simeq\mathcal{M}\otimes(\wedge^2\varepsilon)^{\otimes m+1}\otimes R^1\pi_\ast\mathcal{O}(-m-2)\nonumber\\
&\simeq\mathcal{M}\otimes(\wedge^2\varepsilon)^{\otimes m+1}\otimes ((\pi_\ast\mathcal{O}(m))^{\vee}\otimes(\wedge^2\varepsilon)^{\vee})\nonumber\\
&\simeq\mathcal{M}\otimes((\pi_\ast\mathcal{O}(m))^{\vee}\otimes(\wedge^2\varepsilon)^{\otimes m}),\nonumber
\end{align}
where the second $\mathcal{O}_{G/P_i}$-module isomorphism follows from \cite[III, Ex. 8.3]{H} and the third $\mathcal{O}_{G/P_i}$-module isomorphism follows from \cite[III, Ex. 8.4.(c)]{H}.

On the other hand, we have the perfect pairing 
\begin{align}\label{extra3}
S^m(\varepsilon)\times S^m(\varepsilon)\rightarrow(\wedge^2\varepsilon)^{\otimes m},~(x^ay^b,x^cy^d)\mapsto\delta_{a+c,m}\delta_{b+d,m}(x\wedge y)^{\otimes m},
\end{align}
where $S(\varepsilon)=\bigoplus_{m\geq 0}S^m(\varepsilon)$ is the symmetric algebra of $\varepsilon$ (see \cite[II, Ex. 5.16(a)]{H}).
By \cite[III, Ex. 8.4(a)]{H} and the assumption that $m=(\mu,\alpha_i)\geq 0$, we have $\pi_\ast\mathcal{O}_{\mathbb{P}(\varepsilon)}(m)\simeq S^m(\varepsilon)$, and hence
\begin{align}\label{extra4}
\pi_\ast\mathcal{O}(m)^{\vee}\otimes(\wedge^2\varepsilon)^{\otimes m}\simeq S^m(\varepsilon)^{\vee}\otimes(\wedge^2\varepsilon)^{\otimes m}\simeq S^m(\varepsilon)\simeq\pi_\ast\mathcal{O}(m).
\end{align}
Therefore, by \eqref{extra2} and \eqref{extra4}, we obtain that
\begin{align}
R^1\pi_\ast\mathcal{O}(\sigma_i(\mu+\rho)-\rho)\simeq\mathcal{M}\otimes\pi_\ast\mathcal{O}(m)\simeq\pi_\ast\mathcal{O}(\mu),
\end{align}
where the last $\mathcal{O}_{G/P_i}$-module isomorphism follows from \cite[III, Ex. 8.3]{H}.
\end{proof}

By combining Lemma \ref{fung} with \eqref{(401)}, we obtain the following.
\begin{cor}\label{corfung}
For $1\leq i\leq l$, $\sigma\in W$, and $\lambda\in\Lambda$, we have
\begin{align*}
H^a(P_i\times_B\mathbb{C}_{\epsilon_{\lambda}(\sigma)})&\simeq
H^b(P_i\times_B\mathbb{C}_{\epsilon_{\lambda}(\sigma_i\sigma)+\epsilon_{\sigma_i\sigma\ast\lambda}(\sigma_i)+\delta_{(\sqrt{p}\overline{\sigma\ast\lambda},\alpha_i),p-1}\alpha_i})
\end{align*}
as $P_i$-modules, where $(a,b)=(0,1)$ or $(1,0)$.
\end{cor}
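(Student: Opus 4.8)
The plan is to deduce the corollary directly from Lemma \ref{fung} after rewriting the relation \eqref{(401)} in terms of the $\rho$-shifted action of the simple reflection $\sigma_i$. Write $\sigma_i\cdot\mu:=\sigma_i(\mu+\rho)-\rho$, and set
\begin{align*}
\nu=\epsilon_{\lambda}(\sigma_i\sigma)+\epsilon_{\sigma_i\sigma\ast\lambda}(\sigma_i)+\delta_{(\sqrt{p}\overline{\sigma\ast\lambda},\alpha_i),p-1}\alpha_i,
\end{align*}
so that $\nu$ is exactly the weight appearing on the right-hand side of the asserted isomorphism. First I would verify, using $\sigma_i\alpha_i=-\alpha_i$, that \eqref{(401)} says precisely $\epsilon_\lambda(\sigma)=\sigma_i\cdot\nu$: expanding $\sigma_i(\nu+\rho)-\rho$ and collecting the $\delta$-term reproduces the right-hand side of \eqref{(401)}. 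Since $\sigma_i\cdot$ is an involution (as $\sigma_i^2=\operatorname{id}$), this simultaneously gives $\nu=\sigma_i\cdot\epsilon_\lambda(\sigma)$.

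The second step is a sign dichotomy. Because $(\sigma_i\cdot\mu+\rho,\alpha_i)=(\sigma_i(\mu+\rho),\alpha_i)=-(\mu+\rho,\alpha_i)$, the integers $(\nu+\rho,\alpha_i)$ and $(\epsilon_\lambda(\sigma)+\rho,\alpha_i)$ are negatives of one another; hence at least one of the weights $\nu,\epsilon_\lambda(\sigma)$ satisfies the hypothesis $(\,\cdot\,+\rho,\alpha_i)\geq 0$ of Lemma \ref{fung}. If $(\nu+\rho,\alpha_i)\geq 0$, I would apply Lemma \ref{fung} with $\mu=\nu$ to get $H^0(P_i\times_B\mathbb{C}_\nu)\simeq H^1(P_i\times_B\mathbb{C}_{\sigma_i\cdot\nu})=H^1(P_i\times_B\mathbb{C}_{\epsilon_\lambda(\sigma)})$, which is the case $(a,b)=(1,0)$. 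If instead $(\epsilon_\lambda(\sigma)+\rho,\alpha_i)\geq 0$, I would apply Lemma \ref{fung} with $\mu=\epsilon_\lambda(\sigma)$ to get $H^0(P_i\times_B\mathbb{C}_{\epsilon_\lambda(\sigma)})\simeq H^1(P_i\times_B\mathbb{C}_{\sigma_i\cdot\epsilon_\lambda(\sigma)})=H^1(P_i\times_B\mathbb{C}_\nu)$, using $\sigma_i\cdot\epsilon_\lambda(\sigma)=\nu$; this is the case $(a,b)=(0,1)$.

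I do not expect a genuine obstacle: the entire content is the bookkeeping identity $\epsilon_\lambda(\sigma)=\sigma_i\cdot\nu$ read off from \eqref{(401)}, together with the fact that $\sigma_i\cdot$ is an $\rho$-shifted involution carrying exactly one side into the dominant range of Lemma \ref{fung}. The only point needing a word of care is the boundary case $(\nu+\rho,\alpha_i)=0$, where both inner products vanish and both cohomology groups are zero by \eqref{dims}; there either application of Lemma \ref{fung} is legitimate and the stated isomorphism holds trivially.
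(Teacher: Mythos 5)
Your proposal is correct and is exactly the paper's argument, which consists of the single line ``combine Lemma \ref{fung} with \eqref{(401)}'': your identity $\epsilon_\lambda(\sigma)=\sigma_i\cdot\nu$ is precisely the content of \eqref{(401)} rewritten via the $\rho$-shifted action, and the sign dichotomy tells you which of the two weights lies in the dominant range where Lemma \ref{fung} applies. Since the corollary is later invoked for both choices of $(a,b)$, the only point worth adding is that in each branch the isomorphism not supplied by Lemma \ref{fung} still holds because both of its sides vanish by \eqref{dims} --- the same observation you already make at the boundary case $(\nu+\rho,\alpha_i)=0$.
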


\begin{conj}\label{van}
Let us fix $\lambda\in\Lambda$ and $\sigma\in W$. If $l(\sigma_i\sigma)=l(\sigma)+1$, then we have 
\begin{align}\label{conjvan}
H^n(\xi_{\sigma\ast\lambda}(\epsilon_{\lambda}(\sigma)))\simeq H^{n+1}(\xi_{\sigma_i\sigma\ast\lambda}(\epsilon_{\lambda}(\sigma_i\sigma)))
\end{align}
for any $n\in\mathbb{Z}$ (see \eqref{(666)}).
In particular, for any $\lambda\in\Lambda$, $\sigma\in W$ and $n\in\mathbb{Z}$, we have $H^n(\xi_{\lambda})\simeq H^{n+l(\sigma)}(\xi_{\sigma\ast\lambda}(\epsilon_{\lambda}(\sigma)))$.
\end{conj}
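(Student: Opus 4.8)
The plan is to compute both sides through the $\mathbb{P}^1$-fibration $\pi\colon G/B\twoheadrightarrow G/P_i$ and to compare the relative cohomology sheaves. For a $B$-module $V$ one has $R^q\pi_\ast(G\times_BV)\simeq G\times_{P_i}H^q(P_i\times_BV)$, and this vanishes for $q>1$ because the fibres $P_i/B\simeq\mathbb{P}^1$ have cohomological dimension $1$; here $H^q(P_i\times_BV)$ is the $P_i$-module of fibrewise cohomology. So first I would reduce \eqref{conjvan} to the two concentration statements
\[
R^1\pi_\ast\xi_{\sigma\ast\lambda}(\epsilon_\lambda(\sigma))=0,\qquad R^0\pi_\ast\xi_{\sigma_i\sigma\ast\lambda}(\epsilon_\lambda(\sigma_i\sigma))=0,
\]
together with the $P_i$-equivariant (hence $G/P_i$-sheaf) isomorphism
\[
R^0\pi_\ast\xi_{\sigma\ast\lambda}(\epsilon_\lambda(\sigma))\simeq R^1\pi_\ast\xi_{\sigma_i\sigma\ast\lambda}(\epsilon_\lambda(\sigma_i\sigma)).
\]
Once these hold, each Leray spectral sequence $E_2^{p,q}=H^p(G/P_i,R^q\pi_\ast\xi)$ is concentrated in a single row and degenerates at $E_2$, giving $H^n(\xi_{\sigma\ast\lambda}(\epsilon_\lambda(\sigma)))\simeq H^n(G/P_i,R^0\pi_\ast\cdots)$ and $H^{n+1}(\xi_{\sigma_i\sigma\ast\lambda}(\epsilon_\lambda(\sigma_i\sigma)))\simeq H^n(G/P_i,R^1\pi_\ast\cdots)$, and these coincide by the displayed isomorphism.

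To compute the fibrewise cohomology I would feed the short exact sequence of $B$-modules from Theorem \ref{am1}\eqref{am1:exseq} into the long exact sequence of $H^\bullet(P_i\times_B-)$. Since both the kernel $W(p,\sigma\ast\lambda)_Q^i$ and the cokernel $W(p,\sigma_i\sigma\ast\lambda)_Q^i$ are $P_i$-submodules (Lemma \ref{lemm:serre}), the isomorphism \eqref{(202)} reduces their fibrewise cohomology to the line-bundle computation \eqref{dims}, which is concentrated in degree $0$ when the relevant pairing with $\alpha_i$ is $\ge0$ and in degree $1$ when it is $\le-2$. Because $l(\sigma_i\sigma)=l(\sigma)+1$, Remark \ref{rmkbeloweps} gives $(\epsilon_\lambda(\sigma),\alpha_i)\ge0$ and $(\epsilon_\lambda(\sigma_i\sigma),\alpha_i)\le-1$, while \eqref{neew} and \eqref{401-1} pin down the pairings of the cokernel twists $\epsilon_\lambda(\sigma)+\epsilon_{\sigma\ast\lambda}(\sigma_i)$ and $\epsilon_\lambda(\sigma_i\sigma)+\epsilon_{\sigma_i\sigma\ast\lambda}(\sigma_i)$; these force the two concentration statements. (When $(\sqrt p\,\overline{\sigma\ast\lambda},\alpha_i)=p-1$ the sequence \eqref{am1:exseq} is replaced by the equality $V_{\sqrt pQ+\sigma\ast\lambda}=W(p,\sigma\ast\lambda)_Q^i$, so the bundle is already $P_i$-trivial up to the twist and the computation is immediate.)

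The isomorphism of relative cohomology sheaves would then be assembled from the Bott-type swap $H^0(P_i\times_B\mathbb{C}_\mu)\simeq H^1(P_i\times_B\mathbb{C}_{\sigma_i(\mu+\rho)-\rho})$ of Lemma \ref{fung}, in the refined form of Corollary \ref{corfung}. The two short exact sequences share their $W(p,-)_Q^i$-constituents in opposite roles: $W(p,\sigma\ast\lambda)_Q^i$ is the subobject in the first and the quotient in the second, and $W(p,\sigma_i\sigma\ast\lambda)_Q^i$ conversely. Matching the line-bundle factors by means of the weight identity \eqref{(401)}, which is exactly the input to Corollary \ref{corfung}, identifies the graded pieces of $R^0\pi_\ast\xi_{\sigma\ast\lambda}(\epsilon_\lambda(\sigma))$ with those of $R^1\pi_\ast\xi_{\sigma_i\sigma\ast\lambda}(\epsilon_\lambda(\sigma_i\sigma))$. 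Finally, the \emph{in particular} assertion follows by induction along a reduced expression $\sigma=\sigma_{i_{l(\sigma)}}\cdots\sigma_{i_1}$: each factor increases the length, so \eqref{conjvan} applies at every step, the degree shifts accumulate to $l(\sigma)$, and one starts from $\epsilon_\lambda(\operatorname{id})=0$ and $\xi_\lambda=\xi_{\operatorname{id}\ast\lambda}(\epsilon_\lambda(\operatorname{id}))$.

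The hard part will be the matching in the generic case $(\sqrt p\,\overline{\sigma\ast\lambda},\alpha_i)\le p-2$, where $V_{\sqrt pQ+\sigma\ast\lambda}$ is a genuine non-split extension of $P_i$-modules rather than a $P_i$-module itself. There the two fibrewise cohomologies appear as extensions in which the roles of the $W(p,-)_Q^i$-factors are interchanged, so identifying them as $P_i$-modules requires controlling the connecting homomorphisms (equivalently the Leray differentials) and the splitting of $P_i$-extensions, which $P_i$ being non-reductive does not grant for free. The obstruction concentrates precisely at the degenerate weights where the correction term $\delta_{(\sqrt p\,\overline{\sigma\ast\lambda},\alpha_i),p-1}\alpha_i$ of Corollary \ref{corfung} is nonzero and \eqref{am1:exseq} is unavailable; this is the technical difficulty of Remark \ref{tech}. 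Imposing the alcove condition $(\sqrt p\,\bar\lambda+\rho,\theta)\le p$, equivalently condition \eqref{condition1} of Lemma \ref{condequiv}, keeps a reduced word for $w_0$ clear of these configurations and makes the matching unconditional, which is why Theorem \ref{thm2} is established precisely under that hypothesis while the statement in full generality remains conjectural.
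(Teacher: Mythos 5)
Your outline is essentially the paper's own argument: the statement is only a conjecture here, and what the paper actually proves (Theorem \ref{vanish}) is exactly the case you isolate, namely $(\epsilon_\lambda(\sigma),\alpha_i)=0$, via the Leray spectral sequence for $\pi\colon G/B\to G/P_i$, the short exact sequence of Theorem \ref{am1}\eqref{am1:exseq} fed through \eqref{(202)} and \eqref{dims} (this is Lemma \ref{beforevan1}), the Bott swap of Corollary \ref{corfung}, and induction along a reduced word for the ``in particular'' part. Your concentration statements $R^1\pi_\ast\xi_{\sigma\ast\lambda}(\epsilon_\lambda(\sigma))=0$ and $R^0\pi_\ast\xi_{\sigma_i\sigma\ast\lambda}(\epsilon_\lambda(\sigma_i\sigma))=0$ do hold unconditionally, as in Lemma \ref{beforevan1}, and you are right that the genuine difficulty is matching the two fibrewise cohomologies when they are nontrivial extensions with the roles of the $W(p,-)^i_Q$-constituents interchanged. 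One correction, though: you locate the residual obstruction at the configurations where $\delta_{(\sqrt p\,\overline{\sigma\ast\lambda},\alpha_i),p-1}\neq0$ and \eqref{am1:exseq} is unavailable, but that case is actually the easy one --- there $V_{\sqrt pQ+\sigma\ast\lambda}=W(p,\sigma\ast\lambda)^i_Q$ is itself a $P_i$-module and Lemma \ref{fung} applies directly (second half of the proof of Theorem \ref{vanish}). The obstruction identified in Remark \ref{tech} is instead $(\epsilon_\lambda(\sigma),\alpha_i)>0$, which can occur even when every $(\sqrt p\,\overline{\sigma\ast\lambda},\alpha_i)\le p-2$: then \eqref{tuika3} fails, both constituents contribute to $H^0$ on one side and to $H^1$ on the other, and the extension-matching problem you describe is exactly what remains open. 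The alcove condition enters through Lemma \ref{condequiv}, which guarantees a reduced word for $w_0$ along which every pairing $(\epsilon_\lambda(\sigma_{i_n}\cdots\sigma_{i_1}),\alpha_{i_{n+1}})$ vanishes, so Theorem \ref{vanish} applies at each step; with that correction your proposal reproduces the paper's partial proof and correctly leaves the general statement conjectural.
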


We prove Conjecture \ref{van} partially (see also Remark \ref{tech} below).

\begin{thm}\label{vanish}
Let us fix $\lambda\in\Lambda$ and $\sigma\in W$. If $l(\sigma_i\sigma)=l(\sigma)+1$ and $(\epsilon_{\lambda}(\sigma),\alpha_i)=0$, then \eqref{conjvan} holds for any $n\in\mathbb{Z}$.
\end{thm}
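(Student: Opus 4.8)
The plan is to analyze both sides of \eqref{conjvan} through the $\mathbb{P}^1$-bundle $\pi\colon G/B\twoheadrightarrow G/P_i$ and to reduce the sheaf cohomology on $G/B$ to fiberwise cohomology on $P_i/B\simeq\mathbb{P}^1$. Write $\lambda'=\sigma\ast\lambda$, $\mu'=\epsilon_\lambda(\sigma)$, $\lambda''=\sigma_i\sigma\ast\lambda=\sigma_i\ast\lambda'$, and $\mu''=\epsilon_\lambda(\sigma_i\sigma)$. For a homogeneous bundle $G\times_BV$ one has $R^b\pi_\ast(G\times_BV)\simeq G\times_{P_i}H^b(P_i\times_BV)$ as $G$-equivariant sheaves, and since the fibers of $\pi$ are isomorphic to $\mathbb{P}^1$ we have $R^b\pi_\ast=0$ for $b\geq2$. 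The first task is therefore to compute the $P_i$-modules $H^b(P_i\times_BV_{\sqrt{p}Q+\lambda'}(\mu'))$ and $H^b(P_i\times_BV_{\sqrt{p}Q+\lambda''}(\mu''))$ for $b=0,1$, and then to feed the outcome into the Leray spectral sequence $E_2^{a,b}=H^a(G/P_i,R^b\pi_\ast)\Rightarrow H^{a+b}(G/B,-)$. The point will be that exactly one row $E_2^{\ast,b}$ survives on each side, so the spectral sequence degenerates and the cohomological degree shifts by one.

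For the fiberwise computation I would use that $W(p,\nu)_Q^i$ is a $P_i$-submodule of $V_{\sqrt{p}Q+\nu}$ (Lemma \ref{lemm:serre}), so by \eqref{(202)} its fiberwise cohomology is $H^b(P_i\times_B\mathbb{C}_\bullet)\otimes W(p,\nu)_Q^i$, with the line-bundle cohomology governed by \eqref{dims}. Assume first $s_i:=(\sqrt{p}\bar\lambda',\alpha_i)\leq p-2$. Then Theorem \ref{am1}\eqref{am1:exseq} exhibits $V_{\sqrt{p}Q+\lambda'}(\mu')$ as an extension of the quotient $W(p,\lambda'')_Q^i(\mu'+\epsilon_{\lambda'}(\sigma_i))$ by the subobject $W(p,\lambda')_Q^i(\mu')$ (recall $\sigma_i\ast\lambda'=\lambda''$). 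The hypothesis gives $(\mu',\alpha_i)=(\epsilon_\lambda(\sigma),\alpha_i)=0$, while \eqref{neew} gives $(\epsilon_{\lambda'}(\sigma_i),\alpha_i)=-1$, so $(\mu'+\epsilon_{\lambda'}(\sigma_i),\alpha_i)=-1$; by \eqref{dims} the quotient term then has vanishing fiberwise cohomology in both degrees. Hence $H^0(P_i\times_BV_{\sqrt{p}Q+\lambda'}(\mu'))\simeq L_1\otimes W(p,\lambda')_Q^i$, where $L_1:=H^0(P_i\times_B\mathbb{C}_{\mu'})$ is one-dimensional, and $H^{\geq1}=0$. The Leray sequence collapses to the row $b=0$ and yields $H^n(\xi_{\lambda'}(\mu'))\simeq H^n(G/P_i,G\times_{P_i}(L_1\otimes W(p,\lambda')_Q^i))$.

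For the $\lambda''$-side I would run the same exact sequence for $\lambda''$: a direct check shows $(\sqrt{p}\overline{\lambda''},\alpha_i)\leq p-2$, so Theorem \ref{am1}\eqref{am1:exseq} applies again, now with $\sigma_i\ast\lambda''=\lambda'$. A short weight bookkeeping using \eqref{(401)} with $\delta=0$ gives $(\mu''+\epsilon_{\lambda''}(\sigma_i),\alpha_i)=-2$ and $(\mu'',\alpha_i)=-1$. By \eqref{dims} the subobject $W(p,\lambda'')_Q^i(\mu'')$ has vanishing fiberwise cohomology, while the quotient $W(p,\lambda')_Q^i(\mu''+\epsilon_{\lambda''}(\sigma_i))$ contributes only in degree one; thus $H^0=0$ and $H^1(P_i\times_BV_{\sqrt{p}Q+\lambda''}(\mu''))\simeq L_2\otimes W(p,\lambda')_Q^i$ with $L_2:=H^1(P_i\times_B\mathbb{C}_{\mu''+\epsilon_{\lambda''}(\sigma_i)})$ one-dimensional. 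The Leray sequence collapses to the row $b=1$, producing the shift $H^{n+1}(\xi_{\lambda''}(\mu''))\simeq H^n(G/P_i,G\times_{P_i}(L_2\otimes W(p,\lambda')_Q^i))$. Finally Corollary \ref{corfung} (with $\delta=0$) identifies $L_1\simeq L_2$ as $P_i$-modules, so the two $G$-equivariant sheaves coincide and \eqref{conjvan} follows.

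The remaining case $s_i=p-1$ is where I expect the main bookkeeping obstacle, since Theorem \ref{am1}\eqref{am1:exseq} is then unavailable. There $F_{i,\lambda'}=F_{i,\lambda''}=0$, so $V_{\sqrt{p}Q+\lambda'}=W(p,\lambda')_Q^i$ and $V_{\sqrt{p}Q+\lambda''}=W(p,\lambda'')_Q^i$ are already $P_i$-modules, and since $\lambda''=\lambda'-\sqrt{p}\alpha_i$ by Remark \ref{beloweps} these two $P_i$-modules coincide. Using \eqref{401+1} and \eqref{(401)} with $\delta=1$ one computes $(\mu'',\alpha_i)=-2$, so \eqref{(202)} and \eqref{dims} give the surviving pieces $L_1\otimes W(p,\lambda')_Q^i$ in degree $0$ on the $\lambda'$-side and $H^1(P_i\times_B\mathbb{C}_{\mu''})\otimes W(p,\lambda'')_Q^i$ in degree $1$ on the $\lambda''$-side, and Corollary \ref{corfung} with $\delta=1$ identifies the two one-dimensional $P_i$-modules. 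The delicate points throughout are the precise sign computations ensuring that exactly one cohomological degree survives on each side — this is where the hypothesis $(\epsilon_\lambda(\sigma),\alpha_i)=0$ enters, forcing the borderline value $(\mu',\alpha_i)=0$ — together with the verification that the collapse of the two-row Leray spectral sequence transports cohomology with the asserted shift by one.
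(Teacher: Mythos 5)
Your proposal is correct and follows essentially the same route as the paper: it reduces to the fiberwise cohomology over $P_i/B\simeq\mathbb{P}^1$ via the short exact sequences of Theorem \ref{am1}\eqref{am1:exseq} (the paper's Lemma \ref{beforevan1}), uses \eqref{dims} together with the hypothesis $(\epsilon_\lambda(\sigma),\alpha_i)=0$ and Corollary \ref{corfung} to see that exactly one cohomological degree survives on each side with matching $P_i$-modules (the paper's \eqref{(403)}), and then degenerates the Leray spectral sequence for $G/B\to G/P_i$, treating the case $(\sqrt{p}\overline{\sigma\ast\lambda},\alpha_i)=p-1$ separately via Lemma \ref{fung} exactly as the paper does. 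The weight computations you record ($(\mu'+\epsilon_{\lambda'}(\sigma_i),\alpha_i)=-1$, $(\mu'',\alpha_i)=-1$ resp.\ $-2$, $(\mu''+\epsilon_{\lambda''}(\sigma_i),\alpha_i)=-2$) all check out against \eqref{401-1}, \eqref{(401)}, and \eqref{401+1}.
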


\begin{rmk}
When $\mathfrak{g}=\mathfrak{sl}_2$, Conjecture \ref{van} is equivalent to Theorem \ref{vanish}.
\end{rmk}

We first prove a lemma needed for the proof of Theorem \ref{vanish}.

\begin{lemm}\label{beforevan1}
Let $1\leq i\leq l$, $\sigma\in W$ such that $l(\sigma\sigma_i)=l(\sigma)+1$, $\lambda\in\Lambda$ such that $(\sqrt{p}\overline{\sigma\ast\lambda},\alpha_i)\leq p-2$. Then we have short exact sequences of $P_i$-modules
\begin{align*}
0&\rightarrow H^0(P_i\times_B\mathbb{C}_{\epsilon_{\lambda}(\sigma)})\otimes W(p,\sigma\ast\lambda)_Q^i\\
&\rightarrow H^0(P_i\times_BV_{\sqrt{p}Q+\sigma\ast\lambda}(\epsilon_{\lambda}(\sigma)))\nonumber\\
&\rightarrow H^0(P_i\times_B\mathbb{C}_{\epsilon_{\lambda}(\sigma)+\epsilon_{\sigma\ast\lambda}(\sigma_i)})\otimes W(p,\sigma_i\sigma\ast\lambda)_Q^i\rightarrow 0\nonumber
\end{align*} 
and 
\begin{align*}
0&\rightarrow H^1(P_i\times_B\mathbb{C}_{\epsilon_{\lambda}(\sigma_i\sigma)})\otimes W(p,\sigma_i\sigma\ast\lambda)_Q^i\\
&\rightarrow H^1(P_i\times_BV_{\sqrt{p}Q+\sigma_i\sigma\ast\lambda}(\epsilon_{\lambda}(\sigma_i\sigma)))\nonumber\\
&\rightarrow H^1(P_i\times_B\mathbb{C}_{\epsilon_{\lambda}(\sigma_i\sigma)+\epsilon_{\sigma_i\sigma\ast\lambda}(\sigma_i)})\otimes W(p,\sigma\ast\lambda)_Q^i
\rightarrow 0.\nonumber
\end{align*}
Moreover, we have
\begin{align*}
H^1(P_i\times_BV_{\sqrt{p}Q+\sigma\ast\lambda}(\epsilon_{\lambda}(\sigma)))\simeq H^0(P_i\times_BV_{\sqrt{p}Q+\sigma_i\sigma\ast\lambda}(\epsilon_{\lambda}(\sigma_i\sigma)))\simeq 0.
\end{align*}
\end{lemm}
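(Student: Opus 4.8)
The plan is to obtain both short exact sequences and both vanishings from a single pair of cohomology long exact sequences. First I would invoke Theorem \ref{am1}\eqref{am1:exseq} twice: once with base weight $\sigma\ast\lambda$ and shift $\mu=\epsilon_\lambda(\sigma)$, and once with base weight $\sigma_i\sigma\ast\lambda$ and shift $\mu=\epsilon_\lambda(\sigma_i\sigma)$. The first is available by hypothesis, and for the second I would note that $\sigma_i$ sends the label $(\sqrt{p}\overline{\sigma\ast\lambda},\alpha_i)\le p-2$ to $(\sqrt{p}\overline{\sigma_i\sigma\ast\lambda},\alpha_i)=p-2-(\sqrt{p}\overline{\sigma\ast\lambda},\alpha_i)\le p-2$, so the hypothesis of Theorem \ref{am1}\eqref{am1:exseq} is met there too; here I use that $\ast$ is a $W$-action, so $\sigma_i\ast(\sigma\ast\lambda)=\sigma_i\sigma\ast\lambda$ and $\sigma_i\ast(\sigma_i\sigma\ast\lambda)=\sigma\ast\lambda$, which makes the third terms of the two sequences the modules $W(p,\sigma_i\sigma\ast\lambda)_Q^i$ and $W(p,\sigma\ast\lambda)_Q^i$ appearing in the statement.

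Applying the homogeneous-bundle functor $P_i\times_B(-)$ to each sequence and taking sheaf cohomology over $P_i/B\simeq\mathbb{P}^1$ produces two six-term long exact sequences of $P_i$-modules (only $H^0$ and $H^1$ occur on a curve). Since each $W(p,\mu)_Q^i$ is a $P_i$-module by Lemma \ref{lemm:serre}, the isomorphism \eqref{(202)} identifies the cohomology of the outer (sub- and quotient) bundles with $H^n(P_i\times_B\mathbb{C}_\nu)\otimes W(p,\mu)_Q^i$, reducing everything to the line-bundle cohomology recorded in \eqref{dims}.

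The computation then turns entirely on pairings with $\alpha_i$. Writing $d=(\epsilon_\lambda(\sigma),\alpha_i)$, the length hypothesis together with Remark \ref{rmkbeloweps} gives $d\ge 0$. Remark \ref{beloweps}, applied to the bases $\sigma\ast\lambda$ and $\sigma_i\sigma\ast\lambda$, gives $(\epsilon_{\sigma\ast\lambda}(\sigma_i),\alpha_i)=(\epsilon_{\sigma_i\sigma\ast\lambda}(\sigma_i),\alpha_i)=-1$, so the quotient weight of the first sequence pairs to $d-1$ with $\alpha_i$; and feeding \eqref{(401)} (whose $\delta$-term vanishes because $(\sqrt{p}\overline{\sigma\ast\lambda},\alpha_i)\le p-2$) through the identity $\epsilon_\lambda(\sigma)=\sigma_i(\epsilon_\lambda(\sigma_i\sigma)+\epsilon_{\sigma_i\sigma\ast\lambda}(\sigma_i)+\rho)-\rho$ yields $(\epsilon_\lambda(\sigma_i\sigma),\alpha_i)=-d-1$ and $(\epsilon_\lambda(\sigma_i\sigma)+\epsilon_{\sigma_i\sigma\ast\lambda}(\sigma_i),\alpha_i)=-d-2$. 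Now \eqref{dims} does the rest: for the sequence based at $\sigma\ast\lambda$ the sub- and quotient weights pair to $d\ge 0$ and $d-1\ge -1$, and since $\dim H^1(P_i\times_B\mathbb{C}_\nu)=-(\nu,\alpha_i)-1$ already vanishes at $(\nu,\alpha_i)=-1$, both outer $H^1$ vanish for every $d\ge 0$; hence the connecting map dies, the $H^0$ part splits off as the first asserted short exact sequence, and $H^1$ of the middle term is squeezed to $0$. Dually, for the sequence based at $\sigma_i\sigma\ast\lambda$ the two outer weights pair to $-d-1<0$ and $-d-2<0$, so both outer $H^0$ vanish, forcing $H^0$ of the middle term to $0$ and splitting the $H^1$ part off as the second asserted short exact sequence.

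The step I expect to be the main obstacle is exactly this sign bookkeeping, and in particular the borderline value $(\nu,\alpha_i)=-1$: it is the vanishing of both $H^0$ and $H^1$ of the line bundle there that lets the single inequality $d\ge 0$ split both long exact sequences at once and deliver all three conclusions. Applying \eqref{(401)} with the correct base weights to pin down the $\sigma_i\sigma$-pairings, and carrying the $P_i$-equivariance through \eqref{(202)} so that the resulting splittings are isomorphisms of $P_i$-modules and not merely of vector spaces, are the places that require care.
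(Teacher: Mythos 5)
Your proposal is correct and follows essentially the same route as the paper: two applications of Theorem \ref{am1}\eqref{am1:exseq} (at $\sigma\ast\lambda$ and at $\sigma_i\sigma\ast\lambda$), pushed through $P_i\times_B(-)$ and the long exact sequence on $P_i/B\simeq\mathbb{P}^1$, with the outer terms identified via \eqref{(202)} and killed using \eqref{dims} together with Remarks \ref{rmkbeloweps}, \ref{beloweps} and \eqref{(401)}. The only cosmetic difference is that you compute the pairings $(\epsilon_\lambda(\sigma_i\sigma),\alpha_i)=-d-1$ and $(\epsilon_\lambda(\sigma_i\sigma)+\epsilon_{\sigma_i\sigma\ast\lambda}(\sigma_i),\alpha_i)=-d-2$ directly from \eqref{(401)}, where the paper obtains the same $H^0$-vanishings by citing Corollary \ref{corfung} (which itself rests on \eqref{(401)}).
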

\begin{proof}
By Theorem \ref{am1} \eqref{am1:exseq}, we have the short exact sequence of sheaves
\begin{align}\label{shortexact1}
0&\rightarrow P_i\times_BW(p,\sigma\ast\lambda)_Q^i(\epsilon_{\lambda}(\sigma))\\
&\rightarrow P_i\times_BV_{\sqrt{p}Q+\sigma\ast\lambda}(\epsilon_{\lambda}(\sigma))\nonumber\\
&\rightarrow P_i\times_BW(p,\sigma_i\sigma\ast\lambda)_Q^i(\epsilon_{\lambda}(\sigma)+\epsilon_{\sigma\ast\lambda}(\sigma_i))\rightarrow 0.\nonumber
\end{align}
Because $(\epsilon_{\lambda}(\sigma),\alpha_i)\geq0$ (see Remark \ref{rmkbeloweps}) and $(\epsilon_{\lambda}(\sigma)+\epsilon_{\sigma\ast\lambda}(\sigma_i),\alpha_i)\geq-1$ (see Remark \ref{beloweps}), by \eqref{dims}, we have 
\begin{align}\label{align001}
H^1(P_i\times_B\mathbb{C}_{\epsilon_{\lambda}(\sigma)})\simeq H^1(P_i\times_B\mathbb{C}_{\epsilon_{\lambda}(\sigma)+\epsilon_{\sigma\ast\lambda}(\sigma_i)})\simeq 0.
\end{align}
By combining the long exact sequence corresponding to \eqref{shortexact1} with \eqref{(202)} and  \eqref{align001}, we obtain the short exact sequence of $P_i$-modules
\begin{align}\label{(400)}
0&\rightarrow H^0(P_i\times_B\mathbb{C}_{\epsilon_{\lambda}(\sigma)})\otimes W(p,\sigma\ast\lambda)_Q^i\\
&\rightarrow H^0(P_i\times_BV_{\sqrt{p}Q+\sigma\ast\lambda}(\epsilon_{\lambda}(\sigma)))\nonumber\\
&\rightarrow H^0(P_i\times_B\mathbb{C}_{\epsilon_{\lambda}(\sigma)+\epsilon_{\sigma\ast\lambda}(\sigma_i)})\otimes W(p,\sigma_i\sigma\ast\lambda)_Q^i\rightarrow 0.\nonumber
\end{align}
Here, by \eqref{align001}, we have 
\begin{align}\label{align007}
H^1(P_i\times_BV_{\sqrt{p}Q+\sigma\ast\lambda}(\epsilon_{\lambda}(\sigma)))\simeq0.
\end{align}

On the other hand, by applying the exact sequence in Theorem \ref{am1} \eqref{am1:exseq} to the case of $\sigma_i\sigma\ast\lambda$, we have the short exact sequence of sheaves
\begin{align}\label{shortexact2}
0&\rightarrow P_i\times_B W(p,\sigma_i\sigma\ast\lambda)_Q^i(\epsilon_{\lambda}(\sigma_i\sigma))\\
&\rightarrow P_i\times_BV_{\sqrt{p}Q+\sigma_i\sigma\ast\lambda}(\epsilon_{\lambda}(\sigma_i\sigma))\nonumber\\
&\rightarrow P_i\times_B W(p,\sigma\ast\lambda)_Q^i(\epsilon_\lambda(\sigma_i\sigma)+\epsilon_{\sigma_i\sigma\ast\lambda}(\sigma_i))
\rightarrow 0.\nonumber
\end{align}
By \eqref{align001} and Corollary \ref{corfung}, we have
\begin{align}\label{tuika2}
H^0(P_i\times_B\mathbb{C}_{\epsilon_\lambda(\sigma_i\sigma)})\simeq H^0(P_i\times_B\mathbb{C}_{\epsilon_\lambda(\sigma_i\sigma)+\epsilon_{\sigma_i\sigma\ast\lambda}(\sigma_i)})\simeq 0.
\end{align}
By combining the long exact sequence corresponding to \eqref{shortexact2} with \eqref{(202)} and \eqref{tuika2}, we obtain the short exact sequence of $P_i$-modules
\begin{align}\label{(402.5)}
0&\rightarrow H^1(P_i\times_B\mathbb{C}_{\epsilon_{\lambda}(\sigma_i\sigma)})\otimes W(p,\sigma_i\sigma\ast\lambda)_Q^i\\
&\rightarrow H^1(P_i\times_BV_{\sqrt{p}Q+\sigma_i\sigma\ast\lambda}(\epsilon_{\lambda}(\sigma_i\sigma)))\nonumber\\
&\rightarrow H^1(P_i\times_B\mathbb{C}_{\epsilon_{\lambda}(\sigma_i\sigma)+\epsilon_{\sigma_i\sigma\ast\lambda}(\sigma_i)})\otimes W(p,\sigma\ast\lambda)_Q^i
\rightarrow 0.\nonumber
\end{align}
Here, by \eqref{tuika2}, we have 
\begin{align}\label{align011}
H^0(P_i\times_BV_{\sqrt{p}Q+\sigma_i\sigma\ast\lambda}(\epsilon_{\lambda}(\sigma_i\sigma)))\simeq 0.
\end{align}
Thus, Lemma \ref{beforevan1} is proved.
\end{proof}

\begin{proof}[Proof of Theorem \ref{vanish}]
We first consider the cases where $(\sqrt{p}\overline{\sigma\ast\lambda},\alpha_i)\leq p-2$. 
By Remark \ref{beloweps} and the assumption that $(\epsilon_{\lambda}(\sigma),\alpha_i)=0$, we have
\begin{align}\label{tuika1}
(\epsilon_{\lambda}(\sigma)+\epsilon_{\sigma\ast\lambda}(\sigma_i),\alpha_i)=-1.
\end{align}
By \eqref{dims}, \eqref{tuika1}, and Corollary \ref{corfung}, we have 
\begin{align}\label{tuika3}
H^0(P_i\times_B\mathbb{C}_{\epsilon_{\lambda}(\sigma)+\epsilon_{\sigma\ast\lambda}(\sigma_i)})\simeq H^1(P_i\times_B\mathbb{C}_{\epsilon_\lambda(\sigma_i\sigma)})\simeq0.
\end{align}
Then we obtain the $P_i$-module isomorphisms
\begin{align}\label{(403)}
&H^0(P_i\times_BV_{\sqrt{p}Q+\sigma\ast\lambda}(\epsilon_{\lambda}(\sigma)))\\
\simeq~&H^0(P_i\times_B\mathbb{C}_{\epsilon_{\lambda}(\sigma)})\otimes W(p,\sigma\ast\lambda)_Q^i\nonumber\\
\simeq~&H^1(P_i\times_B\mathbb{C}_{\epsilon_{\lambda}(\sigma_i\sigma)+\epsilon_{\sigma_i\sigma\ast\lambda}(\sigma_i)})\otimes W(p,\sigma\ast\lambda)_Q^i\nonumber\\
\simeq~&H^1(P_i\times_BV_{\sqrt{p}Q+\sigma_i\sigma\ast\lambda}(\epsilon_{\lambda}(\sigma_i\sigma))),\nonumber
\end{align}
where the first and third isomorphisms follow from \eqref{tuika3} and Lemma \ref{beforevan1}, and the second isomorphism follows from Corollary \ref{corfung}.

We apply the Leray spectral sequences
\begin{align}
\begin{cases}
E_2^{a,b}=H^a(G\times_{P_i}H^b(P_i\times_BV_{\sqrt{p}Q+\sigma\ast\lambda}(\epsilon_{\lambda}(\sigma))))\Rightarrow H^{a+b}(\xi_{\sigma\ast\lambda}(\epsilon_{\lambda}(\sigma)))\\
E_2^{a,b}=H^a(G\times_{P_i}H^b(P_i\times_BV_{\sqrt{p}Q+\sigma_i\sigma\ast\lambda}(\epsilon_{\lambda}(\sigma_i\sigma))))\Rightarrow H^{a+b}(\xi_{\sigma_i\sigma\ast\lambda}(\epsilon_{\lambda}(\sigma_i\sigma)))\nonumber
\end{cases}
\end{align}
to the fibration $\pi:G/B\rightarrow G/P_i$ and the vector bundle $\xi_{\lambda}$ over $G/B$. We assume that $a+b=n$ for the first spectral sequence, and that $a+b=n+1$ for the second one above. Then by \eqref{(403)} and Lemma \ref{beforevan1}, we have
\begin{align}
H^n(\xi_{\sigma\ast\lambda}(\epsilon_{\lambda}(\sigma)))\simeq H^{n+1}(\xi_{\sigma_i\sigma\ast\lambda}(\epsilon_{\lambda}(\sigma_i\sigma)))
\end{align}
as $G$-modules, because both hands have the same spectral sequence.

Second, we consider the case of $(\sqrt{p}\overline{\sigma\ast\lambda},\alpha_i)=p-1$. By Lemma \ref{lemm:serre} and the fact that $V_{\sqrt{p}Q+\sigma\ast\lambda}=W(p,\sigma\ast\lambda)_Q^i$, $V_{\sqrt{p}Q+\sigma\ast\lambda}$ is a $P_i$-module. 
By \eqref{401-1}, \eqref{(401)}, and the assumption that $(\epsilon_\lambda(\sigma),\alpha_i)=0$, we have
\begin{align}\label{94}
\epsilon_{\lambda}(\sigma_i\sigma)=\sigma_i(\epsilon_{\lambda}(\sigma)+\rho)-\rho.
\end{align}
By Lemma \ref{fung} and \eqref{94}, we obtain a $P_i$-module isomorphisms
\begin{align}\label{align012}
H^a(P_i\times_BV_{\sqrt{p}Q+\sigma\ast\lambda}(\epsilon_{\lambda}(\sigma)))&\simeq H^a(P_i\times_B\mathbb{C}_{\epsilon_{\lambda}(\sigma)})\otimes V_{\sqrt{p}Q+\sigma\ast\lambda}\\
&\simeq H^{b}(P_i\times_B\mathbb{C}_{\epsilon_{\lambda}(\sigma_i\sigma)})\otimes V_{\sqrt{p}Q+\sigma_i\sigma\ast\lambda}\nonumber\\
&\simeq H^{b}(P_i\times_BV_{\sqrt{p}Q+\sigma_i\sigma\ast\lambda}(\sigma_i(\epsilon_{\lambda}(\sigma_i\sigma))))\nonumber
\end{align}
for $(a,b)=(0,1)$ or $(1,0)$.
On the other hand, by \eqref{94}, Lemma \ref{fung}, and the fact that $(\epsilon_\lambda(\sigma),\alpha_i)\geq 0$ (see Remark \ref{rmkbeloweps}), we have
\begin{align}\label{align0125}
H^1(P_i\times_B\mathbb{C}_{\epsilon_\lambda(\sigma)})\simeq H^0(P_i\times_B\mathbb{C}_{\epsilon_{\lambda}(\sigma_i\sigma)})\simeq 0, 
\end{align}
and thus, by \eqref{align012} and \eqref{align0125}, we have 
\begin{align}\label{align013}
H^1(P_i\times_BV_{\sqrt{p}Q+\sigma\ast\lambda}(\epsilon_{\lambda}(\sigma)))\simeq H^0(P_i\times_BV_{\sqrt{p}Q+\sigma_i\sigma\ast\lambda}(\epsilon_{\lambda}(\sigma_i\sigma)))\simeq 0.
\end{align}
By combining the Leray spectral sequence above with \eqref{align012} and \eqref{align013}, we have $H^n(\xi_{\sigma\ast\lambda}(\epsilon_{\lambda}(\sigma)))\simeq H^{n+1}(\xi_{\sigma_i\sigma\ast\lambda}(\epsilon_{\lambda}(\sigma_i\sigma)))$.
\end{proof}

\begin{cor}\label{restrict}
For $n\geq 1$ and $\lambda\in\Lambda$ such that $(\sqrt{p}\bar\lambda+\rho,\theta)\leq p$, we have $H^n(\xi_{\lambda})=0$. 
\end{cor}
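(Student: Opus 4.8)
The plan is to iterate Theorem \ref{vanish} along a reduced word for the longest element and then to kill the resulting cohomology by a dimension count on $G/B$. First I would fix a minimal expression $w_0=\sigma_{i_{l(w_0)}}\cdots\sigma_{i_1}$ and, for $0\leq n\leq l(w_0)$, set $\sigma^{(n)}=\sigma_{i_n}\cdots\sigma_{i_1}$ with $\sigma^{(0)}=\operatorname{id}$. Since every prefix of a reduced word is reduced, $l(\sigma^{(n)})=n$, so the length hypothesis $l(\sigma_{i_n}\sigma^{(n-1)})=l(\sigma^{(n-1)})+1$ of Theorem \ref{vanish} holds at each step. The assumption $(\sqrt{p}\bar\lambda+\rho,\theta)\leq p$ is exactly condition \ref{condition2} of Lemma \ref{condequiv}, from which I would extract the orthogonality $(\epsilon_\lambda(\sigma^{(n-1)}),\alpha_{i_n})=0$ needed as the second hypothesis of Theorem \ref{vanish}: for $n\geq 2$ this is condition \ref{condition1} at index $n-1$, and for $n=1$ it is immediate from $\epsilon_\lambda(\operatorname{id})=0$.

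With both hypotheses verified for each $1\leq n\leq l(w_0)$, applying Theorem \ref{vanish} with $\sigma=\sigma^{(n-1)}$ and $i=i_n$ yields, for every $m\in\mathbb{Z}$,
\begin{align}
H^m(\xi_{\sigma^{(n-1)}\ast\lambda}(\epsilon_\lambda(\sigma^{(n-1)})))\simeq H^{m+1}(\xi_{\sigma^{(n)}\ast\lambda}(\epsilon_\lambda(\sigma^{(n)}))).\nonumber
\end{align}
Chaining these isomorphisms from $n=1$ up to $n=l(w_0)$ and using $\sigma^{(l(w_0))}=w_0$ together with $\epsilon_\lambda(\operatorname{id})=0$ would give, for every $m\in\mathbb{Z}$,
\begin{align}
H^m(\xi_\lambda)\simeq H^{m+l(w_0)}(\xi_{w_0\ast\lambda}(\epsilon_\lambda(w_0))),\nonumber
\end{align}
which is precisely the ``in particular'' clause of Conjecture \ref{van} in this case.

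To finish, I would take $m=n\geq 1$ and argue that the right-hand side vanishes. By Remark \ref{holom} the bundle $\xi_{w_0\ast\lambda}(\epsilon_\lambda(w_0))$ decomposes as a direct sum of finite-rank holomorphic vector bundles over the projective variety $G/B$, so its degree-$(n+l(w_0))$ cohomology is a direct sum of finite-dimensional coherent-sheaf cohomologies; by Grothendieck's vanishing theorem these vanish in degrees exceeding $\dim G/B=l(w_0)$. Since $n\geq 1$ forces $n+l(w_0)>l(w_0)$, I would conclude $H^n(\xi_\lambda)=0$.

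I do not expect a genuine obstacle, as the substance is carried entirely by Theorem \ref{vanish} and Lemma \ref{condequiv}; the only points demanding care are the bookkeeping of the reduced word (confirming that each prefix remains reduced and that the index shift in Lemma \ref{condequiv} lines up with the step $\sigma^{(n-1)}\mapsto\sigma^{(n)}$ of the iteration) and the mild technical point that Grothendieck vanishing must be invoked on the conformal-weight-graded pieces of Remark \ref{holom} rather than on the infinite-rank bundle directly.
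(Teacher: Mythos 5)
Your proposal is correct and follows essentially the same route as the paper: iterate Theorem \ref{vanish} along a reduced word for $w_0$, using Lemma \ref{condequiv} (and $\epsilon_\lambda(\operatorname{id})=0$) to supply the orthogonality hypothesis at each step, and then kill $H^{n+l(w_0)}$ because $l(w_0)=\dim G/B$. Your extra care in checking the prefix lengths and in applying the dimension-vanishing argument to the conformal-weight-graded finite-rank pieces of Remark \ref{holom} only makes explicit what the paper leaves implicit.
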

\begin{proof}
Let us recall Lemma \ref{condequiv}.
For a minimal expression $\sigma_{i_{l(w_0)}}\cdots\sigma_{i_1}$ of $w_0$,
applying Theorem \ref{vanish} to $\sigma_{i_n}\cdots\sigma_{i_1}$ for $1\leq n\leq l(w_0)$ repeatedly, we obtain that
$H^n(\xi_{\lambda})\simeq H^{n+l(w_0)}(\xi_{w_0\ast\lambda}(\epsilon_{\lambda}(w_0)))$.
Since $l(w_0)=\dim G/B$,  Corollary \ref{restrict} is proved.
\end{proof}

\begin{rmk}\label{tech}
If $(\epsilon_{\lambda}(\sigma),\alpha_i)>0$, then \eqref{tuika3} does not hold.
In particular, the first and third isomorphisms in \eqref{(403)} do not hold.
\end{rmk}

\subsection{Character formulas in Theorem \ref{thm2}}\label{chformulas}
We give the character formulas in Theorem \ref{thm2} along the line of the proof in \cite[Section 6]{FT}. For $\alpha\in P$, we use the notation
$z^{\alpha}=z_1^{(\alpha_1,\alpha)}\dots z_l^{(\alpha_l,\alpha)}$.
Let $V$ be a vector space equipped with diagonalizable actions of $L_0$ and $\mathfrak{h}$. Then the {\it full character} $\chi_V(q, z)$ of $V$ is defined by
$\chi_V(q, z)=\operatorname{tr}_V(q^{L_0-\frac{c}{24}}z_1^{h_1}\cdots z_l^{h_l})$.
We consider the case where $V={H^0(\xi_{\lambda})}$ for $\lambda\in\Lambda$ such that $(\sqrt{p}\bar\lambda+\rho,\theta)\leq p$.

Let us recall the open covering \eqref{(33.001)} of $G/B$, and regard $H^0(\xi_{\lambda})$ as the subspace of $\bigoplus_{\sigma\in W}H^0(\xi_{\lambda}|_{U_{\sigma}})=\bigoplus_{\sigma\in W}\mathcal{O}_{G/B}(U_{\sigma})\otimes V_{\sqrt{p}Q+\lambda}$ by the embedding \eqref{H0embedding}. Then for $v\in\mathcal{F}_{-\sqrt{p}(\alpha+\hat\lambda)+\bar\lambda}$, the $G$-action defined by \eqref{(31.2)} induces the following action of $\mathfrak{h}$ on $\mathcal{O}_{G/B}(U_{\sigma})\otimes\mathcal{F}_{-\sqrt{p}\alpha+\lambda}\subseteq\mathcal{O}_{G/B}(U_{\sigma})\otimes V_{\sqrt{p}Q+\lambda}$:
\begin{align}\label{(59)}
&h_i({x_{\beta_1,\sigma}}^{n_1}\cdots{x_{\beta_m,\sigma}}^{n_m}\otimes v)\\
=&(\sigma(\alpha+\hat\lambda+\rho)-\rho-\sum_{j=1}^{m}n_m\beta_m,\alpha_i){x_{\beta_1,\sigma}}^{n_1}\cdots{x_{\beta_m,\sigma}}^{n_m}\otimes v\nonumber.
\end{align}
Here ${x_{\beta_1,\sigma}},\dots,{x_{\beta_m,\sigma}}$ are generators of $\mathcal{O}_{G/B}(U_{\sigma})$ corresponding to the positive roots $\beta_1,\cdots,\beta_m$. 
The action of $L_0$ on $\mathcal{O}_{G/B}(U_{\sigma})\otimes V_{\sqrt{p}Q+\lambda}$ is given by
\begin{equation}\label{(60)}
L_0({x_{\beta_1,\sigma}}^{n_1}\cdots{x_{\beta_m,\sigma}}^{n_m}\otimes v)={x_{\beta_1,\sigma}}^{n_1}\cdots{x_{\beta_m,\sigma}}^{n_m}\otimes L_0v
\end{equation}
by definition.
By Remark \ref{holom}, we have
$H^n(\xi_{\lambda})=\bigoplus_{\Delta\in\mathbb{Q}}H^n(({\xi_{\lambda}})_{\Delta})$
for $n\geq 0$. Using the Atiyah-Bott fixed point formula \cite{AB} and Corollary \ref{restrict}, we obtain that
\begin{align}
\chi_{H^0(\xi_{\lambda})}(q, z)&=\sum_{n\geq 0}(-1)^n\chi_{H^n(\xi_{\lambda})}(q, z)\\
&=\sum_{\Delta\in\mathbb{Q}}q^{\Delta-\frac{c}{24}}\sum_{n\geq 0}(-1)^n\operatorname{tr}_{H^n((\xi_\lambda)_\Delta)}(z_1^{h_1}\cdots z_l^{h_l})\nonumber\\
&=\sum_{\Delta\in\mathbb{Q}}q^{\Delta-\frac{c}{24}}\sum_{\sigma\in W}\operatorname{tr}_{\mathcal{O}_{G/B}(U_{\sigma})\otimes (V_{\sqrt{p}Q+\lambda})_\Delta}(z_1^{h_1}\cdots z_l^{h_l})\nonumber\\
&=\sum_{\sigma\in W}\chi_{\mathcal{O}_{G/B}(U_{\sigma})\otimes V_{\sqrt{p}Q+\lambda}}(q,z).\nonumber
\end{align}
By \eqref{(59)}, \eqref{(60)} and calculations in \cite[Section 4.2]{BM}, we have
\begin{align}\label{(61)}
\chi_{H^0(\xi_{\lambda})}(q, z)&=\frac{1}{\eta(q)^l}\sum_{\sigma\in W}\sum_{\alpha\in Q}(-1)^{l(\sigma)}\frac{q^{\frac{1}{2}|\sqrt{p}(\alpha+\hat{\lambda}+\rho)-\bar{\lambda}-\frac{1}{\sqrt{p}}\rho|^2}z^{\sigma(\alpha+\rho+\hat{\lambda})-\rho}}{\Pi_{\beta\in\Delta_-}(1-z^{\beta})}\\
&=\sum_{\alpha\in P_+\cap Q}\chi_{\alpha+\hat{\lambda}}^{\mathfrak{g}}(z)\sum_{\sigma\in W}(-1)^{l(\sigma)}\frac{q^{\frac{1}{2}|\sqrt{p}\sigma(\alpha+\rho+\hat{\lambda})-\bar{\lambda}-\frac{1}{\sqrt{p}}\rho|^2}}{\eta(q)^l}\nonumber\\
&=\sum_{\alpha\in P_+\cap Q}\chi_{\alpha+\hat{\lambda}}^{\mathfrak{g}}(z)\operatorname{tr}_{T^p_{\sqrt{p}\bar{\lambda},\alpha+\hat{\lambda}}}(q^{L_0-\frac{c}{24}}).\nonumber
\end{align}
\hypertarget{lastsection4}
{Here $\chi_{\beta}^{\mathfrak{g}}(z)$ is the Weyl character of the irreducible $\mathfrak{g}$-module $\mathcal{R}_{\beta}$ with the highest weight $\beta$, $\eta(q)$ is the Dedekind Eta function and $T^p_{\sqrt{p}\bar{\lambda},\alpha+\hat{\lambda}}=H^0_{DS,\alpha+\hat{\lambda}}(\mathbb{V}_{p,\sqrt{p}\bar{\lambda}})$ is the $\mathcal{W}^{p-h}(\mathfrak{g})$-module defined in \cite{ArF}.}
\begin{rmk}
If Conjecture \ref{van} holds, then we have $H^n(\xi_\lambda)=0$ for any $\lambda\in\Lambda$ and $n\geq1$, and thus, \eqref{(61)} holds for any $\lambda\in\Lambda$.
\end{rmk}

\subsection{Proofs of Main Theorem and Theorem \ref{thm1}}\label{lastsection}
Let us recall Definition \ref{logWalgs}, \ref{dfnmathcalWs}.
For $\alpha\in P_+\cap Q$ and $\lambda\in \Lambda$, we use the letters $y_{\alpha+\hat\lambda}$ and  $y'_{\alpha+\hat\lambda}$ for the highest weight vector and lowest weight vector of $\mathcal{R}_{\alpha+\hat\lambda}$ throughout this subsection.
For $\lambda\in\Lambda$, we consider the $B$-module homomorphism $\Phi_\lambda$ defined by 
\begin{align}\label{embinnewthm1}
\Phi_\lambda\colon H^0(\xi_\lambda)\rightarrow V_{\sqrt{p}Q+\lambda},~s\mapsto s(\operatorname{id}B).
\end{align}
\begin{thm}\label{newthm1}
The $B$-module homomorphism $\Phi_\lambda$ is injective.
\end{thm}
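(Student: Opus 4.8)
The plan is to reduce to finite rank and then exploit that $\Phi_\lambda$ is $\mathfrak b$-equivariant, feeding this into the representation theory packaged in Lemmas \ref{newlemm3.4} and \ref{newlemm3.9}. By Remark \ref{holom} and \eqref{confdecomp} the bundle $\xi_\lambda$ splits into the finite-rank bundles $(\xi_\lambda)_\Delta=G\times_B(V_{\sqrt pQ+\lambda})_\Delta$, so $H^0(\xi_\lambda)=\bigoplus_{\Delta}H^0((\xi_\lambda)_\Delta)$ with each summand a finite-dimensional $G$-module; since $\Phi_\lambda$ preserves the conformal grading, it suffices to prove injectivity on a single $H:=H^0((\xi_\lambda)_\Delta)$. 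Differentiating the identity $(bs)(\operatorname{id}B)=b\,(s(\operatorname{id}B))$ shows that $\Phi_\lambda$ is a homomorphism of $\mathfrak b$-modules for the $\mathfrak b$-action of Theorem \ref{ft41}; in particular $\Phi_\lambda$ preserves $\mathfrak h$-weights, and each weight space of $V_{\sqrt pQ+\lambda}$ is a single Fock space $\mathcal F_{-\sqrt p\alpha+\lambda}$, of weight $\alpha+\hat\lambda$.

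The geometric heart is the behaviour of $\Phi_\lambda$ on highest weight vectors. Using the $G$-action \eqref{(31.2)}, a $\mathfrak g$-highest weight vector $v^+\in H$ satisfies $X\cdot v^+=0$ for $X\in\mathfrak n_+$, which says exactly that the associated $B$-equivariant function $f$ on $G$ is invariant under left multiplication by $N_+$. Restricting to the big cell $U_{\operatorname{id}}=N_+B/B$, this forces $f$ to be constant on $N_+$ with value $f(\operatorname{id})=\Phi_\lambda(v^+)$. Since $U_{\operatorname{id}}$ is dense in $G/B$ and a section of a locally free sheaf that vanishes on a dense open set is zero, $\Phi_\lambda(v^+)=0$ implies $v^+=0$; hence $\Phi_\lambda$ is injective on the space of highest weight vectors of $H$.

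Finally I would upgrade this to injectivity on all of $H$ by a multiplicity argument. Decompose $H$ into $\mathfrak g$-isotypic components; since distinct components map under $\Phi_\lambda$ to isotypic $\mathfrak g$-submodules of $V_{\sqrt pQ+\lambda}$ of distinct types (which meet trivially), it is enough to treat one component $\cong\mathcal R_{\alpha+\hat\lambda}^{\oplus m}$ with $\alpha\in P_+\cap Q$, highest weight vectors $v_1^+,\dots,v_m^+$, and $c_i:=\Phi_\lambda(v_i^+)\in\mathcal F_{-\sqrt p\alpha+\lambda}$. By the previous paragraph the $c_i$ are linearly independent, and Lemma \ref{newlemm3.9} shows that $\Phi_\lambda$ maps $\mathcal U(\mathfrak n_-)v_i^+\cong\mathcal R_{\alpha+\hat\lambda}$ isomorphically onto the $G$-submodule $\mathcal U(\mathfrak b)c_i$. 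Applying the element $u_{\alpha+\hat\lambda}$ of Lemma \ref{newlemm3.4}, whose injectivity on $\mathcal F_{-\sqrt p\alpha+\lambda}$ is precisely the point, the lowest weight vectors $u_{\alpha+\hat\lambda}c_i$ remain linearly independent, so the $m$ copies $\mathcal U(\mathfrak b)c_i$ lie in direct sum and $\Phi_\lambda$ is injective on this component. I expect the main obstacle to be exactly this control of multiplicities: without the injectivity of $u_{\alpha+\hat\lambda}$ on each Fock space the images of equal-highest-weight constituents could collapse, and matching the $\mathfrak b$-equivariant source against the purely $\mathfrak g$-module images $\mathcal U(\mathfrak b)c_i\subseteq V_{\sqrt pQ+\lambda}$ is the delicate step.
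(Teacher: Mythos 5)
Your overall strategy is the one the paper uses: split $H^0(\xi_\lambda)$ by conformal weight, show $\Phi_\lambda$ is injective on $\mathfrak{n}_+$-highest weight sections by observing that such a section is constant on the big cell $U_{\operatorname{id}}$ (this is Lemma \ref{newlemm3}), and then propagate injectivity to whole copies of $\mathcal{R}_{\alpha+\hat\lambda}$ via Lemmas \ref{newlemm3.4} and \ref{newlemm3.9} (this is Lemma \ref{newlemm4}). Your handling of a single isotypic component $\mathcal{R}_{\alpha+\hat\lambda}^{\oplus m}$ — push everything down to the one-dimensional lowest weight space and use that $u_{\alpha+\hat\lambda}$ is injective on the Fock space $\mathcal{F}_{-\sqrt{p}\alpha+\lambda}$ — is also correct in substance.

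The genuine gap is the sentence ``distinct components map under $\Phi_\lambda$ to isotypic $\mathfrak{g}$-submodules of $V_{\sqrt{p}Q+\lambda}$ of distinct types (which meet trivially), so it is enough to treat one component.'' The target $V_{\sqrt{p}Q+\lambda}$ carries only the $B$-action of Theorem \ref{ft41}, not a $\mathfrak{g}$-action, so it has no isotypic decomposition to appeal to: the images $\Phi_\lambda(\mathcal{U}(\mathfrak{g})s)=\mathcal{U}(\mathfrak{b})\Phi_\lambda(s)$ are merely $B$-submodules that happen to be abstractly isomorphic to various $\mathcal{R}_{\beta+\hat\lambda}$, and nothing you have said prevents a nontrivial linear relation among vectors drawn from images of different types. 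Moreover, even if pairwise trivial intersection were established, that is weaker than what injectivity requires, namely that the sum of the images over \emph{all} types occurring in a given conformal weight is direct. This is exactly the step the paper spends its effort on: it fixes a weight $\gamma$, chooses a type $\alpha$ minimal in the dominance order among those occurring, and applies a string $f_{i_1}\cdots f_{i_m}$ carrying $y_{\alpha+\hat\lambda}$-components to the lowest weight space; Lemma \ref{triviallemm} (the weight $w_0(\alpha+\hat\lambda)$ does not occur in $\mathcal{R}_{\beta+\hat\lambda}$ unless $\beta\geq\alpha$) then kills every other type, isolating a relation inside a single Fock space where your multiplicity argument applies. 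Your proposal needs this ordering-and-projection step (or an equivalent one) spelled out; without it the reduction to a single isotypic component is unjustified.
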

We first prove some lemmas needed for the proof of Theorem \ref{newthm1}.
For a $G$-module $M$ and $\gamma\in\mathfrak{h}^\ast$, denote by $M_{\gamma}$ the weight space of $M$ with the weight $\gamma$.

\begin{lemm}\label{newlemm2}
We have 
$H^0(\xi_\lambda)\simeq\bigoplus\limits_{\alpha\in P_+\cap Q}\mathcal{R}_{\alpha+\hat\lambda}\otimes H^0(\xi_\lambda)_{\alpha+\hat\lambda}^{\mathfrak{n}_+}$
as $G$-modules.
\end{lemm}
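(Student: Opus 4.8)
The plan is to read the decomposition off from complete reducibility of the $G$-module $H^0(\xi_\lambda)$, using the explicit weight computation \eqref{(59)} to control which highest weights occur. First I would record complete reducibility. By Remark \ref{holom} the bundle splits as $\bigoplus_{\Delta}(\xi_\lambda)_\Delta$ with $(\xi_\lambda)_\Delta=G\times_B(V_{\sqrt{p}Q+\lambda})_\Delta$ of finite rank, and since $G/B$ is projective each $H^0((\xi_\lambda)_\Delta)$ is a finite-dimensional rational $G$-module; hence $H^0(\xi_\lambda)=\bigoplus_{\Delta}H^0((\xi_\lambda)_\Delta)$ is a direct sum of finite-dimensional $G$-modules and therefore semisimple. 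Consequently $H^0(\xi_\lambda)\cong\bigoplus_{\mu\in P_+}\mathcal{R}_\mu\otimes\operatorname{Hom}_G(\mathcal{R}_\mu,H^0(\xi_\lambda))$, and sending a $G$-map to the image of a highest-weight vector identifies each multiplicity space $\operatorname{Hom}_G(\mathcal{R}_\mu,H^0(\xi_\lambda))$ with the space $H^0(\xi_\lambda)^{\mathfrak{n}_+}_\mu$ of $\mathfrak{n}_+$-highest-weight vectors of weight $\mu$.

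Next I would cut down the set of $\mu$ that occur. Viewing $H^0(\xi_\lambda)$ inside $\bigoplus_{\sigma}\mathcal{O}_{G/B}(U_\sigma)\otimes V_{\sqrt{p}Q+\lambda}$, the $\mathfrak{h}$-action is given by \eqref{(59)}: on the summand attached to $\mathcal{F}_{-\sqrt{p}\alpha+\lambda}$ the weights are $\sigma(\alpha+\hat\lambda+\rho)-\rho$ minus a nonnegative integral combination of positive roots. Since $W$ acts trivially on $P/Q$ and $\alpha\in Q$ (see \eqref{(15.5)}), every weight lies in $\hat\lambda+Q$; hence $H^0(\xi_\lambda)^{\mathfrak{n}_+}_\mu=0$ unless $\mu\in P_+\cap(\hat\lambda+Q)$, that is $\mu=\alpha+\hat\lambda$ for some $\alpha\in Q$ with $\alpha+\hat\lambda$ dominant. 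This already rewrites the decomposition as a sum over such $\alpha$ with the asserted multiplicity spaces.

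The remaining, and main, point is to pin the index set down to $P_+\cap Q$, i.e.\ to determine precisely which highest weights $\alpha+\hat\lambda$ carry a nonzero multiplicity space. For this I would pass through Frobenius reciprocity $\operatorname{Hom}_G(\mathcal{R}_{\alpha+\hat\lambda},H^0(\xi_\lambda))\cong\operatorname{Hom}_B(\mathcal{R}_{\alpha+\hat\lambda},V_{\sqrt{p}Q+\lambda})$ for the induced module $H^0(\xi_\lambda)$, which is legitimate because the $B$-action of Theorem \ref{ft41} is rational (the $f_i$ are locally nilpotent and the $h_{i,\lambda}$ act with weights in $P$, cf.\ Corollary \ref{cor:ft41}). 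Presenting $\mathcal{R}_{\alpha+\hat\lambda}$ as the $\mathfrak{b}$-module $\mathcal{U}(\mathfrak{n}_-)\big/\sum_i\mathcal{U}(\mathfrak{n}_-)f_i^{(\alpha+\hat\lambda,\alpha_i)+1}$ turns this $\operatorname{Hom}_B$ into the space of $w\in\mathcal{F}_{-\sqrt{p}\alpha+\lambda}$ annihilated by every $f_i^{(\alpha+\hat\lambda,\alpha_i)+1}$, which is exactly the kind of condition controlled by Corollary \ref{coram1}; equivalently, one can produce the constituents concretely as the $G$-submodules $\mathcal{U}(\mathfrak{b})|{-\sqrt{p}\alpha+\lambda}\rangle$ supplied by Lemmas \ref{newlemm3.4} and \ref{newlemm3.9}. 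The careful bookkeeping matching the nonvanishing multiplicities against the dominance condition on $\alpha$ is where the substance of the proof lies; the complete-reducibility and weight steps above are routine. I expect the dominance bookkeeping at the boundary nodes, where $(\alpha+\hat\lambda,\alpha_i)=0$ and the criterion of Corollary \ref{coram1}(\ref{coram1.1}) degenerates, to be the delicate part.
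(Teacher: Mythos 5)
Your first two paragraphs already constitute a complete proof, and they follow the paper's route exactly: the paper's own proof is just the complete-reducibility observation (finite-dimensionality of each $H^0((\xi_\lambda)_\Delta)$ plus Weyl's theorem and Remark \ref{holom}), with the fact that all weights lie in $\hat\lambda+Q$ left implicit; your explicit check via \eqref{(59)} that every highest weight has the form $\alpha+\hat\lambda$ with $\alpha\in P_+\cap Q$ is the only other ingredient needed. Your third paragraph, however, rests on a misreading of what the lemma asserts: the statement does \emph{not} claim that each multiplicity space $H^0(\xi_\lambda)_{\alpha+\hat\lambda}^{\mathfrak{n}_+}$ is nonzero, so there is no ``index set to pin down'' --- summands with vanishing multiplicity space contribute nothing, and the isotypic decomposition over all of $P_+\cap Q$ is automatic once semisimplicity and the weight-lattice constraint are in hand. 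The Frobenius-reciprocity analysis and the ``delicate dominance bookkeeping'' you defer to Corollary \ref{coram1} and Lemmas \ref{newlemm3.4}, \ref{newlemm3.9} belong to the later results (Main Theorem \eqref{mthm3}, where $\Phi_\lambda(H^0(\xi_\lambda)_{\alpha+\hat\lambda}^{\mathfrak{n}_+})$ is identified with $\mathcal{W}_{-\sqrt{p}\alpha+\lambda}$), not to this lemma; had that step genuinely been required here, your proof would be incomplete, since you only sketch it.
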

\begin{proof}
 For any $\Delta\in\mathbb{Q}$, since $\dim(V_{\sqrt{p}Q+\lambda})_\Delta<\infty$, we have $\dim_{\mathbb{C}} H^0((\xi_\lambda)_\Delta)<\infty$. By Weyl's theorem on complete reducibility and by Remark \ref{holom}, $H^0(\xi_\lambda)$ decomposes into a direct sum of finite-dimensional irreducible $\mathfrak{g}$-modules. 
\end{proof}
For $s\in H^0(\xi_\lambda)_{\alpha+\hat\lambda}^{\mathfrak{n}_+}$, denote by $\psi_s$ the $G$-module isomorphism from $\mathcal{U}(\mathfrak{g})s$ to $\mathcal{R}_{\alpha+\hat\lambda}$.
Let us recall the open covering \eqref{(33.001)} and the $G$-module isomorphism \eqref{strofH0}.
\begin{lemm}\label{newlemm3}
If $s\in H^0(\xi_\lambda)_{\alpha+\hat\lambda}^{\mathfrak{n}_+}$, then $s|_{U_{\operatorname{id}}}=1\otimes v$ for some $v\in\mathcal{F}_{-\sqrt{p}\alpha+\lambda}$. Moreover, the linear map $\Phi_\lambda|_{H^0(\xi_\lambda)_{\alpha+\hat\lambda}^{\mathfrak{n}_+}}\colon H^0(\xi_\lambda)_{\alpha+\hat\lambda}^{\mathfrak{n}_+}\rightarrow\mathcal{F}_{-\sqrt{p}\alpha+\lambda}$ is injective.
\end{lemm}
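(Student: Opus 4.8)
The plan is to work with the description of global sections as $B$-equivariant functions furnished by Lemma~\ref{lemm:P_Jmodisom} with $J=\Pi$, under which $H^0(\xi_\lambda)\simeq(\mathcal{O}_G(G)\otimes_{\mathbb{C}}V_{\sqrt{p}Q+\lambda})^B$, $s\leftrightarrow f$, with $s(gB)=[g,f(g)]$, $f(gb)=b^{-1}f(g)$ for $b\in B$, and $G$-action $(g\cdot f)(g_0)=f(g^{-1}g_0)$. The first observation is that $\operatorname{id}B$ is a $B$-fixed point of $G/B$, so evaluation there is $B$-equivariant: one has $\Phi_\lambda(s)=f(\operatorname{id})$, and from $f(gb)=b^{-1}f(g)$ a direct computation gives $f(b^{-1})=b\cdot f(\operatorname{id})$, i.e. $\Phi_\lambda(b\cdot s)=b\cdot\Phi_\lambda(s)$ for the $B$-action of Theorem~\ref{ft41} on the fibre. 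In particular $\Phi_\lambda$ is $\mathfrak{h}$-equivariant. Since $s$ has $\mathfrak{h}$-weight $\alpha+\hat\lambda$ and, by the proof of Theorem~\ref{ft41}, $h_{i,\lambda}$ acts on $\mathcal{F}_{-\sqrt{p}\alpha'+\lambda}$ by the scalar $(\alpha_i,\alpha'+\hat\lambda)$, the $\mathfrak{h}$-weight spaces of $V_{\sqrt{p}Q+\lambda}$ are exactly the Fock spaces $\mathcal{F}_{-\sqrt{p}\alpha'+\lambda}$; hence $v:=\Phi_\lambda(s)=f(\operatorname{id})\in\mathcal{F}_{-\sqrt{p}\alpha+\lambda}$.

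Next I would exploit the $\mathfrak{n}_+$-invariance of $s$ to prove that $f$ is constant on the big cell $U_{\operatorname{id}}=N_+B/B\cong N_+$. Differentiating the $G$-action, for $X\in\mathfrak{n}_+$ and $n\in N_+$ one has $(X\cdot f)(n)=\frac{d}{dt}\big|_{t=0}f(\exp(-tX)\,n)$, the derivative of $f$ along the left-translation orbit of $N_+$. The hypothesis $\mathfrak{n}_+\cdot s=0$ forces all these derivatives to vanish; since left multiplication by $N_+$ acts transitively on the connected group $N_+$, the restriction $f|_{N_+}$ is constant, equal to $f(\operatorname{id})=v$. Unwinding the trivialization of $\xi_\lambda$ over $U_{\operatorname{id}}$, where the local expression of the section is $n\mapsto f(n)$, this says exactly $s|_{U_{\operatorname{id}}}=1\otimes v$ with $v\in\mathcal{F}_{-\sqrt{p}\alpha+\lambda}$, which is the first assertion. (The membership $v\in\mathcal{F}_{-\sqrt{p}\alpha+\lambda}$ may equally be read off from the weight formula \eqref{(59)} for $\sigma=\operatorname{id}$, but the vanishing of the higher-order terms genuinely needs the $\mathfrak{n}_+$-argument, since a bare weight count only forces those terms into Fock spaces $\mathcal{F}_{-\sqrt{p}\alpha'+\lambda}$ with $\alpha'>\alpha$.)

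For injectivity, suppose $\Phi_\lambda(s)=v=0$. Then $s|_{U_{\operatorname{id}}}=1\otimes v=0$. Writing $s=\sum_{\Delta}s_\Delta$ under the conformal decomposition of Remark~\ref{holom}, each $s_\Delta$ is a section of the finite-rank holomorphic vector bundle $(\xi_\lambda)_\Delta$ over the irreducible smooth projective variety $G/B$, and $U_{\operatorname{id}}$ is a dense open subset; as sections of a vector bundle over an integral scheme are determined by their restriction to any dense open set, each $s_\Delta=0$, whence $s=0$. Thus $\Phi_\lambda|_{H^0(\xi_\lambda)_{\alpha+\hat\lambda}^{\mathfrak{n}_+}}$ is injective.

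I expect the main obstacle to be the second paragraph: one must match the conventions of \eqref{gactiononglobalsection} and \eqref{(31.2)} carefully so that the infinitesimal $\mathfrak{n}_+$-action on sections becomes precisely the left-translation derivative on $N_+$, and then invoke transitivity of $N_+$ on the big cell. The remaining ingredients—the $B$-equivariance of evaluation at the fixed point $\operatorname{id}B$, the identification of $\mathfrak{h}$-weight spaces with Fock spaces, and the density argument for injectivity—are routine.
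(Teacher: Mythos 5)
Your proposal is correct and follows essentially the same route as the paper's proof: the paper uses the $G$-action formula \eqref{(31.2)} together with $\mathfrak{n}_+$-invariance (integrated to $N_+$-invariance) to conclude that $s$ is constant on $U_{\operatorname{id}}$, invokes the weight formula \eqref{(59)} to place $v$ in $\mathcal{F}_{-\sqrt{p}\alpha+\lambda}$, and deduces injectivity from \eqref{strofH0} via density of the big cell, exactly as you do. Your extra remarks (the conformal-weight decomposition to reduce to finite-rank bundles, and the observation that a bare weight count would only confine the higher-order terms to Fock spaces $\mathcal{F}_{-\sqrt{p}\alpha'+\lambda}$ with $\alpha'\geq\alpha$) are accurate refinements of the same argument.
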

\begin{proof}

Let $s\in H^0(\xi_\lambda)_{\alpha+\hat\lambda}^{\mathfrak{n}_+}$. 
Because the $G$-action on $H^0(\xi_\lambda)$ is given by \eqref{(31.2)}, for any $g\in N_+$, we have
$s(\operatorname{id}B)=s(gB)$.
Thus, we have $s|_{U_{\operatorname{id}}}=1\otimes v$ for some $v\in V_{\sqrt{p}Q+\lambda}$.
By \eqref{(59)}, we have $v\in\mathcal{F}_{-\sqrt{p}\alpha+\lambda}$.
For $s\in H^0(\xi_\lambda)_{\alpha+\hat\lambda}$, if $s|_{U_{\operatorname{id}}}=0$, then by \eqref{strofH0}, we have $s=0$.
\end{proof}

\begin{lemm}\label{newlemm4}
For $\Delta\in\mathbb{Q}$, $\alpha\in P_+\cap Q$ and $s\in H^0((\xi_\lambda)_\Delta)_{\alpha+\hat\lambda}^{\mathfrak{n}_+}$, the $B$-module homomorphism $\Phi_\lambda|_{\mathcal{U}(\mathfrak{g})s}\colon\mathcal{U}(\mathfrak{g})s\rightarrow\mathcal{U}(\mathfrak{b})\Phi_\lambda(s)$
is an isomorphism.
\end{lemm}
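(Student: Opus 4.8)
The plan is to reduce this statement about the $\mathfrak{g}$-module $\mathcal{U}(\mathfrak{g})s$ to a purely $B$-equivariant assertion to which Lemma \ref{newlemm3.9} applies verbatim. The crucial observation is that, because the Borel here is $\mathfrak{b}=\mathfrak{n}_-\oplus\mathfrak{h}$ and $s$ is annihilated by $\mathfrak{n}_+$, the PBW factorization $\mathcal{U}(\mathfrak{g})=\mathcal{U}(\mathfrak{n}_-)\mathcal{U}(\mathfrak{h})\mathcal{U}(\mathfrak{n}_+)$ collapses on $s$: any monomial carrying a factor in $\mathfrak{n}_+$ of positive degree kills $s$, so that $\mathcal{U}(\mathfrak{g})s=\mathcal{U}(\mathfrak{b})s$ as subspaces of $H^0(\xi_\lambda)$. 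This is what lets the $G$-module question be answered by a $B$-module input.

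First I would dispose of the trivial case $s=0$ and henceforth assume $s\neq 0$; by Lemma \ref{newlemm3} the vector $v:=\Phi_\lambda(s)$ is then a nonzero element of $\mathcal{F}_{-\sqrt{p}\alpha+\lambda}$. Since $\Phi_\lambda$ is a $B$-module homomorphism, the collapse $\mathcal{U}(\mathfrak{g})s=\mathcal{U}(\mathfrak{b})s$ yields $\Phi_\lambda(\mathcal{U}(\mathfrak{g})s)=\Phi_\lambda(\mathcal{U}(\mathfrak{b})s)=\mathcal{U}(\mathfrak{b})v=\mathcal{U}(\mathfrak{b})\Phi_\lambda(s)$. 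This already shows that $\Phi_\lambda|_{\mathcal{U}(\mathfrak{g})s}$ lands on, and exhausts, the claimed target, so surjectivity is immediate.

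For injectivity I would pass through the $G$-module isomorphism $\psi_s\colon\mathcal{U}(\mathfrak{g})s\xrightarrow{\sim}\mathcal{R}_{\alpha+\hat\lambda}$ and consider the composite $\Phi_\lambda\circ\psi_s^{-1}\colon\mathcal{R}_{\alpha+\hat\lambda}\to\mathcal{U}(\mathfrak{b})v$. This is a $B$-module homomorphism, and since the $\mathfrak{n}_+$-highest weight vector of weight $\alpha+\hat\lambda$ in $\mathcal{R}_{\alpha+\hat\lambda}$ is unique up to scalar, after renormalizing $\psi_s$ we may arrange that it sends $y_{\alpha+\hat\lambda}$ to $v$. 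Lemma \ref{newlemm3.9}, applied with $x=v$ (nonzero by the previous paragraph), then shows this composite is an isomorphism; as $\psi_s^{-1}$ is an isomorphism, so is $\Phi_\lambda|_{\mathcal{U}(\mathfrak{g})s}$, completing the argument.

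Given the two inputs, the argument is essentially formal, so I do not expect a serious obstacle at this stage; the genuine content lies upstream in Lemma \ref{newlemm3} (nonvanishing of $\Phi_\lambda$ on the $\mathfrak{n}_+$-highest weight spaces) and in Lemma \ref{newlemm3.9}, which itself rests on the non-annihilation statement of Lemma \ref{newlemm3.4}. The single point requiring care is confirming that the image of $\Phi_\lambda\circ\psi_s^{-1}$ is precisely $\mathcal{U}(\mathfrak{b})v$ and nothing larger, so that Lemma \ref{newlemm3.9} may be invoked with its stated codomain $\mathcal{U}(\mathfrak{b})x$; it is exactly here that the identity $\mathcal{U}(\mathfrak{g})s=\mathcal{U}(\mathfrak{b})s$ is indispensable.
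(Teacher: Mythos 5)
Your proof is correct and follows essentially the same route as the paper: dispose of the case $\Phi_\lambda(s)=0$ via Lemma \ref{newlemm3}, then apply Lemma \ref{newlemm3.9} to the composite $\Phi_\lambda|_{\mathcal{U}(\mathfrak{g})s}\circ\psi_s^{-1}$. The only difference is that you make explicit the point the paper leaves implicit --- that the image is exactly $\mathcal{U}(\mathfrak{b})\Phi_\lambda(s)$, which you get from $\mathcal{U}(\mathfrak{g})s=\mathcal{U}(\mathfrak{b})s$ (equivalently, from $\mathcal{R}_{\alpha+\hat\lambda}=\mathcal{U}(\mathfrak{n}_-)y_{\alpha+\hat\lambda}$) --- a worthwhile clarification but not a different argument.
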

\begin{proof}
If $\Phi_\lambda(s)=0$, then by Lemma \ref{newlemm3}, we have $s=0$ and the claim is clear.
Let $\Phi_\lambda(s)\not=0$.
By Lemma \ref{newlemm3.9}, the $B$-module homomorphism $\Phi_\lambda|_{\mathcal{U}(\mathfrak{g})s}\circ \psi_s^{-1}\colon \mathcal{R}_{\alpha+\hat\lambda}\rightarrow\mathcal{U}(\mathfrak{b})\Phi_\lambda(s)$ is an isomorphism. Since $\psi_s$ is an isomorphism, so is $\Phi_\lambda|_{\mathcal{U}(\mathfrak{g})s}$.
\end{proof}

Let us recall the standard partial order $\geq$ on $\mathfrak{h}^\ast$ defined by
\begin{align}
\mu\geq\mu'\Leftrightarrow\mu-\mu'\in\sum_{i=1}^l\mathbb{Z}_{\geq0}\alpha_i.
\end{align}
\begin{lemm}\label{triviallemm}
For $\beta_1,\beta_2\in P_+$, if $\beta_2\not\geq\beta_1$, then we have $(\mathcal{R}_{\beta_2})_{w_0(\beta_1)}=\{0\}$.
\end{lemm}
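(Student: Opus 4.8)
The plan is to reduce everything to two completely standard facts about the weights of a finite-dimensional irreducible $\mathfrak{g}$-module $\mathcal{R}_{\beta_2}$: first, the set of weights of $\mathcal{R}_{\beta_2}$ is stable under the Weyl group $W$; second, every weight $\mu$ of $\mathcal{R}_{\beta_2}$ satisfies $\mu\leq\beta_2$ for the partial order introduced just above, i.e.\ $\beta_2-\mu\in\sum_{i=1}^{l}\mathbb{Z}_{\geq0}\alpha_i$, since $\beta_2$ is the highest weight and all other weights arise from it by subtracting positive roots. Both are available from the representation theory of $\mathfrak{g}$ and require no new input.

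I would argue by contraposition. Assume $(\mathcal{R}_{\beta_2})_{w_0(\beta_1)}\neq\{0\}$, so that $w_0(\beta_1)$ is a weight of $\mathcal{R}_{\beta_2}$. Since $w_0$ is an involution, $w_0^2=\operatorname{id}$, and the weight set of $\mathcal{R}_{\beta_2}$ is $W$-stable, applying $w_0\in W$ shows that
\begin{align}
\beta_1=w_0(w_0(\beta_1))
\end{align}
is again a weight of $\mathcal{R}_{\beta_2}$. By the second fact above this forces $\beta_1\leq\beta_2$, i.e.\ $\beta_2-\beta_1\in\sum_{i=1}^{l}\mathbb{Z}_{\geq0}\alpha_i$, which is exactly the assertion $\beta_2\geq\beta_1$. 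Taking the contrapositive, if $\beta_2\not\geq\beta_1$ then $w_0(\beta_1)$ cannot be a weight of $\mathcal{R}_{\beta_2}$, so $(\mathcal{R}_{\beta_2})_{w_0(\beta_1)}=\{0\}$, as required.

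There is essentially no obstacle here; the lemma is "trivial" as its label suggests, and the only point deserving care is to invoke the two structural facts in the correct direction — in particular, to use $W$-invariance of the weight set to move the lowest-type weight $w_0(\beta_1)$ back to the dominant weight $\beta_1$ before comparing with the highest weight $\beta_2$.
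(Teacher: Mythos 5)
Your proof is correct: the paper states this lemma without proof (treating it as standard), and your argument — using $W$-invariance of the weight set of $\mathcal{R}_{\beta_2}$ to pass from $w_0(\beta_1)$ back to $\beta_1$, then invoking that every weight is $\leq\beta_2$ — is exactly the intended standard justification.
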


\begin{proof}[Proof of Theorem \ref{newthm1}]
For $\Delta\in\mathbb{Q}$ and $\alpha\in P_+\cap Q$, let us choose a basis $(B_{-\sqrt{p}\alpha+\lambda})_{\Delta}$ of $H^0((\xi_\lambda)_\Delta)_{\alpha+\hat\lambda}^{\mathfrak{n}_+}$.
By Lemma \ref{newlemm2}, for a vector $s\in H^0(\xi_\lambda)$, we have 
\begin{align}\label{(66.3)}
s=\sum_{s^j_{-\sqrt{p}\beta+\lambda}\in K}\psi_{s^j_{-\sqrt{p}\beta+\lambda}}^{-1}(y^j_{\beta+\hat\lambda}), 
\end{align}
where $y^j_{\beta+\lambda}\in\mathcal{R}_{\beta+\hat\lambda}$ and $K$ is a finite subset of $\bigsqcup_{\Delta\in\mathbb{Q}}\bigsqcup_{\beta\in P_+\cap Q}(B_{-\sqrt{p}\beta+\lambda})_{\Delta}$. We show that if $\Phi_\lambda(s)=0$, then $s=0$.

We can assume that all $s^j_{-\sqrt{p}\beta+\lambda}$ in \eqref{(66.3)} have the same conformal weight $\Delta\in\mathbb{Q}$, and that all $y^j_{\beta+\hat\lambda}$ have the same weight $\gamma$ for some $\gamma\in Q+\hat\lambda$. 
We consider the finite set
\begin{align}
K_B=\{\beta\in P_+\cap Q~|~(B_{-\sqrt{p}\beta+\lambda})_{\Delta}\cap K\not=\phi\},
\end{align}
and let us choose an element $\alpha\in K_B$ such that $\beta\not\geq\alpha$ for any $\beta\in K_B$, $\beta\not=\alpha$.

Let us assume that $\Phi_\lambda(\psi_{s^i_{-\sqrt{p}\alpha+\lambda}}^{-1}(y^i_{\alpha+\hat\lambda}))\not=0$ for some $s^i_{-\sqrt{p}\alpha+\lambda}\in K$.
Let $f_{i_1},\dots,f_{i_m}$ be a sequence of the Chevalley gererators of $\mathfrak{n}_-$ such that $f_{i_1}\cdots f_{i_m}y^i_{\alpha+\hat\lambda}=dy'_{\alpha+\hat\lambda}$ for some $d\in\mathbb{C}^\times$.
By the assumption that all $y^j_{\beta+\hat\lambda}$ has weight $\gamma$, we have $\gamma-\sum_{j=1}^m\alpha_{i_j}=w_0(\alpha+\hat\lambda)$. Then by Lemma \ref{triviallemm}, we have
\begin{align}\label{alignnew-1}
f_{i_1}\cdots f_{i_m}y^j_{\beta+\hat\lambda}\in(\mathcal{R}_{\beta+\hat\lambda})_{w_0(\alpha+\hat\lambda)}=\{0\}
\end{align}
for $\beta\in K_B$ such that $\beta\not=\alpha$.
On the other hand, we have $f_{i_1}\cdots f_{i_m}y^j_{\alpha+\hat\lambda}=b_jy'_{\alpha+\hat\lambda}$ for some $b_j\in\mathbb{C}$.
Then by \eqref{alignnew-1} we have
\begin{align}\label{alignnew}
0=f_{i_1}\cdots f_{i_m}\Phi_\lambda(s)=&\Phi_\lambda(\sum_{s^j_{-\sqrt{p}\beta+\lambda}\in K}\psi_{s^j_{-\sqrt{p}\beta+\lambda}}^{-1}(f_{i_1}\cdots f_{i_m}y^j_{\beta+\hat\lambda}))\\
=&\Phi_\lambda(\psi_{s_{-\sqrt{p}\alpha+\lambda}}^{-1}(y'_{\alpha+\hat\lambda})),\nonumber
\end{align}
where we have put
%Here the second equality follows from \eqref{alignnew-1}, and
\begin{align}
s_{-\sqrt{p}\alpha+\lambda}:=ds^i_{-\sqrt{p}\alpha+\lambda}+{\sum}_{j\not=i}b_{j}s^j_{-\sqrt{p}\alpha+\lambda}.
\end{align}
By Lemma \ref{newlemm4} and \eqref{alignnew}, we have $\psi_{s_{-\sqrt{p}\alpha+\lambda}}^{-1}(y'_{\alpha+\hat\lambda})=0$. Thus, we have $s_{-\sqrt{p}\alpha+\lambda}=0$.
However, it contradicts the fact that the vectors in $(B_{\alpha+\hat\lambda})_\Delta$ are linearly independent. 
Thus, we obtain that $\Phi_\lambda(\psi_{s^i_{-\sqrt{p}\alpha+\lambda}}^{-1}(y^i_{\alpha+\hat\lambda}))=0$, and by Lemma \ref{newlemm4}, $\psi_{s^i_{-\sqrt{p}\alpha+\lambda}}^{-1}(y^i_{\alpha+\hat\lambda})=0$.
By repeating this process finitely many times, we obtain that $\psi_{s^j_{-\sqrt{p}\beta+\lambda}}^{-1}(y^j_{\beta+\hat\lambda})=0$ for any $s^j_{-\sqrt{p}\beta+\lambda}\in K$. Therefore, we have $s=0$ and Theorem \ref{newthm1} is proved.
\end{proof}

\begin{lemm}\label{embedding in intersection}
For any $\lambda$, $\Phi_\lambda(H^0(\xi_\lambda))$ is a $G$-submodule of $W(p,\lambda)_Q$.
\end{lemm}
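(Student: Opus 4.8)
The plan is to combine the injectivity of $\Phi_\lambda$ (Theorem \ref{newthm1}) with the maximality criterion of Lemma \ref{lemm:maximalP_Jsubmod}. First I would record that $M:=\Phi_\lambda(H^0(\xi_\lambda))$ is a $G$-submodule of $V_{\sqrt{p}Q+\lambda}$ in the sense of Definition \ref{dfnGsubmodule}. Indeed, $H^0(\xi_\lambda)$ carries a $G$-module structure (the construction following Lemma \ref{voastr}), and by Theorem \ref{newthm1} the map $\Phi_\lambda$ is an injective $B$-module homomorphism; hence $\Phi_\lambda\colon H^0(\xi_\lambda)\xrightarrow{\sim}M$ is a $B$-module isomorphism whose source is a $G$-module. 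By Definition \ref{dfnGsubmodule}, $M$ is therefore a $G$-submodule, and in particular a $P_i$-submodule of $V_{\sqrt{p}Q+\lambda}$ for every $1\le i\le l$.

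Next I would deduce $M\subseteq W(p,\lambda)_Q^i$ for each $i$ by showing that $W(p,\lambda)_Q^i$ is the \emph{largest} $P_i$-submodule of $V_{\sqrt{p}Q+\lambda}$. By Lemma \ref{lemm:serre}, $W(p,\lambda)_Q^i$ is a $P_i$-submodule, so to invoke Lemma \ref{lemm:maximalP_Jsubmod} with $J=\{i\}$ it remains to identify $H^0(P_i\times_BV_{\sqrt{p}Q+\lambda})$ with $W(p,\lambda)_Q^i$ via $s\mapsto s(\operatorname{id})$. If $(\sqrt{p}\bar\lambda,\alpha_i)=p-1$, then $V_{\sqrt{p}Q+\lambda}=W(p,\lambda)_Q^i$ is itself a $P_i$-module and the identification is Lemma \ref{lemm:H^0VandV}. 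If $(\sqrt{p}\bar\lambda,\alpha_i)\le p-2$, I would apply $H^0(P_i\times_B-)$ to the exact sequence of Theorem \ref{am1} \eqref{am1:exseq} with $\mu=0$ (equivalently, use the first exact sequence of Lemma \ref{beforevan1} with $\sigma=\operatorname{id}$). By \eqref{(202)} the subobject is $H^0(P_i\times_B\mathbb{C}_0)\otimes W(p,\lambda)_Q^i\simeq W(p,\lambda)_Q^i$, while the quotient term $H^0(P_i\times_B\mathbb{C}_{\epsilon_\lambda(\sigma_i)})\otimes W(p,\sigma_i\ast\lambda)_Q^i$ vanishes, since $(\epsilon_\lambda(\sigma_i),\alpha_i)=-1$ by Remark \ref{beloweps} forces $H^0(P_i\times_B\mathbb{C}_{\epsilon_\lambda(\sigma_i)})=0$ by \eqref{dims}. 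In both cases $\Phi\colon H^0(P_i\times_BV_{\sqrt{p}Q+\lambda})\to V_{\sqrt{p}Q+\lambda}$ is an embedding with image $W(p,\lambda)_Q^i$, so Lemma \ref{lemm:maximalP_Jsubmod} applies and $M\subseteq W(p,\lambda)_Q^i$.

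Finally, intersecting over $i$ gives $M\subseteq\bigcap_{i=1}^l W(p,\lambda)_Q^i=W(p,\lambda)_Q$. Combined with the first paragraph, this shows $M=\Phi_\lambda(H^0(\xi_\lambda))$ is a $G$-submodule of $V_{\sqrt{p}Q+\lambda}$ contained in $W(p,\lambda)_Q$, i.e.\ a $G$-submodule of $W(p,\lambda)_Q$, as claimed. The step I expect to be the main obstacle is the identification $H^0(P_i\times_BV_{\sqrt{p}Q+\lambda})\simeq W(p,\lambda)_Q^i$: one must verify the vanishing of the quotient cohomology through the weight computation of Remark \ref{beloweps} and \eqref{dims}, and treat the degenerate case $(\sqrt{p}\bar\lambda,\alpha_i)=p-1$ separately, since there the exact sequence of Theorem \ref{am1} \eqref{am1:exseq} is unavailable.
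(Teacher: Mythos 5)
Your proof is correct and follows essentially the same route as the paper: the paper likewise establishes the $P_i$-module isomorphism $H^0(P_i\times_BV_{\sqrt{p}Q+\lambda})\simeq W(p,\lambda)_Q^i$, $s\mapsto s(\operatorname{id}B)$ (the first isomorphism in \eqref{(403)} with $\sigma=\operatorname{id}$), combines it with Lemma \ref{lemm:maximalP_Jsubmod} to get $\Phi_\lambda(H^0(\xi_\lambda))\subseteq W(p,\lambda)_Q^i$, and intersects over $i$. Your explicit separation of the degenerate case $(\sqrt{p}\bar\lambda,\alpha_i)=p-1$ is simply a more careful spelling-out of the same argument.
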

\begin{proof}
By the first isomorphism in \eqref{(403)} for $\sigma=\operatorname{id}$, we have the $P_i$-module isomorphism
\begin{align}\label{tuika21}
\Phi_\lambda^i\colon H^0(P_i\times_BV_{\sqrt{p}Q+\lambda})\simeq W(p,\lambda)_Q^i,~s\mapsto s(\operatorname{id}B).
\end{align}
By definition \eqref{embinnewthm1}, $\Phi_\lambda(H^0(\xi_\lambda))$ is a $G$-submodule of $V_{\sqrt{p}Q+\lambda}$, and hence a $P_i$-submodule of $V_{\sqrt{p}Q+\lambda}$ for any $1\leq i\leq l$.
By Lemma \ref{lemm:maximalP_Jsubmod}, $\Phi_\lambda(H^0(\xi_\lambda))$ is a $G$-submodule of $W(p,\lambda)_Q^i$ for $1\leq i\leq l$, and hence a $G$-submodule of $W(p,\lambda)_Q=\bigcap_{i=1}^lW(p,\lambda)_Q^i$.
\end{proof}

\begin{proof}[Proof of Main Theorem \eqref{mthm1}, \eqref{mthm2}]
By Theorem \ref{parab}, if and only if $\lambda$ satisfies \eqref{novelcond3}, 
then $W(p,\lambda)_Q$ is a $G$-submodule of $V_{\sqrt{p}Q+\lambda}$, and hence a $G$-submodule of $\Phi_\lambda(H^0(\xi_\lambda))$ by Lemma \ref{lemm:maximalP_Jsubmod}.
Thus, by Lemma \ref{embedding in intersection}, we obtain that
$W(p,\lambda)_Q=\Phi_\lambda(H^0(\xi_\lambda))$ if and only if $\lambda$ satisfies \eqref{novelcond3}.
In particular, since $\lambda=0$ satisfies \eqref{novelcond3}, we have $H^0(\xi_0)\simeq W(p)_Q$ as $G$-modules. 
It follows that the map 
$\Phi_\lambda: H^0(\xi_\lambda)\xrightarrow{\sim}W(p,\lambda)_Q $ 
is an $W(p)_Q$-module isomorphism.
%By definition, $\Phi_\lambda$ gives the $W(p)_Q$-module isomorphism.
Finally, the claim $G\subseteq\operatorname{Aut}_{W(p)_Q}(H^0(\xi_\lambda))$ follows from \hyperlink{lastcomment}{Section \ref{subsectvectbdle}}.
Therefore, Main Theorem \eqref{mthm1}, \eqref{mthm2} are proved.
\end{proof}

\begin{lemm}\label{bmodgmod}
For any nonzero vector $x\in\mathcal{W}_{-\sqrt{p}\alpha+\lambda}$, we have $\mathcal{U}(\mathfrak{b})x\simeq\mathcal{R}_{\alpha+\hat\lambda}$.
\end{lemm}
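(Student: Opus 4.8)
The plan is to reduce the statement to Lemma \ref{newlemm3.9}, whose hypothesis is merely the existence of a $B$-module homomorphism $\Phi\colon\mathcal{R}_{\alpha+\hat\lambda}\to\mathcal{U}(\mathfrak{b})x$ with $\Phi(y_{\alpha+\hat\lambda})=x$; once such a $\Phi$ is produced, that lemma promotes it to an isomorphism and identifies $\mathcal{U}(\mathfrak{b})x$ as a $G$-submodule, which is exactly the assertion. Throughout I work under the standing assumption $\alpha\in P_+\cap Q$, so that $\alpha+\hat\lambda\in P_+$ and $\mathcal{R}_{\alpha+\hat\lambda}$ is the irreducible $\mathfrak{g}$-module of highest weight $\alpha+\hat\lambda$.

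First I would record the weight of $x$. Since $x\in\mathcal{F}_{-\sqrt{p}\alpha+\lambda}$ and, by the computation in the proof of Theorem \ref{ft41}, $h_{i,\lambda}$ acts on $\mathcal{F}_{-\sqrt{p}\alpha+\lambda}$ by the scalar $(\alpha_i,\alpha+\hat\lambda)$, the vector $x$ has $\mathfrak{h}$-weight $\alpha+\hat\lambda$ for the $\mathfrak{b}$-action of Theorem \ref{ft41}, matching the weight of the highest weight vector $y_{\alpha+\hat\lambda}$.

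Next I would invoke the standard presentation of $\mathcal{R}_{\alpha+\hat\lambda}$ as a $\mathfrak{b}$-module. Because $\mathcal{R}_{\alpha+\hat\lambda}=\mathcal{U}(\mathfrak{n}_-)y_{\alpha+\hat\lambda}$, the kernel of the surjection $\mathcal{U}(\mathfrak{n}_-)\twoheadrightarrow\mathcal{R}_{\alpha+\hat\lambda}$, $u\mapsto u\,y_{\alpha+\hat\lambda}$, is a left ideal of $\mathcal{U}(\mathfrak{n}_-)$, and it is generated by $\{f_i^{(\alpha+\hat\lambda,\alpha_i)+1}\}_{i=1}^l$. Consequently the assignment $y_{\alpha+\hat\lambda}\mapsto x$ extends to a well-defined $\mathfrak{b}$-module homomorphism $\Phi$ onto $\mathcal{U}(\mathfrak{b})x$ as soon as $x$ satisfies the same relations, the $\mathfrak{h}$-compatibility being guaranteed by the weight computation above. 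The required relations $f_i^{(\alpha+\hat\lambda,\alpha_i)+1}x=0$ for every $i$ follow at once from Corollary \ref{beforecoram1}, applied with $j=i$: indeed $(\alpha+\hat\lambda,\alpha_i)\geq 0$ since $\alpha+\hat\lambda\in P_+$, and $x\in\mathcal{W}_{-\sqrt{p}\alpha+\lambda}=\bigcap_{i=1}^l\mathcal{W}^i_{-\sqrt{p}\alpha+\lambda}\subseteq\mathcal{W}^i_{-\sqrt{p}\alpha+\lambda}$. With $\Phi$ in hand, Lemma \ref{newlemm3.9} finishes the argument.

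The only delicate point is the reduction from the $\mathfrak{g}$-module presentation to a presentation over the Borel $\mathfrak{b}=\mathfrak{n}_-\oplus\mathfrak{h}$: I must check that the maximal proper submodule of the Verma module $M(\alpha+\hat\lambda)$, which a priori is generated over $\mathcal{U}(\mathfrak{g})$ by the singular vectors, is already generated over $\mathcal{U}(\mathfrak{n}_-)$ by them. This is immediate, since each singular vector $f_i^{(\alpha+\hat\lambda,\alpha_i)+1}y_{\alpha+\hat\lambda}$ is itself a highest weight vector, so that $\mathcal{U}(\mathfrak{g})$ applied to it coincides with $\mathcal{U}(\mathfrak{n}_-)$ applied to it; thus no input beyond Corollary \ref{beforecoram1} and the weight of $x$ is needed.
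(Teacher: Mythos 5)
Your proposal is correct and follows essentially the same route as the paper: establish $h_{i,\lambda}x=(\alpha+\hat\lambda,\alpha_i)x$ and $f_i^{(\alpha+\hat\lambda,\alpha_i)+1}x=0$ (the paper cites Corollary \ref{coram1}\,\eqref{coram1.2}, whose relevant direction is exactly Corollary \ref{beforecoram1}), deduce a well-defined $B$-module homomorphism $\mathcal{R}_{\alpha+\hat\lambda}\to\mathcal{U}(\mathfrak{b})x$, and conclude by Lemma \ref{newlemm3.9}. The only difference is that you spell out the presentation of $\mathcal{R}_{\alpha+\hat\lambda}$ over $\mathcal{U}(\mathfrak{n}_-)$ that the paper leaves implicit in its ``therefore''.
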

\begin{proof}
By Definition \ref{dfnmathcalWs} and Corollary \ref{coram1} \eqref{coram1.2}, we have $h_{i,\lambda}x=(\alpha+\hat\lambda,\alpha_i)x$ and $f_i^{(\alpha+\hat\lambda,\alpha_i)+1}x=0$ for any $x\in\mathcal{W}_{-\sqrt{p}\alpha+\lambda}$.
Therefore, there is a well-defined $B$-module homomorphism
$\mathcal{R}_{\alpha+\hat\lambda}\rightarrow U(\mathfrak{b})x$.
The assertion follows from Lemma \ref{newlemm3.9}.
%By Lemma \ref{newlemm3.9}, it is enough to show that there exists a (well-defined) $B$-module homomorphism that sends $y_{\alpha+\hat\lambda}$ to $x$, that is, if $yy_{\alpha+\hat\lambda}=y'y_{\alpha+\hat\lambda}$ for $y,y'\in\mathcal{U}(\mathfrak{b})$, then $yx=y'x$.
%In other words, it is enough to show that if $yy_{\alpha+\hat\lambda}=0$ for some $y\in\mathcal{U}(\mathfrak{b})$, then $yx=0$.
%By \eqref{newnotation} and Corollary \ref{coram1}.(2), for any $x\in\mathcal{W}_{-\sqrt{p}\alpha+\lambda}$, we have $h_{i,\lambda}x=(\alpha+\hat\lambda,\alpha_i)x$ and $f_i^{(\alpha+\hat\lambda,\alpha_i)+1}x=0$. Because 
%\begin{align}
%\mathcal{R}_{\alpha+\hat\lambda}\simeq\mathcal{U}(\mathfrak{b})/(\sum_{i=1}^l\mathcal{U}(\mathfrak{b})(h_i-(\alpha+\hat\lambda,\alpha_i))+\sum_{i=1}^l\mathcal{U}(\mathfrak{b})f_i^{(\alpha+\hat\lambda,\alpha_i)+1})
%\end{align}
%as $B$-modules, the claim is proved.
\end{proof}

\begin{proof}[Proof of Main Theorem \eqref{mthm3}]
It is enough to show that $\Phi_\lambda(H^0(\xi_\lambda)_{\alpha+\hat\lambda}^{\mathfrak{n}_+})=\mathcal{W}_{-\sqrt{p}\alpha+\lambda}$ for any $\alpha\in P_+\cap Q$ and $\lambda\in\Lambda$.
We first show that 
$\Phi_\lambda(H^0(\xi_\lambda)_{\alpha+\hat\lambda}^{\mathfrak{n}_+})\subseteq\mathcal{W}_{-\sqrt{p}\alpha+\lambda}$.
Because $\mathcal{W}^i_{-\sqrt{p}\alpha+\lambda}$ is the space of highest weight vectors of the $P_i$-submodule $W(p,\lambda)_Q^i$ of $V_{\sqrt{p}Q+\lambda}$ with the highest weight $\alpha+\hat{\lambda}$,  we have
\begin{align}\label{(1024)}
\Phi_\lambda(H^0(\xi_\lambda)_{\alpha+\hat\lambda}^{\mathfrak{n}_+})\subseteq\bigcap_{i=1}^l\mathcal{W}^i_{-\sqrt{p}\alpha+\lambda}=\mathcal{W}_{-\sqrt{p}\alpha+\lambda}.
\end{align}
We prove the converse.
Let $x\in\mathcal{W}_{-\sqrt{p}\alpha+\lambda}$.
By Lemma \ref{bmodgmod}, $\mathcal{U}(\mathfrak{b})x$ is a $G$-submodule of $V_{\sqrt{p}Q+\lambda}$.
Then by Lemma \ref{lemm:maximalP_Jsubmod}, 
$\mathcal{U}(\mathfrak{b})x$ is a $G$-submodule of $\Phi_\lambda(H^0(\xi_\lambda))$.
Since $e_ix=0$ and $h_{i,\lambda}x=(\alpha+\hat\lambda,\alpha_i)x$ for any $1\leq i\leq l$ by definition, we have
\begin{align}
x\in(\Phi_\lambda(H^0(\xi_\lambda)))^{\mathfrak{n}_+}_{\alpha+\hat\lambda}=\Phi_\lambda(H^0(\xi_\lambda)^{\mathfrak{n}_+}_{\alpha+\hat\lambda}),
\end{align}
and thus, $\mathcal{W}_{-\sqrt{p}\alpha+\lambda}\subseteq\Phi_\lambda(H^0(\xi_\lambda)^{\mathfrak{n}_+}_{\alpha+\hat\lambda})$. Thus, Main Theorem \eqref{mthm3} is proved.
\end{proof}

Let us recall that the Coxeter number $h$ and the dual Coxeter number $h^\vee$ coincides because $\mathfrak{g}$ is simply-laced. Let $k=p-h$.

\begin{proof}[Proof of Theorem \ref{thm1}]
Let us assume that $p\geq h-1$.
By Theorem \ref{thm2} and Main Theorem \eqref{mthm3}, the character of $\mathcal{W}_0$ coincides with the character of the $W$-algebra $\mathcal{W}^k(\mathfrak{g})$ (\cite{FB,FF}). By \cite[Remark 4.1]{FKRW}, we have 
$\mathcal{W}^k(\mathfrak{g})\subseteq\mathcal{W}_0$,
%\begin{align}
%\mathcal{W}_k(\mathfrak{g})=&\lim_{\substack{x\rightarrow k\\\text{$x$: generic}}}\bigcap_{i=1}^l\ker|-\frac{\alpja_i}{\sqrt{x+h}}\rangle_{(0)}|_{\mathcal{F}_0}\\
%=&\lim_{\substack{x\rightarrow k\\\text{$x$: generic}}}\bigcap_{i=1}^l\ker|\sqrt{x+h}\alpha_i\rangle_{(0)}|_{\mathcal{F}_0}\nonumber\\
%\subseteq\bigcap_{i=1}^l\ker f_i|_{\mathcal{F}_0}=\mathcal{W}_0,
%\end{align}
%where the first equal follows from \cite[Remark 4.1]{FKRW} (see also Arakawa-Creutzig-Linshaw), the second equal follows from \cite[(15.4.10)]{FB}, and the last subseteq follows from the uppersemicontinuity (see also \cite[Remark 4.1]{FKRW}).
and thus, we obtain that $\mathcal{W}^k(\mathfrak{g})\simeq\mathcal{W}_0$.
\end{proof}

\begin{lemm}[{\cite[Theorem 10]{CrM}}] \label{lemm:crm}
When $p\geq h$, we have $\mathcal{W}^{p-h}(\mathfrak{g})\simeq\mathcal{W}_{p-h}(\mathfrak{g})$, where $\mathcal{W}_{p-h}(\mathfrak{g})$ is the simple quotient of $\mathcal{W}^{p-h}(\mathfrak{g})$.
\end{lemm}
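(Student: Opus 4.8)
The plan is to realise both sides as principal Drinfeld--Sokolov reductions and to reduce the statement to a cohomology vanishing on a maximal ideal. Write $H^{\bullet}_{DS}(-)$ for the cohomology of the quantized Drinfeld--Sokolov reduction functor attached to the principal nilpotent $f_{\mathrm{prin}}$, applied to modules over the affine vertex algebra at level $k=p-h$; recall that here $k+h^{\vee}=p$ since $\mathfrak{g}$ is simply-laced. By the Feigin--Frenkel construction one has $\mathcal{W}^{p-h}(\mathfrak{g})\simeq H^{0}_{DS}(V^{k}(\mathfrak{g}))$ for the universal affine vertex algebra $V^{k}(\mathfrak{g})$, and by Arakawa's theorem the simple quotient satisfies $\mathcal{W}_{p-h}(\mathfrak{g})\simeq H^{0}_{DS}(L_{k}(\mathfrak{g}))$, where $L_{k}(\mathfrak{g})$ is the simple affine vertex algebra. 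Let $N_{k}\subseteq V^{k}(\mathfrak{g})$ be the maximal proper ideal, so that $0\to N_{k}\to V^{k}(\mathfrak{g})\to L_{k}(\mathfrak{g})\to 0$ is exact. Since $k+h^{\vee}=p\geq h^{\vee}$ is a positive integer, the level is non-critical, and Arakawa's vanishing theorem gives $H^{i}_{DS}=0$ for $i\neq 0$ on the relevant category $\mathcal{O}$; hence $H^{0}_{DS}$ is exact there and yields a short exact sequence $0\to H^{0}_{DS}(N_{k})\to \mathcal{W}^{p-h}(\mathfrak{g})\to \mathcal{W}_{p-h}(\mathfrak{g})\to 0$. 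Thus it suffices to prove $H^{0}_{DS}(N_{k})=0$.

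To this end I would exploit that $k\in\mathbb{Z}_{\geq 0}$. In this case $L_{k}(\mathfrak{g})$ is the integrable quotient and $N_{k}$ is generated by the singular vector $(e_{\theta}\otimes t^{-1})^{k+1}|0\rangle$, of $\mathfrak{g}$-weight $(k+1)\theta$; every composition factor of $N_{k}$ is a simple highest weight module $L(\mu)$ whose highest weight $\mu$ is linked to $k\Lambda_{0}$ under the integral affine Weyl group but is distinct from it. I would then invoke Arakawa's description of the principal reduction of simple highest weight modules: $H^{0}_{DS}(L(\mu))$ is nonzero only for \emph{non-degenerate} highest weights $\mu$, and every $\mu$ occurring in $N_{k}$ lies in the degenerate locus annihilated by the reduction — this is precisely what distinguishes the denominator-one case $k+h^{\vee}=p\in\mathbb{Z}_{\geq h^{\vee}}$ from the generic admissible levels, where $\mathcal{W}^{k}\neq\mathcal{W}_{k}$. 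Applying $H^{0}_{DS}$ to a composition series (or a BGG-type resolution) of $N_{k}$ and using exactness then forces $H^{0}_{DS}(N_{k})=0$, giving the desired isomorphism.

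An alternative, more computational route works entirely inside the $W$-algebra: via the free-field realisation $\mathcal{W}^{k}(\mathfrak{g})=\bigcap_{i=1}^{l}\ker S_{i}$ in a Heisenberg vertex algebra, simplicity is equivalent to nondegeneracy of the contravariant Shapovalov-type form, and one compares the $W$-algebra Kac--Kazhdan determinant against the value $k+h^{\vee}=p$. The \emph{main obstacle} in either route is the same, namely to show that no singular vector survives the reduction: one must check that the only potentially dangerous factors of the determinant — equivalently, the composition factors of $N_{k}$ — are exactly those killed by the principal reduction, so that denominator one together with $p\geq h^{\vee}$ places the level outside the non-simple admissible locus. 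Verifying this via Arakawa's explicit non-degeneracy condition is the technical heart of the argument, and is where I would concentrate the effort.
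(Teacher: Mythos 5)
Your overall framework (realising both sides via quantized Drinfeld--Sokolov reduction and reducing the statement to a cohomology vanishing on the maximal ideal) is reasonable in spirit, but the two load-bearing assertions at its heart are unjustified, and at least one of them is false as stated. You work at $k=p-h\in\mathbb{Z}_{\geq 0}$, so $L_{k}(\mathfrak{g})$ is the \emph{integrable} simple affine vertex algebra; for the standard ``$-$''-reduction one has $H^{0}_{DS,-}(L_{k}(\mathfrak{g}))=0$ (the reduction annihilates integrable modules), and your exact sequence then yields $H^{0}_{DS,-}(N_{k})\simeq\mathcal{W}^{p-h}(\mathfrak{g})$ --- the entire universal $W$-algebra comes from the maximal ideal, exactly the opposite of what you need. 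The identification $\mathcal{W}_{p-h}(\mathfrak{g})\simeq H^{0}_{DS}(L_{k}(\mathfrak{g}))$ is not a theorem you can quote at these degenerate admissible (denominator-one) levels, and the claim that every composition factor of $N_{k}$ is annihilated by the reduction is precisely the content of the lemma rather than an available input; you give no argument for it, and it is incompatible with the behaviour of the ``$-$''-reduction just described. Choosing the ``$+$''-reduction instead does not automatically repair this: one would still have to verify both the nonvanishing on $L_{k}(\mathfrak{g})$ and the vanishing on $N_{k}$ at an integer level, which is the hard technical point you acknowledge but do not supply.

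The paper sidesteps the maximal ideal entirely by a move you do not consider: Feigin--Frenkel self-duality $\mathcal{W}^{p-h}(\mathfrak{g})\simeq\mathcal{W}^{k'}(\mathfrak{g})$ with $k'+h=\tfrac{1}{p}$. At the dual level the Gorelik--Kac simplicity criterion shows that the universal affine vertex algebra $V^{k'}(\mathfrak{g})$ is already simple, so there is no ideal $N_{k'}$ to control, and Arakawa's theorem on the ``$+$''-reduction of simple highest-weight modules (applicable because $p\geq h$ is equivalent to the hypothesis (383) of \cite{Ar}) gives directly that $H^{0}_{+}(L(k'\Lambda_0))$ is the simple quotient $\mathcal{W}_{k'}(\mathfrak{g})$. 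In short: the duality converts the problem ``show the reduction kills the maximal ideal'' into the problem ``show there is no maximal ideal,'' which is solved by Gorelik--Kac; without that step your proposal leaves the essential vanishing unproved.
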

\begin{proof}
We provide a proof for completeness. 
By the Feigin-Frenkel duality \cite{FF1,FF2,ACL}, for $k'=\frac{1}{p}-h$, we have $\mathcal{W}^k(\mathfrak{g})\simeq\mathcal{W}^{k'}(\mathfrak{g})$.
Thus, it is enough to show that $\mathcal{W}^{k'}(\mathfrak{g})$ is simple.
By \cite[Remark 6.4.1]{Ar}, we have $\mathcal{W}^{k'}(\mathfrak{g})=H^0_+(V_{k'}(\mathfrak{g}))$. Here $V_{k'}(\mathfrak{g})$ is the universal affine vertex algebra of level $k'$ associated with $\mathfrak{g}$, and $H^0_+$ is the functor in \cite[(289)]{Ar}. By \cite{GK}, $V_{k'}(\mathfrak{g})$ is the simple affine vertex algebra $L(k'\Lambda_0)$ of level $k'$, and thus, $H^0_+(V_{k'}(\mathfrak{g}))=H^0_+(L(k'\Lambda_0))$. Now, we assume that $p\geq h$, that is equivalent to the condition \cite[(383)]{Ar} in our case, to use \cite[Theorem 9.1.4]{Ar}. Under the condition, we obtain that $H^0_+(L(k'\Lambda_0))=\mathbb{L}(\gamma_{-(k'+h)\rho})$ and the right-hand side is the unique simple quotient $\mathcal{W}_{k'}(\mathfrak{g})$ (see \cite[(277)]{Ar}). Thus, Lemma \ref{lemm:crm} is proved.
\end{proof}

The followings are implicitly conjectured in \cite{FT}.

\begin{conj}\label{future}
Let $\mathfrak{g}$ be a simply-laced simple Lie algebra, $p\in\mathbb{Z}_{\geq2}$, and $\lambda\in\Lambda$.
\begin{enumerate}
\item\label{futureconj1}
The $W(p)_Q$-module $H^0(\xi_\lambda)$ is irreducible.
\item\label{futureconj2}
For any $\alpha\in P_+\cap Q$, $\mathcal{W}_{-\sqrt{p}\alpha+\lambda}$ is an irreducible $\mathcal{W}_{p-h}(\mathfrak{g})$-module.
\item \label{futureconj3}
The set $\{H^0(\xi_\lambda)|\lambda\in\Lambda\}$ provides a complete set
of isomorphism classes of  irreducible $W(p)_Q$-modules.
\end{enumerate}
\end{conj}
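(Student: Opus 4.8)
The plan is to prove the assertions in the order \eqref{futureconj2}$\Rightarrow$\eqref{futureconj1}, treating \eqref{futureconj3} separately as the genuinely hard point; throughout I would first work in the range $(\sqrt{p}\bar\lambda+\rho,\theta)\le p$ where Theorem \ref{thm2} is available, and assume $p\ge h$ so that Theorem \ref{thm1} together with Lemma \ref{lemm:crm} identifies $\mathcal{W}_0\simeq\mathcal{W}^{p-h}(\mathfrak{g})\simeq\mathcal{W}_{p-h}(\mathfrak{g})$. For \eqref{futureconj2}, the key observation is that, by Definition \ref{dfnmathcalW}, $\mathcal{W}_{-\sqrt{p}\alpha+\lambda}$ is the intersection over $i$ of the kernels of the screenings $f_i$ acting on $\mathcal{F}_{-\sqrt{p}\alpha+\lambda}$, which is exactly the free-field realization of the quantum Drinfeld--Sokolov reduction attached to a Wakimoto-type module; I would identify it with the module $T^p_{\sqrt{p}\bar\lambda,\alpha+\hat\lambda}=H^0_{DS,\alpha+\hat\lambda}(\mathbb{V}_{p,\sqrt{p}\bar\lambda})$ of \cite{ArF}. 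Extracting the graded character of $\mathcal{W}_{-\sqrt{p}\alpha+\lambda}$ from Main Theorem \eqref{mthm3} and comparing it with the right-hand side of \eqref{(61)} shows the two characters coincide, which pins the identification at the level of characters; irreducibility of $T^p_{\sqrt{p}\bar\lambda,\alpha+\hat\lambda}$ would then follow from the exactness of the reduction functor $H^0_{DS,\alpha+\hat\lambda}(-)$ and its preservation of simplicity on the relevant simple affine input, in the spirit of \cite{Ar}.

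Granting \eqref{futureconj2}, I would deduce \eqref{futureconj1} by a double-commutant argument for the $G$-orbifold. Since $G$ acts on $W(p)_Q$ by vertex operator algebra automorphisms with $\mathcal{W}_0=(W(p)_Q)^G$, any $W(p)_Q$-submodule $N\subseteq H^0(\xi_\lambda)$ is simultaneously $G$-stable and $\mathcal{W}_0$-stable, hence by Main Theorem \eqref{mthm3} a sum of isotypic pieces $\mathcal{R}_{\alpha+\hat\lambda}\otimes\mathcal{W}_{-\sqrt{p}\alpha+\lambda}$. This uses that the pieces are mutually non-isomorphic: the $G$-modules $\mathcal{R}_{\alpha+\hat\lambda}$ have distinct highest weights, and the $\mathcal{W}_0$-modules $\mathcal{W}_{-\sqrt{p}\alpha+\lambda}$ are the distinct simple highest-weight $\mathcal{W}_{p-h}(\mathfrak{g})$-modules $T^p_{\sqrt{p}\bar\lambda,\alpha+\hat\lambda}$ furnished by \eqref{futureconj2}. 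It then remains to rule out proper such sums, for which I would invoke the decomposition of $W(p)_Q=H^0(\xi_0)$ itself, namely the $\lambda=0$ case of Main Theorem \eqref{mthm3}, to exhibit states of $W(p)_Q$ whose fields carry one isotypic component of $H^0(\xi_\lambda)$ into another; the non-degeneracy of this action forces $N=H^0(\xi_\lambda)$.

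The main obstacle is \eqref{futureconj3}. The natural route is first to prove that $W(p)_Q$ is $C_2$-cofinite, which would guarantee finitely many simple modules and a well-behaved Zhu algebra, and then to match this finite count with $|\Lambda|$ by showing the characters $\chi_{H^0(\xi_\lambda)}(q,z)$ of \eqref{(61)} are linearly independent and exhaust the simple-module characters. Both ingredients are presently out of reach in general: $C_2$-cofiniteness of $W(p)_Q$ is known only for type $A_1$ and for $(\mathfrak{g},p)=(\mathfrak{sl}_3,2)$ \cite{AMW}, and even granting it, the geometric construction $\lambda\mapsto H^0(\xi_\lambda)$ does not by itself preclude simple $W(p)_Q$-modules lying outside its image. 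I therefore expect \eqref{futureconj3} to require input orthogonal to the present geometric methods, plausibly a direct analysis of the Zhu algebra $A(W(p)_Q)$ or of the associated variety of $W(p)_Q$, whereas \eqref{futureconj1} and \eqref{futureconj2} should be within reach by the character-plus-orbifold-duality strategy above.

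Finally, extending \eqref{futureconj1} and \eqref{futureconj2} beyond the alcove condition $(\sqrt{p}\bar\lambda+\rho,\theta)\le p$ would require the full vanishing $H^n(\xi_\lambda)=0$ for $n>0$, that is, Conjecture \ref{van}, in order to make the character formula \eqref{(61)} available for all $\lambda\in\Lambda$. This is precisely the technical gap flagged in Remark \ref{tech}, and settling it in the case $(\epsilon_\lambda(\sigma),\alpha_i)>0$ is the remaining step needed before the argument above applies to arbitrary $\lambda$.
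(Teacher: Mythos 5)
The first thing to note is that the paper offers no proof of this statement: Conjecture \ref{future} is stated as an open conjecture, and the surrounding remarks only record that it is known for $\mathfrak{g}=\mathfrak{sl}_2$ \cite{AM1}, that \eqref{futureconj1} and \eqref{futureconj2} are known for $(\mathfrak{g},p,\lambda)=(\mathfrak{sl}_3,2,0)$ \cite{AMW}, and that \eqref{futureconj1}, \eqref{futureconj2} are deferred to the sequel \cite{S1} under the alcove condition $(\sqrt{p}\bar\lambda+\rho,\theta)\le p$. So there is nothing in the paper to compare your argument against; your proposal has to stand on its own as an attempted proof. Judged that way, it is a sensible road map but not a proof, and while you are candid about this for \eqref{futureconj3}, the gaps in \eqref{futureconj1} and \eqref{futureconj2} should be made equally explicit.

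Concretely: in your argument for \eqref{futureconj1} you assert that any $W(p)_Q$-submodule $N\subseteq H^0(\xi_\lambda)$ is ``simultaneously $G$-stable and $\mathcal{W}_0$-stable.'' The $\mathcal{W}_0$-stability is free, but $G$-stability of an arbitrary submodule is not: $G$ acts by module automorphisms, so it permutes submodules rather than preserving each one, and the entire difficulty of the orbifold/double-commutant step is hidden there. Likewise, the final step --- exhibiting states of $W(p)_Q$ whose fields carry one isotypic component into another and appealing to ``non-degeneracy of this action'' --- is precisely the statement to be proved, not a tool one may invoke. For \eqref{futureconj2}, matching the character of $\mathcal{W}_{-\sqrt{p}\alpha+\lambda}$ against that of $T^p_{\sqrt{p}\bar\lambda,\alpha+\hat\lambda}$ via \eqref{(61)} does not identify the modules; you need an actual $\mathcal{W}_{p-h}(\mathfrak{g})$-module homomorphism between them (compare how the proof of Theorem \ref{thm1} needs both the character identity and the inclusion $\mathcal{W}^{p-h}(\mathfrak{g})\subseteq\mathcal{W}_0$ from \cite{FKRW}), and the simplicity of $T^p_{\sqrt{p}\bar\lambda,\alpha+\hat\lambda}$ is itself a nontrivial input from \cite{Ar,ArF} that you invoke only ``in the spirit of.'' Finally, everything is conditional on $p\ge h$, on the alcove condition, and for general $\lambda$ on Conjecture \ref{van} (see Remark \ref{tech}), so even the parts you consider within reach are available only for a restricted range of parameters. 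In short, your proposal correctly identifies the expected strategy --- essentially the one announced for \cite{S1} --- but it does not close any of the three assertions, and the conjecture remains open in the generality stated.
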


\begin{rmk}
In \cite{AM1}, Conjecture \ref{future} has been proved in the case of $\mathfrak{g}=\mathfrak{sl}_2$.
Recently, in \cite{AMW},
Conjecture \ref{future} \eqref{futureconj1}, \eqref{futureconj2} has been proved in the case of $\mathfrak{g}=\mathfrak{sl}_3$, $p=2$, $\lambda=0$. In the subsequent paper \cite{S1}, we will prove Conjecture \ref{future} \eqref{futureconj1}, \eqref{futureconj2} for any simply-laced simple Lie algebra $\mathfrak{g}$ and $\lambda\in\Lambda$ such that $(\sqrt{p}\bar\lambda+\rho,\theta)\leq p$.
\end{rmk}

\begin{rmk}
At least for $\lambda\in\Lambda$ such that $(\sqrt{p}\bar\lambda+\rho,\theta)\leq p$, we have
\begin{align}
\operatorname{tr}_{\mathcal{W}_{-\sqrt{p}\alpha+\lambda}}(q^{L_0-\frac{c}{24}})&=\sum_{\sigma\in W}(-1)^{l(\sigma)}\operatorname{tr}_{\mathcal{F}_{-\sqrt{p}(\sigma(\alpha+\hat{\lambda}+\rho)-\rho)+\bar{\lambda}}}(q^{L_0-\frac{c}{24}})\\
&=\sum_{\sigma\in W}(-1)^{l(\sigma)}\operatorname{tr}_{\mathcal{F}_{-\sqrt{p}(\alpha+\hat{\lambda}-\epsilon_{\lambda}(\sigma))+\overline{\sigma\ast\lambda}}}(q^{L_0-\frac{c}{24}}).\nonumber
\end{align}
The author expects that $\mathcal{W}_{-\sqrt{p}\alpha+\lambda}$ has a certain resolution as \cite[(3.14)]{ArF}.
\end{rmk}

Finally, we give a conjecture on narrow screening operators.
For $\lambda\in\Lambda$, $1\leq i_1,\dots,i_n\leq l$, and $\sigma\in W$ that has a minimal expression $\sigma=\sigma_{i_n}\cdots\sigma_{i_1}$, let $F_{\sigma,\lambda}$ denote the composition of narrow screening operators $F_{i_n,\sigma_{i_n}\cdots\sigma_{i_1}\ast\lambda}\cdots F_{i_1,\lambda}\in\operatorname{Hom}_{\mathbb{C}}(V_{\sqrt{p}Q+\lambda},V_{\sqrt{p}Q+\sigma_{i_n}\cdots\sigma_{i_1}\ast\lambda})$.
\begin{conj}\label{conjnarrow}
For any $\lambda\in\Lambda$ and $\sigma\in W$, $F_{\sigma,\lambda}$ is independent of the
choice of a
minimal expression of $\sigma\in W$.
Moreover, we have
\begin{align}\label{alignnarrow}
\Phi_{\lambda}(H^0(\xi_\lambda))=\bigcap\operatorname{Im}F_{\tau,\tau^{-1}\ast\lambda},
\end{align}
where the intersection runs over all
$\tau\in W$ such that $F_{\tau,\tau^{-1}\ast\lambda}\not=0$.
\end{conj}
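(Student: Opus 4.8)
The plan is to separate the two assertions of Conjecture~\ref{conjnarrow}: the independence of $F_{\sigma,\lambda}$ from the minimal expression is a purely algebraic fact about the maps $F_{i,\lambda}$, while the identity \eqref{alignnarrow} is geometric and, as I explain below, is equivalent in strength to Conjecture~\ref{van}.

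For the independence, by the Matsumoto--Tits theorem it suffices to compare, for each pair $(i,j)$, the two compositions attached to a braid move, since any two minimal expressions of $\sigma$ are linked by braid moves through reduced words. When $(\alpha_i,\alpha_j)=0$ the integrands $|-\tfrac{1}{\sqrt{p}}\alpha_i\rangle(z)$ and $|-\tfrac{1}{\sqrt{p}}\alpha_j\rangle(w)$ have trivial mutual braiding, their operator product exponent being $\tfrac{1}{p}(\alpha_i,\alpha_j)=0$; hence by \eqref{genborcherds} the two contours may be exchanged and $F_{j,\sigma_i\ast\lambda}F_{i,\lambda}=F_{i,\sigma_j\ast\lambda}F_{j,\lambda}$ after matching the $\ast$-shifts. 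The substantial case is $(\alpha_i,\alpha_j)=-1$, where one must prove the braid identity for the two length-three compositions; this is the rank-two ($\mathfrak{sl}_3$) heart of the statement, the analogue for screening operators of Lusztig's braid symmetry. I would prove it by restricting, via \eqref{aligndecomp}, to the $\mathfrak{sl}_3$ lattice and matching both iterated integrals to a single integral over a common cycle, fixing the surviving scalar by the nonvanishing $F_{i,\lambda}\neq 0$ of \cite{NT} together with the exact sequence of Theorem~\ref{am1}\eqref{am1:exseq}, which determines each $F$ by its kernel and cokernel. This braid identity is the first main obstacle, as the cycles $[\Gamma_{s_i+1}]$ are delicate to compare.

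One inclusion in \eqref{alignnarrow} is easy. Applying Theorem~\ref{am1}\eqref{am1:exseq} at $\sigma_i\ast\lambda$ gives $\operatorname{Im}F_{\sigma_i,\sigma_i\ast\lambda}=W(p,\lambda)_Q^i=\ker F_{i,\lambda}$ whenever $F_{\sigma_i,\sigma_i\ast\lambda}\neq 0$ (and $W(p,\lambda)_Q^i=V_{\sqrt{p}Q+\lambda}$ otherwise). Hence the intersection $M:=\bigcap_\tau\operatorname{Im}F_{\tau,\tau^{-1}\ast\lambda}$ satisfies $M\subseteq\bigcap_{i=1}^l W(p,\lambda)_Q^i=W(p,\lambda)_Q$. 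More generally, writing a minimal expression $\tau=\sigma_i\tau'$ with $l(\tau)=l(\tau')+1$, the $\ast$-action gives $\tau'\ast(\tau^{-1}\ast\lambda)=\sigma_i\ast\lambda$, so $F_{\tau,\tau^{-1}\ast\lambda}=F_{i,\sigma_i\ast\lambda}\circ F_{\tau',\tau^{-1}\ast\lambda}$ and, when $F_{\tau,\tau^{-1}\ast\lambda}\neq 0$, $\operatorname{Im}F_{\tau,\tau^{-1}\ast\lambda}\subseteq\operatorname{Im}F_{i,\sigma_i\ast\lambda}=W(p,\lambda)_Q^i$ for every left descent $\sigma_i$ of $\tau$. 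By the Main Theorem and Lemma~\ref{lemm:maximalP_Jsubmod}, $\Phi_\lambda(H^0(\xi_\lambda))$ is the largest $G$-submodule inside $W(p,\lambda)_Q$, so both sides of \eqref{alignnarrow} lie in $W(p,\lambda)_Q$.

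It remains to prove $\Phi_\lambda(H^0(\xi_\lambda))\subseteq\operatorname{Im}F_{\tau,\tau^{-1}\ast\lambda}$ for each $\tau$ with $F_\tau\neq 0$, and that $M$ is $G$-stable (whence $M\subseteq\Phi_\lambda(H^0(\xi_\lambda))$). The natural route is geometric: using the sheaf sequence \eqref{shortexact1} and the $\mathbb{P}^1=P_i/B$ fibration $\pi\colon G/B\to G/P_i$ iterated along a minimal expression, one identifies $\operatorname{Im}F_{\tau,\tau^{-1}\ast\lambda}$ with the image in the fibre at $\operatorname{id}B$ of the sections over the Bott--Samelson variety $Z_\tau$ resolving $\overline{U_\tau}$; lying in every such image is then exactly the condition of extending to a global section over $G/B$, which gives $M=\Phi_\lambda(H^0(\xi_\lambda))$ and, via Theorem~\ref{parab}, explains why the long $\tau$ remove precisely the part of $W(p,\lambda)_Q$ that fails the $G$-stability encoded by \eqref{novelcond}. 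The obstruction is that each step of this identification requires the cohomology comparison \eqref{conjvan} for the twisted bundles $\xi_{\sigma\ast\lambda}(\epsilon_\lambda(\sigma))$ in the range $(\epsilon_\lambda(\sigma),\alpha_i)>0$, which is exactly the case left open in Theorem~\ref{vanish} and flagged in Remark~\ref{tech}. Consequently I expect this plan to yield an unconditional proof of \eqref{alignnarrow} for $\lambda$ with $(\sqrt{p}\bar\lambda+\rho,\theta)\leq p$, where Corollary~\ref{restrict} applies and, by the remark after Lemma~\ref{condequiv}, condition \eqref{novelcond3} holds so that both sides collapse to $W(p,\lambda)_Q$; the general statement is equivalent in difficulty to Conjecture~\ref{van}, which is the decisive obstacle.
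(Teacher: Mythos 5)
The statement you were asked to prove is stated in the paper as Conjecture \ref{conjnarrow} and is left open there: the paper offers no proof of either assertion, so there is nothing to compare your argument against, and the real question is whether your proposal closes the conjecture. It does not. What you have written is an honest and plausible strategy outline, but both of its load-bearing steps are explicitly deferred. The braid identity for $(\alpha_i,\alpha_j)=-1$, which you correctly identify as the rank-two heart of the independence claim, is exactly the hard part: the compositions $F_iF_jF_i$ and $F_jF_iF_j$ involve different numbers of integration variables at each stage (the exponents $s_i+1$ depend on the intermediate weights $\sigma\ast\lambda$), and ``matching both iterated integrals to a single integral over a common cycle'' is an assertion, not an argument; the exact sequence of Theorem \ref{am1}\eqref{am1:exseq} determines each single $F_{i,\lambda}$ up to scalar by kernel and cokernel, but it does not determine a length-three composition, whose kernel and image are precisely what is unknown. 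The $(\alpha_i,\alpha_j)=0$ case and the inclusion $M\subseteq W(p,\lambda)_Q$ via $\operatorname{Im}F_{i,\sigma_i\ast\lambda}=W(p,\lambda)_Q^i$ are correct and are the genuinely established parts of your text.

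For the identity \eqref{alignnarrow}, even in the range $(\sqrt{p}\bar\lambda+\rho,\theta)\leq p$ where you claim the plan becomes unconditional, you have only proved one inclusion. Main Theorem \eqref{mthm2} gives $\Phi_\lambda(H^0(\xi_\lambda))=W(p,\lambda)_Q$ there, and your descent argument gives $M\subseteq W(p,\lambda)_Q$; but the reverse inclusion $W(p,\lambda)_Q\subseteq\operatorname{Im}F_{\tau,\tau^{-1}\ast\lambda}$ for $l(\tau)\geq 2$ is nowhere established --- it amounts to exactness of a Felder-type complex of narrow screenings at the relevant spot, which is not among the paper's results. The proposed Bott--Samelson identification of $\operatorname{Im}F_{\tau,\tau^{-1}\ast\lambda}$ with sections over $Z_\tau$ is again a program rather than a proof, and as you note it runs into the case $(\epsilon_\lambda(\sigma),\alpha_i)>0$ excluded by Remark \ref{tech}. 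So the proposal should be read as a roadmap that correctly locates the obstacles (the braid relation for the cycles $[\Gamma_{s_i+1}]$ and Conjecture \ref{van}), not as a proof of the conjecture or even of a special case of it.
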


\appendix
\section{A proof of Lemma \ref{condequiv}}\label{sectioncond}
For $1\leq i_1,\cdots,i_n\leq l$, denote $\sigma_{i_n\cdots i_1}$ by $\sigma_{i_n}\cdots\sigma_{i_1}$. We use the letter $\sigma_{i_0}$ for the identity map $\operatorname{id}_W$ of $W$ throughout this subsection.
\begin{lemm}\label{condequiv2}
The condition Lemma \ref{condequiv} \eqref{condition1} is equivalent to the following condition:
for $1\leq n\leq l(w_0)$, we have
\begin{align}\label{(1502)}
\epsilon_\lambda(\sigma_{i_{n}\cdots i_0})=\sum_{j=0}^{n-1}\epsilon_{\sigma_{i_{j}\cdots i_0}\ast\lambda}(\sigma_{i_{j+1}}).
\end{align} 
\end{lemm}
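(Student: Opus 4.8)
The plan is to prove both implications at once by first extracting, from the recursion \eqref{(401)}, a single simplified formula for $\epsilon_\lambda$ along the reduced word that holds with \emph{no} hypothesis on $\lambda$, and then reading off the equivalence from it. Throughout, write $\tau_n=\sigma_{i_n\cdots i_0}=\sigma_{i_n}\cdots\sigma_{i_1}$, so that $\tau_0=\operatorname{id}$ and, since $w_0=\sigma_{i_{l(w_0)}}\cdots\sigma_{i_1}$ is minimal, $l(\sigma_{i_{n+1}}\tau_n)=l(\tau_n)+1$ for every $n$. Abbreviate $v_n=\epsilon_\lambda(\tau_n)$ and $w_{n+1}=\epsilon_{\tau_n\ast\lambda}(\sigma_{i_{n+1}})$, so that the right-hand side of \eqref{(1502)} is exactly $\sum_{j=0}^{n-1}w_{j+1}$, and recall $v_0=\epsilon_\lambda(\operatorname{id})=0$.

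First I would specialize \eqref{(401)} to $\sigma=\tau_{n+1}$ and $i=i_{n+1}$, using $\sigma_{i_{n+1}}\tau_{n+1}=\tau_n$, and expand $\sigma_{i_{n+1}}(\nu)=\nu-(\nu,\alpha_{i_{n+1}})\alpha_{i_{n+1}}$ together with $(\rho,\alpha_{i_{n+1}})=1$. This gives
\begin{align*}
v_{n+1}=v_n+w_{n+1}-\bigl[(v_n,\alpha_{i_{n+1}})+(w_{n+1},\alpha_{i_{n+1}})+1+\delta\bigr]\alpha_{i_{n+1}},
\end{align*}
where $\delta=\delta_{(\sqrt{p}\overline{\tau_{n+1}\ast\lambda},\alpha_{i_{n+1}}),\,p-1}$. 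The heart of the argument, and the step I expect to be the main obstacle, is the universal identity $(w_{n+1},\alpha_{i_{n+1}})+1+\delta=0$, which collapses the recursion to
\begin{align*}
v_{n+1}=v_n+w_{n+1}-(v_n,\alpha_{i_{n+1}})\alpha_{i_{n+1}}.
\end{align*}
I would prove this identity by the case analysis of Remark \ref{beloweps} applied to $\mu=\tau_n\ast\lambda$ with $i=i_{n+1}$, writing $s=(\sqrt{p}\bar\mu,\alpha_{i_{n+1}})$. If $s\le p-2$, then $(w_{n+1},\alpha_{i_{n+1}})=-1$ by \eqref{neew}, while a direct computation of $\sigma_{i_{n+1}}\ast\mu$ from the definition \eqref{(16.2)} gives $(\sqrt{p}\overline{\tau_{n+1}\ast\lambda},\alpha_{i_{n+1}})=p-s-2\le p-2$, forcing $\delta=0$. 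If $s=p-1$, then $w_{n+1}=-\alpha_{i_{n+1}}$ and $\overline{\tau_{n+1}\ast\lambda}=\bar\mu$ by \eqref{401+1}, so $(w_{n+1},\alpha_{i_{n+1}})=-2$ and $\delta=1$. In both cases the bracketed quantity vanishes.

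With the collapsed recursion in hand, both directions are short inductions on $n$. For \eqref{condition1}$\Rightarrow$\eqref{(1502)}, the hypothesis $(v_n,\alpha_{i_{n+1}})=0$ (which also holds trivially at $n=0$ since $v_0=0$) kills the correction term for $0\le n\le l(w_0)-1$, so $v_{n+1}=v_n+w_{n+1}$; telescoping from $v_0=0$ yields $v_n=\sum_{j=0}^{n-1}w_{j+1}$, which is \eqref{(1502)}. Conversely, if \eqref{(1502)} holds for all $1\le n\le l(w_0)$, then $v_{n+1}-v_n=w_{n+1}$, and comparing with the recursion forces $(v_n,\alpha_{i_{n+1}})\alpha_{i_{n+1}}=0$, hence $(v_n,\alpha_{i_{n+1}})=0$ for $1\le n\le l(w_0)-1$, which is \eqref{condition1}. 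The only point requiring care is the boundary bookkeeping: I would check that the ranges $0\le n\le l(w_0)-1$ in the telescoping and the base point $\tau_0=\operatorname{id}$, $\epsilon_\lambda(\operatorname{id})=0$, match the index range $1\le n\le l(w_0)$ in \eqref{(1502)} exactly, so that the two conditions correspond without an off-by-one gap.
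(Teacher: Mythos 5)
Your proposal is correct and follows essentially the same route as the paper: both derive from \eqref{(401)} (together with the case analysis of Remark \ref{beloweps} / \eqref{401-1}, \eqref{401+1} according to whether $(\sqrt{p}\overline{\tau_n\ast\lambda},\alpha_{i_{n+1}})$ equals $p-1$ or is at most $p-2$) the uniform recursion $\epsilon_\lambda(\tau_{n+1})=\epsilon_\lambda(\tau_n)+\epsilon_{\tau_n\ast\lambda}(\sigma_{i_{n+1}})-(\epsilon_\lambda(\tau_n),\alpha_{i_{n+1}})\alpha_{i_{n+1}}$, and then obtain both implications by telescoping. The only difference is cosmetic (you instantiate \eqref{(401)} at $\sigma=\tau_{n+1}$ and verify the cancellation identity directly, while the paper instantiates it at $\sigma=\tau_n$ and substitutes \eqref{401-1}), so the two arguments coincide.
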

\begin{proof}
Let us write $\sigma$ for $\sigma_{i_{n}\cdots i_1}$ and $i$ for $i_{n+1}$, respectively. Then we have
\begin{align}
\epsilon_{\lambda}(\sigma)&=\sigma_i\epsilon_{\lambda}(\sigma)+(\epsilon_{\lambda}(\sigma),\alpha_i)\alpha_i\\
&=\epsilon_\lambda(\sigma_i\sigma)+\epsilon_{\sigma_i\sigma\ast\lambda}(\sigma_i)+\alpha_i+\delta_{(\sqrt{p}\overline{\sigma\ast\lambda},\alpha_i),p-1}\alpha_i+(\epsilon_{\lambda}(\sigma),\alpha_i)\alpha_i\label{bunki}
\end{align}
where the second equality follows from \eqref{(401)}. If $(\sqrt{p}\overline{\sigma\ast\lambda},\alpha_i)=p-1$, then by \eqref{401+1}, \eqref{bunki} is equal to
\begin{align}
\epsilon_\lambda(\sigma_i\sigma)+\alpha_i+(\epsilon_{\lambda}(\sigma),\alpha_i)\alpha_i=\epsilon_\lambda(\sigma_i\sigma)-\epsilon_{\sigma\ast\lambda}(\sigma_i)+(\epsilon_{\lambda}(\sigma),\alpha_i)\alpha_i.
\end{align}
Thus, we obtain that 
\begin{align}\label{1502-0.9}
\epsilon_\lambda(\sigma_i\sigma)=\epsilon_\lambda(\sigma)+\epsilon_{\sigma\ast\lambda}(\sigma_i)-(\epsilon_{\lambda}(\sigma),\alpha_i)\alpha_i.
\end{align}
On the other hand, if $(\sqrt{p}\overline{\sigma\ast\lambda},\alpha_i)\leq p-2$, then by \eqref{401-1}, \eqref{bunki} is equal to
\begin{align}
\epsilon_\lambda(\sigma_i\sigma)-\epsilon_{\sigma\ast\lambda}(\sigma_i)+(\epsilon_{\lambda}(\sigma),\alpha_i)\alpha_i
\end{align}
Thus, we obtain that
\begin{align}\label{1502-1}
\epsilon_\lambda(\sigma_i\sigma)=\epsilon_\lambda(\sigma)+\epsilon_{\sigma\ast\lambda}(\sigma_i)-(\epsilon_{\lambda}(\sigma),\alpha_i)\alpha_i.
\end{align}
If $\lambda\in\Lambda$ satisfies the condition in Lemma \ref{condequiv} \eqref{condition1}, then by \eqref{1502-0.9} and \eqref{1502-1}, for any $0\leq n\leq l(w_0)-1$, we have
\begin{align}\label{recursion}
\epsilon_{\lambda}(\sigma_{i_{n+1}\cdots i_0})=\epsilon_\lambda(\sigma_{i_{n}\cdots i_0})+\epsilon_{\sigma_{i_{n}\cdots i_0}\ast\lambda}(\sigma_{i_{n+1}}).
\end{align}
 By solving the recurrence relation \eqref{recursion} for $n$, we obtain \eqref{(1502)} for $1\leq n\leq l(w_0)$.
The converse is clear from \eqref{1502-0.9} and \eqref{1502-1}.
\end{proof}
\begin{rmk}
By combining Lemma \ref{condequiv} \eqref{condition1} with Lemma \ref{condequiv2}, the condition Lemma \ref{condequiv} \eqref{condition1} and Lemma \ref{condequiv2} are equivalent to the following: For $1\leq n\leq l(w_0)-1$, we have
\begin{align}\label{(1502.5)}
(\sum_{j=0}^{n-1}\epsilon_{\sigma_{i_{j}\cdots i_0}\ast\lambda}(\sigma_{i_{j+1}}),\alpha_{i_{n+1}})=0.
\end{align} 
\end{rmk}

\begin{lemm}\label{indepofminexp}
The conditions in Lemma \ref{condequiv} and Lemma \ref{condequiv2} are independent of the choice of a minimal expression of the longest element $w_0\in W$.
\end{lemm}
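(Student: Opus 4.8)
The plan is to bypass the recursion and instead compute $\epsilon_\lambda(\sigma)$ in closed form directly from its definition \eqref{(666)}, which rewrites condition Lemma \ref{condequiv} \eqref{condition1} as a family of inequalities indexed by \emph{positive roots} rather than by the chosen word. Put $\Lambda_0=\sqrt{p}\bar\lambda+\rho$, a regular dominant weight, and recall from \eqref{(16.2)} that $\sqrt{p}\,(\sigma\ast\bar\lambda)=\sigma(\Lambda_0)-\rho\in P$. Since $\overline{\sigma\ast\lambda}$ is the representative of $[\sigma\ast\bar\lambda]$ in $\bar\Lambda_p$, and since passing to this representative amounts to reducing each coordinate in the fundamental-weight basis modulo $p$ into $\{0,\dots,p-1\}$, I expect the unconditional identity
\begin{align*}
\epsilon_\lambda(\sigma)=\sum_{k=1}^{l}\left\lfloor\frac{(\sigma(\Lambda_0)-\rho,\alpha_k)}{p}\right\rfloor\omega_k .
\end{align*}
Pairing with $\alpha_i$ and using $(\omega_k,\alpha_i)=\delta_{ki}$ and $(\rho,\alpha_i)=1$ yields the key formula
\begin{align*}
(\epsilon_\lambda(\sigma),\alpha_i)=\left\lfloor\frac{(\sigma(\Lambda_0),\alpha_i)-1}{p}\right\rfloor ,
\end{align*}
which for $\sigma=\sigma_i$ recovers Remark \ref{beloweps} and for $l(\sigma_i\sigma)=l(\sigma)+1$ recovers the sign statement of Remark \ref{rmkbeloweps}, giving useful consistency checks.

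First I would prove these two formulas; this is the only place where the definitions of $\bar\Lambda_p$ and $\overline{\sigma\ast\lambda}$ are used. Next, applying the second formula to $\sigma^{(n)}:=\sigma_{i_n}\cdots\sigma_{i_1}$ and $i=i_{n+1}$, set $\beta_{n+1}:=(\sigma^{(n)})^{-1}(\alpha_{i_{n+1}})$, which is a positive root because the expression is reduced. Then $(\sigma^{(n)}(\Lambda_0),\alpha_{i_{n+1}})=(\Lambda_0,\beta_{n+1})$, and this is $\geq 1$ since $\Lambda_0$ is regular dominant; hence the floor above vanishes exactly when the upper bound holds, so
\begin{align*}
(\epsilon_\lambda(\sigma^{(n)}),\alpha_{i_{n+1}})=0\iff (\sqrt{p}\bar\lambda+\rho,\beta_{n+1})\leq p .
\end{align*}
Thus condition Lemma \ref{condequiv} \eqref{condition1} is exactly the conjunction, over $1\leq n\leq l(w_0)-1$, of the inequalities $(\sqrt{p}\bar\lambda+\rho,\beta_{n+1})\leq p$.

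Finally I would use the standard fact that for the reduced word $w_0=\sigma_{i_{l(w_0)}}\cdots\sigma_{i_1}$ the roots $\{(\sigma^{(n)})^{-1}(\alpha_{i_{n+1}})\}_{0\leq n\leq l(w_0)-1}$ are precisely the positive roots, each occurring once (they form the inversion set of $w_0$). The term $n=0$ contributes the simple root $\alpha_{i_1}$, for which $(\sqrt{p}\bar\lambda+\rho,\alpha_{i_1})=(\sqrt{p}\bar\lambda,\alpha_{i_1})+1\leq p$ holds automatically, so adjoining it changes nothing. Consequently condition Lemma \ref{condequiv} \eqref{condition1} is equivalent to
\begin{align*}
(\sqrt{p}\bar\lambda+\rho,\beta)\leq p\qquad\text{for every positive root }\beta ,
\end{align*}
a statement with no reference to the chosen minimal expression; this gives the independence for condition Lemma \ref{condequiv} \eqref{condition1}. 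Since, for each fixed minimal expression, Lemma \ref{condequiv2} identifies the condition of Lemma \ref{condequiv2} with condition Lemma \ref{condequiv} \eqref{condition1}, that condition is independent as well, completing the proof.

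The step I expect to be the main obstacle is the first: justifying the closed formula for $\epsilon_\lambda(\sigma)$, in particular the clean description of $\overline{\sigma\ast\lambda}$ as the coordinatewise reduction of $\sigma(\Lambda_0)-\rho$ modulo $p$, while correctly tracking the two factors of $\sqrt{p}$ in \eqref{(666)}. Everything after that is formal: one pairing computation and the combinatorics of inversion sets. I note that the argument proves more than stated: as $\sqrt{p}\bar\lambda+\rho$ is dominant, $\max_{\beta>0}(\sqrt{p}\bar\lambda+\rho,\beta)$ is attained at the highest root $\theta$, so the displayed condition reduces to $(\sqrt{p}\bar\lambda+\rho,\theta)\leq p$, which is exactly the bridge to Lemma \ref{condequiv} itself.
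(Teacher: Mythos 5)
Your proposal is correct, and it takes a genuinely different route from the paper. The paper proves independence by showing that condition Lemma \ref{condequiv} \eqref{condition1} is preserved under the elementary braid moves (the commutation case $(\alpha_{i_n},\alpha_{i_{n+1}})=0$ and the length-three braid case), implicitly invoking Matsumoto's theorem that any two reduced words are connected by such moves; the equivalence with $(\sqrt{p}\bar\lambda+\rho,\theta)\leq p$ is then established separately in the appendix by a type-by-type tabulation of $(\epsilon_{\sigma\ast\lambda})$ along one explicit reduced word (Table \ref{table}). You instead compute $\epsilon_\lambda(\sigma)$ in closed form: since $\bar\Lambda_p$-reduction is coordinatewise reduction mod $p$ in the basis $\frac{1}{\sqrt{p}}\omega_k$ and the $\hat\lambda$-part lies in $\sqrt{p}P$ and drops out of \eqref{(666)}, your floor formula $(\epsilon_\lambda(\sigma),\alpha_i)=\lfloor((\sigma(\Lambda_0),\alpha_i)-1)/p\rfloor$ is a direct (and correct) consequence of the definitions, consistent with Remarks \ref{beloweps} and \ref{rmkbeloweps} and with \eqref{401+1}. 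Rewriting the condition via $(\Lambda_0,\beta_{n+1})$ with $\beta_{n+1}=(\sigma^{(n)})^{-1}(\alpha_{i_{n+1}})$ and using that these roots form the inversion set of $w_0$, i.e.\ all of $\Phi^+$ (the omitted $n=0$ term being a simple root, for which the bound is automatic), you obtain a word-free reformulation. This buys more than the paper's argument: it proves Lemma \ref{condequiv} itself uniformly in all types, since the maximum of $(\Lambda_0,\beta)$ over $\beta>0$ is attained at $\theta$, thereby making the appendix tables and the braid-move case analysis unnecessary. The only step you flag as uncertain --- the closed formula --- is in fact routine once one observes $\overline{\sigma\ast\lambda}=\overline{\sigma\ast\bar\lambda}$, so I see no gap.
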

\begin{proof}
By Lemma \ref{condequiv2}, it is enough to show that the condition Lemma \ref{condequiv} \eqref{condition1} is independent of the choice of a minimal expression $w_0$. Let $w_0=\sigma_{i_{l(w_0)}\cdots{i_1}}$ be a minimal expression of $w_0$ that satisfies the condition Lemma \ref{condequiv} \eqref{condition1}. For $\sigma=\sigma_{i_{n-1}\cdots{i_1}}$, we have
\begin{align}\label{app1}
(\epsilon_\lambda(\sigma),\alpha_{i_n})=0,~(\epsilon_\lambda(\sigma_{i_n}\sigma),\alpha_{i_{n+1}})=(\epsilon_\lambda(\sigma)+\epsilon_{\sigma\ast\lambda}(\sigma_{i_{n}}),\alpha_{i_{n+1}})=0.
\end{align}
When $(\alpha_{i_n},\alpha_{i_{n+1}})=0$, the second equation in \eqref{app1} leads $(\epsilon_\lambda(\sigma),\alpha_{i_{n+1}})=0$.
Thus, by \eqref{401-1} and \eqref{(401)}, we have
\begin{align}\label{omake1}
\epsilon_{\lambda}(\sigma_{i_{n+1}}\sigma)=\epsilon_\lambda(\sigma)+\epsilon_{\sigma\ast\lambda}(\sigma_{i_{n+1}}).
\end{align}
By \eqref{omake1} and the first equation in \eqref{app1}, we also obtain that
$(\epsilon_{\lambda}(\sigma_{i_{n+1}}\sigma),\alpha_{i_n})=(\epsilon_\lambda(\sigma)+\epsilon_{\sigma\ast\lambda}(\sigma_{i_{n+1}}),\alpha_{i_n})=0$. Therefore, we have
\begin{align}\label{app2}
(\epsilon_{\lambda}(\sigma),\alpha_{i_{n+1}})=(\epsilon_{\lambda}(\sigma_{i_{n+1}}\sigma),\alpha_{i_n})=0.
\end{align}

When $i_{n\pm 1}=j$ and $(\alpha_{i_n},\alpha_j)=-1$, for $\sigma=\sigma_{i_{n-2}}\cdots\sigma_{i_1}$,  we have
\begin{align}\label{want}
(\epsilon_\lambda(\sigma),\alpha_{j})=(\epsilon_\lambda(\sigma_j\sigma),\alpha_{i_n})
=(\epsilon_\lambda(\sigma_{i_n}\sigma_j\sigma),\alpha_j)=0.
\end{align}
It is straightforward to verify that \eqref{want} holds if and only if 
\begin{align}\label{want1}
\sqrt{p}((\overline{\sigma\ast\lambda},\alpha_{i_n}+\alpha_j))\leq p-2.
\end{align}
Since \eqref{want1} is symmetrical with respect to $i_n$ and $j$, we also have 
\begin{align}\label{want0}
(\epsilon_\lambda(\sigma),\alpha_{i_n})=(\epsilon_\lambda(\sigma_{i_n}\sigma),\alpha_{j})
=(\epsilon_\lambda(\sigma_{j}\sigma_{i_n}\sigma),\alpha_{i_n})=0.
\end{align}
Therefore, by \eqref{app2} and \eqref{want0}, the condition Lemma \ref{condequiv} \eqref{condition1} is preserved under relations in the Weyl group $W$.
\end{proof}

Before the proof of Lemma \ref{condequiv}, we set up some notations.
For a sequence ${\bf m}=(m_1,\dots,m_i)$ of elements in $\Pi=\{1,\cdots, l\}$, let $r_{{\bf m}}$ denote the corresponding element $\sigma_{m_1}\cdots\sigma_{m_i}$ in $W$.
Also, for sequences ${\bf m}=(m_1,\dots,m_i)$ and ${\bf n}=(n_1,\dots,n_j)$ of elements in $\Pi$, we write ${\bf m}{\bf n}$ for the sequence $(m_1,\dots,m_i,n_1,\dots,n_j)$.
Let us take the sequences ${\bf s}_k$ of elements in $\Pi$ as in Table \ref{table0}, and set
$(i_{l(w_0)},\dots,i_1)={\bf s}_1\cdots{\bf s}_l$.
Then we obtain the minimal expression
\begin{align}\label{minexpofw_0}
w_0=r_{{\bf s}_1\cdots{\bf s}_l}=\sigma_{i_{l(w_0)}\cdots i_1}
\end{align}
of $w_0$ given in \cite{BKOP}.
For convenience, set $\sigma_{i_0}=r_{{\bf s}_{l+1}}=\operatorname{id}_W$.
For $0\leq m<n\leq l(w_0)$, $\tau=\sigma_{i_m}\cdots\sigma_{i_0}$, $\tau'=\sigma_{i_n}\cdots\sigma_{i_0}$, let $(\epsilon_{\sigma\ast\lambda})_{\tau\leq_L\sigma\leq_L\tau'}$ be the sequence
\begin{align}
(\epsilon_{\sigma\ast\lambda})_{\tau\leq_L\sigma\leq_L\tau'}=(\epsilon_{\tau\ast\lambda}(\sigma_{i_{m+1}}),\epsilon_{\sigma_{i_{m+1}}\tau\ast\lambda}(\sigma_{i_{m+2}}),\cdots,\epsilon_{\sigma_{i_{n-1}\cdots i_{m+1}}\tau\ast\lambda}(\sigma_{i_{n}}))\nonumber
\end{align}
of elements in $P$.
For $1\leq i\leq l$, we set
\begin{align}
(\epsilon_{\sigma\ast\lambda})_{(i+1,i)}=(\epsilon_{\sigma\ast\lambda})_{r_{{\bf s}_{i+1}\cdots{\bf s}_{l+1}}\leq_L\sigma\leq_Lr_{{\bf s}_{i}\cdots{\bf s}_{l+1}}}.
\end{align}

\begin{proof}[Proof of Lemma \ref{condequiv}]

We show that the condition Lemma \ref{condequiv} \eqref{condition2} and that in Lemma \ref{condequiv2} are equivalent.
By Lemma \ref{indepofminexp}, it is enough to consider the minimal expression \eqref{minexpofw_0} of $w_0$.

It is straightforward to show the following: first, when $(\sqrt{p}\bar\lambda+\rho,\theta)<p$, the list of $(\epsilon_{\sigma\ast\lambda})_{\operatorname{id}\leq_L\sigma\leq_Lw_0}$ is given by Table \ref{table}.
Second, when $(\sqrt{p}\bar\lambda+\rho,\theta)=p$, the list of $(\epsilon_{\sigma\ast\lambda})_{\operatorname{id}\leq_L\sigma\leq_Lw_0}$ is given by changing Table \ref{table} as 
\begin{align}\label{othercase}
\begin{cases}
\epsilon_{\sigma_1r_{{\bf s}_l}\ast\lambda}(\sigma_1)=-2\omega_1+\omega_2,~\epsilon_{\sigma_2r_{{\bf s}_{l-1}}r_{{\bf s}_l}\ast\lambda}(\sigma_2)=\omega_1-\omega_2+\omega_3,&(A_l)\\
\epsilon_{\sigma_1r_{{\bf s}_l}\ast\lambda}(\sigma_1)=-2\omega_1+\omega_2,~\epsilon_{r_{{\bf s}_l}\ast\lambda}(\sigma_2)=\omega_1-\omega_2,&(D_l)\\
\epsilon_{\sigma_6r_{{\bf s}_6}\ast\lambda}(\sigma_6)=-2\omega_6+\omega_5,~\epsilon_{\sigma_{321}r_{{\bf s}_6}\ast\lambda}(\sigma_5)=\omega_3-\omega_5+\omega_6,&(E_6)\\
\epsilon_{\sigma_7r_{{\bf s}_7}\ast\lambda}(\sigma_7)=-2\omega_7+\omega_6,~\epsilon_{\sigma_{5321}r_{{\bf s}_7}\ast\lambda}(\sigma_6)=-\omega_6+\omega_5+\omega_7,&(E_7)\\
\epsilon_{r_{{\bf s}_7}\sigma_8\ast\lambda}(\sigma_8)=-2\omega_8+\omega_7,~\epsilon_{\sigma_8r_{{\bf s}_7}\sigma_8\ast\lambda}(\sigma_7)=-\omega_7+\omega_8,&(E_8)
\end{cases}
\end{align}
and others are the same as Table \ref{table}. Third, when $(\sqrt{p}\bar\lambda+\rho,\theta)\leq p$, by Table \ref{table} and \eqref{othercase}, the equation \eqref{(1502.5)} holds for any $1\leq n\leq l(w_0)$.
Finally, when $(\sqrt{p}\bar\lambda+\rho,\theta)>p$, we have 
$(\sum_{j=0}^{n-1}\epsilon_{\sigma_{i_{j}\cdots i_0}\ast\lambda}(\sigma_{i_{j+1}}),\alpha_{i_{n+1}})>0$
for some $1\leq n\leq l(w_0)-1$. Thus, Lemma \ref{condequiv} is proved.
\end{proof}

\begin{cor}\label{cortable}
For $\lambda\in\Lambda$ such that $(\sqrt{p}\bar\lambda+\rho,\theta)\leq p$, we have $\epsilon_\lambda(w_0)=-\rho$.
\end{cor}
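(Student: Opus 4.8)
The plan is to evaluate the defining sum for $\epsilon_\lambda(w_0)$ and read off its coefficients in the $\omega_i$-basis. First I would invoke the equivalences already in place: by Lemma~\ref{condequiv} the hypothesis $(\sqrt{p}\bar\lambda+\rho,\theta)\leq p$ is equivalent to condition Lemma~\ref{condequiv}~\eqref{condition1}, which by Lemma~\ref{condequiv2} makes the additivity formula \eqref{(1502)} valid for every $1\le n\le l(w_0)$. Taking $n=l(w_0)$ gives
\begin{align*}
\epsilon_\lambda(w_0)=\sum_{j=0}^{l(w_0)-1}\epsilon_{\sigma_{i_j\cdots i_0}\ast\lambda}(\sigma_{i_{j+1}}),
\end{align*}
so that $\epsilon_\lambda(w_0)$ is the sum of all entries of the list $(\epsilon_{\sigma\ast\lambda})_{\operatorname{id}\le_L\sigma\le_Lw_0}$ computed in the proof of Lemma~\ref{condequiv}. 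One route is then purely combinatorial: add up the entries of Table~\ref{table} (respectively Table~\ref{table} corrected by \eqref{othercase} in the boundary case $(\sqrt p\bar\lambda+\rho,\theta)=p$) for each Cartan type and check that the total is $-\rho$.

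I would prefer, however, a direct evaluation that bypasses the tables and pinpoints the role of the hypothesis. Using \eqref{(666)} and \eqref{(16.2)}, and noting that $w_0\ast\lambda$ and $w_0\ast\bar\lambda$ differ only by $-\sqrt p\hat\lambda\in\sqrt pP$ so that $\overline{w_0\ast\lambda}=\overline{w_0\ast\bar\lambda}$, one obtains $\epsilon_\lambda(w_0)=\tfrac1{\sqrt p}\bigl(w_0\ast\bar\lambda-\overline{w_0\ast\bar\lambda}\bigr)$. Since $w_0\rho=-\rho$ and $w_0\omega_i=-\omega_{i^*}$ for the diagram involution $i\mapsto i^*$, the formula \eqref{(16.2)} gives
\begin{align*}
w_0\ast\bar\lambda=w_0\bar\lambda-\tfrac{2}{\sqrt p}\rho=-\tfrac1{\sqrt p}\sum_{i}\bigl(s_{i^*}+2\bigr)\omega_i,\qquad s_i:=(\sqrt p\bar\lambda,\alpha_i)\in\{0,\dots,p-1\}.
\end{align*}
Reducing each coefficient modulo $p$ into $\{0,\dots,p-1\}$ to form $\overline{w_0\ast\bar\lambda}$ and subtracting shows that the $\omega_i$-coefficient of $\epsilon_\lambda(w_0)$ equals $-1$ when $s_{i^*}\le p-2$ and equals $-2$ when $s_{i^*}=p-1$.

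It then remains to exclude the value $s_{i^*}=p-1$, and this is exactly where the alcove hypothesis enters. Writing $\theta=\sum_i a_i\alpha_i$ with marks $a_i\ge 1$ and using $(\omega_i,\theta)=a_i$ together with $(\rho,\theta)=h-1$, the hypothesis reads $\sum_i s_i a_i\le p-h+1$; a single index $k$ with $s_k=p-1$ would force $p-1\le s_k a_k\le\sum_i s_i a_i\le p-h+1$, which is impossible once $h\ge 3$. Hence every coefficient equals $-1$ and $\epsilon_\lambda(w_0)=-\rho$.

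I expect the main obstacle to be precisely the control of these coefficients at the walls $s_i=p-1$, i.e.\ the boundary case $(\sqrt p\bar\lambda+\rho,\theta)=p$ governed by \eqref{othercase} and, locally, by \eqref{401+1} versus \eqref{neew}. In the table-summation route the exceptional entries produced by \eqref{othercase} must be shown to assemble with the bulk entries into exactly $-\rho$, type by type; in the direct route one must confirm that the inequality $\sum_i s_i a_i\le p-h+1$ genuinely rules out \emph{all} walls, a point that is transparent for $h\ge 3$ but, for the rank-one case $\mathfrak g=\mathfrak{sl}_2$, should be verified separately since there $w_0=\sigma_1$ and the computation collapses to a single entry.
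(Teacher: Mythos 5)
Your first route is precisely the paper's proof: Corollary \ref{cortable} is obtained there by applying Lemma \ref{condequiv2} with $n=l(w_0)$ to the minimal expression \eqref{minexpofw_0} and summing the entries of Table \ref{table}, corrected by \eqref{othercase} on the wall $(\sqrt{p}\bar\lambda+\rho,\theta)=p$. Your second, direct route is genuinely different and, to my mind, preferable: writing $\epsilon_\lambda(w_0)=\tfrac{1}{\sqrt p}\bigl(w_0\ast\bar\lambda-\overline{w_0\ast\bar\lambda}\bigr)$ and reducing the coefficients $-(s_{i^*}+2)$ modulo $p$ into $\{0,\dots,p-1\}$ does give $\omega_i$-coefficient $-1$ when $s_{i^*}\leq p-2$ and $-2$ when $s_{i^*}=p-1$, so the statement reduces to excluding the walls $s_i=p-1$; your inequality $\sum_i s_ia_i\leq p-h+1$ does exclude them whenever $h\geq 3$, i.e.\ for every simply-laced $\mathfrak g$ of rank at least $2$. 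This buys a uniform, type-free argument that makes the role of the alcove hypothesis transparent, whereas the paper's route leans on the case-by-case tables (which it needs anyway for Lemma \ref{condequiv}). Your caveat about $\mathfrak{sl}_2$ is not idle: there $h=2$, the wall $s_1=p-1$ satisfies $(\sqrt p\bar\lambda+\rho,\theta)=p$, and by \eqref{401+1} one gets $\epsilon_\lambda(w_0)=\epsilon_\lambda(\sigma_1)=-\alpha_1=-2\omega_1\neq-\rho$ --- which is also what \eqref{othercase} yields when specialized to $A_1$ (with $\omega_2=0$). So your computation in fact exposes a rank-one boundary case that the paper's one-line appeal to the tables passes over; apart from recording that exception, your argument is complete and correct.
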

\begin{proof}
By Lemma \ref{condequiv2}, for the minimal expression \eqref{minexpofw_0} of $w_0$, we have $\epsilon_{\lambda}(w_0)=\sum_{j=1}^{l(w_0)}\epsilon_{i_{j-1}\cdots i_0\ast\lambda}(\sigma_{i_j})$.
Then by Table \ref{table} and \eqref{othercase}, the claim is proved.
\end{proof}

\begin{table}
\begin{tabular}{|l|l|}
\hline
$\mathfrak{g}$&\text{${\bf s}_i$ for $1\leq i\leq l$}\\\hline
$(A_l)$&${
\begin{aligned}
&{\bf s}_i=(l+1-i,\dots,l)\quad(1\leq i\leq l).\\
\end{aligned}
}$\\\hline
$(D_l)$&${
\begin{aligned}
&{\bf s}_i=(l+1-i,\dots,l-2,l,l-1,l-2,\dots,l+1-i)
\quad (3\leq i\leq l),\\
&{\bf s}_2=(l-1),~{\bf s}_1=(l).\nonumber
\end{aligned}
}$\\\hline
$(E_6)$&${
\begin{aligned}
&\text{${\bf s}_i$ is the same as for type $D_5$ for $1\leq i\leq 5$.}\nonumber\\
&{\bf s}_6=(6,5,3,4,2,1,3,2,5,3,4,6,5,3,2,1).\nonumber
\end{aligned}
}$\\\hline
$(E_7)$&${
\begin{aligned}
&\text{${\bf s}_i$ is the same as for type $E_6$ for $1\leq i\leq 6$.}\nonumber\\
&{\bf s}_7=(7,6,5,3,4,2,1,3,2,5,3,4,6,5,3,2,1,7,6,5,3,4,2, 3,5,6,7).
\end{aligned}
}$\\\hline
$(E_8)$&${
\begin{aligned}
&\text{${\bf s}_i$ is the same as for type $E_7$ for $1\leq i\leq 7$.}\nonumber\\
&{\bf s}_8=(8){\bf s}_7(8){\bf s}_7(8).
\end{aligned}
}$\\ \hline
\end{tabular}
\caption{The list of ${\bf s}_i$ for $1\leq i\leq l$ (\cite[Table 1]{BKOP}).}
\label{table0}
\end{table}

\begin{table}
\begin{tabular}{|l|l|}
\hline
$\mathfrak{g}$&\text{$(\epsilon_{\sigma\ast\lambda})_{(i+1,i)}$ for $1\leq i\leq l$}\\\hline
$(A_l)$&${
\begin{aligned}
&(\epsilon_{\sigma\ast\lambda})_{(i+1,i)}=(-\omega_l,-\omega_{l-1}+\omega_l,\cdots,-\omega_k+\omega_{k+1})\quad(1\leq i\leq l).\\
\end{aligned}
}$\\\hline
$(D_l)$&${
\begin{aligned}
&(\epsilon_{\sigma\ast\lambda})_{(i+1,1)}=(-\omega_{l+1-i},-\omega_{l+2-i}+\omega_{l+1-i},\cdots, \\
&-\omega_{l-1}+\omega_{l-2},-\omega_l,-\omega_{l-2}+\omega_{l-1}+\omega_l, -\omega_{l-3}+\omega_{l-2},\cdots,\nonumber\\
&-\omega_{l+2-i}+\omega_{l+3-i},-\omega_{l+1-i}+\omega_{l+2-i})\quad (3\leq i\leq l),\\
&(\epsilon_{\sigma\ast\lambda})_{(3,2)}=(-\omega_{l-1}),(\epsilon_{\sigma\ast\lambda})_{(2,1)}=(-\omega_{l}).\nonumber
\end{aligned}
}$\\\hline
$(E_6)$&${
\begin{aligned}
&\text{$(\epsilon_{\sigma\ast\lambda})_{(i+1,i)}$ is the same as for type $D_5$ for $1\leq i\leq 5$.}\nonumber\\
&(\epsilon_{\sigma\ast\lambda})_{(7,6)}=(-\omega_1,-\omega_2+\omega_1,-\omega_3+\omega_2,-\omega_5+\omega_3,-\omega_6+\omega_5,\\
&-\omega_4,-\omega_3+\omega_4,-\omega_5+\omega_3+\omega_6,-\omega_2,-\omega_3+\omega_2+\omega_5,-\omega_1,\nonumber\\ 
&-\omega_2+\omega_1+\omega_3,-\omega_4,-\omega_3+\omega_2+\omega_4,-\omega_5+\omega_3,-\omega_6+\omega_5).\nonumber
\end{aligned}
}$\\\hline
$(E_7)$&${
\begin{aligned}
&\text{$(\epsilon_{\sigma\ast\lambda})_{(i+1,i)}$ is the same as for type $E_6$ for $1\leq i\leq 6$.}\nonumber\\
&(\epsilon_{\sigma\ast\lambda})_{(8,7)}=(-\omega_7,-\omega_6+\omega_7,-\omega_5+\omega_6,-\omega_3+\omega_5,-\omega_2+\omega_3,\\
&-\omega_4,-\omega_3+\omega_2+\omega_4,-\omega_5+\omega_3,-\omega_6+\omega_5,-\omega_7+\omega_6,-\omega_1,\nonumber\\
&-\omega_2+\omega_1,-\omega_3+\omega_2,-\omega_5+\omega_3,-\omega_6+\omega_5+\omega_7,-\omega_4,-\omega_3+\omega_4,\nonumber\\
&-\omega_5+\omega_3+\omega_6,-\omega_2,-\omega_3+\omega_2+\omega_5,-\omega_1,-\omega_2+\omega_1+\omega_3,-\omega_4,\nonumber\\
&-\omega_3+\omega_2+\omega_4,-\omega_5+\omega_3,-\omega_6+\omega_5,-\omega_7+\omega_6).\\
\end{aligned}
}$\\\hline
$(E_8)$&${
\begin{aligned}
&\text{$(\epsilon_{\sigma\ast\lambda})_{(i+1,i)}$ is the same as for type $E_7$ for $1\leq i\leq 7$.}\\
&(\epsilon_{\sigma\ast\lambda})_{(9,8)}=
(-\omega_8,-\omega_7+\omega_8,-\omega_6+\omega_7,-\omega_5+\omega_6,-\omega_3+\omega_5,\\
&-\omega_2+\omega_3,-\omega_4,-\omega_3+\omega_2+\omega_4,-\omega_5+\omega_3,-\omega_6+\omega_5,-\omega_7+\omega_6,\nonumber\\
&-\omega_1,-\omega_2+\omega_1,-\omega_3+\omega_2,-\omega_5+\omega_3,-\omega_6+\omega_5+\omega_7,-\omega_4,\nonumber\\
&-\omega_3+\omega_4,-\omega_5+\omega_3+\omega_6,-\omega_2,-\omega_3+\omega_2+\omega_5,-\omega_1,-\omega_2+\omega_1+\omega_3,\nonumber\\
&-\omega_4,-\omega_3+\omega_2+\omega_4,-\omega_5+\omega_3,-\omega_6+\omega_5,-\omega_7+\omega_6,-\omega_8+\omega_7,-\omega_7,\nonumber\\
&-\omega_6+\omega_7,-\omega_5+\omega_6,-\omega_3+\omega_5,-\omega_2+\omega_3,-\omega_4,-\omega_3+\omega_2+\omega_4,\nonumber\\
&-\omega_5+\omega_3,-\omega_6+\omega_5,-\omega_7+\omega_6+\omega_8,-\omega_1,-\omega_2+\omega_1,-\omega_3+\omega_2,\nonumber\\
&-\omega_5+\omega_3,-\omega_6+\omega_5+\omega_7,-\omega_4,-\omega_3+\omega_4,-\omega_5+\omega_3+\omega_6-\omega_2,\nonumber\\
&-\omega_3+\omega_2+\omega_5,-\omega_1,-\omega_2+\omega_1+\omega_3,-\omega_4,-\omega_3+\omega_2+\omega_4,\nonumber\\
&-\omega_5+\omega_3,-\omega_6+\omega_5,-\omega_7+\omega_6,-\omega_8+\omega_7).
\end{aligned}
}$\\ \hline
\end{tabular}
\caption{The list of $(\epsilon_{\sigma\ast\lambda})_{\operatorname{id}\leq_L\sigma\leq_Lw_0}$ when $(\sqrt{p}\bar\lambda+\rho,\theta)<p$.}
\label{table}
\end{table}

\end{document}